\newcommand\independent{\protect\mathpalette{\protect\independenT}{\perp}}
\def\independenT#1#2{\mathrel{\rlap{$#1#2$}\mkern2mu{#1#2}}}
\DeclareMathSymbol{\shortminus}{\mathbin}{AMSa}{"39}
\newtheorem{theorem}{Theorem}[section]
\newtheorem{defin}{Definition}[section]
\newtheorem{lemma}{Lemma}[section]
\newtheorem{example}{Example}[section]
\newtheorem{corollary}{Corollary}[section]
\newtheorem{remark}{Remark}[section]
\newcommand{\Ex}{\mathrm{E}}
\newcommand{\cov}{\mathop{\rm {\mathbb C}ov}\nolimits}%
\newcommand{\C}{\mathds{C}}
\def\m{\mathcal}
\def\mb{\mathbb}
\def\mr{\mathrm}
\definecolor{cobalt}{rgb}{0.0, 0.28, 0.67}
\newcommand{\bX}{\mathbf{X}}
\title{Conditionally specified 
graphical modeling of stationary multivariate time series}
\author{{Anirban Bhattacharya$^1$, Jan Johannes$^2$ and
    Suhasini Subba Rao$^1$}}
\date{$^1$ Texas A\&M University, TX-77845, USA. \\%
  $^2$ Universit\"at Heidelberg, D-69120, Germany \\[2ex]%
  August 19, 2025}
\begin{document}

\maketitle

\begin{abstract}
Graphical models are ubiquitous for summarizing conditional relations
in multivariate data. In many applications involving multivariate time
series, it is of interest to learn an interaction graph that treats
each individual time series as nodes of the graph, with the presence
of an edge between two nodes signifying conditional dependence given
the others. Typically, the partial covariance is used as a measure of
conditional dependence. However, in many applications, the outcomes
may not be Gaussian and/or could be a mixture of different
outcomes. For such time series using the partial covariance as a
measure of conditional dependence may be restrictive. In this article,
we propose a broad class of time series models which are specifically
designed to succinctly encode process-wide conditional independence in
its parameters. For each univariate component in the time series, we
model its conditional distribution with a distribution from the
exponential family. We develop a notion of process-wide compatibility
under which such conditional specifications can be stitched together
to form a well-defined strictly stationary multivariate time
series. We call this construction a conditionally exponential
stationary graphical model ({\it CEStGM}). A central quantity
underlying CEStGM is a positive kernel which we call the interaction
kernel. Spectral properties of such positive kernel operators
constitute a core technical foundation of this work. We establish
process-wide local and global Markov properties of CEStGM exploiting a
Hammersley-Clifford type decomposition of the interaction
kernel. Further, we study various probabilistic properties of CEStGM
and show that it is geometrically mixing. An approximate Gibbs sampler
is also developed to simulate sample paths of CEStGM.

{\bf Keywords}: Conditional distributions, Exponential Family,  
Non-Gaussian, Stationary Time Series, Positive kernel operators, Power iteration,
Undirected Graphical models.
\end{abstract}

\section{Introduction}

Markov random fields \citep{p:bro-64,p:cliff-ham-71,p:grimmett-73,p:bes-74}, also called (undirected) graphical models \citep{b:lau-96},
are  widely popular 
for flexibly modeling high dimensional probability distributions. The resulting distribution readily leads to a graph-based representation which depicts the conditional dependence between variables in a  
multivariate random vector. Typically, 
these conditional distributions are from the exponential family, which allow for mixtures of different variable types \cite{p:lau-wer-89,p:yan-14}. 
Distributions based on the exponential family satisfy the classical Markov properties (pairwise, local and global).
There exists extensive research in undirected 
graphical models
where the main 
focus has been to model independent, identically distributed multivariate random vectors;  see \cite{b:lau-96,b:kol-09} for book-level treatments. However, in 
diverse fields such as medicine, finance, 
psychology and the biological sciences, these vectors are observed over time, and the assumption of temporal independence is unlikely to hold. 
\cite{p:kol-10} and \cite[Sections 2.5 \& 3.1]{p:has-20} both consider graphical modeling of the  contemporaneous conditional dependence of multivariate  mixed time series data. 
They are motivated by several real examples, including
the gene expression data of a common fruit fly observed over its life time and a momentary assessment state study from an individual diagnosed with major depression, where their momentary state was recorded daily over a period of time. Clearly, an approach based on contemporaneous conditional dependence omits important conditional temporal dependencies that are likely to exist in these time series.
In such applications, it may be more meaningful to report a `process-wide' graph which treats
the entire time series, corresponding to each variable, as nodes of the graph.


Motivated by the compelling range of applications, 
we propose a general multivariate stationary time series framework that preserves the pertinent features of an undirected graph and seamlessly allows modeling high-dimensional time series; potentially consisting of a combination of proportions, positive, categorical and
count variables.
The limited body of work on undirected graphical models for time series has so far
focused on the class of 
Gaussian linear time series; pioneered by \cite{p:dah-00}, where he builds
an undirected graph for multivariate stationary time series in terms of its partial correlation by placing an edge between two nodes if there exists a non-zero partial correlation. 
\cite{p:dah-00} shows that the graph satisfies the classical Markov properties in a process-wide partial correlation sense. As
the non-zero partial correlations are encoded in the inverse spectral density matrix, 
classical spectral techniques can be used to estimate the graph (c.f. \citep{p:eic-03,p:eic-08,jung2015graphical,p:tank-15,p:fie-18,p:tug-22,p:dey-22, p:fas-24,
p:bas-24, p:kra-25}). See \cite{p:gather-02} and \cite[Sections 7 \& 8]{p:tank-15} for some representative applications.
However, the underlying time series must be Gaussian to translate partial non-correlation into process-wide conditional independence between components of the time series. In many applications, the time series is not Gaussian and a pre-processing step is often used in an attempt  to `Gaussianize' 
the data c.f. \cite{p:tug-25}. 

In light of these considerations, we
define a new class of multivariate, mixed, time series models 
where the parameters encode a `process-wide' conditional independence graph. 
Motivated by \cite{p:bes-74,p:yan-15}, we begin with a {\it conditional specification} where the conditional distributions at each node and at each time point are modeled via appropriate exponential families. 
A key contribution of this article is to delineate simple verifiable conditions on the parameters of these distributions to ensure the existence of a valid stationary multivariate stochastic process with these pre-specified node-wise conditionals. We call this {\it process-wide compatibility}, extending the notion of {\it compatibility} of a finite collection of conditional distributions \citep{p:bes-74,p:hob-98,p:arn-02}.  
In Section \ref{sec:cstgm}
we give an explicit construction of the multivariate stochastic process, which we refer to as {\it CEStGM} (conditionally exponential stationary graphical model), by specifying its finite-dimensional projections and verifying Kolmogorov's consistency theorem \citep{billingsley2017probability}.
The nature of the conditional specification prevents 
the verification of Doeblin-type conditions on a Markov transition kernel
to show existence of a stationary process, c.f. \citep{b:dou-14,b:douc2018markov,p:dou-19,p:tru-21,p:dou-neu-tru-23}. Instead, the crucial ingredient
in the construction of CEStGM is a quantity that we dub the {\it interaction kernel}, which is a positive kernel (though importantly not a Markov transition kernel) involving the parameters appearing inside the conditional specifications. 
The proposed framework includes the multivariate Gaussian time series, as defined by \cite{p:dah-00}, as a special case.  We additionally provide a number of explicit non-Gaussian constructions, including mixed data types. In Section \ref{sec:graph},
we arrive at our original motivation of showing that sparsity in the model parameters encode a notion of process-wide conditional independence,
which we define in terms of the sigma-algebras generated by the respective processes.
In Section \ref{sec:prob_cestgm}
we show the CEStGM process is ergodic and geometrically
$\beta$-mixing under mild conditions, and develop a Gibbs sampling based approximate sampling algorithm to simulate sample paths of the CEStGM process.
In Section \ref{sec:d_nn} we give an extension to higher order temporal dependence. All proofs can be found in the supplementary material.

The core technical foundation of this work largely builds on the spectral theory of linear integral operators with strictly positive kernel functions \citep{schaefer1974banach,boelkins1998spectral}. Curiously, if such an operator is {\it compact}, then its dominant eigenfunction is strictly positive due to the celebrated Krein--Rutman theorem \citep{krein1948linear,krein1962linear}, which generalizes the Perron--Frobenius theory for positive matrices to abstract Banach spaces. Appealing to this theory, the dominant eigenfunctions of the interaction kernel and its adjoint are rendered pivotal to the CEStGM formulation. To corroborate compactness,
one could verify a stronger, but more amenable, Hilbert--Schmidt condition on the interaction kernel; this is what we apply in all the examples.  Proving the aforementioned probabilistic properties of CEStGM such as $\beta$-mixing 
organically necessitates manipulations with powers of the interaction kernel in an operator theoretic sense, which interestingly shares a high-level resemblance with iterated Markov kernels. Specifically, we establish and crucially exploit a {\it power-iteration} flavored result for powers of compact {\it non} self-adjoint operators, which may be of independent interest. The local and global Markov properties are derived using a {\it Hammersley--Clifford type factorization} of the interaction kernel. En route, we prove a general result about process-wide conditional independence of $\beta$-mixing multivariate time series. As far as we are aware, the probabilistic tools we develop in this article are new and may be of independent interest.





\emph{Broader literature on count and other non-continuous response time series.} Over the past forty years, 
several approaches for modeling
`non-Gaussian'  time series have been proposed. 
They include
integer-valued autoregressive (INAR) models 
\citep{p:mck-85,p:alz-al-90,p:fra-sub-95,p:joe-96,p:lat-97, b:wei-18}, and  
observation driven models that use a GLM approach to model the conditional distribution of the time series given the past (\citep{p:fei-81,
p:zeg-qaq-88, p:ben-rig-sta-03,
p:dav-03,p:fok-04,p:lat-06,p:fok-09,p:neu-11,p:deb-tru-24,p:tru-24} and the recent review in \cite{p:dav-21}).
More recently, copula and latent model approaches have been proposed \cite{p:tru-23}, \citep{p:bed-02,p:bea-15,p:mag-22}, \citep{p:jia-23, p:kon-lun-24,p:kim-due-fis-pip-24}. Each model has its own advantage, but 
only a few of the above models are easily scalable to high-dimensions and, 
as far as we are aware,
none encode process-wide conditional independence.  

\subsection*{Preliminary groundwork} 
We begin by introducing key notation and summarizing two distinct strands of research in the theory
and modeling of (undirected) conditional independence/partial correlation graphs.
We blend these ideas to define CEStGM in Section \ref{sec:cstgm}.


We start with the formal definition of conditional
independence. Suppose
$(\Omega,\mathcal{G},P)$ is a probability space and $\mathcal{G}_{1}$,
$\mathcal{G}_{2}$ and $\mathcal{G}_{3}$ are sub-sigma algebras of
$\mathcal{G}$. We define the set of functions
$ [\mathcal{G}_{i}]^{+}  = \{ \textrm{all positive, bounded }
  \mathcal{G}_{i}\textrm{-measurable functions}\}$. 
     Then  $\mathcal{G}_{1}\independent \mathcal{G}_{2}|\mathcal{G}_{3}$
iff for  all $m_{1}\in [\mathcal{G}_{1}]^{+}$ and $m_{2}\in [\mathcal{G}_{2}]^{+}$
\begin{eqnarray*}
\Ex[m_{1}m_{2}|\mathcal{G}_{3}] =
  \Ex[m_{1}|\mathcal{G}_{3}]\Ex[m_{2}|\mathcal{G}_{3}],
\end{eqnarray*}  
almost surely (see Theorem 2.2.1 in \cite{b:flo-mou-rol-90}). If
$\mathcal{G}_{i} = \sigma(Y_{i})$,  where $Y_{i}$ is a finite
collection of random variables then we often use the notation
 $Y_{1}\independent Y_{2}|Y_{3}$. For finite random vectors with strictly positive densities, this is
  typically stated in terms of joint densities (see Proposition 2.2.1,
  \cite{b:lau-20}) but we require this more
  general definition later on in the paper to define process-wide versions of conditional independence. 

\paragraph*{Graphical models using the conditional exponential family}
We first define a conditional independence graph.
Let $X = (X^{(1)},\ldots,X^{(p)})$ be a collection of $p$ scalar
 random variables, with
 $X^{(a)} \in \m X^{(a)}$. For $a, b \in [p] :\,= \{1, \ldots, p\}$, define $X_{V\backslash
  \{a,b\}} = (X^{(c)} \,; \, c \in [p]\backslash\{a,b\})$. The {\it conditional independence graph} associated with $X$ is an undirected graph $G=(V,E)$ with vertex set $V = [p]$ and edge set $E \subseteq V\times
V$ with the following rule: $(a,b)\notin E$ if and only if $X^{(a)}\independent X^{(b)}|X_{V\backslash
  \{a,b\}}$.
  
For 
$a \in [p]$, let $X_{-a} = (X^{(c)} \,:\, c \ne a)^\top$ denote the random vector with the entry $X^{(a)}$ removed from $X$. 
\cite{p:bes-74} first proposed using distributions from the
exponential family as a method for modeling conditional interactions $X^{(a)} \mid X_{-a}$, and obtained sufficient conditions for these collection of conditional distributions across $a \in [p]$ to be {\it compatible}, that is, define a valid unique joint probability distribution. 
Subsequently, it has been used to model
mixed Binary/Gaussian random vectors in  
\cite{p:lau-wer-89} and more recently in the high dimensional set-up
in, for example,  \cite{p:wai-03}, \cite{p:rav-10}, 
  \cite{p:jal-11}, \cite{p:yan-14}, \cite{p:yan-15}, \cite{p:foy-15}, \cite{p:lev-17},
  \cite{b:wai-19}. We mention that the early statistical literature on conditional interactions was motivated by problems in statistical mechanics  
\citep{b:geo-11}. 
The basic premise is to assume that each  conditional distribution is from the {\it exponential family}, written in the following natural exponential form,
  \begin{eqnarray}
  \label{eq:natural}  
    p_\theta(x) \propto \exp(s(x) \, \theta_1+x \, \theta_{2}+c(x)) 
  \end{eqnarray}  
where $(s(\cdot),x)$ are sufficient statistics of the distribution (and we allow for the case that $s(x)=x$), and the support of $p_\theta(\cdot)$ does not depend on $\theta =(\theta_1, \theta_2)^\top$.
The exponential family (\ref{eq:natural}) forms the basis of modeling
the conditional distribution of $X^{(a)}$ given $X_{-a}$, where 
$\theta_{2}$ is replaced with a linear sum of the conditioning set $X_{-a}$. In
particular, for each $a\in [p]$, let
 \begin{eqnarray}\label{eq:cond_exp}
    p_a(X^{(a)}|X_{-a}) \propto
   \exp\left (\theta_a s(X^{(a)})+X^{(a)}\sum_{b \ne a} \phi^{(a,b)}X^{(b)} +c(X^{(a)})\right), \quad X^{(a)}\in \mathcal{X}^{(a)},
 \end{eqnarray}
be the conditional density of $X^{(a)} \mid X_{-a}$ with respect to a sigma-finite measure $\mu_j$ on $\m X_j$. The question of {\it compatibility} of such conditional distributions has a long history; see for example, \cite{p:bro-64,p:bes-74,p:hob-98}. In the present context, we paraphrase \cite{p:yan-15}[Theorem 2.1] for conditions implying a compatible joint distribution of $X$ on the product space $\m X = \otimes_{j=1}^p \m X^{(j)}$, starting from the collection of conditionals in \eqref{eq:cond_exp}.
Let $\theta = (\theta_1, \ldots, \theta_p)^\top$, and $\phi$ denote the collection of coefficients $\phi^{(a, b)}$ for $a \ne b \in [p] \times [p]$. 
If $\phi^{(a, b)} = \phi^{(b, a)}$ for all $a \ne b$, and 
\begin{align}\label{eq:Apt}
A(\phi,\theta) :\,= \int_{\m X}\exp\left(\sum_{a=1}^{p}\big\{\theta_{a} s(x^{(a)})+c(x^{(a)})\big\}
  + \mathop{\sum\sum}_{a \ne b} \frac{1}{2} \phi^{(a,b)}x^{(a)}x^{(b)}\right)\prod_{j=1}^{p} \, \mu_j(d x_j) < \infty, 
\end{align}
then the joint distribution of $X$ is well-defined and is given by the {\it Gibbs distribution}
\begin{align}
\label{eq:jointexponential}  
 p(x^{(1)},\ldots,x^{(p)}) 
 \propto \exp\left(\sum_{a=1}^{p}
 \big\{\theta_{a} s(x^{(a)})+c(x^{(a)})\big\}
  +  \mathop{\sum\sum}_{a \ne b} \frac{1}{2}\phi^{(a,b)}x^{(a)}x^{(b)}\right).
\end{align}
What makes this model attractive, besides the ease in which different distributions can ``slot'' together, is that the conditional
independence graph it describes is determined by the non-zero coefficients $\phi^{(a, b)}$. Specifically, for $a \in [p]$, define $\m N_a = \{b \in [p] \,:\, \phi^{(a, b)} \ne 0\}$ to be the neighborhood set of node $a$. Inspecting \eqref{eq:cond_exp}, it is apparent that $X^{(a)}\independent X^{(c)}|X_{V\backslash\{a,c\}}$ iff $c\notin \mathcal{N}_{a}$. By symmetry of the $\phi^{(a,c)}$s, the neighborhood relation is symmetric, i.e., $c \in \m N_a$ iff $a \in \m N_c$. Thus, the undirected edge set $E$ for the conditional independence graph consists of all such  
neighbors $(a, c)$. 

\paragraph*{Graphical models for multivariate time series} While the
above contributions are model based, \cite{p:dah-00} proposes
a process-wide conditional graph for multivariate time series
based on the partial correlation. 
Given the stationary multivariate time series $\{X_{t} = (X_{t}^{(1)},\ldots,X_{t}^{(p)}\}$,
the vertex set consists of each component in the series
(rather than individual random variables) and
the edge set is determined by the partial covariance between time series. 
Let $\mathcal{H} = \overline{sp}(X_{t}^{(c)};t\in \mathbb{Z},c\in
\{1,\ldots,p\})$ and $\mathcal{H}_{-(a,b)} =
\overline{sp}(X_{t}^{(c)};t\in \mathbb{Z},c\in
\{1,\ldots,p\}\backslash\{a,b\})$. Then the partial covariance between
time series $\{X_{t}^{(a)}\}$ and $\{X_{t}^{(b)}\}$ is defined as 
$\rho_{h}^{(a,b)|\shortminus\{a,b\}}=\cov[X_{t}^{(a)} - P_{\mathcal{H}_{-(a,b)}}(X_{t}^{(a)}),
X_{t+h}^{(b)} - P_{\mathcal{H}_{-(a,b)}}(X_{t+h}^{(b)})]$. The partial correlation graph is defined under the rule that
 $(a,b) \in E$  iff
  $\{\rho_{h}^{(a,b)|\shortminus\{a,b\}}\neq 0 \textrm{ for some  }h \in \mathbb{Z}\}$.
  \cite{p:dah-00} shows that the condition
  $\{\rho_{h}^{(a,b)|\shortminus\{a,b\}}=0 \textrm{ for all
  }h \in \mathbb{Z}\}$ is equivalent to
  $[f(\omega)^{-1}]_{a,b}=0$ for all $\omega\in [0,2\pi]$, where
  $f(\omega) = \sum_{h\in \mathbb{Z}}C(h)\exp(ih\omega)$ is the spectral density matrix of the time series 
  $\{X_{t}\}_{t\in \mathbb{Z}}$ with 
  $\{C(h)\}_{h\in \mathbb{Z}}$ as the autocovariance function
  $C(h) = \cov[X_{0},X_{h}]$. 
 Whereas most graphical model approaches consider the conditional independence between random variables at the individual random variable level, this 
 approach is unique in that the partial covariance is a process-wide measure of conditional dependence for Gaussian processes.


\section{Conditionally specified multivariate time series} \label{sec:cstgm}
Our aim is to blend the above approaches to propose a probabilistic framework towards modeling process-wide conditional dependence for multivariate non-Gaussian time series.
To motivate our
approach we start with the  vector autoregressive model
VAR$(1)$. The inverse spectral density matrix of a 
VAR$(1)$ has a simple form, from which it is straightforward to deduce the 
partial correlation graph defined in \cite{p:dah-00}.
Moreover, we show below that a VAR$(1)$
model with iid Gaussian innovations can also be expressed in
a conditional exponential form. Using this we can explicitly connect the
partial correlation graph of a multivariate time series with the conditional
independence graph from an exponential distribution approach. We show how this
can be used as a stepping stone for defining 
a process-wide
graphical model for general non-Gaussian time series.

Consider the VAR$(1)$ model 
\begin{eqnarray}
\label{eq:XtVAR1}  
X_t =   \left(
\begin{array}{c}
X_{t}^{(1)} \\
X_{t}^{(2)} \\
X_{t}^{(3)} 
\end{array}
\right) = 
\left(
\begin{array}{ccc}
\alpha_1 & \beta_1 &  0 \\
 0 & \beta_2 & 0  \\
0 & \beta_3 & \gamma_3 \\
\end{array}
\right) \left(
\begin{array}{c}
X_{t-1}^{(1)} \\
X_{t-1}^{(2)} \\
X_{t-1}^{(3)} \\
\end{array}
\right) +\varepsilon_{t} = AX_{t-1}+\varepsilon_{t}
\end{eqnarray}
where $\{\varepsilon_{t}\}_{t \in \mb Z}$ are iid Gaussian random vectors with
$\varepsilon_{t} \sim N(0,I_{3})$. The inverse spectral density is 
\begin{align*}
  f(\omega)^{-1} 
  &= \left(
  \begin{array}{ccc}
    |1-\alpha_{1}e^{i\omega}|^{2} &
                 -\beta_{1}e^{-i\omega} (1-\alpha_{1}e^{i\omega}) & 0\\
 -\beta_{1}e^{i\omega} (1-\alpha_{1}e^{-i\omega}) & \beta_{1}^{2}+\beta_{3}^{2}+|1-\beta_{2}e^{-i\omega}|^{2} & -\beta_{3}e^{-i\omega}(1-\gamma_{3}e^{i\omega})  \\
0 & -\beta_{3}e^{i\omega}(1-\gamma_{3}e^{-i\omega}) & |1-\gamma_{3}e^{i\omega}|^{2} \\
\end{array}
  \right).
\end{align*}
Since $[f(\omega)^{-1}]_{(1,3)}=[f(\omega)^{-1}]_{(3,1)} =0$  and all
the remaining entries are non-zero, the partial correlation graph 
has vertex set $V=\{1,2,3\}$ and edge set $E=\{(1,2),(2,3)\}$.
We now examine the same model from the perspective of conditional distributions.
For each $(a,t)$ with $a \in \{1, 2, 3\}$ and $t \in \mb Z$, we define the sigma-algebra
$\mathcal{H}_{(a,t)} = \sigma(X_{\tau}^{(b)}; \, (b,\tau)\neq (a,t))$.
Since $\{X_{t}\}$ is a
Gaussian stochastic process, the conditional distributions
$X_{t}^{(1)}|\mathcal{H}_{(1,t)}\sim \mathcal{N}(\mu_{1,t},|\alpha|^{-1})$,
$X_{t}^{(2)}|\mathcal{H}_{(2,t)}\sim \mathcal{N}(\mu_{2,t},|\beta|^{-1})$, and 
$X_{t}^{(3)}|\mathcal{H}_{(3,t)}\sim
\mathcal{N}(\mu_{3,t},|\gamma|^{-1})$ are all univariate Gaussian with parameters
  \begin{align}\label{eq:gaussian3}
  \begin{aligned}
\mu_{1,t} &=
   \frac{\alpha_{1}}{|\alpha|}X_{t-1}^{(1)}+\frac{\alpha_{1}}{|\alpha|}X_{t+1}^{(1)}
  -\frac{\alpha_{1}\beta_{1}}{|\alpha|}X_{t}^{(2)}+
     \frac{\beta_{1}}{|\alpha|}X_{t-1}^{(2)} \\
\mu_{2,t} &=
      \frac{\beta_{2}}{|\beta|}X_{t-1}^{(2)} +\frac{\beta_{2}}{|\beta|}X_{t+1}^{(2)} +
    \frac{\beta_{1}}{|\beta|}X_{t+1}^{(1)} +
     \frac{\beta_{3}}{|\beta|}X_{t+1}^{(3)}-
    \frac{\alpha_{1}\beta_{1}}{|\beta|}X_{t}^{(1)}-
    \frac{\gamma_{3}\beta_{3}}{|\beta|}X_{t}^{(3)}  \\
\mu_{3,t} &=
  \frac{\gamma_{3}}{|\gamma|}X_{t-1}^{(3)}+\frac{\gamma_{3}}{|\gamma|}X_{t+1}^{(3)}-
     \frac{\gamma_3\beta_{3}}{|\gamma|}X_{t}^{(2)}+
               \frac{\beta_{3}}{|\gamma|}X_{t-1}^{(2)}
               \end{aligned}
       \end{align}  
where  $|\alpha| = 1+\alpha_1^{2}$, 
  $|\beta| = 1+\beta_{1}^{2}+\beta_{2}^{2}+\beta_{3}^{2}$, $|\gamma| =
  1+\gamma_{3}^{2}$. Examining 
  the expressions
  of $\mu_{1,t}$,
  $\mu_{2,t}$ and $\mu_{3,t}$ show that the time series $\{X_{t}^{(3)}\}$
  does not appear in the conditional specification of $X_{t}^{(1)}$ and
  $\{X_{t}^{(1)}\}$
  does not appear in the conditional specification of $X_{t}^{(3)}$,
  whereas $\{X_{t}^{(2)}\}$ appears in both the conditional
  specification of  $X_{t}^{(1)}$  and $X_{t}^{(3)}$. This suggests
  in general 
  that appearance of a time series $\{X_{t}^{(a)}\}$ (regardless of lag) in the
  conditional distribution of $X_{t}^{(b)}$ implies that process-wide the
  two time series  $\{X_{t}^{(a)}\}$ and  $\{X_{t}^{(b)}\}$ are
  conditionally dependent. We make this notion precise by defining
  a conditionally-specified nearest-neighbor multivariate time
  series which includes the VAR$(1)$ model as a special case. In the Gaussian VAR case, $\{X_t\}$, by virtue of being a multivariate Gaussian process, is a well-defined stochastic process. However, more generally, the {\it existence} of such a stochastic process is a question in itself, which we address below.

We introduce some general notation first. Let $X_t = (X_{t}^{(1)},\ldots,X_{t}^{(p)})^{\top}$ for $t \in \mb Z$ be a multivariate time series, where $X_{t}^{(a)}\in \mathcal{X}^{(a)} \subseteq \mb R$. Let us also define $\bX^{(a)} = \{X_t^{(a)}\}_{t \in \mb Z}$ to be the univariate time series corresponding to $a \in [p]$. For any $a \in [p]$ and $t \in \mb Z$, define the sigma-algebra 
\begin{equation}\label{eq:H_at}
\mathcal{H}_{(a, t)} = \sigma\left(X_\tau^{(b)}\,:b\, \in [p], \tau \in \mathbb{Z}, (b, \tau) \neq (a, t) \right),
\end{equation}
that is, the sigma-algebra created by $\{X_\tau^{(a)}\}_{\tau \ne t}$ and the collection of time series $\{\bX^{(b)}\}$ for $b \ne a$.  Our goal is to {\it define} a stochastic process $\{X_t\}_{t \in \mb Z}$ such that the conditional distribution of any $X_t^{(a)}$ given $\m H_{(a, t)}$ has a natural exponential family form as in \eqref{eq:natural} - \eqref{eq:cond_exp}, 
 \begin{align}
 \begin{aligned}\label{eq:conditionalspecification}  
  p_{a}(x_{t}^{(a)}|\mathcal{H}_{(a,t)})  &\propto
  \exp\left(\theta^{(a)}s_a(x_{t}^{(a)}) 
             +x_{t}^{(a)}\Theta_{a}(\mathcal{H}_{(a,t)}) + c_a(x_t^{(a)}) \right), \quad
                                                              x_{t}^{(a)}\in
  \mathcal{X}^{(a)},\\[1ex]
  \textrm{ where }  \Theta_{a}(\mathcal{H}_{(a,t)}) &=
  \sum_{b\ne a}\Phi_{0}^{(a,b)}x_{t}^{(b)} + 
             \sum_{b \in [p]} \left[\Phi_{-1}^{(a,b)}x_{t-1}^{(b)}+
                                \Phi_{1}^{(a, b)}x_{t+1}^{(b)}\right],
\end{aligned}                                
\end{align}
where $\Phi_\ell = (\Phi_\ell^{(a, b)})$ are $p \times p$ matrices for
$\ell \in \{-1, 0, 1\}$, with $\Phi_0$ having zero on the
diagonals. The $(a, b)$th entry of $\Phi_0$ specifies the coefficient
of
$X_t^{(b)}$ inside $\Theta_a^{(t)} :\,= \Theta_a(\m H_{(a, t)})$ (for $a \ne b)$, while those for $\Phi_{-1}$ and $\Phi_1$ respectively specify the coefficients of $X_{t-1}^{(b)}$ and $X_{t+1}^{(b)}$ inside $\Theta_a^{(t)}$, for any $t$. We have restricted to a $1$-nearest (time) neighbor specification in \eqref{eq:conditionalspecification} above (that is, $X_t^{(a)}$ only possibly interacts with $X_\tau^{(b)}$ for $\tau \in \{t-1, t, t+1\}$), along with the conditionals having a {\it natural} exponential family form, to make the subsequent development transparent. In Section \ref{sec:d_nn}, we show that the above construction generalizes to a $d$-nearest (time) neighbor
specification and {\it general} distributions from the exponential family. 
\begin{example}[VAR(1) revisited]
Consider the VAR(1) model from \eqref{eq:XtVAR1}. Since the conditional distributions $X_t^{(a)} \mid \m H_{(a, t)}$ are all Gaussian, with the conditional means given in \eqref{eq:gaussian3},  is satisfied with $s_a(x) = x^2/2$ for $a \in \{1, 2, 3\}$ and the matrices
\begin{align*}
\Phi_0 = 
\begin{pmatrix}
0 & -\alpha_1 \beta_1  & 0 \\
-\alpha_1\beta_1 & 0 & -\beta_3 \gamma_3 \\
0 & -\beta_3 \gamma_3 & 0
\end{pmatrix}, \
\Phi_{-1} = 
\begin{pmatrix}
\alpha_1 & \beta_1 & 0 \\
0 & \beta_2 & 0 \\
0 & \beta_3 & \gamma_3
\end{pmatrix}, \
\Phi_1 = 
\begin{pmatrix}
\alpha_1 & 0 & 0 \\
\beta_1 & \beta_2 & \beta_3 \\
0 & 0 & \gamma_3
\end{pmatrix}. 
\end{align*}
The structure of these matrices reveal some interesting features. First, $\Phi_0$ is symmetric, implying the coefficient of $X_t^{(b)}$ in $\Theta_a^{(t)}$ is the same as that of $X_t^{(a)}$ in $\Theta_b^{(t)}$, for any $a \ne b$. In other words, at any time point $t$, pairwise interactions between the nodes are symmetric. Next, $\Phi_{-1} = \Phi_1^\top$, implying the coefficient of $X_{t-1}^{(b)}$ in $\Theta_t^{(a)}$ is the same as the coefficient of $X_{t+1}^{(a)}$ in $\Theta_t^{(b)}$ (which further equals the coefficient of $X_t^{(a)}$ in $\Theta_{t-1}^{(b)}$, by time-homogeneity). Thus, across neighboring time points, the coefficients of interaction between pairs of nodes are also symmetric. We show below that both these symmetries are necessary to ensure a compatible joint distribution exists. Finally, observe that the entries $(1, 3)$ and $(3, 1)$ of all three matrices equal zero. This has implications towards process-wide conditional independencies of $\bX^{(1)}$ and $\bX^{(3)}$ given $\bX^{(2)}$; we discuss this in details in Section \ref{sec:graph}. 
\end{example}

Before proceeding to the stochastic process construction, we discuss conditions for the conditional specifications in \eqref{eq:conditionalspecification} to define a joint distribution over a finite time horizon $t = 1, \ldots, n$. To deal with the two boundary cases ($t = 1$ and $t = n$), we adopt common practice to assume a {\it reflective boundary}, i.e., $X_1$ only interacts with $X_2$, and $X_n$ only with $X_{n-1}$. This requires a small modification in the definition of $\Theta_a(\m H_{(a, t)})$ as below. 
\begin{theorem}\label{theorem:distributionedge}
Fix a positive integer $n > 1$, and consider the collection of random variables $\{X_t^{(a)}\}$ for $t \in [n], a \in [p]$. Assume the conditional distributions of $X_t^{(a)} \mid \m H_{(a, t)}$ are given by the first line of \eqref{eq:conditionalspecification}, with
\begin{align*}
\Theta_a(\m H_{(a, t)}) = 
\begin{cases}
\sum_{b \ne a} \Phi_0^{(a, b)} X_t^{(b)} + \sum_{b \in [p]} \left[\Phi_{-1}^{(a, b)} X_{t-1}^{(b)} + \Phi_1^{(a, b)} X_{t+1}^{(b)} \right], & 1 < t < n \\
\sum_{b \ne a} \Phi_0^{(a, b)} X_1^{(b)} + \sum_{b \in [p]} \Phi_1^{(a, b)} X_{2}^{(b)}, &  t = 1 \\
\sum_{b \ne a} \Phi_0^{(a, b)} X_n^{(b)} + \sum_{b \in [p]} \Phi_{-1}^{(a, b)} X_{n-1}^{(b)}, &  t = n. \\
\end{cases}
\end{align*}
Suppose $\Phi_0$ is symmetric, and $\Phi_1 = \Phi_{-1}^\top$. Then,
the joint distribution of $(X_1, \ldots, X_n)$ defined on the $n$-fold
product of $\m X=\times_{a=1}^{p}\mathcal{X}^{(a)}$ is given by 
{\small \begin{align*}
p^{(n)}(x_1, \ldots, x_n) = [c^{(n)}]^{-1} \, \exp\left(
  \sum_{t=1}^{n}\big[\theta^{\top}{\bf
      s}(x_{t}) + {\bf 1}_p^\top {\bf c}(x_t) + \frac{1}{2} \, x_{t}^{\top}\Phi_{0}x_{t} \big] + \frac{1}{2} \sum_{t=2}^n \left[x_t^\top \Phi_{-1} x_{t-1} + x_{t-1}^\top \Phi_1 x_t \right] \right),
\end{align*}}
provided the constant of integration $c^{(n)}$ is finite. In the above display, 
$\theta = (\theta_{1},\ldots,\theta_{p})^\top$, ${\bf 1}_p$ is a $p$-vector of ones, and for $x = (x_1, \ldots, x_p)^\top$, 
${\bf s}(x) :\,= (s_{1}(x_1),\ldots,s_{p}(x_p))^{\top}$ \& ${\bf c}(x) :\,= (c_{1}(x_1),\ldots,c_{p}(x_p))^{\top}$. Also, $x_t = (x_t^{(1)}, \ldots, x_t^{(p)})^\top \in \m X$ for all $t \in [n]$. 
\end{theorem}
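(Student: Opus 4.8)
The plan is to recognize this as a \emph{compatibility} statement in the sense of \cite{p:bes-74} and to establish it in two parts: first, to verify that the proposed Gibbs density $p^{(n)}$ reproduces exactly the prescribed full conditionals of \eqref{eq:conditionalspecification}; and second, to argue that, once such a compatible joint exists, it is the unique distribution consistent with the given conditionals. The two symmetry hypotheses---$\Phi_0$ symmetric and $\Phi_1 = \Phi_{-1}^\top$---are precisely the conditions that make the construction internally consistent, and the verification step is where they enter.

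For the verification, I would fix a pair $(a,t)$ and isolate, inside the exponent of $p^{(n)}$, all terms involving $x_t^{(a)}$; everything else is absorbed into the normalizer of the conditional density. The contributions $\theta^{(a)} s_a(x_t^{(a)})$ and $c_a(x_t^{(a)})$ read off immediately. The coefficient multiplying $x_t^{(a)}$ collects pieces from three sources: the contemporaneous quadratic form $\tfrac12 x_t^\top \Phi_0 x_t$, and the two cross-time terms indexed by $t$ and by $t+1$ in the sum $\tfrac12\sum_{t} [x_t^\top \Phi_{-1} x_{t-1} + x_{t-1}^\top \Phi_1 x_t]$. Using the symmetry of $\Phi_0$ together with its zero diagonal, the contemporaneous piece yields $\sum_{b \ne a} \Phi_0^{(a,b)} x_t^{(b)}$; using $\Phi_1 = \Phi_{-1}^\top$, the two adjacent-time contributions to $x_{t-1}^{(b)}$ merge into $\sum_b \Phi_{-1}^{(a,b)} x_{t-1}^{(b)}$ and those to $x_{t+1}^{(b)}$ into $\sum_b \Phi_1^{(a,b)} x_{t+1}^{(b)}$, with the factors of $\tfrac12$ cancelling the doubling. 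This reproduces $\Theta_a(\m H_{(a,t)})$ exactly for interior $t$; the boundary cases $t=1,n$ follow the same computation with the truncated sum and again rely on $\Phi_1 = \Phi_{-1}^\top$ to match the case-split definition of $\Theta_a$ in the statement.

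For uniqueness, I would invoke a Brook-type expansion: fixing an arbitrary reference configuration $x^\ast$ in the support (which is fixed, not $\theta$-dependent, by \eqref{eq:natural}), the ratio $p^{(n)}(x)/p^{(n)}(x^\ast)$ is written as a telescoping product of full-conditional ratios obtained by switching coordinates from $x^\ast$ to $x$ one at a time. Since the full conditionals are prescribed and the density is strictly positive on the product of supports, this product determines $p^{(n)}$ up to the single normalizing constant $c^{(n)}$, finite by hypothesis. Hence any distribution with the stated conditionals must coincide with the exhibited Gibbs form.

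The main obstacle is the bookkeeping in the verification: for a single coordinate $x_t^{(a)}$ one must track four separate cross-time contributions---as the left and right factor at sum-index $t$, and as the left and right factor at sum-index $t+1$---and confirm that exactly the two symmetry relations collapse them into the prescribed $\Theta_a$. This is where the hypotheses are indispensable: without $\Phi_0 = \Phi_0^\top$ the contemporaneous coefficient fails to symmetrize, and without $\Phi_1 = \Phi_{-1}^\top$ the forward and backward temporal couplings do not agree, so the prescribed conditionals would not arise from any single joint. Care is likewise needed at the two temporal boundaries, where the absence of one neighbor must be reconciled with the modified $\Theta_a$.
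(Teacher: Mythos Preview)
Your proposal is correct and follows exactly the approach the paper invokes: the paper's proof is a one-line citation to \cite{p:hob-98}, Theorem~1 (with origins in \cite{p:bro-64} and \cite{p:bes-74}), and your verification-plus-Brook-expansion argument is precisely what that cited result supplies. You have simply unpacked the cited theorem, including the role of the two symmetry hypotheses, which is entirely appropriate.
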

\begin{proof}
The proof is an application of \cite{p:hob-98}, Theorem 1
(with origins in \cite{p:bro-64} and \cite{p:bes-74}). 
\end{proof}  
While one obtains a valid joint distribution for any $n$ under the conditions of Theorem \ref{theorem:distributionedge}, the joint distributions $p^{(n)}$ thus constructed lack {\it internal consistency}, for example, $p^{(n)}(x_1, \ldots, x_n) \ne \int p^{(n+1)}(x_1, \ldots, x_{n+1}) dx_{n+1}$, thereby rendering them unsuitable for process modeling. Moreover, even for a given $n$, the induced marginal distributions of $X_t$, $t = 1, \ldots, n$, change with $t$, implying a lack of stationarity. We present concrete illustrations of these features in the univariate Gaussian case in Section \ref{subsec:univ_g_reflect} of the Supplement.

\subsection{The construction of Conditional Exponential Stationary
  Graphical Model}

We now return to our main desiderata of constructing a strictly stationary stochastic process $\{X_t\}_{t \in \mb Z}$ with node-wise conditionals as in \eqref{eq:conditionalspecification}, where from now onwards we additionally  impose the baseline constraints that $\Phi_0$ is symmetric, and $\Phi_{-1} = \Phi_1^\top$, motivated by Theorem \ref{theorem:distributionedge}. We recall that a time series
$\{X_{t};t\in \mathbb{Z}\}$ is called strictly stationary if for all
$m,\tau_{1},\ldots,\tau_{n}\in \mathbb{Z}$ the joint distribution of
$(X_{m+\tau_{1}}, X_{m+\tau_{2}},\ldots, X_{m+\tau_{n}})$ does not
depend on $m$. To motivate our construction, we revisit the joint distribution $p^{(n)}$ obtained in Theorem \ref{theorem:distributionedge}. Using $\Phi_{-1} = \Phi_1^\top$, write 
\begin{align*}
p^{(n)}(x_1, \ldots, x_n) \, \propto \, \exp\left(
  \sum_{t=1}^{n}[\theta^{\top}{\bf
      s}(x_{t})+ {\bf 1}_p^\top {\bf c}(x_t) + \frac{1}{2}x_t^{\top}\Phi_{0}x_{t}]+\sum_{t=2}^{n}
    x_{t-1}^{\top}\Phi_{1}x_{t}\right). 
\end{align*}
For $x, y \in \m X$, define  
\begin{align}\label{eq:GH_def}
G(x) =  \exp\left(\theta^{\top}{\bf
      s}(x)+ {\bf 1}_p^\top {\bf c}(x) + \frac{1}{2} \, x^{\top}\Phi_{0}x\right), \quad
H(x,y) = \exp\left(
      x^{\top}\Phi_1 y\right).
\end{align}
We can then write 
\begin{align}
p^{(n)}(x_1, \ldots, x_n) & \, \propto \, G(x_{1})\left[\prod_{t=2}^{n}G(x_{t})H(x_{t-1},x_{t})\right] \nonumber \\
& = G(x_{1})^{1/2}\left[\prod_{t=2}^{n}G(x_{t-1})^{1/2}H(x_{t-1},x_{t})G(x_{t})^{1/2}\right]G(x_{n})^{1/2} \nonumber \\
& = G(x_{1})^{1/2}\left[\prod_{t=2}^{n} R(x_{t-1}, x_t)\right] G(x_n)^{1/2}, \label{eq:pn_newform} 
\end{align}
where the {\it interaction kernel} $R(\cdot, \cdot)$ is defined as 
\begin{align}\label{eq:int_ker}
R(x, y) = G(x)^{1/2} \, H(x, y) \, G(y)^{1/2}, \quad x, y \in \m X.
\end{align}
The interaction kernel plays a central role in the development below. Observe that $R$ is always {\it strictly positive}, which carries major implications. In the univariate case ($p = 1$), is clear that $R(x, y) = R(y, x)$. However, for $p > 1$, $R$ is generally not symmetric, since $H(y, x) = \exp \big(y^\top \Phi_1 x \big) = \exp \big(x^\top \Phi_1^\top y) \ne H(x, y)$, since $\Phi_1$ is not necessarily symmetric. Moreover, $R$ is not a Markov transition kernel since $R(x,y)$ is not a conditional density (in $y$) given the past state $x$.
Accordingly, our subsequent analysis requires tools from the spectral theory of positive (not necessarily Markovian) linear operators.

We first recall some essential facts about compact operators on Hilbert spaces and their spectrum; see e.g. \cite{b:con-90}[Chapter 2] for a standard textbook treatment, and also Section \ref{sec:FA_rev} of the Supplement for a self-contained summary. Let $\mu = \otimes_{j=1}^p \mu_j$ denote the product measure on $\m
X$, and let $L^2(\m X, \mu) :\, = \{f : \m X \to \C \,:\, \int |f|^2 d
\mu < \infty\}$ denote the {\it Hilbert space} of square-integrable
functions, with the usual $L^2$ inner product $\langle f, g \rangle =
\int f \overline{g} d\mu$ and $L^2$ norm $\|f\| = (\int |f|^2 d\mu)^{1/2}$.   
Using the kernel $R$, define the {\it linear integral operator} $T : L^2(\m X, \mu) \to L^2(\m X, \mu)$ as 
\begin{align}\label{eq:int_oper_main}
T(f)[y] = \int_{\m X} R(x, y) f (x) \mu(dx), \quad y \in \m X. 
\end{align}
If $\max \left\{\int R(x, y) \mu(dx), \int R(x, y) \mu(dy) \right\}
\le c$ $\mu$-a.e., then $T$ is a bounded linear operator with
$\|T\|_{\rm op} \le c$ where $\|\cdot\|_{\rm op}$
denotes the operator norm
induced by the Hilbert space norm $\|\cdot\|$. The {\it adjoint operator} of $T$, denoted $T^*$, is given by,
\begin{align}\label{eq:int_adj_main}
T^*(f)[y] = \int_{\m X} R(y,x) f(x) \mu(dx), \quad y \in \m X. 
\end{align}
When $p = 1$, one has $T = T^*$ using symmetry of $R$, implying the operator $T$ is {\it self-adjoint}. However, this is generally not the case. 

A bounded linear map is called {\it compact} if it maps the unit ball to a set with compact closure. A function $f \in L^2(\m X, \mu)$ is an {\it eigenfunction} of the integral operator $T$ if there exists $\lambda \in \C$ such that $T(f) = \lambda f$; any such $\lambda$ is called an {\it eigenvalue} of $T$. If $T$ is compact, then it has countably many eigenvalues, and the {\it spectral radius} $r(T)$ is the supremum of $|\lambda|$ over all eigenvalues $\lambda$. Importantly, $r(T) = r(T^*)$. 

\begin{defin}[Strongly positive operator]\label{def:spo}
A linear integral operator $T$ as in \eqref{eq:int_oper_main} is called strongly positive if $R(x, y) > 0$ a.e. $\mu \otimes \mu$. It is called positive if $R(x, y) \ge 0$ a.e. $\mu \otimes \mu$ and $R \ne 0$. These definitions are consistent with the more general definitions in \cite[Chapter 2]{boelkins1998spectral} in a Banach space setting. 
\end{defin}
A strongly positive (resp. positive) operator is an infinite-dimensional generalization of a matrix with all strictly positive (resp. nonnegative) entries. Let $K = \{f \in L^2(\mathcal{X}, \mu) \,:\, f \ge 0 \text{ a.e. } \mu\}$ denote the {\it cone} of non-negative functions in $L^2(\mathcal{X}, \mu)$. It is straightforward to see that if $f \in K$, then for a positive operator $T$, one has $T(f) \in K$, which implies $T(K) \subseteq K$. Moreover, if $f \ne 0 \in K$, and $T$ is strongly positive, then $T(f) > 0$ a.e. $\mu$. 

Note that the operator corresponding to the interaction kernel $R$ in \eqref{eq:int_ker} is strongly positive. We now state a version of the {\it Krein--Rutman} theorem \citep{krein1962linear} pertaining to the dominant eigenvalue and corresponding eigenfunction of a {\it strongly positive integral operator}, which is a key ingredient of our process construction. 
\begin{theorem}[Adapted from Theorem 6.6 (Chapter V) of \cite{schaefer1974banach}]\label{thm:KR_without_int_main}
Let $(\m X, \m A, \mu)$ be a sigma-finite measure space, and $T: L^2(\m X, \mu) \to L^2(\m X, \mu)$ be a strongly positive integral operator as in Definition \ref{def:spo}.
Additionally, suppose $T$ is compact. 

Then, (a) the spectral radius $r(T)$ is positive, (b) the spectral radius $r(T)$ is an eigenvalue of $T$, and has a unique 
eigenfunction $v \in L^2(\m X, \mu)$ with $\|v\| = 1$ satisfying $v > 0 \text{ a.e. } \mu$, and (c) every other eigenvalue $\lambda$ of $T$ satisfies $|\lambda| < r(T)$.  


\end{theorem}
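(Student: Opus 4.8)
The statement is an $L^{2}$ incarnation of the abstract Krein--Rutman/Perron--Frobenius theory, so my plan is to cast $T$ into the framework of irreducible compact positive operators on a Banach lattice, verify the hypotheses, invoke the cited result of \cite{schaefer1974banach} for parts (a) and (b), and then establish the strict peripheral domination in (c) by a hands-on modulus argument that genuinely exploits strong positivity. First I would fix the order structure: $L^{2}(\m X,\mu)$ is a Banach lattice with positive cone $K=\{f\ge 0\}$, and since $R\ge 0$ the operator $T$ is positive, i.e. $T(K)\subseteq K$, and it is compact by assumption. The decisive structural observation is that strong positivity ($R>0$ a.e. $\mu\otimes\mu$) makes $T$ \emph{irreducible}: the closed ideals of $L^{2}(\m X,\mu)$ are exactly the subspaces $L^{2}(A,\mu)$ for measurable $A\subseteq\m X$, and if such an ideal were $T$-invariant then $R$ would have to vanish a.e. on $A\times A^{c}$ (by Fubini), forcing $\mu(A)=0$ or $\mu(A^{c})=0$. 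I also record that the adjoint $T^{*}$ has kernel $R(y,x)$, hence is itself strongly positive and compact, with $r(T^{*})=r(T)$.

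With $T$ compact, positive and irreducible, parts (a) and (b) follow from the Krein--Rutman theory for such operators as recorded in \cite{schaefer1974banach} (Ch.\ V, Thm.\ 6.6): the spectral radius $r(T)$ is strictly positive and is an eigenvalue whose eigenspace is one-dimensional and spanned by a single nonnegative eigenfunction $v$, which I normalize to $\|v\|=1$. Strict positivity of $v$ is then immediate from strong positivity: from $Tv=r(T)v$ with $v\in K$, $v\ne 0$, one gets $Tv>0$ a.e., so $r(T)v>0$ a.e., and since $r(T)>0$ this gives $v>0$ a.e. This settles (a) and (b).

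For (c), let $\lambda$ be any eigenvalue of modulus $r:=r(T)$ with eigenfunction $f\ne 0$. Pointwise $r\,|f|=|\lambda f|=|Tf|\le T|f|$, using $|\int R(x,y)f(x)\,\mu(dx)|\le\int R(x,y)|f(x)|\,\mu(dx)$. Let $w>0$ a.e. be the positive eigenfunction of $T^{*}$ at $r(T^{*})=r$ supplied by (b) applied to $T^{*}$. Pairing the nonnegative function $T|f|-r|f|$ against $w$ gives $\langle T|f|-r|f|,w\rangle=\langle |f|,T^{*}w\rangle-r\langle |f|,w\rangle=0$; since $w>0$ a.e., this forces $T|f|=r|f|$ a.e., so $|f|$ is a nonnegative eigenfunction at $r$ and hence $|f|=c\,v>0$ a.e. by (b). Consequently $|Tf|=T|f|$ a.e., which is exactly equality in the triangle inequality for the integral of $R(\cdot,y)f$ against the a.e.-positive weight $R(\cdot,y)$; this forces the argument of $f$ to be constant a.e., i.e. $f=e^{i\alpha_{0}}|f|$. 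Then $Tf=e^{i\alpha_{0}}T|f|=r\,e^{i\alpha_{0}}|f|=r f$, whence $\lambda=r$. Thus no eigenvalue other than $r(T)$ attains modulus $r(T)$, giving (c).

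The two steps I expect to be most delicate are the verification that strong positivity yields irreducibility (translating the lattice-ideal condition into the statement that $R$ vanishes on a rectangle, which needs a careful Fubini argument), and, above all, the final phase-rigidity step in (c): deducing from $|Tf|=T|f|$ that $f$ has constant argument. This last point is precisely where mere irreducibility would be insufficient, since an irreducible positive operator may carry a nontrivial cyclic peripheral spectrum; it is the full positivity-improving strength $R>0$ a.e., combined with $|f|>0$ a.e., that guarantees the supporting measure of the integral is nondegenerate and thereby collapses the peripheral spectrum to the single point $r(T)$.
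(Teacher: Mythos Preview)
Your proposal is correct and follows essentially the same route as the paper: the paper deduces the theorem as a special case of \cite[Chapter V, Theorem 6.6]{schaefer1974banach} by noting that strong positivity of $R$ automatically implies the irreducibility hypothesis (the paper's condition \eqref{eq:irred}), which is exactly your closed-ideal argument in Banach-lattice language. The only difference is that for part (c) you supply a self-contained modulus/equality-in-triangle-inequality argument, whereas the paper simply cites part (c) of Schaefer's theorem directly; your argument is standard and correct, and in fact makes transparent why strong positivity (not mere irreducibility) is needed to collapse the peripheral spectrum.
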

A result in the spirit of Theorem \ref{thm:KR_without_int_main} was first proved for positive matrices by \cite{perron1907grundlagen}, and subsequently for non-negative irreducible matrices by \cite{frobenius1912matrizen}. The first infinite-dimensional version of this result was proved by \cite{jentzsch1912integralgleichungen} for positive integral operators on $L^2[0,1]$. \cite{krein1948linear,krein1962linear} generalized the Perron--Frobenius--Jentzsch theory to abstract Banach spaces; see the introduction section of \cite{boelkins1998spectral} for an excellent review of this topic. For more details on the specific version of the theorem we use here, please refer to Theorem \ref{thm:KR_without_int} and Remark \ref{rem:schaefer_vs} in the Supplement. 

\begin{remark}\label{rem:only_positive}
In Theorem \ref{thm:KR_without_int_main}, $r(T)$ is a {\it simple} eigenvalue, i.e., its eigenspace has dimension one (and therefore is spanned by $v$); see also \cite{victory1982linear}[Theorem 4(i)]. Moreover, $v$ (upto scaling) is the only a.e. positive eigenfunction of $T$, i.e., if $\widetilde{v}$ is an a.e. positive eigenfunction of $T$, then there exists a positive constant $c$ such that $\widetilde{v} = c v$. See \ref{subsec:only_positive} in the Supplement for a proof of this fact. 
\end{remark}
Let $T$ satisfy the assumptions of Theorem \ref{thm:KR_without_int_main}, and denote $r = r(T) > 0$. Let $v$ ($\|v\| = 1$) be the unique positive eigenfunction of unit length such that $Tv = r v$, that is, 
\begin{align*}
\int_{\m X} R(x, y) v(x) \mu (dx) = r \, v(y), \quad y \in \m X. 
\end{align*}
From \eqref{eq:int_adj_main}, it is evident that $T^*$ also satisfies the assumptions of Theorem \ref{thm:KR_without_int_main}. Further, recall that $r(T^*) = r(T)$. Let $v^*$ ($\|v^*\| = 1$) denote the corresponding positive eigenfunction. Define $w = v^*/\langle v, v^*\rangle$, so that $w > 0$ a.e., $\langle v, w \rangle = 1$ and $T^* w = r w$. The eigenfunction pair $(v, w)$ plays an important role below. If $T$ is self-adjoint, then $w = v^* = v$, however more generally $w \ne v$. With these ingredients, we state our main result concerning the existence and construction of a multivariate stationary stochastic process with node-wise conditionals as in 
\eqref{eq:conditionalspecification}. We refer to this stochastic process as {\it Conditionally Exponential Stationary Graphical Model} (CEStGM). 
\begin{theorem}[CEStGM construction]\label{theorem:one_stationarity}
Suppose the kernel $R: \m X \times \m X \to (0, \infty)$ and the corresponding integral operator $T: L^2(\m X, \mu) \to L^2(\m X, \mu)$ are defined as in \eqref{eq:int_ker} and \eqref{eq:int_oper_main}, respectively. Suppose $T$ is a compact operator. Then, there exists a unique strictly stationary stochastic process $\{X_t\}_{t \in \mb Z}$ with $X_t = (X_t^{(1)}, \ldots, X_t^{(p)})$ and $X_t^{(a)} \in \m X^{(a)}$, such that the conditional distribution of any $X_t^{(a)}$ given $\m H_{(a, t)}$ is given by \eqref{eq:conditionalspecification}. 

Let $r :\, = r(T) = r(T^*)$ be the spectral radius of $T$ and $T^*$. Let $v$ and $w$ be a.e. positive eigenfunctions of $T$ and $T^*$ corresponding to $r$, appropriately scaled so that $\langle v, w\rangle = 1$. Then, for any $n \in \mb N$ and shift $m \in \mb Z$, the joint density function of $(X_{m},\ldots,X_{m+n-1})$ (with respect to the $n$-fold product of $\mu$) is given by 
\begin{eqnarray}\label{eq:joint_cestgm}
p_{[1:n]}(x_{1},\ldots,x_{n})
  =  \frac{1}{r^{n-1}} \, 
  v(x_{1}) \left[\prod_{t=2}^{n} R(x_{t-1},x_{t}) \right] w(x_n), \quad \textrm{} x_t \in \m X, \, \textrm{ for } t \in [n]. 
\end{eqnarray}
In particular, for all $t\in \mathbb{Z}$,
the marginal density function of $X_{t}$ is $p_{1}(x) = v(x)w(x)$.
\end{theorem}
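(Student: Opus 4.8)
The plan is to recognize that the proposed finite-dimensional law \eqref{eq:joint_cestgm} is precisely that of a stationary Markov chain obtained from a Doob-type $h$-transform of the sub-probabilistic kernel $R/r$ by the positive adjoint eigenfunction $w$. Concretely, I would first introduce the candidate transition density
\[
P(x, y) = \frac{R(x, y)\, w(y)}{r\, w(x)}, \qquad x, y \in \m X,
\]
together with the candidate marginal $\pi(x) = v(x) w(x)$. That $P$ is a genuine Markov transition density is immediate from the adjoint eigenrelation $T^* w = r w$, i.e. $\int_{\m X} R(x, y) w(y)\,\mu(dy) = r\, w(x)$, which gives $\int P(x, y)\,\mu(dy) = 1$ (positivity of $P$ is inherited from $R > 0$ and $w > 0$). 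Likewise $\pi$ integrates to one because $\int \pi\, d\mu = \langle v, w\rangle = 1$, and $T v = r v$ yields $\int \pi(x) P(x, y)\,\mu(dx) = \tfrac{w(y)}{r}\int R(x,y) v(x)\,\mu(dx) = v(y) w(y) = \pi(y)$, so $\pi$ is invariant for $P$.

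With these two ingredients the existence of a strictly stationary process is standard: the Ionescu--Tulcea/Kolmogorov extension theorem produces a unique law on $\m X^{\mb Z}$ whose finite-dimensional marginals are $\pi(x_1)\prod_{t=2}^n P(x_{t-1}, x_t)$, and invariance of $\pi$ makes this law shift-invariant, hence strictly stationary. Substituting the definitions of $P$ and $\pi$ and telescoping the factors $w(x_t)/w(x_{t-1})$ collapses the product to $\tfrac{1}{r^{n-1}} v(x_1)\big[\prod_{t=2}^n R(x_{t-1}, x_t)\big] w(x_n)$, which is exactly \eqref{eq:joint_cestgm}; since this expression is independent of the shift $m$, stationarity is manifest, and $n = 1$ recovers the marginal $p_1 = v w$. (Alternatively one may bypass the Markov formalism and verify Kolmogorov consistency of \eqref{eq:joint_cestgm} directly: integrating out $x_1$ uses $Tv = rv$ and integrating out $x_n$ uses $T^* w = r w$, each contributing a single factor of $r$ that cancels, so both boundary marginalizations reproduce the formula for one fewer variable.)

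Next I would verify that this process has the prescribed node-wise conditionals \eqref{eq:conditionalspecification}. By the Markov property in $t$, the conditional law of the whole vector $X_t$ given $\m H_{(a,t)}$ reduces to its conditional given the temporal neighbors $X_{t-1}, X_{t+1}$, which from \eqref{eq:joint_cestgm} is proportional (in $x_t$) to $R(x_{t-1}, x_t) R(x_t, x_{t+1})$. Using the factorization $R(x,y) = G(x)^{1/2} H(x, y) G(y)^{1/2}$ from \eqref{eq:int_ker}, the half-powers of $G(x_t)$ combine, leaving a density proportional to $G(x_t) H(x_{t-1}, x_t) H(x_t, x_{t+1})$. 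Substituting \eqref{eq:GH_def} and isolating the terms containing the single coordinate $x_t^{(a)}$, the symmetry of $\Phi_0$ (with zero diagonal) accounts for the same-time coefficients $\Phi_0^{(a,b)}$, while $\Phi_1 = \Phi_{-1}^\top$ converts $x_{t-1}^\top \Phi_1 x_t$ and $x_t^\top \Phi_1 x_{t+1}$ into the coefficients $\Phi_{-1}^{(a,b)}$ of $x_{t-1}^{(b)}$ and $\Phi_1^{(a,b)}$ of $x_{t+1}^{(b)}$, respectively. This reproduces exactly the natural exponential form \eqref{eq:conditionalspecification}.

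Finally, for uniqueness I would argue that any strictly stationary process with the single-site conditionals \eqref{eq:conditionalspecification} is forced to coincide with the one just constructed. Because each single-site full conditional depends only on same-time coordinates and the two temporal neighbors, finite compatibility (Theorem \ref{theorem:distributionedge}) fixes the conditional law of each $X_t$ given $(X_{t-1}, X_{t+1})$, up to normalization, as $\propto R(x_{t-1}, x_t) R(x_t, x_{t+1})$, which is the defining property of a nearest-neighbor Markov random field in time, i.e. a stationary Markov chain. Its transition density $P$ must then satisfy $P(x, y_1) P(y_1, z) \propto R(x, y_1) R(y_1, z)$ in $y_1$ for all $x, z$, which forces the $h$-transform form $P(x, y) = R(x, y) h(y)/(\lambda h(x))$ for some positive $h$ and constant $\lambda$; the normalization $\int P(x, y)\,\mu(dy) = 1$ then reads $T^* h = \lambda h$ with $h > 0$. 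By uniqueness of the positive eigenfunction of $T^*$ in Theorem \ref{thm:KR_without_int_main} and Remark \ref{rem:only_positive}, necessarily $\lambda = r$ and $h = w$ up to scale, pinning down $P$ and hence the invariant density $\pi = v w$. I expect this uniqueness step — in particular, ruling out competing stationary Gibbs measures on the infinite index set $\mb Z$ — to be the main obstacle; it is exactly here that compactness of $T$ and the strict spectral gap $|\lambda| < r$ for all other eigenvalues (clause (c) of Theorem \ref{thm:KR_without_int_main}) do the essential work, guaranteeing that no competing positive eigenfunction, and hence no competing stationary law, can arise.
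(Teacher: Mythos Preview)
Your proposal is correct and complete, but the packaging differs from the paper's. The paper's proof in Section \ref{sec:pf_one_stationarity} takes your parenthetical ``alternative'' as its main route: it defines the family $p_{[1:n]}$ directly by \eqref{eq:joint_cestgm}, builds all finite-dimensional marginals $\pi_{s_1,\ldots,s_k}$ by integrating out gaps from a contiguous $p_{[1:\ell]}$, and then verifies Kolmogorov consistency by hand, using $Tv=rv$ to integrate out the leftmost variable and $T^*w=rw$ to integrate out the rightmost (the identities \eqref{eq:eigenmagic}). The Markov transition kernel $P(x,y)=R(x,y)w(y)/(r\,w(x))$ that you start from is only recorded afterwards, as Corollary \ref{cor:Markovian}.

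Your route via the $h$-transform and Ionescu--Tulcea is more economical: once $P$ is shown to be a transition density with invariant $\pi=vw$, existence and stationarity are immediate, and the telescoping that produces \eqref{eq:joint_cestgm} is transparent. The paper's hands-on verification has the mild advantage of making the non-contiguous marginals explicit without invoking the Markov property, but this is a matter of taste. Your check that the single-site conditionals coincide with \eqref{eq:conditionalspecification} is a point the paper leaves implicit. On uniqueness, your cocycle argument (that any stationary process with these conditionals forces a transition kernel of the form $R(x,y)h(y)/(\lambda h(x))$ with $h>0$ an eigenfunction of $T^*$) is more detailed than what the paper gives; the paper simply cites Remark \ref{rem:only_positive} on uniqueness of the positive eigenfunction. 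You are right that the delicate point is ruling out competing stationary Gibbs measures on $\mb Z$, and neither your sketch nor the paper fully closes that gap, but both correctly identify the Krein--Rutman uniqueness of $(v,w)$ as the mechanism.
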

Compared to the joint density $p^{(n)}$ in \eqref{eq:pn_newform} arising from the reflective boundary condition in Theorem \ref{theorem:distributionedge}, the only difference in the expression for $p_{[1:n]}$ involves the presence of the eigenfunctions $v$ and $w$ at the two ends in place of $G^{1/2}$.
Since $v$ and $w$ are a.e. positive, $p_{[1:n]}$ is strictly positive for any $n$. For $n = 1$, that $p_1$ is a density follows from $\langle v, w \rangle = 1$; for $n > 1$ the same follows from iteratively integrating and using that $v$ and $w$ are eigenfunctions of $T$ and $T^*$. 
Augmenting the eigenfunctions $v$ and $w$ at the two boundaries is the core idea behind the validity of the CEStGM process.
We show in Section \ref{sec:pf_one_stationarity} how to construct all finite-dimensional distributions from these $p_{[1:n]}$s, and verify that these finite-dimensional probability measures satisfy Kolmogorov's consistency conditions \cite[Chapter 4]{billingsley2017probability}, thus leading to a valid stochastic process $\{X_t\}_{t \in \mb Z}$. As an illustration, consider 
\begin{align*}
p_{[1:4]}(x_1, \ldots, x_4) = \frac{1}{r^3} \, 
  v(x_{1}) \left[R(x_1, x_2) \, R(x_2, x_3) \, R(x_3, x_4)\right] w(x_4).
\end{align*}
Since $\int R(x_1, x_2) v(x_1) d\mu(x_1) = r v(x_2)$, we get $\int p_{[1:4]}(x_1, \ldots, x_4) \, d\mu(x_4) = p_{[1:3]}(x_1, \ldots, x_3)$. Similarly, using $\int R(x_3, x_4) w(x_4) d\mu(x_4) = r w(x_3)$, we obtain $\int p_{[1:4]}(x_1, \ldots, x_4) \, d\mu(x_4) = p_{[1:3]}(x_1, \ldots, x_3)$. These key identities form the basis of the proof. The (strict) stationarity of CEStGM follows directly from construction, since the joint distribution of $(X_m, \ldots, X_{m+n-1})$ is invariant to the shift $m$. Finally, the uniqueness follows from Remark \ref{rem:only_positive} which precludes the existence of other positive eigenfunctions of $T$ (and $T^*$). 

We make some remarks about various aspects of the above theorem. 

\begin{remark}[ Process-wide Compatibility, Compactness, \& Hilbert--Schmidt condition]\label{rem:compact_vfy}
The compactness condition on $T$ in Theorem \ref{theorem:one_stationarity} can be viewed as a generalization of the integrability condition (i.e. finiteness of $c^{(n)}$) in Theorem \ref{theorem:distributionedge}.
We show in Example \ref{ex:AR1} that for Gaussian autoregressive models the compactness of the operator $T$ is an if and only if condition to ensure stationarity of the process. In general, we conjecture that compactness 
is a necessary condition to ensure stationarity of the underlying distribution. Accordingly, we dub the compactness condition together with the baseline constraints, $\Phi_0$ is symmetric and $\Phi_{-1} = \Phi_1^\top$, as {\it process-wide compatibility} conditions. This extends the more usual notion of compatible conditionals \citep{p:bes-74,p:hob-98,p:arn-02}.

Directly verifying that $T$ is compact is usually quite difficult. However, if $R(\cdot, \cdot) \in L^2(\mathcal{X} \times \mathcal{X}, \mu \otimes \mu)$, so that 
\begin{align}\label{eq:HS_defn}
    \|R(\cdot, \cdot)\|^2  :\,= \int_{\m X \times \m X} R(x, y)^2 \mu(dx)\mu(dy) < \infty,
\end{align}
then the operator $T$ is Hilbert--Schmidt, which implies $T$ is compact \cite[Chapter 2]{b:con-90}. For all our examples below, we verify the Hilbert--Schmidt (HS) condition to ensure compactness. Moreover, the HS condition is crucially used to establish key probabilistic properties of CEStGM including $\beta$-mixing; refer to Section \ref{sec:prob_cestgm}. 

Verifying the HS condition imposes certain restrictions on the parameters of the constituent exponential families; however, these conditions are no more stringent than verifying the joint distribution 
$p^{(n=2)}$ arising from the reflective boundary condition in Theorem \ref{theorem:distributionedge} is integrable. 


\end{remark}

\begin{remark}[Characteristics of interaction kernel and
  eigenvectors]\label{rem:eigencharacteristics}
  The eigenfunctions $v$ and $w$ can be analytically calculated only in special cases; see Example \ref{ex:AR1} below for one such instance. More generally, one requires numerical procedures to approximate them; see Theorem \ref{theorem:simulation} where we describe an MCMC-based strategy to this end. We can, however, provide general qualitative estimates as follows. 
  We have $v(y) = r^{-1} \, \int R(x, y) v(x) dx  = r^{-1}
  \, \langle R_y, v\rangle \le r^{-1} \, \|R_y\|$, where $R_y(\cdot) = R(\cdot, y)$. This provides a decay rate on $v(\cdot)$ provided $\|R_y\|$ is finite, which holds under \eqref{eq:HS_defn}. 
  In addition, if $\sup_y \|R_y\|$ is finite, then $v$ is a
  bounded function (a.e.). The same holds true for $w$.
\end{remark}

\begin{example}[Univariate case]\label{ex:univar}
Consider the case $p = 1$ so that \eqref{eq:conditionalspecification}
assumes the simpler form $p(x_t \mid \m H_t) \propto \exp \{\theta \,
s(x_t) + \phi x_t (x_{t-1} + x_{t+1}) +c(x_t)\}$, where $\theta, \phi$ are real-valued parameters. From \eqref{eq:GH_def}, we have $G(x) = \exp\{\theta \, s(x)+c(x_t)\}$, and $H(x, y) = \exp(\phi x y)$. 
\\[1ex]
(i) {\it Conditional Gaussian.} We have $s(x) = -x^2/2$ for $x \in \mb
R$. The HS condition is satisfied for  $\phi \in \mb
R$ and $\theta>|2\phi|$ (see Example \ref{ex:AR1} for the special case
$\theta = 1+\phi^{2}$). \\
(ii) {\it Conditional Binary.} We have $s(x) = x$ for $x \in \{0, 1\}$. The HS condition is satisfied for any $\theta, \phi \in \mb R$. \\
(iii) {\it Conditional Exponential.} We have $s(x) = -x$ for $x \in [0, \infty)$. The HS condition is satisfied for any $\theta > 0$ and $\phi \leq 0$. \\
(iv) {\it Conditional Poisson.} We have $s(x) = x$ and $c(x) = -\log
x!$ for $x \in \{0, 1, \ldots\}$. The HS condition is satisfied for
any $\theta \in \mb R$ and $\phi \leq  0$.
\end{example}

\begin{example}[Gaussian AR(1) in light of Theorem \ref{theorem:one_stationarity}]\label{ex:AR1}
Consider the (univariate) Gaussian AR$(1)$ process
  $X_{t} = \phi X_{t-1} + \varepsilon_{t}$,
  where $\varepsilon_{t} \overset{i.i.d.} \sim N(0,1)$.  If $|\phi|<1$, using the
 Cholesky decomposition it is well known
  that the joint distribution of $X_{0},\ldots,X_{n+1}$ is
  \begin{eqnarray}\label{eq:ar1_density}
    p(x_0,\ldots,x_{n+1}) 
    \propto\exp\left[-\frac{1}{2} \left\{x_{0}^{2}+x_{n+1}^{2}+
    \sum_{t=1}^{n}(1+\phi^{2})x_{t}^{2}-2\phi\sum_{t=1}^{n+1}x_{t}x_{t-1}\right\}
    \right].
  \end{eqnarray}
We now explain why the coefficient of $x_0^{2}$ and $x_{n+1}^{2}$
differs from $x_{t}^{2}$ (for $1\leq t\leq n$) in the above display using Theorem
\ref{theorem:one_stationarity}. 
For the Gaussian AR$(1)$ model (where $\phi$ can be arbitrary), the conditional specification is 
\begin{align*}
X_t \mid \m H_t \sim N\left( \frac{\phi}{1+\phi^2} (X_{t-1} + X_{t+1}), \frac{1}{1+\phi^2} \right). 
\end{align*}
The conditional interaction kernel is then given by 
\begin{eqnarray}\label{eq:kernel_AR1}
  R(x,y) = \exp\left(-\frac{1}{4}(1+\phi^{2})x^{2}\right)\exp(\phi xy)
  \exp\left(-\frac{1}{4}(1+\phi^{2})y^{2}\right).
\end{eqnarray}
If $|\phi|\neq 1$, then it can be shown $\int R(x,y)^{2}dx
  dy<\infty$, implying $T$ is a Hilbert--Schmidt operator.
  By explicit calculation (see Section \ref{subsec:eigencalc} in the Supplement) we can show that 
  the positive eigenvector (upto scaling) corresponding to the maximal eigenvalue of $T$ (and $T^*$, by symmetry of $R$) is
  \begin{eqnarray*}
v(x) = v^*(x) &\propto& 
         \left\{
       \begin{array}{cc}  
         \exp(-\frac{1}{2}x^{2}) & |\phi|<1 \\
         \exp(-\frac{\phi^2}{2}x^{2}) & |\phi|>1
        \end{array} 
         \right.
  \end{eqnarray*}
Then, from Theorem \ref{theorem:one_stationarity}, we have
\begin{eqnarray*}   
    && p(x_0,\ldots,x_{n+1}) \propto \\
     &&   \left\{
   \begin{array}{cc}     
   \exp\big(-\frac{1}{2}(x_{0}^{2}+x_{n+1}^{2}+
    \sum_{t=1}^{n}(1+\phi^{2})x_{t}^{2}-2\phi\sum_{t=1}^{n+1}x_{t}x_{t-1})
     \big) & |\phi|<1 \\
     \exp\big(-\frac{1}{2}(\phi^{2}(x_{0}^{2}+ x_{n+1}^{2})+
    \sum_{t=1}^{n}(1+\phi^{2})x_{t}^{2}-2\phi\sum_{t=1}^{n+1}x_{t}x_{t-1})
     \big) & |\phi|>1\\
   \end{array}
    \right.
\end{eqnarray*}
The joint density recovered by Theorem \ref{theorem:one_stationarity} in the case $|\phi| < 1$ coincides with \eqref{eq:ar1_density}. The case $|\phi| > 1$ recovers the stationary non-causal solution to $X_t = \phi X_{t-1} + \varepsilon_t$. In the case of the random walk with $|\phi|=1$  (the nonstationary
  case), it can be shown that $T$ is not a compact operator on $L_{2}(\mathbb{R})$; see Section \ref{subsec:not_compact} in the Supplement for a proof. 
\end{example}


\begin{example}[Multivariate construction]
We now define a trivariate time series
$\{(X_{t}^{(1)},X_{t}^{(2)},X_{t}^{(3)})\}_{t \in \mb Z}$, whose conditional specifications
follow univariate Poisson distributions 
\begin{align*}
p(x_{t}^{(1)}|\mathcal{H}_{t}^{(1)}) &=\exp(x_{t}^{(1)}(\theta^{(1)} +
  \Phi_{-1}^{(1,1)}x_{t-1}^{(1)}+ \Phi_{1}^{(1,1)}x_{t+1}^{(1)}+\Phi_{1}^{(1,2)}x_{t+1}^{(2)})
              -\log x_{t}^{(1)}!) \\
p(x_{t}^{(2)}|\mathcal{H}_{t}^{(2)}) &=\exp(x_{t}^{(2)}(\theta^{(2)} +
                                       \Phi^{(2,2)}_{-1}x_{t-1}^{(2)}+
                                      \Phi^{(2,2)}_{1} x_{t+1}^{(2)}+
                                    \Phi^{(2,1)}_{-1}x_{t-1}^{(1)}+\Phi^{(2,3)}_{1}x_{t+1}^{(3)})
                                       -\log x_{t}^{(2)}!) \\
  p(x_{t}^{(3)}|\mathcal{H}_{t}^{(3)}) &=\exp(x_{t}^{(3)}(\theta^{(3)}
                                         + 
  \phi^{(3,3)}_{-1}x_{t-1}^{(3)}+\Phi_{1}^{(3,3)}x_{t+1}^{(3)}+\Phi^{(3,2)}_{-1}x_{t-1}^{(2)})
                                       -\log x_{t}^{(3)}!) 
\end{align*}
In the above example, the interaction matrices are $3\times
3$-dimensional matrices where $\Phi_0 =0$, $\Phi_{-1}=\Phi_{1}^{\top}$ with 
\begin{align*}
\Phi_{-1} = 
\begin{pmatrix}
  \Phi_{-1}^{(1,1)} & 0 & 0 \\
  \Phi_{-1}^{(2,1)} & \Phi_{-1}^{(2,2)} & 0 \\
  0 & \Phi_{-1}^{(3,2)} & \Phi_{-1}^{(3,3)}  \\
\end{pmatrix} \textrm{ and }
\Phi_{1} = \begin{pmatrix}
  \Phi_{1}^{(1,1)} & \Phi_{1}^{(1,2)} & 0 \\
   0 & \Phi_{1}^{(2,2)} & \Phi_{1}^{(2,3)} \\
  0 & 0 & \Phi_{1}^{(3,3)}  \\
\end{pmatrix}
\end{align*}
This gives the interaction kernel $R(x,y) =
H(x)^{1/2}G(x,y)H(y)^{1/2}$ where 
\begin{align*}
  H(x) &= \exp(\theta^{(1)} x^{(1)} + \theta^{(2)} x^{(2)} +
         \theta^{(3)}x^{(3)} - \log x^{(1)}! - \log x^{(2)}! - \log x^{(3)}!) \\
  G(x,y) &=\exp(x^{(1)}\Phi_{1}^{(1,1)}y^{(1)} +
 x^{(2)}\Phi_{1}^{(2,2)}y^{(2)}+x^{(3)}\Phi_{1}^{(3,3)}y^{(3)}+x^{(1)}\Phi_{1}^{(1,2)}y^{(2)}
           + x^{(2)}\Phi_{1}^{(2,3)}y^{(3)}).
\end{align*}
If all the entries of $\Phi_{1}$ are negative, then
$\|R(\cdot,\cdot)\|_{}<\infty$ and the HS-condition \eqref{eq:HS_defn} is satisfied.
Consequently, $\{X_{t}\}_{t\in \mathbb{Z}}$ is a strictly stationary
time series by Theorem \ref{theorem:one_stationarity}. We note that the requirement that the entries in $\Phi_{1}$
are negative is standard for conditionally specified Poisson
distributions; see, for example, \cite{p:zeg-qaq-88}, Section 2.2(iii)
and \cite{p:yan-15}, Section 2.4.
\end{example}

We now establish that under the conditions of Theorem
\ref{theorem:one_stationarity}, $\{X_t\}_{t \in \mb Z}$ is a
homogeneous Markov process. Recall that $\{X_t\}_{t \in \mb Z}$ is a
Markov process if for each $t \in \mb Z$, $X_t \mid X_{t-1}, X_{t-2},
\ldots \overset{d} = X_t \mid X_{t-1}$; and the Markov process is
homogeneous if the distribution of $X_t \mid X_{t-1} = x$ is the same
for all $t$. 
\begin{corollary}\label{cor:Markovian}
Suppose the conditions in Theorem \ref{theorem:one_stationarity}
hold. Then $\{X_{t}\}_{t \in \mb Z}$ is a homogenuous
  Markov process with right transition kernel
  $p_{t \,|\, t-1}(x_{t}|x_{t-1})
  =r^{-1}R(x_{t-1},x_{t}) \, w(x_{t})/w(x_{t-1})$  and left
  transition kernel $p_{t-1 \,|\, t}(x_{t-1}|x_{t})
  =r^{-1}R(x_{t-1},x_{t}) \, v(x_{t-1})/v(x_{t})$.
\end{corollary}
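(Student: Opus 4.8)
The plan is to read off both transition kernels directly from the chain-factorized form of the joint density in \eqref{eq:joint_cestgm}, exploiting the telescoping structure of the product of interaction kernels. The Markov property and homogeneity drop out as soon as the relevant conditional densities are displayed, and the only genuine verification needed is that the proposed kernels integrate to one, which I would obtain from the eigenfunction identities for $v$ and $w$.

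First I would establish the forward Markov property together with the right transition kernel. Fix $n \ge 2$ and form the ratio of the joint densities of $(X_1, \ldots, X_n)$ and $(X_1, \ldots, X_{n-1})$ supplied by \eqref{eq:joint_cestgm}; this is the conditional density of $X_n$ given $(X_1, \ldots, X_{n-1})$. In the ratio, the leading factor $v(x_1)$ cancels, the product $\prod_{t=2}^{n} R(x_{t-1}, x_t)$ telescopes down to the single surviving term $R(x_{n-1}, x_n)$, and $r^{-(n-1)}/r^{-(n-2)} = r^{-1}$, leaving
\begin{align*}
p(x_n \mid x_1, \ldots, x_{n-1}) = r^{-1} \, R(x_{n-1}, x_n) \, \frac{w(x_n)}{w(x_{n-1})}.
\end{align*}
This depends on the conditioning variables only through $x_{n-1}$, which is exactly the Markov property for a conditioning window of length $n-1$; since the expression is free of the time index (a consequence of the strict stationarity already established in Theorem \ref{theorem:one_stationarity}), the process is homogeneous and the right kernel is the claimed $p_{t \mid t-1}$. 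Extending from arbitrarily long but finite pasts to the full past $X_{t-1}, X_{t-2}, \ldots$ is routine: the finite-past conditional densities coincide for every truncation level, so the conditional law given $\sigma(X_{t-1}, X_{t-2}, \ldots)$ agrees with the conditional law given $X_{t-1}$.

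The left transition kernel is obtained symmetrically, by conditioning on the future instead of the past. I would form the ratio of the joint density of $(X_1, \ldots, X_n)$ to that of $(X_2, \ldots, X_n)$; by stationarity the latter equals $p_{[1:n-1]}$ with indices shifted, i.e. $r^{-(n-2)} v(x_2) \big[\prod_{t=3}^{n} R(x_{t-1}, x_t)\big] w(x_n)$. Now the trailing factor $w(x_n)$ cancels, the product telescopes to $R(x_1, x_2)$, and again an $r^{-1}$ survives, yielding
\begin{align*}
p(x_1 \mid x_2, \ldots, x_n) = r^{-1} \, R(x_1, x_2) \, \frac{v(x_1)}{v(x_2)},
\end{align*}
which depends on the future only through $x_2$ and matches the claimed $p_{t-1 \mid t}$.

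The final step is to confirm these are bona fide transition kernels. Integrating the right kernel over $x_t$ and using the identity $\int_{\m X} R(x_{t-1}, x_t) w(x_t) \mu(dx_t) = r \, w(x_{t-1})$ (which is precisely $T^* w = r w$ rewritten) gives $r^{-1} w(x_{t-1})^{-1} \cdot r \, w(x_{t-1}) = 1$; integrating the left kernel over $x_{t-1}$ and using $\int_{\m X} R(x_{t-1}, x_t) v(x_{t-1}) \mu(dx_{t-1}) = r \, v(x_t)$ (i.e. $T v = r v$) gives $1$ likewise. I do not anticipate a serious obstacle: the computation is essentially bookkeeping on the factorized density, and $v, w$ are a.e. strictly positive by Theorem \ref{thm:KR_without_int_main}, so the divisions by $w(x_{t-1})$ and $v(x_t)$ are well-defined $\mu$-a.e. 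The only point demanding a little care is the passage from finite to infinite conditioning for the Markov property, but this is standard given that the truncated conditionals are identical across all truncation levels.
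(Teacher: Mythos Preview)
Your proposal is correct and takes essentially the same approach as the paper, which simply states that the result ``immediately follows from Theorem \ref{theorem:one_stationarity}.'' You have spelled out in detail the bookkeeping on the factorized density in \eqref{eq:joint_cestgm} that the paper leaves implicit; the telescoping ratio argument and the verification via the eigenfunction identities $Tv = rv$, $T^*w = rw$ are exactly what the one-line proof is pointing to.
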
  
\begin{proof}
The proof immediately follows from Theorem \ref{theorem:one_stationarity}.
\end{proof}


We know from Theorem \ref{theorem:one_stationarity} that the marginal density of any $X_t$ is $p_1(x) = v(x) w(x)$. Suppose, $X_{t-1} \sim p_1$ and $X_t \mid X_{t-1} = x_{t-1} \sim p_{t \,|\,t-1}(\cdot \mid x_{t-1})$. Then, the marginal density of $X_t$ is 
\begin{align*}
& \int_{\m X} p_{t \,|\, t-1}(x_t \mid x_{t-1}) \, p_1(x_{t-1}) d\mu(x_{t-1}) 
= \int_{\m X} r^{-1} R(x_{t-1}, x_t) \, w(x_t)/w(x_{t-1}) \, v(x_{t-1}) w(x_{t-1}) d\mu(x_{t-1}) \\
&= w(x_t) \int_{\m X} r^{-1} R(x_{t-1}, x_t) \, v(x_{t-1}) d\mu(x_{t-1}) = w(x_t) v(x_t) = p_1(x_t),
\end{align*}
implying the marginal distribution is preserved under the right
transition kernel defined above.
A similar story holds for the left transition kernel.

\paragraph{Generalization to any distribution in the natural exponential family} In order to reduce cumbersome notation up until now our focus was on 
a specific subfamily of the exponential family. We conclude this section by defining the one-Markov CEStGM specification for the 
general exponential family. We recall that any distribution in the exponential family has the form
\begin{eqnarray}
\label{eq:generalexponential}
f(x) \propto \exp\left(\sum_{j=1}^{K}\theta_{j}s_{j}(x)+c(x)\right)  = \exp\left({\bf s}(x)^{\top}{\boldsymbol \theta} +c(x)\right)
\end{eqnarray}  
where ${\bf s}(x) = (s_{1}(x), s_{2}(x),\ldots, s_{K}(x))^{\top}$
($\{s_{j}(x)\}_{j=1}^{K}$ are sufficient statistics)
and ${\boldsymbol \theta}= (\theta_{1},\ldots,\theta_K)^{\top}$. Note that (\ref{eq:generalexponential})
includes (\ref{eq:natural}) with 
$s_{1}(x) = s(x)$ and $s_{2}(x)=x$. Thus the conditional specification we give below includes (\ref{eq:conditionalspecification}) as a special case.

Let $X_{t} = (X_{t}^{(1)},\ldots,X_{t}^{(p)})$, where 
$X_{t}^{(a)}\in \mathcal{X}^{(a)}\subseteq \mathbb{R}$. 
We suppose that the conditional distribution of $X_{t}^{(a)}$ given 
$\mathcal{H}_{t}^{(a)}$ has the form 
\begin{eqnarray}
\label{eq:fcondgeneral}
  p(x_{t}^{(a)}|\mathcal{H}_{(a,t)}) &\propto&
       \exp\left({\bf s}^{(a)}(x_{t}^{(a)})^{\top}{\boldsymbol \Theta}_{a}(\mathcal{H}_{a,t})+c^{(a)}(x_{t}^{(a)})\right) \quad x_{t}^{(a)}\in \mathcal{X}^{(a)}
\end{eqnarray}
where ${\bf s}^{(a)}(x^{(a)}) = (s^{(a)}_{1}(x^{(a)}),\ldots,
 s^{(a)}_{K_{a}}(x^{(a)}))^{\top}$ and 
 ${\boldsymbol \Theta}_{a}(\mathcal{H}_{a,t})$ is a $K_a$-dimensional vector
\begin{align}
{\boldsymbol \Theta}_{a}(\mathcal{H}_{a,t}) = 
{\boldsymbol \theta}^{(a)} +\sum_{b\in [p]\backslash\{a\}}\Phi^{(a,b)}_{0}{\bf s}^{(b)}(x_{t}^{(b)})+
  \sum_{b\in [p]}
   \left[\Phi^{(a,b)}_{-1}{\bf s}^{(b)}(x_{t-1}^{(b)}) +
 \Phi^{(a,b)}_{1}{\bf s}^{(b)}(x_{t+1}^{(b)})   \right]
 \label{eq:1lag}
\end{align}
 with ${\boldsymbol \theta}^{(a)} =
(\theta_{1}^{(a)},\ldots,\theta_{K_a}^{(a)})^{\top}$ and
$\Phi_{0}^{(a,b)}$, $\Phi_{-1}^{(a,b)}$ and $\Phi_{1}^{(a,b)}$ are 
$(K_{a}\times K_{b})$-dimensional matrices.

For the purpose of defining the joint distribution we 
define the $p\times p$ block matrices
\begin{eqnarray*}
 \boldsymbol{\Psi}_{\ell} = \left(\Phi^{(a,b)}_{\ell};1\leq a,b \leq p\right) \textrm{ for }\ell\in \{-1,0,1\}.
 \end{eqnarray*}
Noting that in order to ensure that the conditional distributions form a compatable joint distribution we require that 
$\Phi^{(a,a)}_0=0$ for $a\in [p]$,  $(\Phi^{(a,b)}_0)^{\top} = \Phi^{(b,a)}_0$
and  $(\boldsymbol{\Psi}_{1})^{\top} = \boldsymbol{\Psi}_{-1}$ (or equivalently, 
$(\Phi^{(a,b)}_{-1})^{\top} = \Phi^{(b,a)}_{1})$).

To obtain the distribution of the stochastic process we  define the corresponding interaction kernel 
$R(x,y) = G(x)^{1/2}H(x,y)G(y)^{1/2}$ where 
\begin{eqnarray*}
 G(x) 
  = \exp\left({\boldsymbol \theta}^{\top}
      {\bf s}(x)+
      \frac{1}{2} 
      {\bf s}(x_{t})^{\top}{\Psi}_0 {\bf s}(x_{t})
+  {\boldsymbol 1}_{p}^{\top}{\bf c}(x)\right)
\textrm{ and }
H(x,y) 
  =  \exp\left({\bf s}(x)^{\top}\Psi_{1} {\bf s}(y)\right)
\end{eqnarray*}
where ${\bf c}(x) = (c^{(1)}(x^{(1)}),\ldots,c^{(p)}(x^{(p)}))^{\top}$,
${\boldsymbol 1} = (1,\ldots,1)^{\top}$,
${\bf s}(x)^{\top} = \textrm{vec}({\bf
   s}^{(1)}(x^{(1)}),\ldots, {\bf s}^{(p)}(x^{(p)}))$,  ${\boldsymbol
   \theta}^{\top} = \textrm{vec}({\boldsymbol \theta}^{(1)},\ldots, {\boldsymbol
   \theta}^{(p)})$. Using this interaction kernel $R$, Theorem \ref{theorem:one_stationarity} holds.

Below we give some examples. 
Additional examples, including mixed data types, are given in Section \ref{sec:examples} in the Supplement.

\begin{example}\label{exam:beta1}[Univariate conditional beta]
Suppose that $\{X_{t}\}$ is a univariate time series whose conditional
distribution follows a beta-distribution
  \begin{eqnarray*}
   \log p(x_{t}|\mathcal{H}_{t})    &\propto&
  [\alpha + \psi_{1}s_{1}(x_{t-1}) 
  +\psi_{1}s_{1}(x_{t+1})+ 
  \phi_{1}s_{2}(x_{t-1})+
    \phi_{2}s_{2}(x_{t+1})-1]s_1(x_{t})
   \nonumber\\
  &&  + [\beta + \psi_{2}s_{2}(x_{t-1}) 
  +\psi_{2}s_{2}(x_{t+1})+  
  \phi_{2}s_{1}(x_{t-1})+
    \phi_{1}s_{1}(x_{t+1})-1]s_{2}(x_{t}) \quad x\in (0,1)
    \end{eqnarray*}
    where $s_{1}(x) = \log x$ and $s_{2}(x) = \log (1-x)$. In this case
\begin{eqnarray*}
  {\boldsymbol \Psi}_{0} = 0,\
  {\boldsymbol \Psi}_{-1} =\left(
  \begin{array}{cc}
    \psi_{1} & \phi_{1} \\
    \phi_{2} & \psi_{2} \\
  \end{array}  
  \right), \ {\boldsymbol \Psi}_{1} =\left(
  \begin{array}{cc}
    \psi_{1} & \phi_{2} \\
    \phi_{1} & \psi_{2} \\
  \end{array}  
  \right),
\end{eqnarray*}
and ${\boldsymbol \theta} = (\alpha-1,\beta-1)$. Thus
\begin{eqnarray*}
  G(x) = \exp({\boldsymbol \theta}^{\top}{\bf s}(x)) \quad
  H(x,y) = \exp({\bf s}(x)^{\top}\Psi_{1}{\bf s}(y)).
\end{eqnarray*}
The exponent in $H(x,y)$ is negative if
$\psi_{1}\leq 0$, $\psi_{2}\leq 0$,
$\phi_{1}\leq 0$ and $\phi_2\leq 0$ (since $s_{i}(x)$ and $s_{j}(y)$ are
negative). Thus in the case that 
$\alpha>0$, $\beta>0$ (standard conditions for a beta-distribution)
and   $\psi_{1},\psi_{2},\phi_{1},\phi_{2}\leq 0$ the kernel $R(x,y)^{2} \leq 
G(x)G(y)$, and satisfies the HS-condition. 

Since conditionally $X_{t}$ follows a beta-distribution with shape parameters
\begin{eqnarray*}
\alpha(x_{t-1},x_{t+1}) &=& \alpha -1 
+\psi_{1}(s_{1}(x_{t-1})+s_{1}(x_{t+1}))+
\phi_{1}(s_{2}(x_{t-1})+
    s_{2}(x_{t+1}))\\
    \beta(x_{t-1},x_{t+1}) &=& \beta -1 
+\psi_{2}(s_{2}(x_{t-1})+s_{2}(x_{t+1}))+
\phi_{2}(s_{1}(x_{t-1})+
    s_{1}(x_{t+1})),
\end{eqnarray*}
then $X_{t}$ is likely to be closer to
one if 
$\alpha(x_{t-1},x_{t+1})\gg \beta(x_{t-1},x_{t+1})$
(conversely it is likely to closer to zero if 
$\alpha(x_{t-1},x_{t+1})\ll \beta(x_{t-1},x_{t+1})$). 
However, if $x_{t-1}$ and $x_{t+1}$ are close to one, then $s_{1}(x_{t-1})$ and $s_{1}(x_{t+1})$ will be close to zero and $s_{2}(x_{t-1})$ and $s_{2}(x_{t+1})$ will be 
extremely negative. Therefore to increase the probability of 
$X_{t}$ being close to one (when $x_{t-1}$ and $x_{t+1}$ are close to one), we require that 
$\phi_1< \psi_1$ and $\phi_2< \psi_2$. For example, if we set 
$\psi_{1}=\psi_{2}=0$, this condition is satisfied and 
 \begin{eqnarray*}
   \log p(x_{t}|\mathcal{H}_{t})\propto
  [\alpha + 
  \phi_{1}s_{2}(x_{t-1})+
    \phi_{2}s_{2}(x_{t+1})-1]s_1(x_{t})
    + [\beta + 
  \phi_{2}s_{1}(x_{t-1})+
    \phi_{1}s_{1}(x_{t+1})-1]s_{2}(x_{t}).
    \end{eqnarray*}
The above specification induces positive dependence between neighbouring observations.    
\end{example}

\begin{example}[Bivariate conditional beta]
  We now generalize the univariate specification in Example
  \ref{exam:beta1} to the multivariate case. We define the conditional
  distributions of a bivariate time series $\{(X_{t}^{(a)},X_{t}^{(b)})\}_{t}$ as
\begin{align*}
   \log p(x_{t}^{(a)}|\mathcal{H}_{(a,t)}) 
   &\propto
  [\alpha^{(a)} - 1 + \phi_{1,2,1}^{(a,a)}s_{2}(x_{t-1}^{(a)})+
    \phi_{1,1,2}^{(a,a)}s_{2}(x_{t+1}^{(a)})+\phi_{0,1,2}^{(a,b)}s_{2}(x_{t}^{(b)})]s_{1}(x_{t}^{(a)})
   \nonumber\\
  &  + [\beta^{(a)} - 1 + \phi_{1,1,2}^{(a,a)}s_{1}(x_{t-1}^{(a)})+
    \phi_{1,2,1}^{(a,a)}s_{1}(x_{t+1}^{(a)})+\phi_{0,2,1}^{(a,b)}s_{1}(x_{t}^{(b)})]s_{2}(x_{t}^{(a)})
              \end{align*}
  \begin{align*}         
      \log p(x_{t}^{(b)}|\mathcal{H}_{(b,t)}) 
   &\propto
  [\alpha^{(b)} - 1 + \phi_{1,2,1}^{(b,b)}s_{2}(x_{t-1}^{(b)})+
    \phi_{1,1,2}^{(b,b)}s_{2}(x_{t+1}^{(b)})+\phi_{0,2,1}^{(a,b)}s_{2}(x_{t}^{(a)})]s_{1}(x_{t}^{(b)})
    \\
  &  + [\beta^{(b)} - 1 + \phi_{1,1,2}^{(b,b)}s_{1}(x_{t-1}^{(b)})+
    \phi_{1,2,1}^{(b,b)}s_{1}(x_{t+1}^{(b)})+\phi_{0,1,2}^{(a,b)}s_{1}(x_{t}^{(a)})]s_{2}(x_{t}^{(b)}).
  \end{align*}
In this example, ${\boldsymbol \theta} =
(\alpha^{(a)}-1,\beta^{(a)}-1, \alpha^{(b)}-1,\beta^{(b)}-1)$,
${\bf s}(x) =
(s_{1}(x^{(a)}),s_{2}(x^{(a)}),s_{1}(x^{(b)}),s_{2}(x^{(b)}))^{\top}$,
the block matrices are defined as 
\begin{eqnarray*}
  {\boldsymbol \Psi}_{0}^{} =
  \left(
  \begin{array}{cc}
    0  & \Phi_{0}^{(b,a)} \\
   \Phi_{0}^{(a,b)} & 0 \\ 
  \end{array}  
  \right),  \ {\boldsymbol \Psi}_{1}^{} =
  \left(
  \begin{array}{cc}
    \Phi_{1}^{(a,a)}  & 0 \\
   0 & \Phi_{1}^{(b,b)} \\ 
  \end{array}  
  \right) \textrm{ and }
  {\boldsymbol \Psi}_{-1}^{} =
  \left(
  \begin{array}{cc}
    (\Phi_{1}^{(a,a)})^{\top}  & 0 \\
   0 & (\Phi_{1}^{(b,b)})^{\top} \\ 
  \end{array}  
  \right) 
\end{eqnarray*}
where
\begin{eqnarray*}
   \Phi_{0}^{(a,b)} =
  \left(
  \begin{array}{cc}
    0  & \phi_{0,1,2}^{(a,b)} \\
   \phi_{0,2,1}^{(a,b)} & 0 \\ 
  \end{array}  
  \right),  \Phi_{1}^{(a,a)} =
  \left(
  \begin{array}{cc}
    0  & \phi_{1,1,2}^{(a,a)} \\
   \phi_{1,2,1}^{(a,a)}   & 0 \\ 
  \end{array}  
  \right) \textrm{ and }
   \Phi_{1}^{(b,b)} =\left(
  \begin{array}{cc}
    0  & \phi_{1,1,2}^{(b,b)} \\
   \phi_{1,2,1}^{(b,b)}   & 0 \\ 
  \end{array}  
  \right).
\end{eqnarray*}  
Thus
\begin{eqnarray*}
  G(x) = \exp\left({\boldsymbol \theta}^{\top}{\bf s}(x)  +\frac{1}{2}{\bf
  s}(x)^{\top}\Psi_{0}{\bf s}(x)\right)\quad
  H(x,y) = \exp\left({\bf s}(x)^{\top}\Psi_{1}{\bf s}(y)\right).
\end{eqnarray*}
Analogous to Example \ref{exam:beta1}, if all the entries in
$\Psi_{0}$ and $\Psi_{1}$ are zero or negative then
$R(x,y)^{2}\leq \exp({\boldsymbol \theta}^{\top}{\bf s}(x) +
{\boldsymbol \theta}^{\top}{\bf s}(y))$ and $R(x,y)$ satisfies the
HS-condition if
$\alpha^{(a)},\alpha^{(b)},\beta^{(a)},\beta^{(b)}>0$. 
\end{example}

\subsection{Markov properties for CEStGM}\label{sec:graph}

In this section, we return to the original motivation behind CEStGM and
show that sparsity in the parameters of CEStGM encode process-wide conditional
independence between the time series. We first define the notion of conditional independence for multivariate stochastic processes. 
This definition is analogous to the notion of
zero partial correlation for a multivariate Gaussian time series given in
\cite{p:dah-00}. Clearly, an approach
based on partial correlations would be unsuitable for measuring
conditional dependence between non-Gaussian time series. Instead, to
define conditional independence between stochastic processes we adapt
the approach described in 
\cite[Section 6.4B]{b:flo-mou-rol-90} and Definition 2.1 and Theorem 2.1 in \cite{p:eic-12}. These authors define the notation of Granger
non-causality between stochastic processes through their corresponding
sigma-algebras.

\begin{defin}\label{defin:CondIndependence}[Process-wide conditional independence]
  Suppose $\{(X_{t}^{C},X_{t}^{D},X_{t}^{E}) : t\in \mathbb{Z}\}$ is a
  multivariate stochastic process defined on
  $(\Omega,\mathcal{F},P)$ (noting that $X_{t}^{C}$, $X_{t}^{D}$ and
  $X_{t}^{E}$ are not necessarily univariate random vectors). We
  define the sub-sigma algebras $\mathcal{F}^{C} =
  \sigma(X_{t}^{C};t\in \mathbb{Z})$, $\mathcal{F}^{D} =
  \sigma(X_{t}^{D};t\in \mathbb{Z})$ and $\mathcal{F}^{E} =
  \sigma(X_{t}^{E};t\in \mathbb{Z})$.
  Then the processes
  $\{X_{t}^{(C)} : t\in \mathbb{Z}\}$ and  $\{X_{t}^{(D)} : t\in
  \mathbb{Z}\}$ are conditionally independent given
   $\{X_{t}^{(E)} : t\in \mathbb{Z}\}$ if 
$\mathcal{F}^{C}\independent
\mathcal{F}^{D}|\mathcal{F}^{E}$.
\end{defin}  

It is worth bearing in mind that the above Definition  \ref{defin:CondIndependence} includes the classical notion of independence between
stochastic processes. Specifically, the two
stochastic processes
${\bf X}^{(a)} = \{X_{t}^{(a)}:t\in \mathbb{Z}\}$ and 
${\bf X}^{(b)} =\{X_{t}^{(b)}:t\in\mathbb{Z}\}$
are independent if $\sigma({\bf X}^{(a)})\independent \sigma({\bf
  X}^{(b)})$ which is equivalent to the more classical definition that
for all  $k_1,k_2>0$ and $s_{1},\ldots,s_{k_1}\in \mathbb{Z}$, $t_{1},\ldots,t_{k_2}\in \mathbb{Z}$ one has
$\sigma(X_{s_{1}}^{(a)},\ldots,X_{s_{k_1}}^{(a)})\independent
  \sigma(X_{t_{1}}^{(b)},\ldots,X_{t_{k_2}}^{(b)})$.
Moreover, Definition \ref{defin:CondIndependence} includes, as a special case 
conditional, independence between multivariate Gaussian time series
defined in terms of partial correlation given in
\cite{p:dah-00}. 


We now relate the coefficients in models \eqref{eq:conditionalspecification} and 
\eqref{eq:fcondgeneral} to
Definition \ref{defin:CondIndependence}. We first define the neighbourhood set 
\begin{eqnarray}\label{eq:cestgm_nhbr}
  \mathcal{N}_{a} = \{b\in [p]\backslash\{a\};\textrm{ if either }\Phi^{(a,b)}_{0}\neq 0 \textrm{ or }
\Phi^{(a,b)}_{1}\neq 0 \textrm{ or }\Phi^{(a,b)}_{-1}\neq 0\}
\end{eqnarray}
and $\mathcal{N}_{a}^{\prime} = [p]\backslash \{\mathcal{N}_{a}\cup
\{a\}\}$. 
From the conditional specification in
(\ref{eq:conditionalspecification}) it is easily seen that 
\begin{eqnarray}
\label{eq:XtaCond}  
  \sigma(X_{t}^{(a)})\independent \sigma(X_{\tau}^{(c)};c\in
  \mathcal{N}_{a}^{\prime},\tau\in \mathbb{Z})
  \, \big | \,
  \sigma(X_{t-1}^{(a)},X_{t+1}^{(a)},X_{t}^{(b)},X_{t-1}^{(b)},X_{t+1}^{(b)};b\in
  \mathcal{N}_{a}). 
\end{eqnarray}  
This is conditional independence at the individual random variable level.
It is not immediately obvious how this  relates to
our notion of process-wise conditional independence; a clear difficulty is the
appearance of $X_{t-1}^{(a)},X_{t+1}^{(a)}$ in the conditioning
set. 
However, we show in Theorem \ref{theorem:localMarkov} below
that for CEStGM the neighbourhood set 
at the individual random
variable level is equivalent to the process-wide neighbourhood set. 

From Definition \ref{defin:CondIndependence}, one may also define pairwise, local and global Markov properties for stochastic processes (analogous to the partial correlation properties given in Section 3, \cite{p:dah-00}. The Markov properties (see Section 3.2.1, \cite{b:lau-96} and \cite{p:pea-86}) form a fundamental component of a graphical model. It is well known that the Gibbs distribution in (\ref{eq:jointexponential}) satisfies the pairwise, local and global Markov properties (which we define below). 
Since the construction of CEStGM is motivated by the Gibbs distribution it is natural to ask if CEStGM also satisfies the Markov properties. 
For a finite dimensional random vector  
$X=(X^{(1)},\ldots,X^{(p)})$ 
whose joint distribution 
has a density wrt to a $\sigma$-finite product measure on $\times_{i=1}^{p}\mathcal{X}^{(i)}$,
the Markov properties are stated and deduced from
their densities (see Proposition 2.21, \cite{b:lau-20}). 
Indeed, in most applications the 
Markov properties are stated in terms of their densities and the joint density is assumed to be strictly positive. 
However, in the context of stochastic processes the existence of a density with respect to a product measure does not, necessarily, hold (see, for example, Kakutani's Dichotomy).
Therefore, in the following definition we state the Markov properties in terms of sigma-algebras. In conjunction with Definition \ref{defin:CondIndependence}, this allows us to define 
process-wide Markov properties. From this we 
show that CEStGM satisfies a 
process-wide version of the classical Markov properties.


\begin{defin}\label{defin:Markov}[Process-wide Markov properties]
  Let $\bX =
  (\bX^{(1)},\ldots,\bX^{(p)})$ be a multivariate stochastic process, where $\bX^{(j)} = \{X_t^{(j)} : t \in \mb Z\}$. 
  For any $A \subseteq V$, let $\bX^A = (\bX^{(j)} : j \in A)$ and $\m F^A = \sigma(X_t^A : t \in \mb T\}$. Consider an undirected graph $\mathcal{G} = (V,E)$
  where $V=[p]$ and $E \subseteq V\times V$ (where $E$ excludes self-loops
  of the form $(a,a)$). Let $\mathcal{N}_{a} =\{b:(a,b)\in E\}$
  and $\mathcal{N}_{a}^{\prime} = V\backslash \{\mathcal{N}_{a} \cup \{a\}\}$.
 $S$ is called a separator between two disjoint subsets $A,B\subset
  V$ relative to the graph $\mathcal{G}$, if all paths from $A$ to $B$ are
blocked by $S$. If this is the case, then we
write $A\perp_{\mathcal{G}} B|S$. 
  \begin{itemize}
  \item $\bX$ satisfies the
    pairwise Markov property relative to the graph $\mathcal{G}$ if for
    all $a\in [p]$ and $b\in\mathcal{N}_{a}^{\prime}$ 
    we have $\mathcal{F}^{(a)}\independent
  \mathcal{F}^{(b)}|\mathcal{F}^{V\backslash\{a,b\}}$. 
  \item $\bX$ satisfies the
    local Markov property relative to the graph $\mathcal{G}$ if for
    all $a\in [p]$ we have $\mathcal{F}^{(a)}\independent
\mathcal{F}^{\mathcal{N}_{a}^{\prime}}|\mathcal{F}^{\mathcal{N}_{a}}$.
  \item $\bX$ satisfies the  
    global Markov property relative to the graph $\mathcal{G}$
    if for any triple $(A,B,S)$ where $A\perp_{\mathcal{G}} B|S$ then
     $\mathcal{F}_{}^{A}\independent \mathcal{F}^{B}|
  \mathcal{F}_{}^{S}$.
\end{itemize}    
\end{defin}
The above definition is specifically stated for stochastic processes. If we change the index set 
$\mb Z$ to $\mb I$ where $\mb I$ is a finite set, then the above definition coincides with the usual definition, which is typically based on strictly positive densities.
As in the finite-dimensional case, the global Markov property implies the local Markov property which in turn implies the pairwise Markov property. In the finite-dimensional case, if the density of $\bX$ exists and is strictly positive, the three Markov properties are equivalent (see Chapter 2, \cite{b:lau-96}). 
In Theorems \ref{theorem:localMarkov} and \ref{theorem:globalMarkov} we show that the multivariate CEStGM process satisfies all three Markov properties.

\begin{theorem}\label{theorem:localMarkov}[Local Markov property of CEStGM]
 Suppose the interaction kernel $R$ is Hilbert--Schmidt, i.e.
  $\|R(\cdot, \cdot)\| < \infty$ as in \eqref{eq:HS_defn}. Then
for each $a \in [p]$, one has $\mathcal{F}^{(a)}\independent
\mathcal{F}^{\mathcal{N}_{a}^{\prime}}|\mathcal{F}^{\mathcal{N}_{a}}$, where the neighborhood set $\m N_a$ is defined in \eqref{eq:cestgm_nhbr}.

Note that the above immediately implies the pairwise Markov property
(Theorem 2.18 (A.3), \cite{b:lau-20}). 
That is if
$b\notin \mathcal{N}_{a}$ then 
$\mathcal{F}^{(a)}\independent
  \mathcal{F}^{(b)}|\mathcal{F}^{V\backslash\{a,b\}}$.

\end{theorem}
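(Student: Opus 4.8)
The plan is to recast the local Markov property in its conditional-expectation form: since $\mathcal{N}_a'=[p]\backslash(\mathcal{N}_a\cup\{a\})$ gives the disjoint decomposition $V=\{a\}\cup\mathcal{N}_a\cup\mathcal{N}_a'$, we have $\mathcal{F}^{\mathcal{N}_a}\vee\mathcal{F}^{\mathcal{N}_a'}=\mathcal{F}^{V\backslash\{a\}}$, and by the standard equivalence for conditional independence (Definition \ref{defin:CondIndependence}) the claim $\mathcal{F}^{(a)}\independent\mathcal{F}^{\mathcal{N}_a'}\mid\mathcal{F}^{\mathcal{N}_a}$ is equivalent to $\Ex[m\mid\mathcal{F}^{V\backslash\{a\}}]=\Ex[m\mid\mathcal{F}^{\mathcal{N}_a}]$ for every bounded $\mathcal{F}^{(a)}$-measurable $m$. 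By a $\pi$--$\lambda$ argument it suffices to treat $m$ measurable with respect to a finite window $\sigma(X_t^{(a)}:|t|\le L)$, and to test against cylinder events of $\mathcal{F}^{\mathcal{N}_a'}$ over finite time windows. In words, I would show that the conditional law of the node-$a$ process, given everything outside node $a$, does not depend on the non-neighbours.

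First I would exploit the explicit finite-window density from Theorem \ref{theorem:one_stationarity}. Writing $R=G^{1/2}HG^{1/2}$ and expanding, the density over $[-N,N]$ factors as $v(x_{-N})G(x_{-N})^{1/2}\big[\prod_{|t|<N}G(x_t)\big]G(x_N)^{1/2}w(x_N)\prod_t H(x_{t-1},x_t)$. The decisive structural observation (the Hammersley--Clifford step) is that, because $c\in\mathcal{N}_a'$ forces $\Phi_0^{(a,c)}=\Phi_1^{(a,c)}=\Phi_{-1}^{(a,c)}=0$, the coordinate $x_t^{(a)}$ never shares a monomial with any non-neighbour coordinate inside the interior factors $G$ and $H$; it couples only to itself across adjacent times and to the neighbour coordinates $\{x_\tau^{(b)}:b\in\mathcal{N}_a\}$. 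Hence, conditioned on all non-$a$ coordinates, the $a$-process on $[-N,N]$ is a Markov chain in $t$ (time-inhomogeneous because of the ends) whose interior potentials involve only neighbour values. This is the block-in-time extension of the single-site statement \eqref{eq:XtaCond}, and it dissolves the apparent difficulty that $X_{t\pm1}^{(a)}$ sits in the conditioning set of \eqref{eq:XtaCond}, since that self-coupling is now internal to the chain. The only residual dependence on the non-neighbours enters through the two boundary factors $v(x_{-N})$ and $w(x_N)$.

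The crux, and the main obstacle, is this boundary coupling: $v$ and $w$ are the Krein--Rutman eigenfunctions of $T$ and $T^*$ from Theorem \ref{thm:KR_without_int_main} and generically do not factor across the $p$ coordinates, so at times $\pm N$ they entangle node $a$ with its non-neighbours. To dispose of this I would send $N\to\infty$ and show that the boundary influence on a fixed interior window decays to zero. This is exactly where the Hilbert--Schmidt hypothesis is essential: it forces $T$ to be compact, which by Theorem \ref{thm:KR_without_int_main}(c) yields a spectral gap $|\lambda|<r(T)$ for every subdominant eigenvalue, and it underlies the geometric $\beta$-mixing established in Section \ref{sec:prob_cestgm}. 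The spectral gap guarantees that the finite-window conditional laws stabilise, while geometric mixing quantifies how fast the conditional law of $(X_t^{(a)}:|t|\le L)$ forgets the configuration at the boundary times $\pm N$; together they drive the residual dependence on $v(x_{-N})$ and $w(x_N)$, and thereby on $\mathcal{F}^{\mathcal{N}_a'}$, to zero in the limit.

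Finally, combining this with Lévy's upward martingale convergence of the conditional expectations given the $[-N,N]$-truncations of $\mathcal{F}^{V\backslash\{a\}}$ towards $\Ex[\,\cdot\mid\mathcal{F}^{V\backslash\{a\}}]$, the limit of $\Ex[m\mid\mathcal{F}^{V\backslash\{a\}}\text{-truncation}]$ is $\mathcal{F}^{\mathcal{N}_a}$-measurable, giving $\Ex[m\mid\mathcal{F}^{V\backslash\{a\}}]=\Ex[m\mid\mathcal{F}^{\mathcal{N}_a}]$ and hence the local Markov property. I would isolate this $N\to\infty$ argument as the general process-wide conditional independence lemma for $\beta$-mixing series announced in the introduction, so that it can be reused for the global Markov property. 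The pairwise property is then immediate: for $b\in\mathcal{N}_a'$ one has $\mathcal{N}_a\subseteq V\backslash\{a,b\}$, and the local statement together with the decomposition property of conditional independence (Theorem 2.18 of \cite{b:lau-20}) yields $\mathcal{F}^{(a)}\independent\mathcal{F}^{(b)}\mid\mathcal{F}^{V\backslash\{a,b\}}$.
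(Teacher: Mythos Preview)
Your high-level architecture is close to the paper's: a Hammersley--Clifford factorisation of $R$ isolates node $a$'s interactions to $\mathcal{N}_a$, the boundary eigenfunctions $v,w$ are the only obstruction, and some limiting argument plus martingale convergence closes the gap. But the specific mechanism you propose for disposing of the boundary is where the proposal and the paper diverge, and your version has a genuine hole.

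You want to condition $(X_t^{(a)}:|t|\le L)$ on $X^{V\setminus\{a\}}_{[-N,N]}$ only, and then argue that geometric $\beta$-mixing of the full process makes the dependence on $v(x_{-N}),w(x_N)$ (and hence on the non-neighbour coordinates at $\pm N$) vanish as $N\to\infty$. That step is not justified. $\beta$-mixing of $\{X_t\}$ controls dependence between $\sigma(X_t:t\le 0)$ and $\sigma(X_t:t\ge N)$; it says nothing about how fast the \emph{conditional} chain $X^{(a)}\mid X^{V\setminus\{a\}}$ forgets its boundary data when you are conditioning on all intermediate non-$a$ values. That conditional chain is time-inhomogeneous with kernel depending on the realised path of $X^{V\setminus\{a\}}$, and establishing uniform decay of boundary influence for it is a separate (and substantially harder) statement than $\beta$-mixing of the joint process.

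The paper sidesteps this entirely by a device you are missing: it \emph{also conditions on the $a$-tail} ${\bf X}_{(\infty,n+L)}^{(a)}$. Because the process is Markov in time, anchoring $x_{-n-L}^{(a)}$ and $x_{n+L}^{(a)}$ makes the boundary factors containing $v$ and $w$ cancel \emph{exactly} in the conditional density ratio (equation \eqref{eq:densityseparation} in the supplement), yielding for every finite $n,L,k_1,k_2$ the exact identity
\[
p\big(x^{(a)}_{[-n,n]}\,\big|\,{\bf x}^{\mathcal{N}_a}_{[-n-L-k_1,n+L+k_1]},{\bf x}^{\mathcal{N}_a'}_{[-n-L-k_2,n+L+k_2]},{\bf x}^{(a)}_{(\infty,n+L)}\big)=p\big(x^{(a)}_{[-n,n]}\,\big|\,{\bf x}^{\mathcal{N}_a}_{[-n-L,n+L]},x^{(a)}_{\pm(n+L)}\big).
\]
The limiting argument (Theorem \ref{theorem:cond-independenceSigma}) then uses forward and \emph{reverse} martingale convergence: the $a$-tail $\mathcal{F}^{(a)}_{(\infty,n+L)}$ shrinks to the double-tail $\sigma$-algebra $\mathcal{F}^{(a)}_{-\infty}$, which is trivial by $\beta$-mixing, so the extra conditioning on the $a$-tail evaporates for free. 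This is precisely the ``general process-wide conditional independence lemma for $\beta$-mixing series'' you anticipate, but its input is conditional independence \emph{given the $a$-tail}, not a direct decay estimate on boundary eigenfunctions.
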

The equivalence between 
equations (\ref{eq:XtaCond}) and Theorem \ref{theorem:localMarkov}
in terms of the same neighbourhood set $\mathcal{N}_{a}$
is analogous to Proposition 2.1 and Theorem 2.2, \cite{p:bas-sub-23} where
the equivalence between different definitions of partial correlations
time series is shown.
This result provided the initial motivation for Theorem \ref{theorem:localMarkov}, but  the
techniques used to prove the two results are entirely different. The proof
of Proposition 2.1 and Theorem 2.2, \cite{p:bas-sub-23} applies the
Schur complement to an infinite dimensional covariance operator. Whereas,
the proof of Theorem \ref{theorem:localMarkov} is based on a
Hammersley--Clifford type factorisation of the interaction kernel $R$
together with some subtle measure theoretic arguments. We
give a sketch of the proof at the end of this section, the full
details are found in Section \ref{sec:proofgraph}.

\begin{defin}\label{defin:CIG}[Conditional Independence Graph]
The pairwise Markov property given in Theorem
\ref{theorem:localMarkov}) defines a conditional independence graph
for the multivariate time series 
$\{X_{t}\}_{t \in \mb Z}$. Let $\mathcal{G} = (V,E)$ where $V = [p]$, and the edge set
for CEStGM is defined by the rule
$(a,b)\notin E$ iff $\mathcal{F}^{(a)}\independent
  \mathcal{F}^{(b)}|\mathcal{F}^{V\backslash\{a,b\}}$. From Theorem \ref{theorem:localMarkov}, $(a, b) \in E$ iff $b \in \m N_a$ (or equivalently, $a \in \m N_b$), with the neighborhood set $\m N_a$ defined in \eqref{eq:cestgm_nhbr}.
\end{defin}
Some examples, together with their conditional independence graphs, are given in Section \ref{sec:examples}.

Using the conditional independence graph $\mathcal{G}$ we prove 
the stronger process-wide Global Markov property for CEStGM. 

\begin{theorem}\label{theorem:globalMarkov}[Global Markov property of CEStGM]
 Suppose the interaction kernel $R$ is Hilbert-Schmidt, i.e.
 $\|R(\cdot, \cdot)\| < \infty$ as in \eqref{eq:HS_defn}.
For all  $A\perp_{\mathcal{G}} B|S$ (where $\mathcal{G}$ is defined in
Definition \ref{defin:CIG}), we have 
$\mathcal{F}_{}^{A}\independent \mathcal{F}^{B}|
  \mathcal{F}_{}^{S}$.
\end{theorem}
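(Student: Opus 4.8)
The plan is to deduce the global Markov property from the local (equivalently pairwise) Markov property already supplied by Theorem~\ref{theorem:localMarkov}, by reproducing at the process level the classical separation argument of Pearl--Paz and Lauritzen (Theorem~3.7 of \cite{b:lau-96}). That argument combines purely graph-theoretic manipulations of separators with the \emph{graphoid axioms} for conditional independence: symmetry, decomposition, weak union, contraction, and, crucially, intersection. The first four hold verbatim for conditional independence of sub-sigma-algebras as in Definition~\ref{defin:CondIndependence}, since they follow from elementary manipulations of conditional expectations and require no positivity. Only the intersection axiom demands a regularity hypothesis; establishing it for the process-wide sigma-algebras is the main obstacle and is addressed last. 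Throughout I write $\mathcal F^{P}$ for the sigma-algebra generated by the processes indexed by $P \subseteq V$, as in Definition~\ref{defin:Markov}, so that $\mathcal F^{P} $ and $\mathcal F^{Q}$ jointly generate $\mathcal F^{P \cup Q}$.

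Granting the intersection property, I would first reduce to the case $A\cup B\cup S=V$. Given $A\perp_{\mathcal G}B\,|\,S$, let $\widetilde A$ denote the set of all vertices reachable from $A$ in the subgraph $\mathcal G_{V\setminus S}$ obtained by deleting $S$, and set $\widetilde B=V\setminus(S\cup\widetilde A)$. Then $A\subseteq\widetilde A$ and $B\subseteq\widetilde B$ (no vertex of $B$ is reachable from $A$ once $S$ is deleted), the triple $(\widetilde A,\widetilde B,S)$ partitions $V$, and there are no edges between $\widetilde A$ and $\widetilde B$ in $\mathcal G_{V\setminus S}$, so $S$ still separates $\widetilde A$ from $\widetilde B$. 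It therefore suffices to prove $\mathcal F^{\widetilde A}\independent\mathcal F^{\widetilde B}\,|\,\mathcal F^{S}$, from which $\mathcal F^{A}\independent\mathcal F^{B}\,|\,\mathcal F^{S}$ follows by two applications of the decomposition axiom.

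Assuming now $A\cup B\cup S=V$, I would induct on $|A\cup B|=|V|-|S|$. In the base case $|A\cup B|\le 2$ with $A,B$ nonempty we have $A=\{a\}$, $B=\{b\}$ and $S=V\setminus\{a,b\}$; separation forces $(a,b)\notin E$, so the pairwise Markov property of Theorem~\ref{theorem:localMarkov} directly gives $\mathcal F^{(a)}\independent\mathcal F^{(b)}\,|\,\mathcal F^{V\setminus\{a,b\}}$. For the inductive step, with $|A\cup B|\ge 3$ assume without loss of generality $|A|\ge 2$ and fix $a\in A$. Since $S$ separates $A$ from $B$, the enlarged sets $S\cup\{a\}$ and $S\cup(A\setminus\{a\})$ separate, respectively, $A\setminus\{a\}$ from $B$ and $\{a\}$ from $B$; both configurations again exhaust $V$ and have strictly smaller value of $|A\cup B|$, so the inductive hypothesis yields
\begin{align*}
\mathcal F^{A\setminus\{a\}}\independent\mathcal F^{B}\,\big|\,\mathcal F^{S\cup\{a\}},
\qquad
\mathcal F^{(a)}\independent\mathcal F^{B}\,\big|\,\mathcal F^{S\cup(A\setminus\{a\})}.
\end{align*}
Applying the intersection axiom with index sets $P=B$, $Q=A\setminus\{a\}$, $R=\{a\}$ and $W=S$ (so that the two displays read $\mathcal F^{P}\independent\mathcal F^{Q}\,|\,\mathcal F^{R\cup W}$ and $\mathcal F^{P}\independent\mathcal F^{R}\,|\,\mathcal F^{Q\cup W}$) produces $\mathcal F^{B}\independent\mathcal F^{A}\,|\,\mathcal F^{S}$, completing the induction.

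The hard part is the intersection property itself for sigma-algebras generated by the \emph{entire} processes, since a joint density over all of $\mathbb Z$ need not exist (cf.\ Kakutani's dichotomy), so the usual positive-density proof is unavailable. My strategy would be to exploit that every finite-dimensional law $p_{[1:n]}$ is strictly positive (Theorem~\ref{theorem:one_stationarity}), which delivers the intersection property for the finite-horizon sigma-algebras $\sigma(X_t^{\bullet}:|t|\le N)$, and then to pass to the limit $N\to\infty$. Controlling this limit is precisely where the geometric $\beta$-mixing of CEStGM (Section~\ref{sec:prob_cestgm}) is needed: it ensures that the dependence carried by distant time points decays, so that the finite-horizon conditional independences are mutually compatible and their limit reproduces the process-wide statement of Definition~\ref{defin:CondIndependence}. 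Packaging this approximation as a standalone lemma on process-wide conditional independence of $\beta$-mixing series---and reconciling the bounded-measurable-function formulation of Definition~\ref{defin:CondIndependence} with the martingale-type convergence required to take $N\to\infty$---is the most delicate and genuinely new measure-theoretic component of the argument.
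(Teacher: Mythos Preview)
Your reduction to the partition case $\widetilde A\cup\widetilde B\cup S=V$ coincides with the paper's: it too passes to the connectivity component $\alpha$ of $A$ in $\mathcal G_{V\setminus S}$ and sets $\beta=V\setminus(\alpha\cup S)$. From there, however, the paper does \emph{not} induct via the graphoid axioms. It proves $\mathcal F^{\alpha}\independent\mathcal F^{\beta}\mid\mathcal F^{S}$ directly, using a Hammersley--Clifford factorisation of the interaction kernel $R(x,y)=\prod_{D\in\mathcal C}R_D(x^{(D)},y^{(D)})$ over the cliques $\mathcal C$ of $\mathcal G$ (Lemma~\ref{lemma:cliques}). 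Since every clique lies in $\alpha\cup S$ or in $\beta\cup S$, an explicit density computation (Lemma~\ref{lemma:separator}) shows that the conditional law of $X^{\alpha}_{[-n,n]}$ given $X^{S}_{[-n-L-k_1,n+L+k_1]},\,X^{\beta}_{[-n-L-k_2,n+L+k_2]},\,X^{\alpha}_{(\infty,n+L)}$ depends only on $X^{S}$ and the anchor points $X^{\alpha}_{\pm(n+L)}$. The passage $n,L,k_1,k_2\to\infty$ is then handled by a single limiting result (Theorem~\ref{theorem:cond-independenceSigma}), which combines martingale and reverse-martingale convergence with $\beta$-mixing to show that the decreasing tail $\bigcap_L\mathcal F^{\alpha}_{(\infty,n+L)}$ is trivial and can be dropped from the conditioning. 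No intersection axiom is invoked.

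Your route hinges entirely on the process-level intersection axiom, and the sketch you give for it does not close. The premises you would feed into intersection are \emph{process-level} statements $\mathcal F^{P}\independent\mathcal F^{Q}\mid\mathcal F^{R\cup W}$; these do not imply the finite-horizon statements $\mathcal F^{P}_N\independent\mathcal F^{Q}_N\mid\mathcal F^{R\cup W}_N$ needed to invoke the positive-density intersection property at level $N$ (shrinking a conditioning set is not a decomposition move). Phrased equivalently, what you really need is the sigma-algebra identity $\mathcal F^{R\cup W}\cap\mathcal F^{Q\cup W}=\overline{\mathcal F^{W}}$ for disjoint $Q,R$, and neither $\beta$-mixing nor strict positivity of finite-dimensional marginals delivers this in any evident way. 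The paper sidesteps the problem: because it works with the explicit clique factorisation of $R$, the only limit it must control is a decreasing \emph{tail} sigma-algebra of the $\alpha$-process, and $\beta$-mixing makes that tail trivial directly. Both strategies require a nontrivial measure-theoretic limit, but the paper's is tailored to a tractable one, whereas yours requires a general process-wide intersection property that remains unproved.
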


We briefly sketch the key ideas in the proof of Theorems
\ref{theorem:localMarkov} and \ref{theorem:globalMarkov}. We first
connect the neighbourhood set $\mathcal{N}_{a}$ to the
joint distribution of $X_{-n},\ldots,X_{n}$ by defining the graph
 $\mathcal{G} = (V,E)$ where $V=[p]$ and
  $(a,b)\in E$ iff $b\in \mathcal{N}_{a}$. From $\mathcal{G}$ we obtain the set of all induced cliques $\mathcal{C}$. In other words if $D\in \mathcal{C}$ then $D\subseteq V$
and the corresponding subgraph in $\mathcal{G}$ is complete (all nodes
in $D$ are connected to each other). In Lemma \ref{lemma:cliques} we
show that the cliques $\mathcal{C}$ lead to a ``Hammersley-Clifford'' type
factorisation of the interaction kernel $R(\cdot,\cdot)$ 
\begin{eqnarray*}
R(x,y) = \prod_{D\in \mathcal{C}}R_{D}(x^{(D)},y^{(D)})
\end{eqnarray*}  
where $x^{(D)} = (x^{(a)};a\in D)$. Consequently, the joint density of
$X_{-n},\ldots,X_{n}$ is 
\begin{eqnarray}
\label{eq:pnpd2}  
p_{[-n:n]}(x_{-n},\ldots,x_{n})
  &=& \frac{1}{r^{2n}} \,   v(x_{-n})
      \left[\prod_{t=-n+1}^{n} \prod_{D\in \mathcal{C}}R_{D}(x_{t-1}^{(D)},x_{t}^{(D)}) \right] w(x_{n}). 
\end{eqnarray}
Now consider the sets $(A,B,S)$ where $A\perp_{\mathcal{G}}
B|S$. Given $A$ and $B$ we define the sets $\alpha$ and $\beta$ as
follows. Let $\alpha$ denote the
connectivity components in $\mathcal{G}_{V\backslash S}$ that contain $A$ (but, of course, not $B$) and
$\beta = V\backslash\{\alpha\cup S\}$. Clearly
$A\subseteq \alpha$, $B\subseteq \beta$ and $\alpha$
and $\beta$ are disjoint sets such that
$V=\alpha\cup\beta\cup S$. For example, if $A = \{a\}$,
$S=\mathcal{N}_{a}$, and $B \subseteq V\backslash\{\mathcal{N}_{a} \cup\{a\}\}$
then $\alpha = \{a\}$ and $\beta = V\backslash\{\mathcal{N}_{a} \cup \{a\}\}$. We show $\m F^\alpha \independent \m F^\beta \mid \m F^S$, from which the conclusion follows, since $\mathcal{F}^{A}\subseteq
\mathcal{F}^{\alpha}$ and $\mathcal{F}^{B}\subseteq
\mathcal{F}^{\beta}$.

Since $S$ separates $\alpha$ and $\beta$,
any clique of $\mathcal{G}$ is
either in $\alpha\cup S$ or $\beta\cup S$ (but not both).
Let $\mathcal{C}_{\alpha}$ denote all cliques contained in
$\alpha\cup S$, in addition let ${\bf X}^{C}_{[-m,m]} =
(X_{-m}^{a},X_{-m+1}^{a},\ldots,X_{m}^{a};a\in C)$,
${\bf X}_{(\infty,m)}^{C} = (X_{k}^{a};|k|\geq m,a\in C)$,
 $\mathcal{I}_1 =
\{-n-L+1,\ldots,-n-1\}\cup\{n+1,\ldots,n+L-1\}$ and  $\mathcal{I}_2 =
\{-n-L+1,\ldots,-n-1,-n,\ldots,n,n+1,\ldots,n+L-1\}$. 

Using this notation and the
factorisation in (\ref{eq:pnpd2}) we show in Lemma
\ref{lemma:separator} that for all $n,L,k_1,k_2>0$ the conditional
density satisfies the relation
\begin{eqnarray*}
 && p({\bf x}_{[-n,n]}^{\alpha}|{\bf
     x}^{S}_{[-n-L-k_1,n+L+k_1]},{\bf
    x}_{[-n-L-k_2,n+L+k_2]}^{\beta},{\bf x}_{(\infty,n+L)}^{\alpha})\nonumber\\
 &=& \frac{ \int 
      \prod_{t=-n-L+1}^{n+L}\prod_{D\in \mathcal{C}_{\alpha}}R_{D}(x_{t-1}^{(D)},x_{t}^{(D)})
      \prod_{i \in \mathcal{I}_1}\mu(dx_i^{\alpha})}{
  \int  
  \prod_{t=-n-L+1}^{n+L}\prod_{D\in \mathcal{C}_{\alpha}}R_{D}(x_{t-1}^{(D)},x_{t}^{(D)}) \prod_{i \in
     \mathcal{I}_2} \mu(dx_i^{\alpha})} \nonumber\\
  &=& p({\bf x}_{[-n,n]}^{\alpha}|{\bf
    x}^{S}_{[-n-L:n+L]}, {\bf x}_{(\infty,n+L)}^{\alpha}),
\end{eqnarray*}
where the last line is due to the fact that for all $D\in \mathcal{C}_{\alpha}$ we have
$D\subseteq \alpha\cup S$. Consequently,
\begin{eqnarray*}
  \sigma({\bf X}_{[-n,n]}^{\alpha}) \independent
  \sigma({\bf X}^{\beta}_{[-n-L-k_2,n+L+k_2]})  |\sigma({\bf
  X}^{S}_{[-n-L-k_1,n+L+k_1]}, {\bf X}_{(\infty,n+L)}^{\alpha}).
\end{eqnarray*}
As the above holds for all positive $n,L,k_1$ and $k_2$ the final step
is to let $n,L,k_1,k_2\rightarrow \infty$. This is achieved by Theorem \ref{theorem:cond-independenceSigma}, which may be of independent interest. A key observation is that
since CEStGM is beta-mixing (see Theorem \ref{theorem:mixing})
$\mathcal{F}_{-\infty}^{\alpha}=\cap_{L=1}^{\infty} \sigma({\bf X}_{(\infty,n+L)}^{\alpha})$ limits
to the trivial sigma algebra (a set that only contains sets which only
have probability zero or one). Thus in its limit the conditioning set
$\mathcal{F}_{-\infty}^{\alpha}$ plays no role. Consequently, by applying Theorem
\ref{theorem:cond-independenceSigma}, 
we obtain $\mathcal{F}^{\alpha}\independent
\mathcal{F}^{\beta}|\mathcal{F}^{S}$. 

\section{Probabilistic properties of CEStGM}\label{sec:prob_cestgm}

In this section, we offer further insights into CEStGM. In particular, we show that the CEStGM process is geometrically $\beta$-mixing, and describe a Gibbs sampling algorithm to approximately simulate sample paths from the CEStGM process. Interestingly, both these endeavors  require control over iterated applications of the operators $T$ and $T^*$, and possess similar proof techniques. To that end, we introduce the following notation. 

For any integer $k \ge 2$, we adopt standard practice to iteratively define operators $T^k: L^2(\m X, \mu) \to L^2(\m X, \mu)$ as $T^k = T \circ T^{k-1}$, where $\circ$ denotes the composition operation. For example, when $k = 2$, it is straightforward to see that 
\begin{align*}
  T^2(f)[y] = \int_{\m X} R(x, y) T(f)[x] d\mu(x) = \int_{\m X} \int_{\m X} R(z, x) R(x, y) f(z) d\mu(z) d\mu(x). 
\end{align*}
Let $\m X_k = \underbrace{\m X \times \ldots \times \m X}_{k \text{ times}}$ denote the $k$-fold product of $\m X$. Iterating, we have for any $k \ge 2$,
\begin{align}\label{eq:iter_op}
\begin{aligned}
T^k(f)[y] &= \int_{\m X_k} \left\{\prod_{i=2}^k R(x_{i-1}, x_i)\right\} \, R(x_k, y) \, f(x_1) \prod_{i=1}^k \mu(dx_i), \\
(T^*)^k(f)[y] & = \int_{\m X_k} R(y, x_1) \left\{\prod_{i=2}^k R(x_{i-1}, x_i)\right\} \, f(x_k) \prod_{i=1}^k \mu(dx_i). 
\end{aligned}
\end{align}
In Lemma \ref{lemma:pow_it_main}, we establish the following important decomposition,
\begin{align}\label{eq:main_decomp}
T^k(f) = r^k \langle f, w\rangle v + \Delta_k(f), \quad (T^*)^k(f) = r^k \langle f, v\rangle w + \Delta^*_k(f), 
\end{align}
where $\Delta_k, \Delta_k^*$ are operators on $L^2(\m X, \mu)$ whose operator norms we have control over. If $\langle f, v \rangle \ne 0$ and $\langle f, w \rangle \ne 0$, then the first term in each identity can be shown to dominate, which in particular implies $T^k(f)$ (resp. $(T^*)^k$), scaled appropriately, converges to $v$ (resp. $w$) geometrically fast. This decomposition is crucial to both results below.   

\medskip 

\noindent {\it Geometric Mixing.} We first establish the mixing result. 
\begin{defin}[Geometric $\beta$-mixing]\label{def:mixing}
Let $\{X_t\}_{t \in \mb Z}$ be a stationary time series, and let $\m F = \sigma(X_t \,:\, t \in \mb Z)$. For any two sub-sigma fields $\m A$ and $\m B$ of $\m F$, define 
\begin{align}\label{eq:def_beta}
\beta(\m A, \m B) = \frac{1}{2} \sum_{i=1}^I \sum_{j=1}^J |P(A_i \cap B_j) - P(A_i)P(B_j)|
\end{align}
where $\{A_i\}_{i=1}^I$ (resp. $\{B_j\}_{j=1}^J$) is a partition of $\m X$ with each $A_i \in \m A$ (resp. each $B_j \in \m B$), and the supremum in \eqref{eq:def_beta} is over all such finite partition pairs. 

For integers $m_1 \le m_2$, let $\mathcal{F}_{m_1}^{m_2}=\sigma(X_{m_1},X_{m_1+1}\ldots,X_{m_2})$. $\{X_t\}_{t \in \mb Z}$ is called geometrically $\beta$-mixing if there exists $\rho \in (0, 1)$ and $C > 0$ such that for any $n \in \mb N$,
\begin{align}
\beta\left(\mathcal{F}_{-\infty}^{0}, \mathcal{F}_{n+1}^{\infty} \right) \le C \rho^n. 
\end{align}
Moreover, if $\{X_{t}\}$ is a Markov
process, then 
\begin{eqnarray*}
\beta\left(\mathcal{F}_{-\infty}^{0}, \mathcal{F}_{n+1}^{\infty} \right) = \beta\left(\sigma(X_0), \sigma(X_{n+1}) \right),
\end{eqnarray*}  
see for example, \cite[Section 3.1]{p:bra-06}. 
\end{defin}
We now establish $\beta$-mixing of the CEStGM process. $\beta$-mixing
is an important tool in statistical inference involving dependent
data. It is used to show Gaussian approximations of estimators and
covariance bounds. In addition, we crucially use $\beta$-mixing of CEStGM in the
proof of the local and global Markov properties in Theorems
\ref{theorem:localMarkov} and \ref{theorem:globalMarkov}. 
\begin{theorem}[CEStGM is Geometric $\beta$-mixing]\label{theorem:mixing}
Suppose the interaction kernel $R$ is Hilbert--Schmidt, i.e. $\|R(\cdot, \cdot)\| < \infty$ as in \eqref{eq:HS_defn}.
Then, the CEStGM process constructed in Theorem \ref{theorem:one_stationarity} is geometrically $\beta$-mixing, and hence ergodic. 
\end{theorem}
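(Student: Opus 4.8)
The plan is to establish geometric $\beta$-mixing by exploiting the Markov structure of CEStGM (Corollary \ref{cor:Markovian}) together with the power-iteration decomposition \eqref{eq:main_decomp}. Since the process is Markov, Definition \ref{def:mixing} reduces the task to bounding $\beta(\sigma(X_0), \sigma(X_{n+1}))$, which in turn amounts to controlling the total variation distance between the $n$-step transition law and the product of the marginals. Concretely, I would first write down the joint density of $(X_0, X_{n+1})$ and the $(n)$-step transition kernel. From \eqref{eq:joint_cestgm} and the right transition kernel in Corollary \ref{cor:Markovian}, the density of $X_{n+1} \mid X_0 = x$ is $p_{n+1 \,|\, 0}(y \mid x) = r^{-n} R^{(n)}(x, y) \, w(y)/w(x)$, where $R^{(n)}(x,y)$ is the $n$-fold convolution kernel appearing inside $T^n$ as in \eqref{eq:iter_op}. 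The marginal density is $p_1(y) = v(y) w(y)$. The $\beta$-coefficient between $\sigma(X_0)$ and $\sigma(X_{n+1})$ is then (up to the standard equivalence) the expectation over $X_0 \sim p_1$ of the $L^1$ distance $\int |p_{n+1 \,|\, 0}(y \mid x) - p_1(y)| \, \mu(dy)$.

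The key step is to show this $L^1$ distance decays geometrically, and this is precisely where \eqref{eq:main_decomp} enters. The idea is that the numerator $R^{(n)}(x, y)$ is, as an operator, governed by $T^n$; applying the decomposition $T^k(f) = r^k \langle f, w\rangle v + \Delta_k(f)$ to the function $f = R(x, \cdot)$ (or rather to a suitably chosen test function peaked at $x$) shows that $r^{-n} R^{(n)}(x, y)$ converges to $v(y) w(x)$ geometrically fast in $n$, with the error controlled by $\|\Delta_n\|_{\rm op}$. Plugging this into the transition density yields $p_{n+1\,|\,0}(y\mid x) \to v(y) w(y) = p_1(y)$, with a geometric error. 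The crucial quantitative input is a bound of the form $\|\Delta_n\|_{\rm op} \le C \, \rho^n$ for some $\rho < r$; this is the content of the power-iteration lemma (Lemma \ref{lemma:pow_it_main}) alluded to in \eqref{eq:main_decomp}, and is available because compactness of the Hilbert--Schmidt operator $T$ forces a spectral gap (by part (c) of Theorem \ref{thm:KR_without_int_main}, every other eigenvalue has modulus strictly less than $r$). The ratio $\rho/r \in (0,1)$ then furnishes the geometric rate.

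The main obstacle I anticipate is making the pointwise/operator-theoretic convergence of $r^{-n} R^{(n)}(x, \cdot)$ to $v(\cdot) w(x)$ rigorous in the $L^1$ sense uniformly enough to integrate against the marginal $p_1$. The decomposition \eqref{eq:main_decomp} is stated at the level of the operator $T^k$ acting in $L^2$, i.e. it controls $\|T^k f - r^k \langle f, w\rangle v\|$ in $L^2$-norm, whereas the $\beta$-mixing computation requires an $L^1$ (total-variation) statement for the kernel fibers $R^{(n)}(x, \cdot)$ at a.e. fixed $x$. Bridging this gap requires either (i) a Cauchy--Schwarz argument converting $L^2$ operator bounds into integrated $L^1$ bounds, using the finiteness of $\int R(x,\cdot)^2$ guaranteed by the Hilbert--Schmidt condition and the boundedness of $v, w$ (cf. Remark \ref{rem:eigencharacteristics}, which gives boundedness of $v$ when $\sup_y \|R_y\| < \infty$), or (ii) writing $R^{(n)} = R \circ T^{n-2} \circ R$ so that the two extra applications of $R$ smooth the kernel into $L^2 \times L^2$ and render the inner operator bound applicable at the level of integrable functions. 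I would pursue route (i): integrate $\int_{\m X} |r^{-n} R^{(n)}(x,y) w(y)/w(x) - v(y)w(y)| \mu(dy)$, bound it by Cauchy--Schwarz against the $L^2$ operator error from \eqref{eq:main_decomp}, and then take the outer expectation over $X_0 \sim p_1$; the geometric factor $(\rho/r)^n$ survives, and collecting constants yields $\beta(\sigma(X_0), \sigma(X_{n+1})) \le C (\rho/r)^n$, establishing the claim. Ergodicity then follows immediately since geometric $\beta$-mixing implies mixing, hence ergodicity, for stationary processes.
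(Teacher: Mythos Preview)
Your proposal is correct and follows essentially the same route as the paper: reduce to $\beta(\sigma(X_0),\sigma(X_{n+1}))$ via the Markov property, express the joint density $p_{0,n+1}$ through an iterated application of the integral operator to a fiber $R_y$ (the paper uses $(T^*)^n$ applied to $R_{x_{n+1}}$, equivalent to your $T^k$ applied to $R(x,\cdot)$), invoke the decomposition \eqref{eq:main_decomp} (Lemma \ref{lemma:pow_it_main}) to isolate the product-of-marginals term plus a geometrically small remainder, and then pass from the $L^2$ operator bound to the $L^1$ total-variation bound via Cauchy--Schwarz and the Hilbert--Schmidt condition $\|R(\cdot,\cdot)\|<\infty$. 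One small remark: you do not need pointwise boundedness of $v,w$ for route (i); their $L^2$ membership (automatic) and $\|v\|=1$, $\|w\|<\infty$ are all that the Cauchy--Schwarz step requires.
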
  
The proof of the theorem is provided in Section \ref{sec:pf_mixing}; we sketch some of the salient features below. Given Corollary \ref{cor:Markovian}, it amounts to bound $\beta\left(\sigma(X_0), \sigma(X_{n+1}) \right)$. It can be shown, see proof for details, that 
\begin{align}\label{eq:mixAB}
2 \beta\left(\sigma(X_0), \sigma(X_{n+1}) \right) \le \int_{\m X} \int_{\m X} |p_{0,n+1}(x_0,x_{n+1})-p_{1}(x_0)p_1(x_{n+1})| \mu(d x_0) \mu(d x_{n+1}),
\end{align}
where $p_{0,n+1}$ denotes the joint density of $(X_{0},X_{n+1})$. From Theorem \ref{theorem:one_stationarity}
we have
\begin{eqnarray*}
p_{1}(x_0)p_1(x_{n+1})  = v(x_0)w(x_0) \, v(x_{n+1})w(x_{n+1}).
\end{eqnarray*}
From Theorem \ref{theorem:one_stationarity}, 
we can obtain an expression for
$p_{0,n+1}(x_0,x_{n+1})$ by integrating out $x_1, \ldots, x_n$ from the joint $p_{[0:n+1]}(x_0, \ldots, x_{n+1})$. We have 
\begin{eqnarray}
p_{0,n+1}(x_0,x_{n+1}) &=& 
  \int_{\m X_n} p_{[0:n+1]}(x_{0},\ldots,x_{n+1})\prod_{i=1}^{n}\mu(dx_{i}) \nonumber \\
  &=&  \frac{1}{r^{n+1}}
  v(x_{0}) w(x_{n+1})\int_{\m X_n} \prod_{t=1}^{n+1}R(x_{t-1},x_{t})
      \prod_{i=1}^{n} \mu(dx_{i}) \nonumber\\
  &=& \frac{1}{r^{n+1}}
      v(x_{0}) w(x_{n+1})\int_{\m X_n} R(x_0, x_1) \left[\prod_{t=2}^{n}R(x_{t-1},x_{t}) \right] \,
      \underbrace{R(x_{n},x_{n+1})}_{=R_{x_{n+1}}(x_{n})}
      \prod_{i=1}^{n} \mu(dx_{i}) \nonumber \\
                       &=&  \frac{1}{r^{n+1}} v(x_{0}) w(x_{n+1}) \, (T^{*})^{n}(R_{x_{n+1}})[x_{0}]. \label{eq:mixing_joint}
\end{eqnarray}
In the last line of the above display, we make use of the iterated representation of $(T^*)^k(f)$ from \eqref{eq:iter_op} with $f = R_{x_{n+1}}$, where recall that for any $y \in \m X$, the function $R_y$ on $\m X$ is given by $R_y(x) = R(x, y)$ for $x \in \m X$. By the Hilbert--Schmidt assumption, $R_y \in L^2(\m X, \mu)$ for all $y$. Now, using \eqref{eq:main_decomp}, we obtain 
\begin{align*}
(T^*)^n(R_{x_{n+1}}) = r^n \langle R_{x_{n+1}}, v\rangle w + \Delta^*_n(R_{x_{n+1}}).
\end{align*}
Since $\langle R_{x_{n+1}}, v\rangle = \int R(x, x_{n+1}) v(x) dx = r \, v(x_{n+1})$, evaluating both sides of the above display at $x_0$, and substituting in \eqref{eq:mixing_joint}, we obtain that 
\begin{align}\label{eq:mixing_rem}
 p_{0,n+1}(x_0,x_{n+1}) = p_1(x_0) p_1(x_{n+1}) + \frac{1}{r^{n+1}} v(x_0) w(x_{n+1}) \, \Delta^*_n(R_{x_{n+1}})[x_0], 
\end{align}
which expresses the joint $p_{0,n+1}(x_0, x_{n+1})$ as the product of marginals $p_1(x_0) p_1(x_{n+1})$ plus a ``small'' remainder term. The rest of the proof is deferred to \ref{sec:pf_mixing}.

\medskip 

\noindent {\it Simulating sample paths from CEStGM.} Next, we address the question of simulating finite-length sample paths from the CEStGM process. Specifically, suppose we wish to sample $(X_0, \ldots, X_{n+1})$ distributed according to the joint distribution $p_{[0,n+1]}$ in \eqref{eq:joint_cestgm}. Given the conditional exponential structure underlying CEStGM, it is natural to employ a Gibbs sampler (or Glauber dynamics; \cite[Chapter 3]{b:levin2017markov}) to iteratively sample each coordinate from its full conditional distribution. For each $1 \le t \le n$, the conditional distribution of $X_t^{(a)} \mid (X_\tau^{(b)} \,: \, (\tau, b) \ne (t, a))$ for $\tau \in \{0, 1, \ldots, n+1\}$ and $b \in [p]$, has an exponential family distribution given by \eqref{eq:conditionalspecification}, and therefore is straightforward to sample from. 
However, one encounters a difficulty at either boundary $t \in \{0, n+1\}$, since even in the univariate case, $p(x_{0}|x_{1}) \,\propto\, R(x_{0},x_{1})v(x_{0})$ and
$p(x_{n+1}|x_{n})\,\propto\, R(x_{n},x_{n+1})w(x_{n+1})$, with the eigenfunctions $v$ and $w$ typically unknown. Even if they are estimated numerically, e.g. using Nystr{\"o}m type methods, the ensuing sampling task remains challenging. We instead take an augmented Gibbs sampling based approach. We state the following result first. 
\begin{theorem}\label{theorem:simulation}
Suppose the assumptions in Theorem \ref{theorem:one_stationarity} hold, and the interaction kernel $R$ is Hilbert--Schmidt, i.e. $\|R(\cdot, \cdot)\| < \infty$ as in \eqref{eq:HS_defn}.
For any integer $m > 0$, define joint density $g_{n, m}$ on $\m X^{n+2m}$ with respect to the $(n+2m)$-fold product of $\mu$ as
\begin{eqnarray*}
g_{n,m}(x_{-m+1},\ldots,x_{0},\ldots,x_{n+1},\ldots,x_{n+m}) = C_{n+2m}^{-1} 
f(x_{-m+1}) \left[\prod_{i =-m+2}^{n+m}  R(x_{i-1},x_{i}) \right] f(x_{n+m}),
\end{eqnarray*} 
where $f \in L^2(\m X, \mu)$ is a strictly positive function and $C_{n+2m}$ is the constant of integration. Using $g_{n,m}$, define the marginal density 
  \begin{eqnarray*}
     h_{[0:n+1]}^{(m-1)}(x_{0},\ldots,x_{n+1})
 &=&    \int_{\m X_{|\m I|}} g_{n,m}(x_{-m+1},\ldots,x_{n+m}) \, \prod_{i \in \m I} \mu(dx_i),
  \end{eqnarray*}
where $\m I = \{-m+1, \ldots, -1\} \cup \{n+2, \ldots, n+m\}$. Then for all $n$ and $m$ we have 
  \begin{eqnarray*}
  \|h_{[0:n+1]}^{(m-1)} - p_{[0:n+1]}\|_{\rm TV}  = O(\rho^{m-1}),  
  \end{eqnarray*}
where $0 \le \rho <1$, and {\rm TV} denotes the total variation distance.   
\end{theorem}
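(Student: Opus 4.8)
The overall plan is to evaluate the two marginalizing integrals in closed form as powers of $T$ and $T^*$, substitute the power-iteration decomposition \eqref{eq:main_decomp}, and then track how the dominant $r$-powers cancel between $h^{(m-1)}_{[0:n+1]}$ and its normalizing constant, leaving a factor $\rho^{m-1}$.

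\emph{Step 1 (collapse the boundary blocks).} I would first integrate out the $m-1$ left coordinates $x_{-m+1},\dots,x_{-1}$ and the $m-1$ right coordinates $x_{n+2},\dots,x_{n+m}$ in $g_{n,m}$. Comparing with the iterated representations in \eqref{eq:iter_op}, the left integral collapses to $T^{m-1}(f)[x_0]$ and the right to $(T^*)^{m-1}(f)[x_{n+1}]$, while the middle factors $K(x_0,\dots,x_{n+1}) := \prod_{i=1}^{n+1} R(x_{i-1},x_i)$ are untouched. Hence
\[
h^{(m-1)}_{[0:n+1]} = C_{n+2m}^{-1}\, T^{m-1}(f)[x_0]\, K\, (T^*)^{m-1}(f)[x_{n+1}],
\]
and, since $v,w,f$ may be taken real and positive, all inner products below are real.

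\emph{Step 2 (insert the decomposition and compute $C_{n+2m}$).} Writing $T^{m-1}(f)=r^{m-1}\langle f,w\rangle\, v + \Delta_{m-1}(f)$ and $(T^*)^{m-1}(f)=r^{m-1}\langle f,v\rangle\, w + \Delta^*_{m-1}(f)$ and expanding the product yields a leading term $r^{2(m-1)}\langle f,w\rangle\langle f,v\rangle\, v(x_0)K\,w(x_{n+1})$, which equals $r^{2(m-1)+n+1}\langle f,w\rangle\langle f,v\rangle\, p_{[0:n+1]}$ by \eqref{eq:joint_cestgm}, plus two single-remainder cross terms and one double-remainder term. The eigenfunction identities $\int v(x_0)K\prod_{i=0}^{n}\mu(dx_i)=r^{n+1}v(x_{n+1})$ and $\int K\,w(x_{n+1})\prod_{i=1}^{n+1}\mu(dx_i)=r^{n+1}w(x_0)$, combined with $\langle v,\Delta^*_{m-1}(f)\rangle=0$ and $\langle \Delta_{m-1}(f),w\rangle=0$ (both following from $Tv=rv$, $T^*w=rw$, and $\langle v,w\rangle=1$), make the single-remainder terms integrate to zero. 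Thus $C_{n+2m}=r^{2(m-1)+n+1}\langle f,w\rangle\langle f,v\rangle\,(1+O(\rho^{2(m-1)}))$, where the correction is the integral of the double-remainder term.

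\emph{Step 3 (bound the total variation distance).} Since both are densities against $\mu^{\otimes(n+2)}$, I would write $2\|h^{(m-1)}_{[0:n+1]}-p_{[0:n+1]}\|_{\rm TV}=C_{n+2m}^{-1}\int|\tilde h - C_{n+2m}\,p_{[0:n+1]}|$ with $\tilde h=C_{n+2m}h^{(m-1)}_{[0:n+1]}$. Expanding the numerator into the two single-remainder terms, the double-remainder term, and the normalizer correction, I would bound each by collapsing $vK$ (resp. $Kw$) via the eigenfunction identities and applying Cauchy--Schwarz with the power-iteration estimates $\|\Delta_{m-1}(f)\|,\|\Delta^*_{m-1}(f)\|\le C r^{m-1}\rho^{m-1}\|f\|$ from Lemma \ref{lemma:pow_it_main}; the double-remainder term is controlled using the uniform operator-norm bound $\|(T^*)^{n+1}\|_{\rm op}\le C' r^{n+1}$, itself a consequence of \eqref{eq:main_decomp}. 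The single-remainder terms scale like $r^{2(m-1)+n+1}\rho^{m-1}$ and the double-remainder contributions like $r^{2(m-1)+n+1}\rho^{2(m-1)}$. Dividing by $C_{n+2m}\asymp r^{2(m-1)+n+1}\langle f,w\rangle\langle f,v\rangle$, the common factor $r^{2(m-1)+n+1}$ cancels, and the leading single-remainder terms give the advertised $O(\rho^{m-1})$.

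\emph{Main obstacle.} The delicate point is uniformity in $n$: both $r^{n+1}$ and $\|(T^*)^{n+1}\|_{\rm op}$ grow with $n$, and since $T$ is non-normal one cannot naively equate $\|(T^*)^{n+1}\|_{\rm op}$ with $r^{n+1}$. The resolution is structural---every surviving term in the numerator and in $C_{n+2m}$ carries exactly one factor $r^{n+1}$ from collapsing the length-$(n+1)$ chain $K$ against the biorthogonal pair $(v,w)$, so it cancels in the ratio, while $\|(T^*)^{n+1}\|_{\rm op}/r^{n+1}\le \|v\|\,\|w\|+C\rho^{n+1}$ is bounded uniformly in $n$ by \eqref{eq:main_decomp}. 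The secondary subtlety is the pair of orthogonality identities that annihilate the single-remainder contributions to the normalizer; these replace the self-adjoint intuition (where $w=v$) by the correct biorthogonal statement for the non-self-adjoint operator $T$.
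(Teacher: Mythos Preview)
Your proposal is correct and follows essentially the same route as the paper: collapse the boundary blocks to $T^{m-1}(f)$ and $(T^*)^{m-1}(f)$, insert the power-iteration decomposition of Lemma~\ref{lemma:pow_it_main}, and bound the four resulting contributions. Two minor differences are worth noting. First, for the normalizer the paper writes $C_{n+2m}=\int T^{n+2m-1}(f)\cdot f$ and decomposes that single power, obtaining $\delta_3=O(\rho^{n+2m-1})$; you instead expand the product and use the biorthogonality $\langle \Delta_{m-1}(f),w\rangle=\langle v,\Delta^*_{m-1}(f)\rangle=0$ to kill the cross terms, which is a clean observation the paper does not exploit. Second, for the double-remainder term the paper peels off the two end factors $R(x_0,x_1)$ and $R(x_n,x_{n+1})$ and applies Cauchy--Schwarz with $\ell(x)=\|R_x\|$ (this is where the Hilbert--Schmidt norm $\|R(\cdot,\cdot)\|$ enters), whereas you bound it directly via $\|(T^*)^{n+1}\|_{\rm op}\le r^{n+1}(\|w\|+C\rho^{n+1})$; both give the same $O(\rho^{2(m-1)})$ order. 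Your identification of the uniformity-in-$n$ issue and its resolution through cancellation of the common $r^{n+1}$ factor is exactly right.
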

We remark that $f \in L^2(\m X, \mu)$ ensures finiteness of $C_{n+2m}$; refer to the proof in Section \ref{subsec:pf_simul}. We discuss some key features of the proof. Using \eqref{eq:iter_op}, one can write 
\begin{eqnarray*}
h_{[0:n+1]}^{(m-1)}(x_{0},\ldots,x_{n+1}) = C_{n+2m}^{-1} \, q({\bf x}_{[0:n+1]}) \, \left\{ T^{m-1}(f)[x_0] \right\} \, \left\{ (T^*)^{(m-1)}(f)[x_{n+1}] \right\},
\end{eqnarray*}
where $q({\bf x}_{[0:n+1]})=R(x_{0},x_{1}) \times \cdots \times R(x_{n},x_{n+1})$. Next, using \eqref{eq:main_decomp}, this simplifies to
\begin{align}\label{eq:hnm}
h_{[0:n+1]}^{(m-1)}(x_{0},\ldots,x_{n+1}) = \frac{1}{r^{n+1}}q({\bf x}_{[0:n+1]})\frac{
         \left\{  \langle f,w\rangle v(x_{0}) + \delta_1(x_0)\right\}
           \left\{  \langle f,v\rangle w(x_{n+1})
         + \delta_2(x_{n+1})\right\}}{\langle f,w\rangle \langle f,v\rangle + \delta_3}
\end{align}
where $\delta_1(x_0) = r^{-(m-1)} \Delta_{m-1}(f)[x_0]$, $\delta_2(x_{n+1}) = r^{-(m-1)} \Delta_{m-1}^*(f)[x_{n+1}]$, and 
\begin{align*}
\delta_3 = r^{-(n+2m-1)}\int_{\m X} \Delta_{n+2m-1}(f)[x_{n+m}] \, f(x_{n+m}) \mu(d x_{n+m}). 
\end{align*}
Observe that if we drop the $\delta_i$s from the right hand side of \eqref{eq:hnm}, it reduces to $\frac{v(x_0) \,q({\bf x}_{[0:n+1]}) \, w(x_{n+1})}{r^{n+1}} = p_{[0:n+1]}(x_0, \ldots, x_{n+1})$. Thus, control over the $\delta_i$s help us bound $\|h_{[0:n+1]}^{(m-1)} - p_{[0:n+1]}\|_{\rm TV}$, and we defer the remaining details to the proof. 

We next discuss how Theorem \ref{theorem:simulation} enables
simulating sample paths. Suppose for a given $f$, the joint density
$g_{n,m}$ can be sampled from. If we simulate $(Y_{-m+1}, \ldots,
Y_{n+m}) \sim g_{n, m}$ and throw away the $(m-1)$ random variables at
each side to retain $(Y_0, \ldots, Y_{n+1})$, then by definition
$(Y_0, \ldots, Y_{n+1}) \sim h_{[0,n+1]}^{(m-1)}$. For $m$ large,
$h_{[0,n+1]}^{(m-1)}$ offers an increasingly close approximation to
$p_{[0:n+1]}$, and thus we  treat $(Y_0, \ldots, Y_{n+1})$ as an
approximate simulation from $p_{[0:n+1]}$. Regarding the choice of
$f$, one obvious candidate is $f = G^{1/2}$, since it then follows
from \eqref{eq:pn_newform} that $g_{n,m}$
equals $p^{(n+2m)}$, the joint density arising from the reflective
boundary
condition in Theorem \ref{theorem:distributionedge}. One may sample
from
$p^{(n+2m)}$ using Gibbs sampling in a straightforward manner, since
all full
conditionals belong to the exponential family. The resulting algorithm
in
the {\it univariate case} takes the following form: \\[1ex]
(i) Initialize $(Y_{-m+1}^{0}, \ldots, Y_{n+m}^{0})$. \\[1ex]
(ii) For $j = 1, \ldots, J$, sample $(Y_{-m+1}^{j}, \ldots, Y_{n+m}^{j})$ by \\
-- drawing $Y_{-m+1}^{j}$ from $Y_{-m+1} \mid Y_{-m+2}^{j-1}$ (using the reflective boundary distribution), \\
-- for $t = -m+2, \ldots, n+m-1$, drawing $Y_t^{j}$ from $Y_t \mid Y_{t-1}^{j}, Y_{t+1}^{j-1}$, \\
-- drawing $Y_{n+m}^{j}$ from $Y_{n+m} \mid Y_{n+m-1}^{j}$ (using the reflective boundary distribution). \\[1ex]
(iii) Discard the first $B$ ($B < J$) iterates as burn-in and retain $\{(Y_0^{j}, \ldots, Y_{n+1}^{j})\}_{j=B+1}^J$ as approximate samples from $p_{[0:n+1]}$.  Using the above algorithm, some simulations from various different models are given in Section \ref{sec:examples}.


\subsection{Powers of compact positive operators}\label{subsec:pow}
Let $T$ satisfy the assumptions of Theorem \ref{theorem:one_stationarity}, i.e., it is a positive compact integral operator. In addition, if it is self-adjoint (which is the case when $p = 1$), then by the spectral theorem for compact self-adjoint operators \cite[Chapter 2]{b:con-90}; see also Section \ref{sec:FA_rev}; there exists an orthonormal eigenbasis $\{v_j\}_{j=1}^{\infty}$ of $L^2(\m X, \mu)$ with corresponding {\it real} eigenvalues $\{\lambda_j\}_{j=1}^{\infty}$ such that for any $f \in L^2(\mathcal{X})$,  
$T f = \sum_{j=1}^\infty \lambda_j \, \langle f, v_j \rangle \, v_j$. From the Krein--Rutman theorem (Theorem \ref{thm:KR_without_int_main}), we additionally know that the spectral radius $r > 0$ is an eigenvalue of $T$ (with a.e. positive eigenfunction $v$), and all other eigenvalues are strictly smaller than $r$ in magnitude. Therefore, without loss of generality, we can order the eigenvalues in decreasing order of magnitude as $r = \lambda_1 > |\lambda_2| \ge |\lambda_3| \ge \ldots $. For any $f \in L^2(\m X, \mu)$, exploiting {\it orthonormality} of the eigenfunctions, we have 
\begin{align}\label{eq:pow_sa}
T^k(f) = \sum_{j=1}^\infty \lambda_j^k \langle f, v_j \rangle = r^k \langle f, v \rangle v + \sum_{j =2}^{\infty} \lambda_j^k \langle f, v_j \rangle v_j.
\end{align}
If $\langle f, v \rangle \ne 0$, then the first term dominates, which implies that $T^k f$, appropriately scaled, approaches $v$ geometrically fast as $k \to \infty$. To see this, from \eqref{eq:pow_sa}, 
\begin{align}
\begin{aligned}\label{eq:pow_it_sa}
& \left \|\frac{T^k(f)}{r^k \langle f, v \rangle} - v \right \|
= |\langle f, v \rangle|^{-1} \left(\sum_{j=2}^{\infty} (|\lambda_j|/r)^{2k} |\langle f, v_j\rangle|^2\right)^{1/2} \\
& \le |\langle f, v \rangle|^{-1} (|\lambda_2|/r)^{k} \left(\sum_{j=2}^{\infty} |\langle f, v_j \rangle|^2\right)^{1/2} \le |\langle f, v \rangle|^{-1}  \, \|f\| \, (|\lambda_2|/r)^{k}. 
\end{aligned}
\end{align}
\eqref{eq:pow_it_sa} can be viewed as an Hilbert space version of the popular power-iteration \citep{golub2013matrix} for symmetric matrices to iteratively obtain the eigenvector corresponding to the dominant eigenvalue. 

Recall that our operator $T$ is not self-adjoint in the $p > 1$ case. For general (non-diagonalizable) matrices, {\color{blue}an} analogous result as above is typically established using Jordan decomposition, and we are unaware of a similar argument for compact none self-adjoint operators.  A version of this result for normal operators (with self-adjoint operators as a special case) on separable Hilbert spaces was established in \cite{erickson1995power}. 
We state the following Lemma which generalizes \eqref{eq:pow_sa}-\eqref{eq:pow_it_sa} to non self-adjoint operators and subsequently plays an important role in various places. Recall that an operator $P$ on a vector space is called a {\it projection} if it is linear and satisfies $P^2 = P$, that is, {\it it is idempotent}. 
\begin{lemma}\label{lemma:pow_it_main}
Let $T$ be a compact positive integral operator as in Theorem \ref{theorem:one_stationarity}, with $v$ and $w$ the a.e. positive eigenvectors of $T$ and $T^*$ corresponding to $r = r(T)$ with $\langle v, w \rangle = 1$. Define projection operators $P_{v, w}$ and $Q_{v,w}$ on $L^2(\m X, \mu)$ as 
\begin{align}\label{eq:proj_ops}
P_{v, w}(f)[x] = \langle f, w \rangle v(x), \, Q_{v, w}(f)[x] = f(x) - \langle f, w \rangle v(x), \quad f \in L^2(\m X, \mu),
\end{align}
so that $f = P_{v, w}(f) + Q_{v, w}(f)$. Then, for any $f \in L^2(\m X, \mu)$, 
\begin{align}\label{eq:pow_non_sa}
T^k(f) = r^k \langle f, w\rangle v + (T Q_{v,w})^k(f), \quad k \ge 1.
\end{align}
Further, given any $\varepsilon > 0$, there exists a constant $C_\varepsilon > 0$ such that 
\begin{align}\label{eq:op_norm_bd}
\|(T Q_{v,w})^k\|_{\rm op} < C_\varepsilon (|\lambda_2| + \varepsilon)^k, \quad k \ge 1, 
\end{align} 
where $|\lambda_2| :\,= \sup \{|\lambda| \,:\, \lambda \in \sigma(T), \lambda \ne r\}$, and $\|\cdot\|_{\rm op}$ denotes the operator norm. This in particular implies that for any $f \in L^2(\m X, \mu)$ with $\langle f, w\rangle \ne 0$, 
\begin{align}\label{eq:non_sa_geom_cgence}
\left \|\frac{T^k(f)}{r^k \langle f, w \rangle} - v \right \| \le C_\varepsilon \, |\langle f, w \rangle|^{-1}  \, \|f\| \, \left(\frac{|\lambda_2| + \varepsilon}{r}\right)^k, \quad k \ge 1.  \end{align} 
\end{lemma}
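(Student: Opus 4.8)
The plan is to reduce the whole statement to the algebra of the single projection $P_{v,w}$ and its complement $Q_{v,w} = I - P_{v,w}$, and then read off the geometric rate from the spectrum of $T$ restricted to $\mathrm{Ran}(Q_{v,w})$. First I would record the basic commutation identities. Using $Tv = rv$ and $T^* w = rw$ (with $r$ real), for any $f$ one has $T P_{v,w}(f) = \langle f, w\rangle\, Tv = r\langle f, w\rangle v$, while $P_{v,w} T(f) = \langle Tf, w\rangle v = \langle f, T^* w\rangle v = r\langle f, w\rangle v$; hence $T P_{v,w} = P_{v,w} T = r P_{v,w}$. Consequently $T$ commutes with $Q_{v,w}$, and since $Q_{v,w}$ is idempotent, a one-line induction gives $(T Q_{v,w})^k = T^k Q_{v,w}$ for every $k \ge 1$. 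Writing $T^k = T^k(P_{v,w} + Q_{v,w}) = r^k P_{v,w} + (T Q_{v,w})^k$ and evaluating both sides at $f$ yields the decomposition \eqref{eq:pow_non_sa}.

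The crux is the operator-norm bound \eqref{eq:op_norm_bd}, which I would obtain by computing the spectral radius of $S := T Q_{v,w}$ exactly. Because $P_{v,w}$ and $Q_{v,w}$ are complementary \emph{bounded} projections commuting with $T$, the closed subspaces $\mathrm{span}\{v\} = \mathrm{Ran}(P_{v,w})$ and $w^{\perp} = \mathrm{Ran}(Q_{v,w}) = \ker(P_{v,w})$ are both $T$-invariant and give a topological direct sum $L^2(\m X, \mu) = \mathrm{span}\{v\} \oplus w^{\perp}$. On $\mathrm{span}\{v\}$ the operator $S$ vanishes (as $Q_{v,w} v = 0$), and on $w^{\perp}$ it acts as the compact operator $T|_{w^{\perp}}$, so $\sigma(S) = \{0\} \cup \sigma(T|_{w^{\perp}})$. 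By Theorem \ref{thm:KR_without_int_main}(c) the value $r$ is isolated in $\sigma(T)$ (every other spectral value has strictly smaller modulus and $0$ is the only possible accumulation point), and by Remark \ref{rem:only_positive} it is simple; this lets me identify $P_{v,w}$ with the Riesz spectral projection attached to $r$ (writing any rank-one projection onto $\mathrm{span}\{v\}$ as $f \mapsto \langle f, w'\rangle v$, commutation with $T$ forces $T^* w' = r w'$, hence $w' \propto w$, and the projection property pins down $w' = w$). It follows that $\sigma(T|_{w^{\perp}}) = \sigma(T) \setminus \{r\}$, so the spectral radius of $S$ equals $\sup\{|\lambda| : \lambda \in \sigma(T),\, \lambda \ne r\} = |\lambda_2|$.

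With $r(S) = |\lambda_2|$ established, Gelfand's spectral radius formula gives $\|S^k\|_{\rm op}^{1/k} \to |\lambda_2|$, so for each $\varepsilon > 0$ there is $K$ with $\|S^k\|_{\rm op} < (|\lambda_2| + \varepsilon)^k$ for all $k \ge K$; setting $C_\varepsilon := \max\{1,\ \max_{1 \le k < K} \|S^k\|_{\rm op}\,(|\lambda_2| + \varepsilon)^{-k}\}$ extends \eqref{eq:op_norm_bd} to all $k \ge 1$. Finally \eqref{eq:non_sa_geom_cgence} is immediate from \eqref{eq:pow_non_sa}: when $\langle f, w\rangle \ne 0$ we have $\frac{T^k(f)}{r^k \langle f, w\rangle} - v = \frac{S^k(f)}{r^k \langle f, w\rangle}$, and taking norms with $\|S^k(f)\| \le \|S^k\|_{\rm op}\,\|f\|$ and \eqref{eq:op_norm_bd} produces the stated rate.

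I expect the main obstacle to be the middle paragraph: proving that the spectral radius of $T Q_{v,w}$ is \emph{exactly} $|\lambda_2|$, not merely bounded by it. This rests on identifying $P_{v,w}$ as the genuine Riesz projection of the isolated eigenvalue $r$, so that the invariant splitting $L^2 = \mathrm{span}\{v\} \oplus w^{\perp}$ cleanly peels $r$ off the spectrum; the algebraic simplicity of $r$ for both $T$ and $T^*$ (from Remark \ref{rem:only_positive}) is precisely what guarantees this projection is one-dimensional and equal to $f \mapsto \langle f, w\rangle v$ as in \eqref{eq:proj_ops}. The remaining ingredients — the commutation identities and the Gelfand limit — are routine.
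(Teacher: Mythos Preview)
Your proof is correct and, on the algebraic side, tracks the paper closely: both establish $TP_{v,w}=P_{v,w}T$ via $\langle Tf,w\rangle=\langle f,T^*w\rangle$, deduce $(TQ_{v,w})^k=T^kQ_{v,w}$ by induction, and obtain \eqref{eq:pow_non_sa} and \eqref{eq:non_sa_geom_cgence} in the same way. The genuine difference is in bounding $r(TQ_{v,w})$. You invoke the Riesz spectral projection for the isolated eigenvalue $r$, identify it with $P_{v,w}$, and read off $\sigma(T|_{w^\perp})=\sigma(T)\setminus\{r\}$; the paper instead argues elementarily that (i) $r$ cannot be an eigenvalue of $TQ_{v,w}$ (a short contradiction: any such eigenfunction would force $Q_{v,w}f$ to be a nonzero eigenfunction of $T$ at $r$, hence a multiple of $v$, hence annihilated by $Q_{v,w}$) and (ii) every eigenvalue of $TQ_{v,w}$ is already an eigenvalue of $T$. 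Your route is conceptually cleaner but, as written, the step ``the Riesz projection is rank-one'' requires \emph{algebraic} simplicity of $r$, while Remark~\ref{rem:only_positive} records only geometric simplicity; algebraic simplicity does hold for strongly positive compact operators, but the paper's direct argument is self-contained with only the stated hypotheses. A second, minor divergence: to convert $r(TQ_{v,w})\le|\lambda_2|$ into \eqref{eq:op_norm_bd}, the paper uses the equivalent-norm characterization \eqref{eq:sp_inf}, whereas you use Gelfand's formula \eqref{eq:gelfand} and absorb finitely many initial $k$ into $C_\varepsilon$; both are valid.
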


\begin{remark}\label{rem:power_it}
The construction of the projection operators $P_{v,w}$ and $Q_{v,w}$ is adapted from \cite[Chapter 5.2]{boelkins1998spectral}. 
$P_{v,w}$ and $Q_{v,w}$ are projections 
because $\langle v,w \rangle=1$.
Unless $v = w$, $P_{v,w}$ and $Q_{v,w}$ are not orthogonal projections\footnote{non-orthogonal projections are sometimes called oblique projections in the literature}. 
A key property 
of $P_{v,w}$ and $Q_{w,v}$ is that they both commute with $T$, i.e., 
$TP_{v,w} = P_{v,w}T$ and $TQ_{v,w} = Q_{v,w}T$
(this can be deduced from
$\langle Tf,w\rangle = \langle f,T^{*}w\rangle $). 
Since $P_{v,w}$ and $Q_{v,w}$ are projections and commute with $T$ identity (\ref{eq:pow_non_sa}) can be inferred; see Section \ref{subsec:pf_pow_it_main} for details. 

Equation \ref{eq:pow_non_sa} extracts the contribution in the direction of $v$ from $T^k(f)$ analogous to \eqref{eq:pow_sa}, which is the dominating contribution due to \eqref{eq:op_norm_bd}. From Theorem \ref{thm:KR_without_int_main}, we know $|\lambda_2| < r$. Fix $\varepsilon > 0$ such that $|\lambda_2| + \varepsilon < r$. Then, the right hand side of \eqref{eq:non_sa_geom_cgence} converges to zero geometrically fast, recovering the same behavior (with a slightly worse contraction constant) as in \eqref{eq:pow_it_sa}.

Since $(T^*)^* = T$, one can define projection operators $P_{w,v}$ and $Q_{w, v}$ to obtain similar conclusions for $(T^*)^k(f)$. In particular, \eqref{eq:pow_non_sa} and \eqref{eq:op_norm_bd} hold with $T$ replaced by $T^*$, $v$ by $w$ (and $w$ by $v$), $Q_{v,w}$ with $Q_{w,v}$, and  $|\lambda_2|$ replaced with $|\lambda_2^*| :\, = \sup \{|\lambda| \,:\, \lambda \in \sigma(T^*), \lambda \ne r\}< r$. The modified version of \eqref{eq:non_sa_geom_cgence} implies $(T^*)^k(f)/r^k \langle f, v \rangle$ converges geometrically to $w$. Thus, in \eqref{eq:main_decomp}, $\Delta_k = (T Q_{v,w})^k$ and $\Delta_k^* = (T^* Q_{w,v})^k$. 
\end{remark}

 \section{The general $d$-Markov CEStGM specification}\label{sec:d_nn}

So far the CEStGM model is a $1$-Markov model. 
Our aim in this section is to generalize 
the conditional specification given in 
(\ref{eq:conditionalspecification}) and 
(\ref{eq:fcondgeneral}) to a $d$-Markov model to allow for 
$d$ additional lags. 

We define a stochastic 
 process $\{X_{t}\}_{t\in \mathbb{Z}}$ such that the conditional distribution of $X_{t}^{(a)}$ given $\mathcal{H}_{(a,t)}$ is 
\begin{eqnarray}
\label{eq:fcond}
  p(x_{t}^{(a)}|\mathcal{H}_{(a,t)}) &\propto&
       \exp\left({\bf s}^{(a)}(x_{t}^{(a)})^{\top}{\boldsymbol \Theta}_{a}(\mathcal{H}_{a,t})+c^{(a)}(x_{t}^{(a)})\right) \quad x_{t}^{(a)}\in \mathcal{X}^{(a)}
\end{eqnarray}
where ${\boldsymbol \Theta}_{a}(\mathcal{H}_{a,t})$ is the $K_{a}$-dimensional vector
\begin{equation}
\label{eq:dlag}
{\boldsymbol \Theta}_{a}(\mathcal{H}_{a,t}) = 
{\boldsymbol \theta}^{(a)} +\sum_{b\in [p]\backslash\{a\}}\Phi^{(a,b)}_{0}{\bf s}^{(b)}(x_{t}^{(b)})+
  \sum_{\ell=1}^{d} \sum_{b\in [p]}
   \left[\Phi^{(a,b)}_{-\ell}{\bf s}^{(b)}(x_{t-\ell}^{(b)}) +
                                                \Phi^{(a,b)}_{\ell}{\bf s}^{(b)}(x_{t+\ell}^{(b)})   \right] 
\end{equation}
 with ${\boldsymbol \theta}^{(a)} =
(\theta_{1}^{(a)},\ldots,\theta_{K}^{(a)})^{\top}$,
  ${\bf s}^{(a)}(x^{(a)}) = (s^{(a)}_{1}(x^{(a)}),\ldots,
 s^{(a)}_{K}(x^{(a)}))^{\top}$ and $\Psi_{\ell}^{(a,b)}$ are 
 $(K_{a}\times K_{b})$-dimensional matrices. 
 The difference between (\ref{eq:dlag}) and (\ref{eq:1lag}) is the inclusion of additional lags.

For the purpose of defining the joint distribution we 
define the $p\times p$ block matrix
\begin{eqnarray*}
 \boldsymbol{\Psi}_{\ell} = \left(\Phi^{(a,b)}_{\ell};1\leq a,b \leq p\right) \textrm{ for }\ell\in \{-1,0,1\}.
 \end{eqnarray*}
 In order to ensure that the conditional distributions form a compatable joint distribution we require that 
$\Phi^{(a,a)}_0=0$ for $a\in [p]$,  $(\Phi^{(a,b)}_{0})^{\top} = \Phi^{(b,a)}_0$
and  $(\boldsymbol{\Psi}_{\ell})^{\top} = \boldsymbol{\Psi}_{-\ell}$ (or equivalently, 
$(\Phi^{(a,b)}_{-\ell})^{\top} = \Phi^{(b,a)}_{\ell})$.

To obtain the distribution of the stochastic process we need to define the corresponding interaction kernel $R$, which requires us to define an appropriate $G$ and $H$ function. In the case that $d=1$, $G$ is simply a function of $x_t = (x_{t}^{(1)},\ldots,x_{t}^{(p)})$ and $H$ is a function of $x_{t}$ and 
$x_{t+1}$. On the other hand, when $d>1$, $G$ is a function of $d$ lags $x_{1},\ldots,x_{d}$
and $H$ a function of $2d$-lags 
$x_{d+1},\ldots,x_{2d}$. This leads to a distribution that (a) has the conditional specification in (\ref{eq:fcond}) and  (b) a stochastic process that is stationary. 
With this in mind we define $G$ as
\begin{eqnarray*}
 G(x_{1},\ldots,x_{d}) 
  = \exp\left(\sum_{t=1}^{d}{\boldsymbol \theta}^{\top}
      {\bf s}(x_{t})+
      \frac{1}{2} \sum_{t=1}^{d}
      {\bf s}(x_{t})^{\top}{\Psi}_0 {\bf s}(x_{t})
+    \sum_{t=1}^{d}\sum_{\ell=1}^{d-t}
  {\bf s}(x_{t})^{\top}{\Psi}_{\ell} {\bf s}(x_{t+\ell})+
  \sum_{t=1}^{d}{\boldsymbol 1}_{p}^{\top}{\bf c}(x_{t})\right)   
\end{eqnarray*}
where ${\bf c}(x_{t}) = (c^{(1)}(x_{t}^{(1)}),\ldots,c^{(p)}(x_{t}^{(p)}))^{\top}$,
${\boldsymbol 1} = (1,\ldots,1)^{\top}$,
${\bf s}(x)^{\top} = \textrm{vec}({\bf
   s}^{(1)}(x^{(1)}),\ldots, {\bf s}^{(p)}(x^{(p)}))$,  ${\boldsymbol
   \theta}^{\top} = \textrm{vec}({\boldsymbol \theta}^{(1)},\ldots, {\boldsymbol
   \theta}^{(p)})$. Whereas
$G$ contains all the interactions between and within the entries of $x_{1}$ to $x_{d}$. 
$H$ is defined similarly, but only contains interactions
\emph{between} $x_{1},\ldots,x_{d}$ and
$x_{d+1},\ldots,x_{2d}$
\begin{eqnarray}      
&& H(x_{1},\ldots,x_{d};x_{d+1},\ldots, x_{2d}) 
  =  \exp\left(\sum_{t=1}^{d}\sum_{\ell=d-t+1}^{d}
      {\bf s}(x_{t})^{\top}\Psi_{\ell} {\bf s}(x_{t+\ell})\right).    \label{eq:Hdefind}
\end{eqnarray}
In the following graphic we illustrate the interactions in a specific interaction kernel for the
case $d=2$ and $p=2$. The lines correspond to interactions between the nodes. The colors denote to which function 
($G$ or $H$) the interactions belong to. 
\begin{center}
  \includegraphics[scale =0.13]{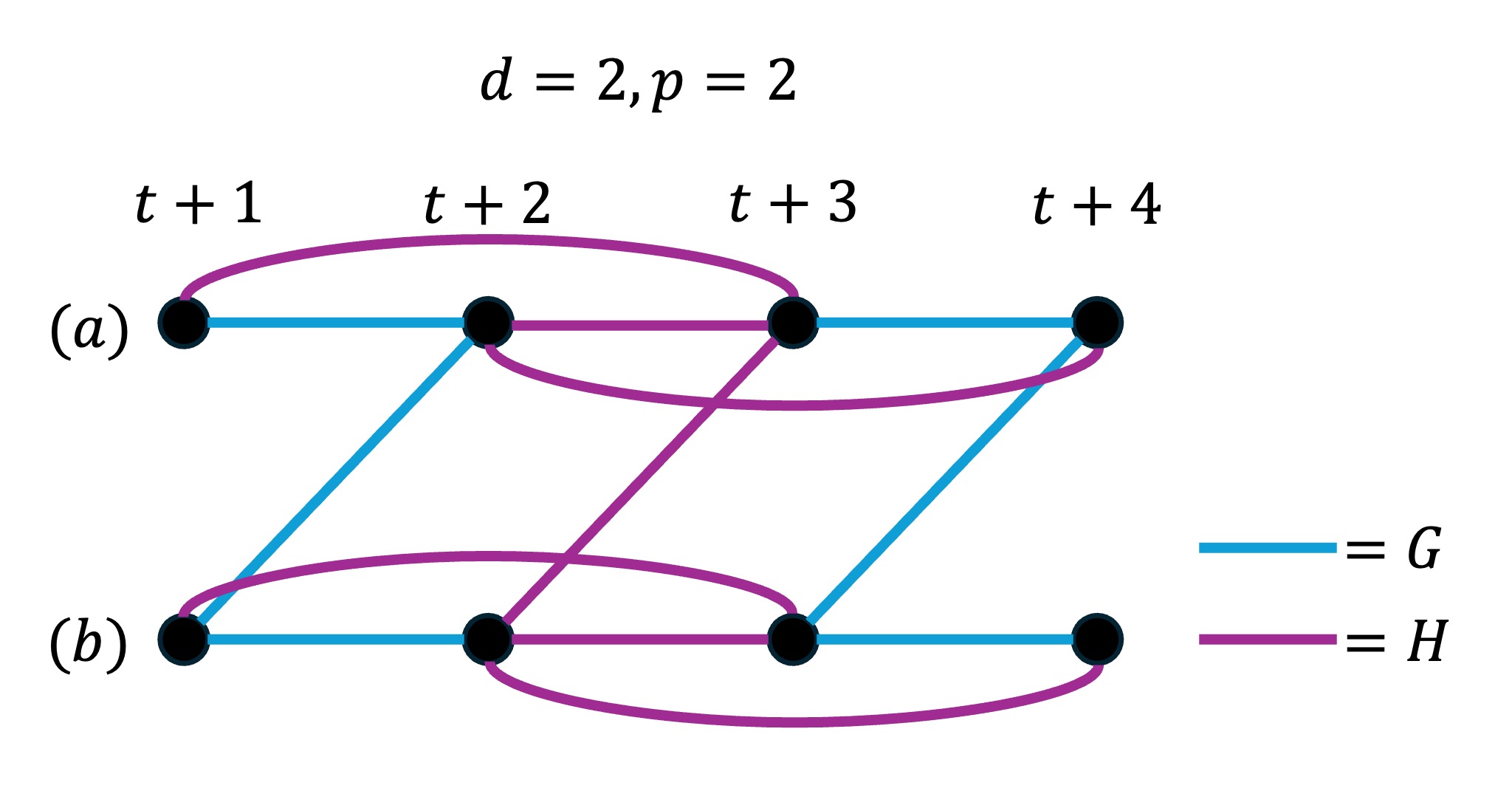}
 \end{center} 
Recall $\m X_d = \underbrace{\m X \times \ldots \times \m X}_{d \text{ times}}$, and let $\mu_d = \underbrace{\m \mu \otimes \ldots \otimes \mu}_{d \text{ times}}$ denote the $d$-fold product measure on $\m X_d$. Using $G$ and $H$, we follow our earlier prescription in \eqref{eq:int_ker} to define the interaction kernel $R$, now defined on $\m X_d \times \m X_d$, as 
$R(\cdot, \cdot) = 
G(\cdot)^{1/2}H(\cdot; \cdot)G(\cdot)^{1/2}$. 

As in \eqref{eq:int_oper_main}, define the {\it linear integral operator}
$T : L^2(\m X_d, \mu_d) \to L^2(\m X_d, \mu_d)$ corresponding to $R$ as, 
\begin{align}\label{eq:int_oper_maind}
T(f)[y] = \int_{\m X_d} R(x, y) f (x) \, \mu_d(dx), \quad y \in \m X_d. 
\end{align}
 From Theorem \ref{thm:KR_without_int_main} and Remark \ref{rem:only_positive}, let $v$ and $w$ denote the unique a.e. positive eigenfunctions of $T$ and $T^*$ respectively, scaled so that $\langle v, w\rangle = 1$.

The important feature in the construction of 
$R(\cdot;\cdot)$ is that the connectivity at each time point $t$ can be rearranged, such that there exists functions $E_{j}$ and $\widetilde{E}_{d-j}$ such that for $j\in \{1,\ldots,d-1\}$, we have, 
\begin{align}\label{eq:R_prop}
& \prod_{t=1}^{n-1}R({\bf x}_{[(t-1)d+1:td]},
{\bf x}_{[td+1:(t+1)d]}) = E_{j}(x_{1},\ldots,x_{j}) \times \\ & \left[\prod_{t=1}^{n-2}
  R({\bf x}_{[(t-1)d+1+j:td+j]},
{\bf x}_{[td+1+j:(t+1)d+j]}) \right] \times 
  \widetilde{E}_{d-j}(x_{nd-d+j+1},\ldots,x_{nd}). \notag
 \end{align}
In other words, there is a shift invariance in the interactions between
$x_{t},\ldots,x_{t+d}$ and $x_{t+d+1},\ldots,x_{t+2d}$ for all $t$. 
This forms the core component in the stationarity result that we state below. 

\begin{theorem}\label{theorem:stationarityd} 
 Suppose that the conditional distributions are specified by
  (\ref{eq:fcond}) with $T$ defined as
  in (\ref{eq:int_oper_maind}). We assume that $T: L_{2}(\mathcal{X}_d)
  \rightarrow L_{2}(\mathcal{X}_{d})$ is a compact  operator.
Then, there exists a unique strictly stationary stochastic process $\{X_t\}_{t \in \mb Z}$ with $X_t = (X_t^{(1)}, \ldots, X_t^{(p)})$ and $X_t^{(a)} \in \m X^{(a)}$, such that the conditional distribution of any $X_t^{(a)}$ given $\m H_{(a, t)}$ is given by 
\eqref{eq:fcond}. 

Let $r :\, = r(T) = r(T^*)$ be the spectral radius of $T$ and $T^*$. Let $v$ and $w$ be a.e. positive eigenfunctions of $T$ and $T^*$ corresponding to $r$, appropriately scaled so that $\langle v, w\rangle = 1$. Then, for any $n \in \mb N$ and shift $m \in \mb Z$, the joint density function of $(X_{m},\ldots,X_{m+dn-1})$ (with respect to the $n$-fold product of $\mu$) is given by 
\begin{eqnarray*}
&&p_{[1:nd]}(x_{1},\ldots, x_{nd})\\
  &=&  \frac{1}{r^{n-1}}
  v(x_{1},\ldots,x_{d}) \left[\prod_{t=1}^{n-1}R(x_{(t-1)d+1},\ldots
  x_{td};x_{td+1},\ldots,x_{(t+1)d})\right]
  w(x_{(n-1)d+1},\ldots,x_{nd}).
  \end{eqnarray*}
\end{theorem}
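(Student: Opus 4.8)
The plan is to reduce the $d$-Markov model to the $1$-Markov setting of Theorem~\ref{theorem:one_stationarity} by a blocking argument, and then to promote the resulting block-level stationarity to stationarity under arbitrary integer shifts using the rearrangement identity \eqref{eq:R_prop}. Group the scalar coordinates into super-states $Y_s :\,= (X_{(s-1)d+1}, \ldots, X_{sd}) \in \m X_d$. The interaction kernel $R = G^{1/2} H G^{1/2}$ on $\m X_d \times \m X_d$ is strictly positive, so the corresponding operator $T$ in \eqref{eq:int_oper_maind} is strongly positive; with $T$ assumed compact, Theorem~\ref{thm:KR_without_int_main} applies, and I let $v, w$ denote the unique a.e.\ positive eigenfunctions of $T$ and $T^*$ with $\langle v, w\rangle = 1$. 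The kernel $R$ and operator $T$ now play exactly the roles they did in Theorem~\ref{theorem:one_stationarity}, but over the product space $\m X_d$ with reference measure $\mu_d$. I would therefore apply the construction underlying Theorem~\ref{theorem:one_stationarity} (using only its existence, joint-density, consistency and uniqueness parts, not its conditional-specification conclusion) to produce a process $\{Y_s\}_{s \in \mb Z}$ that is strictly stationary under $s \mapsto s+1$ with joint density $r^{-(n-1)} v(y_1) \big[\prod_{t=2}^n R(y_{t-1}, y_t)\big] w(y_n)$. Unblocking gives precisely the claimed $p_{[1:nd]}$, and its consistency under integrating out a terminal block is immediate from $\int R(x,y) v(x)\,\mu_d(dx) = r\,v(y)$ and $\int R(x,y) w(y)\,\mu_d(dy) = r\,w(x)$.

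Two items remain: (i) the \emph{unblocked} process has the node-wise conditional specification \eqref{eq:fcond}, and (ii) it is stationary under a shift by a single index, not merely by blocks of length $d$. For (i), I would compute the full conditional of an interior coordinate $X_t^{(a)}$ directly from $p_{[1:Nd]}$ for $N$ large. The functions $G$ and $H$ are built to collect exactly the bilinear terms ${\bf s}^{(a)}(x_t^{(a)})^\top \Phi_\ell^{(a,b)} {\bf s}^{(b)}(x_{t+\ell}^{(b)})$ for $|\ell| \le d$ that enter ${\boldsymbol \Theta}_a(\m H_{(a,t)})$ in \eqref{eq:dlag}; hence the conditional density, being proportional to those factors of the joint density that contain $x_t^{(a)}$, collapses to the exponential-family form \eqref{eq:fcond}. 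For $t$ sufficiently interior the boundary factors $v, w$ do not involve $x_t^{(a)}$ and cancel, and a limiting argument as $N \to \infty$ (identical to the one in the proof of Theorem~\ref{theorem:one_stationarity}) delivers the conditional for every $t \in \mb Z$.

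For (ii), the decisive tool is \eqref{eq:R_prop}. Since block-shift invariance (shift by $d$) is already inherited from stationarity of $\{Y_s\}$, it suffices to establish invariance under a shift by each $j \in \{1, \ldots, d-1\}$; together with the shift by $d$ these generate invariance under all of $\mb Z$. I would compute the law of the non-aligned window $(X_{1+j}, \ldots, X_{nd+j})$ by marginalizing the aligned window $(X_1, \ldots, X_{(n+1)d})$, whose density is known, over the $j$ leftmost and $d-j$ rightmost coordinates. Applying \eqref{eq:R_prop} rewrites the product of kernels in the $j$-shifted alignment and produces the boundary corrections $E_j(x_1, \ldots, x_j)$ and $\widetilde E_{d-j}$; the plan is then to show that integrating $v$ against $E_j$ over the discarded left coordinates, in concert with the adjacent shifted kernel and the eigenrelation $Tv = rv$, reconstitutes $v$ at the shifted left block, and symmetrically for $\widetilde E_{d-j}$, $w$ and $T^*w = rw$ at the right. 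The outcome is $p_{[1:nd]}$ evaluated at the shifted coordinates, yielding the required equality in law.

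The main obstacle is precisely this boundary bookkeeping in step (ii): one must track how the correction factors $E_j$ and $\widetilde E_{d-j}$ interact with the eigenfunctions and the neighbouring kernels so that the shifted boundaries are exactly regenerated, since the variables needed to rebuild the shifted eigenfunction blocks are supplied partly by the boundary factors and partly by the first (resp.\ last) shifted kernel, so the absorption is not a clean single integration and must be carried out with care. This is the genuinely new ingredient beyond the $1$-Markov argument, and it rests entirely on the shift-invariance encoded in \eqref{eq:R_prop}. Once (i) and (ii) hold, existence, the density formula, and strict stationarity are established; uniqueness follows as in Theorem~\ref{theorem:one_stationarity} from Remark~\ref{rem:only_positive}, since the a.e.\ positive eigenfunctions $v, w$ are unique up to the normalization $\langle v, w\rangle = 1$ and hence pin down every finite-dimensional law.
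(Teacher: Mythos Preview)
Your overall architecture matches the paper's: block into super-states $Y_s\in\m X_d$, invoke Theorem~\ref{theorem:one_stationarity} at the block level, then upgrade block-shift stationarity to arbitrary integer shifts via the rearrangement identity \eqref{eq:R_prop}. Item (i) is fine. The gap is in your mechanism for (ii). You propose that integrating $v$ against $E_j$ over the discarded left coordinates, ``in concert with the adjacent shifted kernel and the eigenrelation $Tv=rv$,'' directly reconstitutes $v$ at the shifted left boundary. That does not go through: the eigenrelation involves integrating over a \emph{full} $d$-block against the kernel $R$, whereas the boundary step integrates only $j<d$ coordinates against $E_j$, which is not $R$. There is no algebraic route from $Tv=rv$ alone to $\int E_j(x_{[1:d+j]})\,v(x_{[1:d]})\,\prod_{i\le j}\mu(dx_i)\propto v(x_{[j+1:d+j]})$.

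The paper's device is different and is the missing idea. Define $v_j$ (resp.\ $w_j$) as exactly that left (resp.\ right) boundary integral. Now compute the density of the \emph{same} shifted window from two nested aligned windows: once from a window that is one block longer on each side (which, after the extra block integrations via $T$ and $T^*$, yields boundary factors $Tv_j$ and $T^*w_j$), and once from the minimal aligned window (which yields $v_j$ and $w_j$). Because the unblocked process $\{X_t\}$ is already a well-defined stochastic process, these two marginalizations must agree, forcing $Tv_j\propto v_j$ and $T^*w_j\propto w_j$. Since $v_j,w_j$ are strictly positive, Remark~\ref{rem:only_positive} (uniqueness of the positive Perron eigenfunction), not the eigenrelation itself, gives $v_j\propto v$ and $w_j\propto w$, and the shifted density collapses to $p_{[1:nd]}$. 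So the key tool in (ii) is Krein--Rutman \emph{uniqueness} fed by a two-window comparison, not a direct absorption via $Tv=rv$.
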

Despite the apparent similarity, this result does not directly follow from Theorem \ref{theorem:one_stationarity} for reasons explained below. To keep the exposition simple, we focus on the $d = 2$ case, which nonetheless encapsulates most of the subtleties that arise when $d > 1$. Employing Theorem \ref{theorem:one_stationarity}, one obtains a {\it vector stationary process} $\{\xi_t = (Y_t, Z_t)\}_{t \in \mb Z}$ on $\m X_2$. We project this on a different timeline to {\it define} a stochastic process $\{X_t\}_{t \in \mb Z}$, where $X_{2t-1} = Y_t$ and $X_{2t} = Z_t$. By virtue of vector stationarity of $\{\xi_t\}$, it follows that for any {\it even} $s \in \mb Z$, $(X_1, \ldots, X_{2n}) \overset{d}= (X_{s+1}, \ldots, X_{s + 2n})$. However, this does not imply $\{X_t\}$ is stationary, since it is not even clear whether $X_{2t-1} \overset{d} = X_{2t}$. The key to establishing stationarity is to show that for {\it any} shift $s \in \mb Z$, $(X_1, \ldots, X_{2n}) \overset{d}= (X_{s+1}, \ldots, X_{s + 2n})$, with common joint distribution $p_{[1:2n]}$ as above. We establish this by exploiting the structural properties of $R$ (c.f. \eqref{eq:R_prop}) and the uniqueness (upto scaling) of the positive eigenfunctions of $T$ and $T^*$.

In Section \ref{sec:prob_cestgm} we obtain
$\beta$-mixing (Theorem \ref{theorem:mixing}) 
of the $1$-Markov CEStGM process and a result that allows us to simulate the process the $1$-Markov CEStGM process 
(Theorem \ref{theorem:simulation}). 
The same result holds for the $d$-Markov CEStGM process under the assumption that the interaction kernel $R$ is Hilbert-Schmidt. 

In the theorem below we show that the $d$-Markov 
CEStGM process statisfies the three Markov properties. 
The result is given without proof, as the proof is identical to the proof of Theorems
\ref{theorem:localMarkov} and \ref{theorem:globalMarkov}.

\begin{theorem}\label{theorem:graphd}
Suppose the conditions in Theorem \ref{theorem:stationarityd}
hold. We define the neighbourhood set as \begin{eqnarray*}
\mathcal{N}_{a} = \{b\in [p]\backslash\{a\};\textrm{ if }\Phi^{(a,b)}_{r}\neq 0 
\textrm{ for some }r\in \{-d,-d+1,\ldots,d-1,d\}\}.
\end{eqnarray*}
With 
such $\{\mathcal{N}_{a};a\in [p]\}$,
we obtain analogous results to Theorems
\ref{theorem:localMarkov} \& \ref{theorem:globalMarkov}.
\end{theorem}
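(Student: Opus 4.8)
The plan is to transport the argument behind Theorems \ref{theorem:localMarkov} and \ref{theorem:globalMarkov} to the block interaction kernel $R$ on $\m X_d \times \m X_d$ built from $G$ and $H$ as in \eqref{eq:Hdefind}. Since the global Markov property implies the local (and hence pairwise) property, it suffices to establish the global statement; the local assertion at a node $a$ with separator $\m N_a$ is the special case $A = \{a\}$, $S = \m N_a$, for which $\alpha = \{a\}$ and $\beta = V \setminus (\m N_a \cup \{a\})$. As in the $1$-Markov case the passage to the limit requires $\beta$-mixing of the process, which holds under the Hilbert--Schmidt condition via the $d$-Markov analogue of Theorem \ref{theorem:mixing}; I would therefore carry the Hilbert--Schmidt hypothesis throughout.

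The first and essentially only genuinely new ingredient is the $d$-Markov version of the Hammersley--Clifford factorisation (Lemma \ref{lemma:cliques}). Both $G$ and $H$ are exponentials of sums of bilinear forms $s^{(a)}(x_t^{(a)})^{\top}\Phi_\ell^{(a,b)} s^{(b)}(x_{t+\ell}^{(b)})$ with $\ell$ ranging over $\{-d,\ldots,d\}$, and each such summand is nonzero only when $\Phi_\ell^{(a,b)} \ne 0$, which by the definition of $\m N_a$ in Theorem \ref{theorem:graphd} forces $(a,b) \in E$, i.e. $a$ and $b$ lie in a common clique of $\m G = (V,E)$. Assigning each cross term to a clique containing $\{a,b\}$ and each diagonal term ($\theta^{(a)}$, $c^{(a)}$) to the clique of $a$, one obtains $G = \prod_{D \in \m C} G_D$ and $H = \prod_{D \in \m C} H_D$, where $G_D, H_D$ depend only on the node-coordinates $\{x_t^{(a)} : a \in D\}$ across the $d$ time points of a block. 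Hence $R = \prod_{D \in \m C} R_D$ with $R_D = G_D^{1/2} H_D G_D^{1/2}$, and substituting into the $d$-Markov joint density of Theorem \ref{theorem:stationarityd} produces the block analogue of \eqref{eq:pnpd2}: a product $\prod_{t}\prod_{D \in \m C} R_D$ flanked by the boundary eigenfunctions $v$ and $w$.

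Granting this factorisation, the separator argument proceeds verbatim. Fix $(A,B,S)$ with $A \perp_{\m G} B \mid S$, take $\alpha$ to be the union of connectivity components of $\m G_{V \setminus S}$ meeting $A$, and set $\beta = V \setminus (\alpha \cup S)$. Because $S$ separates $\alpha$ from $\beta$, no clique straddles the two, so $\m C = \m C_\alpha \cup \m C_\beta$ (disjointly), with $\m C_\alpha$ the cliques inside $\alpha \cup S$. Forming, as in Lemma \ref{lemma:separator}, the conditional density of the windowed coordinates $\bX^\alpha_{[-n,n]}$ given $\bX^S_{[-n-L-k_1,n+L+k_1]}$, $\bX^\beta_{[-n-L-k_2,n+L+k_2]}$ and the tail $\bX^\alpha_{(\infty,n+L)}$, every factor not depending on the windowed $\alpha$-coordinates cancels between numerator and denominator: this includes the a.e. positive boundary factors $v, w$ (functions of blocks at the window ends, whose coordinates are all in the conditioning set) and the entire product $\prod_{D \in \m C_\beta} R_D$ (depending only on the conditioned $\beta \cup S$ coordinates). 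What remains is a ratio involving only $\prod_{D \in \m C_\alpha} R_D$, yielding for all $n, L, k_1, k_2 > 0$ the finite conditional independence $\sigma(\bX^\alpha_{[-n,n]}) \independent \sigma(\bX^\beta_{[-n-L-k_2,n+L+k_2]}) \mid \sigma(\bX^S_{[-n-L-k_1,n+L+k_1]}, \bX^\alpha_{(\infty,n+L)})$. Sending $n,L,k_1,k_2 \to \infty$ and invoking Theorem \ref{theorem:cond-independenceSigma}, together with the fact that $\beta$-mixing forces $\bigcap_{L} \sigma(\bX^\alpha_{(\infty,n+L)})$ to be trivial, gives $\m F^\alpha \independent \m F^\beta \mid \m F^S$, whence $\m F^A \independent \m F^B \mid \m F^S$.

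The step demanding the most care, and the reason one re-derives the factorisation rather than citing Theorem \ref{theorem:localMarkov} directly, is the bookkeeping forced by the block structure: each node now recurs at $d$ consecutive time points inside a block, and a clique factor $R_D$ couples these across neighbouring blocks. I would use the shift-invariance \eqref{eq:R_prop} of $R$ to align the finite windows consistently, and verify that the node-level cancellation in the conditional density is untouched by the intra-block temporal interactions, since those interactions still only ever connect nodes sharing a clique. Once this block-level factorisation and alignment are confirmed, the measure-theoretic limiting argument underlying Theorems \ref{theorem:localMarkov} and \ref{theorem:globalMarkov} applies without further change.
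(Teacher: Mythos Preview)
Your proposal is correct and follows the same route as the paper: the paper states Theorem \ref{theorem:graphd} without proof, noting only that the argument is identical to that of Theorems \ref{theorem:localMarkov} and \ref{theorem:globalMarkov}, and you have accurately reconstructed what that entails --- the Hammersley--Clifford factorisation of the block interaction kernel over cliques of $\mathcal{G}$, the separator-based cancellation in the conditional density (Lemma \ref{lemma:separator}), and the passage to the limit via Theorem \ref{theorem:cond-independenceSigma} using $\beta$-mixing. Your additional remarks on the block bookkeeping and the role of \eqref{eq:R_prop} are sensible elaborations of precisely the points one must check when carrying the $d=1$ argument over.
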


\subsection*{Concluding remarks}

In this article, we introduced CEStGM, a class of multivariate time series models
which conveniently encode a process-wide conditional independence  graph. In CEStGM, a 
$p$-dimensional multivariate time series is defined in terms of  
$p$, separate, conditional distributions;
the general form is given in (\ref{eq:fcond}).
We show that these $p$ conditional distributions uniquely define a
positive kernel $R$ which we call the {\it interaction kernel}. The interaction kernel and its associated linear integral operator are instrumental in verifying stationarity.  Specifically,
given the conditional specification in (\ref{eq:fcond}), we show that if the parameters satisfy the   
 following process-wide compatability conditions (i)
$\Phi_0^{(a,a)} = 0$, $\Phi^{(a,b)}_{0} = (\Phi^{(b,a)}_0)^{\top}$, 
$\Phi^{(a,b)}_{\ell} = (\Phi^{(b,a)}_{-\ell})^{\top}$ (for $\ell\in \{1,\ldots,d\}$) and (ii) the corresponding integral operator is compact, then the existence of a stationary process with the stated conditionals is ensured. A sufficient condition for (ii) to hold is that the interaction kernel is Hilbert--Schmidt; thus conditions (i) and (ii) are simple to verify on a case by case basis. Under the Hilbert--Schmidt condition we show that CEStGM is ergodic and geometrically $\beta$-mixing. Further,  we show the Hilbert-Schmidt condition also implies 
that the sparsity structure of 
$\{\Phi_{\ell}^{(a,b)}\}$ leads to a process-wide conditional independence graph that satisfies  process-wide versions of the classical Markov properties.

Kernels and integral operators share a rich connection with many prominent developments in statistics. For example, symmetric positive-definite kernels are fundamental to the construction of 
reproducing kernel Hilbert spaces or covariance kernels of Gaussian processes. Markov kernels have played a major role in time-series modeling as well as Markov chain Monte Carlo (MCMC) algorithms for scientific computation, notably in Bayesian statistics. Interestingly, our interaction kernel is neither positive-definite (it may not even be symmetric), nor is it Markovian. All the technical developments in this article instead stem solely from positivity of the interaction kernel; thanks to the fascinatingly rich spectral properties of positive linear operators, whose application in statistics is less prevalent to the best of our knowledge. Using this machinery we develop the crucial probabilistic tools, which, as far as are aware, are new. 

In the case that $\{X_{t}\}$ are independent, identically distributed high dimensional random vectors the partial likelihood together with 
a penalty has shown promise in 
estimating the conditional independence graph \cite{p:yan-14,p:yan-15,p:rav-10}. 
We conjecture that the partial likelihood can also be utilized to learn the non-zero parameters in CEStGM and thus estimate the conditional independence process-wide graph. In future work, we hope to 
use the probabilistic properties derived in this article, such as 
ergodicity and geometric $\beta$-mixing, to develop statistical theory for such estimation methods. Another interesting direction would be to develop Bayesian estimation for CEStGM, using the partial likelihood as a pseudo-likelihood \cite{p:chernozhukov-hong-2003,p:atchade-2017}, and develop theoretical guarantees for parameter estimation and graph selection.

\subsection*{Acknowledgments}

The research of AB and SSR is partially supported by the National
Science Foundation (grants DMS-2210689 and DMS-2210726
respectively). The authors thank 
Dr.\ Jochen Gl\"uck, whose clear and detailed comments on Math StackExchange helped guide the authors to \cite{schaefer1974banach}, and Jaeseon Lee, for careful reading of the manuscript.

\bibliography{Biblio/graph.bib,Biblio/FA_refs.bib,Biblio/tsreg}

\begin{thebibliography}{87}
\providecommand{\natexlab}[1]{#1}
\providecommand{\url}[1]{\texttt{#1}}
\expandafter\ifx\csname urlstyle\endcsname\relax
  \providecommand{\doi}[1]{doi: #1}\else
  \providecommand{\doi}{doi: \begingroup \urlstyle{rm}\Url}\fi

\bibitem[Alzaid and Al-Osh(1990)]{p:alz-al-90}
A.~A. Alzaid and M.~Al-Osh.
\newblock An integer-valued $p$th order autoregressive structure process.
\newblock \emph{Journal of Applied Probability}, 27:\penalty0 314--324, 1990.

\bibitem[Arnold et~al.(2002)Arnold, Castillo, and Sarabia]{p:arn-02}
B.~Arnold, E.~Castillo, and J.~M. Sarabia.
\newblock Conditionally specified distributions: An introduction.
\newblock \emph{Statistical Science}, 16:\penalty0 249--274, 2002.

\bibitem[ATCHAD{\'E}(2017)]{p:atchade-2017}
YVES~A ATCHAD{\'E}.
\newblock On the contraction properties of some high-dimensional
  quasi-posterior distributions.
\newblock \emph{The Annals of Statistics}, 45\penalty0 (5):\penalty0
  2248--2273, 2017.

\bibitem[Basu and Subba~Rao(2023)]{p:bas-sub-23}
S.~Basu and S.~Subba~Rao.
\newblock Graphical models for nonstationary time series.
\newblock \emph{Ann. Statist.}, 51\penalty0 (4):\penalty0 1453--1483, 2023.

\bibitem[Beare and Seo(2015)]{p:bea-15}
B.~K. Beare and J.~Seo.
\newblock Vine copula specifications for stationary multivariate markov chains.
\newblock \emph{Journal of Time Series Analysis}, 36:\penalty0 228--246, 2015.

\bibitem[Bedford and Cooke(2002)]{p:bed-02}
T.~Bedford and R.~M. Cooke.
\newblock Vines: A new graphical model for dependent random variables.
\newblock \emph{Annals of Statistics}, 30:\penalty0 1031--1068, 2002.

\bibitem[Benjamin et~al.(2003)Benjamin, Rigby, and
  Stasinopoulus]{p:ben-rig-sta-03}
M.~A. Benjamin, R.~A. Rigby, and M.~Stasinopoulus.
\newblock Generalized autoregressive moving average models.
\newblock \emph{Journal of the American Statistical Association}, 98:\penalty0
  214--223, 2003.

\bibitem[Besag(1974)]{p:bes-74}
J.~Besag.
\newblock Spatial interaction and the statistical analysis of lattice systems.
\newblock \emph{Journal of the Royal Statistical Society, Series B},
  36:\penalty0 192--236, 1974.

\bibitem[Billingsley(1994)]{billingsley2017probability}
Patrick Billingsley.
\newblock \emph{Probability and measure}.
\newblock John Wiley \& Sons, 1994.

\bibitem[Boelkins(1998)]{boelkins1998spectral}
Matthew~Richard Boelkins.
\newblock \emph{On the spectral radius of a positive operator}.
\newblock Syracuse University, 1998.

\bibitem[Bradley(2006)]{p:bra-06}
R.~C. Bradley.
\newblock Basic properties of strong mixing conditions. {A} survey and some
  open questions.
\newblock \emph{Probabiity Surveys}, 2:\penalty0 107--144, 2006.

\bibitem[Brook(1964)]{p:bro-64}
D.~Brook.
\newblock On the distinction between the conditional probability and the joint
  probability approaches in the specification of nearest-neighbour systems.
\newblock \emph{Biometrika}, 51:\penalty0 481--484, 1964.

\bibitem[Cheng et~al.(2017)Cheng, Li, Levina, and Zhu]{p:lev-17}
J.~Cheng, T.~Li, E.~Levina, and J.~Zhu.
\newblock High-dimensional mixed graphical models.
\newblock \emph{Journal of Computational and Graphical Statistics},
  26:\penalty0 367--378, 2017.

\bibitem[Chernozhukov and Hong(2003)]{p:chernozhukov-hong-2003}
Victor Chernozhukov and Han Hong.
\newblock An mcmc approach to classical estimation.
\newblock \emph{Journal of econometrics}, 115\penalty0 (2):\penalty0 293--346,
  2003.

\bibitem[Clifford and Hammersley(1971)]{p:cliff-ham-71}
Peter Clifford and John~M Hammersley.
\newblock Markov fields on finite graphs and lattices.
\newblock 1971.

\bibitem[Conway(1990)]{b:con-90}
J.~B. Conway.
\newblock \emph{A course in functional analysis}.
\newblock Springer, New York, 1990.

\bibitem[Dahlhaus(2000)]{p:dah-00}
R.~Dahlhaus.
\newblock Graphical interaction models for multivariate time series.
\newblock \emph{Metrika}, 51\penalty0 (2):\penalty0 157--172, 2000.

\bibitem[Davis et~al.(2003)Davis, Dunsmuir, and Streett]{p:dav-03}
R.~A. Davis, W.~T.~M. Dunsmuir, and S.~B. Streett.
\newblock Observation- driven models for poisson counts.
\newblock \emph{Biometrika}, 90:\penalty0 777--790, 2003.

\bibitem[Davis et~al.(2021)Davis, Fokianos, Holan, Joe, Livsey, and
  Lund]{p:dav-21}
R.~A. Davis, K.~Fokianos, Holan. S.~H. Holan, H.~Joe, J.~Livsey, and R.~Lund.
\newblock Count time series: A methodological review.
\newblock \emph{Journal of the American Statistical Association}, 116:\penalty0
  1533--1547, 2021.

\bibitem[Deb et~al.(2024)Deb, Kuceyeski, and Basu]{p:bas-24}
N.~Deb, A.~Kuceyeski, and S.~Basu.
\newblock Regularized estimation of sparse spectral precision matrices.
\newblock \emph{Preprint}, 2024.

\bibitem[Debaly and Truquet(2024)]{p:deb-tru-24}
M~Debaly and L.~Truquet.
\newblock Multivariate time series models for mixed data.
\newblock \emph{To appear Bernoulli}, 2024.

\bibitem[Debaly et~al.(2024)Debaly, Neumann, and Truquet]{p:tru-24}
M.~Debaly, M.H. Neumann, and L.~Truquet.
\newblock Mixing properties of nonstationary multivariate count processes.
\newblock \emph{Journal of Time Series Analysis}, 2024.

\bibitem[Deimling(2013)]{deimling2013nonlinear}
Klaus Deimling.
\newblock \emph{Nonlinear functional analysis}.
\newblock Springer Science \& Business Media, 2013.

\bibitem[Dey et~al.(2022)Dey, A., and Banjeree]{p:dey-22}
D.~Dey, Datta. A., and S.~Banjeree.
\newblock Graphical gaussian process models for hight multivariate spatial
  data.
\newblock \emph{Biometrika}, 109:\penalty0 993--1014, 2022.

\bibitem[Douc et~al.(2014)Douc, Moulines, and Stoffer]{b:dou-14}
Randal Douc, Eric Moulines, and David Stoffer.
\newblock \emph{Nonlinear Time Series: Theory, Methods and Applications with R
  Examples}.
\newblock Chapman and Hall, London, 2014.

\bibitem[Douc et~al.(2018)Douc, Moulines, Priouret, and
  Soulier]{b:douc2018markov}
Randal Douc, Eric Moulines, Pierre Priouret, and Philippe Soulier.
\newblock \emph{Markov chains}.
\newblock Springer, 2018.

\bibitem[Doukhan and Neumann(2019)]{p:dou-19}
P.~Doukhan and M.~Neumann.
\newblock Absolute regularity of semi-contractive {GARCH}-type processes.
\newblock \emph{Journal fo Applied Probability}, 56:\penalty0 91--115, 2019.

\bibitem[Doukhan et~al.(2023)Doukhan, Neumann, and Truquet]{p:dou-neu-tru-23}
P.~Doukhan, M.~Neumann, and L.~Truquet.
\newblock Stationarity and ergodic properties for some observation driven
  models in random environments.
\newblock \emph{Annals of Applied Probability}, 33:\penalty0 5145--5170, 2023.

\bibitem[Eichler(2003)]{p:eic-03}
M.~Eichler.
\newblock Graphical interaction models for time series: Parameter estimation
  and model selection.
\newblock \emph{Preprint}, 2003.

\bibitem[Eichler(2008)]{p:eic-08}
M.~Eichler.
\newblock Testing nonparametric and semiparametric hypotheses in vector
  stationary processes.
\newblock \emph{J. Multivariate Anal.}, 99:\penalty0 968--1009, 2008.

\bibitem[Eichler(2012)]{p:eic-12}
M.~Eichler.
\newblock Graphical modelling of multivariate time series.
\newblock \emph{Probability theory and related fields}, 233-268:\penalty0 2012,
  2012.

\bibitem[Erickson et~al.(1995)Erickson, Smith, and Laub]{erickson1995power}
M.A. Erickson, R.S. Smith, and A.J. Laub.
\newblock Power methods for calculating eigenvalues and eigenvectors of
  spectral operators on hilbert spaces.
\newblock \emph{International Journal of Control}, 62\penalty0 (5):\penalty0
  1117--1128, 1995.

\bibitem[Fasen-Hartmann and Schenk(2024)]{p:fas-24}
V.~Fasen-Hartmann and L.~Schenk.
\newblock Partial correlation graphs for continuous-parameter time series.
\newblock \emph{https://arxiv.org/pdf/2401.16970}, 2024.

\bibitem[Feign(1981)]{p:fei-81}
P.~D. Feign.
\newblock Conditional exponential families and representation theorem for
  asymptotic inference.
\newblock \emph{Ann. Statistics}, 9:\penalty0 597--603, 1981.

\bibitem[Ferland et~al.(2006)Ferland, Latour, and Oraichi]{p:lat-06}
R.~Ferland, A.~Latour, and D.~Oraichi.
\newblock Integer valued {GARCH} process.
\newblock \emph{Journal of Time Series Analysis}, 923-942:\penalty0 27, 2006.

\bibitem[Fiecas et~al.(2019)Fiecas, Leng, Liu, and Y.]{p:fie-18}
M.~Fiecas, C.~Leng, W.~Liu, and Yu. Y.
\newblock Spectral analysis of high dimensional time series.
\newblock \emph{Electronic Journal of Statistics}, 13:\penalty0 4079--4101,
  2019.

\bibitem[Florens et~al.(1990)Florens, Mouchart, and J.-M.]{b:flo-mou-rol-90}
J.-P. Florens, M.~Mouchart, and Rolin J.-M.
\newblock \emph{Elements of Bayesian Statistics}.
\newblock Marcel Dekker, New York, 1990.

\bibitem[Fokianos and Kedem(2004)]{p:fok-04}
K.~Fokianos and B.~Kedem.
\newblock Partial likelihood inference for time series following generalized
  linear models.
\newblock \emph{Journal of Time Series Analysis}, 25:\penalty0 173--197, 2004.

\bibitem[Fokianos et~al.(2009)Fokianos, Rahbek, and D.]{p:fok-09}
K.~Fokianos, A.~Rahbek, and Tjostheim. D.
\newblock Poisson autoregression.
\newblock \emph{Journal of the American Statistical Association}, 104:\penalty0
  1430--1439, 2009.

\bibitem[Foygel~Barber and Drton(2015)]{p:foy-15}
R.~Foygel~Barber and M.~Drton.
\newblock High-dimensional ising model selection with bayesian information
  criteria.
\newblock \emph{Electronic Journal of Statitics}, 9:\penalty0 567--607, 2015.

\bibitem[Franchi and Truquet(2023)]{p:tru-23}
G.~Franchi and L.~Truquet.
\newblock Time series models on the simplex, with an application to dynamic
  modeling of relative abundance data in ecology.
\newblock \emph{https://arxiv.org/abs/2302.00519}, 2023.

\bibitem[Franke and Subba~Rao(1995)]{p:fra-sub-95}
J.~Franke and T.~Subba~Rao.
\newblock Multivariate first order interger valued autoegression.
\newblock \emph{Technical Report: UMIST}, 1995.

\bibitem[Frobenius(1912)]{frobenius1912matrizen}
G.~Frobenius.
\newblock {\"U}ber {M}atrizen aus nicht negativen {E}lementen.
\newblock \emph{Sitzung der physikalisch-mathematischen}, 1912.

\bibitem[Gather et~al.(2002)Gather, Imhoff, and Fried]{p:gather-02}
U.~Gather, M.~Imhoff, and R.~Fried.
\newblock Graphical models for multivariate time series from intensive care
  monitoring.
\newblock \emph{Statistics in medicine}, 21\penalty0 (18):\penalty0 2685--2701,
  2002.

\bibitem[Georgie(2011)]{b:geo-11}
H-O. Georgie.
\newblock \emph{Gibbs Measures and Phase Transitions}.
\newblock De Gruyter, Berlin, 2011.

\bibitem[Golub and Van~Loan(2013)]{golub2013matrix}
Gene~H Golub and Charles~F Van~Loan.
\newblock \emph{Matrix computations}.
\newblock JHU press, 2013.

\bibitem[Grimmett(1973)]{p:grimmett-73}
Geoffrey~R Grimmett.
\newblock A theorem about random fields.
\newblock \emph{Bulletin of the London Mathematical society}, 5\penalty0
  (1):\penalty0 81--84, 1973.

\bibitem[Hasleck and Waldorp(2020)]{p:has-20}
J.~M.~B. Hasleck and L.~J. Waldorp.
\newblock mgm: Estimating time-varying mixed graphical models in high
  dimensional data.
\newblock \emph{Journal of Statistical Software}, 93:\penalty0 1--46, 2020.

\bibitem[Hobert and Casella(1998)]{p:hob-98}
J.~P. Hobert and G.~Casella.
\newblock Functional compatibility, markov chains, and gibbs sampling with
  improper posteriors.
\newblock \emph{Journal of computational and graphical statistics}, 7:\penalty0
  42--60, 1998.

\bibitem[Holmes(1968)]{holmes1968formula}
R.B. Holmes.
\newblock A formula for the spectral radius of an operator.
\newblock \emph{The American Mathematical Monthly}, 75\penalty0 (2):\penalty0
  163--166, 1968.

\bibitem[Jalai et~al.(2011)Jalai, Ravikumar, Vasuki, and Sanghavi]{p:jal-11}
A.~Jalai, P.~Ravikumar, V.~Vasuki, and S.~Sanghavi.
\newblock On learning discrete graphical models using group sparse
  regularisation.
\newblock \emph{AISTAT}, 14, 2011.

\bibitem[Jentzsch(1912)]{jentzsch1912integralgleichungen}
R.~Jentzsch.
\newblock {\"U}ber integralgleichungen mit positivem kern.
\newblock \emph{Journal f\"ur {R}eine and {A}ngewandte {M}athematik}, 1912.

\bibitem[Jia et~al.(2023)Jia, Kechagias, Livsey, Lund, and Pipiras]{p:jia-23}
Y.~Jia, S.~Kechagias, J.~Livsey, R.~Lund, and V.~Pipiras.
\newblock Latent gaussian count time series.
\newblock \emph{Journal of the American Statistical Association}, 118:\penalty0
  596--606, 2023.

\bibitem[Joe(1996)]{p:joe-96}
H.~Joe.
\newblock Time series models with univariate margins in the convolution-closed
  infinitely divisible class.
\newblock \emph{Journal of Applied Probability}, 33:\penalty0 664--677, 1996.

\bibitem[Jung et~al.(2015)Jung, Hannak, and Goertz]{jung2015graphical}
A.~Jung, G.~Hannak, and N.~Goertz.
\newblock Graphical lasso based model selection for time series.
\newblock \emph{IEEE Signal Processing Letters}, 22:\penalty0 1781--1785, 2015.

\bibitem[Kim et~al.(2024)Kim, D\"uker, Fisher, and
  Pipiras]{p:kim-due-fis-pip-24}
Y.~Kim, M.~D\"uker, Z.F. Fisher, and V.~Pipiras.
\newblock Latent gaussian dynamic factor modelling and forecasting for
  multivariate count time series.
\newblock 2024.

\bibitem[Kolar et~al.(2010)Kolar, Song, Ahmed, and Xing]{p:kol-10}
M.~Kolar, L.~Song, A.~Ahmed, and E.~Xing.
\newblock Estimation time-varying networks.
\newblock \emph{Annals of Applied Statistics}, 4:\penalty0 94--123, 2010.

\bibitem[Koller and Friedman(2009)]{b:kol-09}
Daphne Koller and Nir Friedman.
\newblock \emph{Probabilistic graphical models: principles and techniques}.
\newblock MIT press, 2009.

\bibitem[Kong and Lund(2023)]{p:kon-lun-24}
J.~Kong and R.~Lund.
\newblock Poisson count time series.
\newblock \emph{https://arxiv.org/pdf/2310.10798}, 2023.

\bibitem[Krampe and Paparoditis(2025)]{p:kra-25}
J.~Krampe and E.~Paparoditis.
\newblock Frequency domain statistical inference for high dimensional time
  series.
\newblock \emph{To appear in Journal of the American Statistical Association},
  2025.

\bibitem[Krein and Rutman(1948)]{krein1948linear}
Mark~Grigor'evich Krein and Moisei~Aronovich Rutman.
\newblock Linear operators leaving invariant a cone in a banach space.
\newblock \emph{Uspekhi Matematicheskikh Nauk}, 3\penalty0 (1):\penalty0 3--95,
  1948.

\bibitem[Krein and Rutman(1962)]{krein1962linear}
M.G. Krein and M.A. Rutman.
\newblock Linear operators leaving a cone invariant in a banach space,
  translations amer. math. soc., series 1, volume 10, 199-325.
\newblock \emph{American Mathematical Society, Providence, RI}, 1962.

\bibitem[Latour(1997)]{p:lat-97}
A.~Latour.
\newblock The multivariate {GINAR}$(p)$ process.
\newblock \emph{Advances in Applied Probability}, 29:\penalty0 228--248, 1997.

\bibitem[Lauritzen(2020)]{b:lau-20}
S.~Lauritzen.
\newblock Lecture notes graphical models.
\newblock 2020.

\bibitem[Lauritzen and Wermuth(1989)]{p:lau-wer-89}
S.~L. Lauritzen and N.~Wermuth.
\newblock Graphical models for associations between variables, some of which
  are qualitative and some quantitative.
\newblock \emph{Annals of Statistics}, pages 31--57, 1989.

\bibitem[Lauritzen(1996)]{b:lau-96}
Steffen~L. Lauritzen.
\newblock \emph{Graphical models}, volume~17 of \emph{Oxford Statistical
  Science Series}.
\newblock The Clarendon Press, Oxford University Press, New York, 1996.
\newblock Oxford Science Publications.

\bibitem[Levin and Peres(2017)]{b:levin2017markov}
David~A Levin and Yuval Peres.
\newblock \emph{Markov chains and mixing times}, volume 107.
\newblock American Mathematical Soc., 2017.

\bibitem[Maghadam et~al.(2022)Maghadam, Nevalainen, Stevenson, and
  Vanhatalo]{p:mag-22}
S.~E. Maghadam, P.~Nevalainen, N.~J. Stevenson, and S.~Vanhatalo.
\newblock Sleep state trend ({SST}), a bedside measure of neonatal sleep state
  fluctuations based on single {EEG} channels.
\newblock \emph{Clinican Neurophysiology}, 143:\penalty0 75--83, 2022.

\bibitem[McKenzie(1985)]{p:mck-85}
E.~McKenzie.
\newblock Some simple models for discrete variate time series.
\newblock \emph{Water Resources Bulletin}, 21:\penalty0 645--650, 1985.

\bibitem[Meurant(1992)]{meurant1992review}
G{\'e}rard Meurant.
\newblock A review on the inverse of symmetric tridiagonal and block
  tridiagonal matrices.
\newblock \emph{SIAM Journal on Matrix Analysis and Applications}, 13\penalty0
  (3):\penalty0 707--728, 1992.

\bibitem[Neumann(2011)]{p:neu-11}
M.~Neumann.
\newblock Absolute regularity and ergodicity of poisson count processes.
\newblock \emph{Bernoulli}, 17:\penalty0 1268--1284, 2011.

\bibitem[Pearl and Paz(1986)]{p:pea-86}
J.~Pearl and A.~Paz.
\newblock A graph-zed logic for reasoning about relevance relations.
\newblock ECAI-86,, 1986.

\bibitem[Perron(1907)]{perron1907grundlagen}
Oskar Perron.
\newblock Grundlagen f{\"u}r eine theorie des jacobischen
  kettenbruchalgorithmus.
\newblock \emph{Mathematische Annalen}, 64\penalty0 (1):\penalty0 1--76, 1907.

\bibitem[Ravikumar et~al.(2010)Ravikumar, Wainwright, and Lafferty]{p:rav-10}
P.~Ravikumar, M.~J. Wainwright, and J.~Lafferty.
\newblock High-dimensional {I}sing model selection $\ell_1$-regularization
  logistic regression.
\newblock \emph{Annals of Statistics}, 38:\penalty0 1287--1319, 2010.

\bibitem[Schaefer(1974)]{schaefer1974banach}
Helmut~H Schaefer.
\newblock \emph{Banach Lattices and Positive Operators}.
\newblock Springer, 1974.

\bibitem[Tank et~al.(2015)Tank, Foti, and Fox]{p:tank-15}
Alex Tank, Nicholas~J Foti, and Emily~B Fox.
\newblock Bayesian structure learning for stationary time series.
\newblock In \emph{Proceedings of the Thirty-First Conference on Uncertainty in
  Artificial Intelligence}, pages 872--881, 2015.

\bibitem[Truquet(2021)]{p:tru-21}
L.~Truquet.
\newblock Ergodic properties of some markov chain models in random
  environments.
\newblock \emph{https://arxiv.org/pdf/2108.06211}, 2021.

\bibitem[Tugnait(2022)]{p:tug-22}
J.~K. Tugnait.
\newblock On sparse high dimensional graphical model learning for dependent
  time series.
\newblock \emph{Signal Processing}, 197, 2022.

\bibitem[Tugnait(2025)]{p:tug-25}
J.~K. Tugnait.
\newblock On conditional independence graph learning from multi-attribute
  gaussian dependent time series.
\newblock \emph{IEEE Signal Processing}, 6, 2025.

\bibitem[Victory~Jr(1982)]{victory1982linear}
HD~Victory~Jr.
\newblock On linear integral operators with nonnegative kernels.
\newblock \emph{Journal of Mathematical Analysis and Applications}, 89\penalty0
  (2):\penalty0 420--441, 1982.

\bibitem[Vinokurov(1957)]{p:vin-57}
A.~G. Vinokurov.
\newblock The condition on regularity of random processes.
\newblock \emph{Dokl, Akad, Nauk. SSSR}, 113:\penalty0 959--961, 1957.

\bibitem[Wainwright and Jordan(2003)]{p:wai-03}
M.~J. Wainwright and M.~I. Jordan.
\newblock Graphical models, exponential families and variational inference.
\newblock \emph{Technical Report 649, Depart. Statistics, Univ California,
  Berkeley}, 2003.

\bibitem[Wainwright(2019)]{b:wai-19}
Martin.~J. Wainwright.
\newblock \emph{High-Dimensional Statistics: A non-asymptotic view point}.
\newblock Cambridge University Press, 2019.

\bibitem[Weiss(2018)]{b:wei-18}
Christian Weiss.
\newblock \emph{An introduction to Discrete-Valued Time Series}.
\newblock Wiley, Oxford, 2018.

\bibitem[Yang et~al.(2014)Yang, Baker, Ravikumar, Allen, and Liu]{p:yan-14}
E.~Yang, Y.~Baker, P.~Ravikumar, G.~I. Allen, and Z.~Liu.
\newblock Mixed graphical models via exponential family.
\newblock \emph{Prceedings of the 17th International Congress on Artifical
  Intelligence and Statistics}, 33, 2014.

\bibitem[Yang et~al.(2015)Yang, Ravikumar, Allen, and Liu]{p:yan-15}
E.~Yang, P.~Ravikumar, G.~I. Allen, and Z.~Liu.
\newblock Graphical models via univariate exponential family distributions.
\newblock \emph{Journal of Machine Learning Research}, 16:\penalty0 3813--3847,
  2015.

\bibitem[Zeger and Qaqish(1988)]{p:zeg-qaq-88}
S.~L. Zeger and B.~Qaqish.
\newblock Markov regression models for time series: {A} quasi-likelihood
  approach.
\newblock \emph{Biometrics}, 44:\penalty0 1019--1032, 1988.

\end{thebibliography}
\bibliographystyle{plainnat}

\newpage

\section*{Supplementary Material}\label{supplementary_material}



\renewcommand*{\theHsection}{\thesection}
\renewcommand*{\theHsubsection}{\thesubsection}
\setcounter{section}{0} 
\setcounter{equation}{0}
\setcounter{subsection}{0}

\renewcommand \thesection{S\arabic{section}}
\renewcommand \thesubsection{\thesection.\arabic{subsection}}
\renewcommand \thesubsubsection{\thesubsection.\arabic{subsubsection}}
\renewcommand\thetable{S.\arabic{table}}
\renewcommand \thefigure{S.\arabic{figure}}
\renewcommand{\theequation}{S.\arabic{equation}}

\setcounter{table}{0}
\setcounter{figure}{0}

\def\m{\mathcal}
\def\mb{\mathbb}
\def\mr{\mathrm}

In the supplementary material we prove the results in the paper and provide some additional
background and examples. We give a summary below.
\begin{enumerate}
\item In Section \ref{sec:FA_rev} we recall some results from functional analysis which pertain to the
  Krein-Rutman Theorem summarized in Theorem \ref{thm:KR_without_int_main}.
\item In Section \ref{sec:pf_one_stationarity} we apply the Krein-Rutman theorem to prove Theorem \ref{theorem:one_stationarity}.
\item In Section \ref{sec:proofgraph} we prove the process-wide conditional independence properties
  stated in Section \ref{sec:graph}.
\item In Section \ref{sec:power} we prove the probabilistic properties and 
  power iteration results stated in  Section \ref{sec:prob_cestgm}.
\item In Section \ref{sec:cased} we prove the stationarity result for the 
  general $d$-Markov case stated in Theorem \ref{theorem:stationarityd}.
\item In Section \ref{sec:addnl_derivs} we give the derivations stated in several of the remarks in the paper.
\item In Section \ref{sec:examples} we give concrete examples of multivariate models that satisfy the
  process-wide compatability conditions, together with their process-wide conditional independence graphs
  (according to the process-wide Markov properties) and some realisations using the approximate
  Gibbs sampler described in Section \ref{sec:prob_cestgm}.
\end{enumerate}

\section{Relevant results from functional analysis}\label{sec:FA_rev}
We first recall some standard facts about bounded linear operators (esp. compact operators) on Hilbert spaces. Details can be found in any standard functional analysis textbook, e.g., \cite{b:con-90}[Chapter 2]. 

Let $H$ be a Hilbert space over $\C$ with inner product $\langle \cdot, \cdot \rangle$ and norm $\|\cdot\|$. Let $\m B(H)$ denote the space of bounded linear maps $T: H \to H$. The {\it operator norm} of $T \in \m B(H)$ (induced by the Hilbert space norm $\|\cdot\|$) is $\|T\|_{\rm op} :\, = \sup \{\|T x\| \,:\, \|x\| = 1\}$. For every $T \in \m B (H)$, there exists a unique operator $T^*$, called the {\it adjoint} of $T$, with $\langle Tf, g \rangle  = \langle f, T^* g \rangle$ for all $f, g \in H$. It is known that $\|T\|_{\rm op} = \|T^*\|_{\rm op}$. $T$ is called {\it self-adjoint} if $T = T^*$. 

$T$ is called {\it finite rank} if $\mbox{range}(T)$ is finite-dimensional. $T$ is called {\it compact} if $T$ maps the unit ball to a relatively compact set, that is, a set whose closure is compact. Any finite rank operator is compact. Moreover, $T$ is compact if and only if there exists a sequence of finite-rank operators such that $\|T_n - T\|_{\rm op} \to 0$. If $T$ is compact, then $T^*$ is also compact. 

The {\it resolvent set} of $T$ is defined as 
\begin{align*}
\rho(T) :\, = \{ z \in \C \,:\, (T - z \, \mbox{id}) \text{ is bijective and } (T - z \, \mbox{id})^{-1} \in \m B(H)\}, 
\end{align*}
where $\mbox{id}$ is the identity operator. 
The {\it spectrum} of $T$ is defined as the complement set $\sigma(T) :\,= \mathbb{C} \setminus \rho(T)$. The {\it point spectrum} $\sigma_p(T) := \{\lambda \in \C \,:\, Tf = \lambda f \text{ for some } f \in H\}$ is the set of all eigenvalues. $\lambda \in \sigma_p(T)$ is called {\it simple} if the nullspace of $(T - \lambda \, \mbox{id})$ has dimension 1. If $H$ is finite-dimensional, $\sigma(T)$ is the set of all eigenvalues. However, this no longer holds generally for infinite-dimensional spaces, and one only has $\sigma_p(T) \subset \sigma(T)$. However, {\it if $T$ is compact}, then the spectrum is countable, zero is the only accumulation point inside the spectrum, and every non-zero element of $\sigma(T)$ is an eigenvalue. 

The spectral radius of $T$ is defined as 
\begin{align}\label{eq:sp_radius}
  r(T) :\, = \sup\{ |\lambda| \,:\, \lambda \in \sigma(T)\}  
\end{align} 
An important fact is that $\sigma(T^*) = \sigma(T)$, and therefore $r(T^*) = r(T)$.

Gelfand's spectral radius formula gives 
\begin{align} \label{eq:gelfand}
r(T) = \lim_{n \to \infty} \|T^n\|_{\rm op}^{1/n}. 
\end{align}
Since by sub-multiplicativity of the operator norm $\|T^n\|_{\rm op} \le \|T\|_{\rm op}^n$, this in particular implies $r(T) \le \|T\|$. Another important characterization \citep{holmes1968formula} of the spectral radius is that 
\begin{align}\label{eq:sp_inf}
r(T) = \inf \Big\{|T|_{\rm op} \,:\, \text{ where the inf is over all norms $|\cdot|$ equivalent to $\|\cdot\|$} \Big\},
\end{align}
where recall $|\cdot|$ is equivalent to $\|\cdot\|$ if there exist constants $a, b > 0$ (depending on $|\cdot|$) such that $a |x| \le \|x\| \le b |x|$ for all $x \in H$, and $|T|_{\rm op}$ denotes the norm induced by $|\cdot|$, i.e., $|T|_{\rm op} = \sup\{ |Tx| \,: \, |x| = 1\}$. 
\\[1ex]
{\it Integral operator.} Of particular interest to us is the following setting. Let $(\m X, \m A, \mu)$ be a sigma-finite measure space and let $L^2(\m X, \mu) :\, = \{f : \m X \to \C \,:\, \int |f|^2 d \mu < \infty\}$, with the usual $L^2$ inner product $\langle f, g \rangle = \int f \overline{g} d\mu$. Let $R: \m X \times \m X \to \C$ be a measurable function (commonly called {\it kernel}), and let $T : L^2(\m X, \mu) \to L^2(\m X, \mu)$ be the corresponding {\it integral operator} given by 
\begin{align}\label{eq:int_oper}
T(f)[y] = \int R(x, y) f (x) \mu(dx), \quad y \in \m X. 
\end{align}
If $\max \left\{\int |R(x, y)| \mu(dx), \int |R(x, y)| \mu(dy) \right\} \le c$ $\mu$-a.e., then $T$ is a bounded linear operator with $\|T\| \le c$. Also, the adjoint 
\begin{align}
T^*(f)[y] = \int \overline{R(y,x)} f(x) \mu(dx). 
\end{align}
If the kernel $R$ is square integrable, i.e., if $R(\cdot, \cdot) \in L^2(\m X \times \m X, \mu \otimes \mu)$, so that 
\begin{align}\label{eq:HS_def}
    \|R(\cdot, \cdot)\|^2  = \int |R(x, y)|^2 \mu(dx)\mu(dy) < \infty,
\end{align} 
then $T$ is called {\it Hilbert--Schmidt}, and one has $\|T\|_{\rm op} \le \|R(\cdot, \cdot)\|$. A Hilbert--Schmidt operator is compact. 
\\[2ex]
{\it Positive operators and Krein--Rutman Theorem.} 
Recall Definition \ref{def:spo}. We now state the complete version of \cite[Theorem 6.6 (Chapter V)]{schaefer1974banach}, which implies Theorem \ref{thm:KR_without_int_main} as a special case. The only modification we make is to state the result for $L^2$ spaces, while \cite[Theorem 6.6 (Chapter V)]{schaefer1974banach} holds more generally for $L^p$ spaces. 
\begin{theorem}[Theorem 6.6 (Chapter V) of \cite{schaefer1974banach}]\label{thm:KR_without_int}
Let $(\m X, \m A, \mu)$ be a sigma-finite measure space, and let $T: L^2(\m X, \mu) \to L^2(\m X, \mu)$ be an integral operator as in \eqref{eq:int_oper} with the kernel $R(x, y) \ge 0$ for all $(x, y) \in \mathcal{X} \times \mathcal{X}$. 
Suppose 
\begin{itemize}
\item [(i)] Some power of $T$ is compact. 

\item[(ii)] $S \in \m A$ and $\mu(S) > 0, \mu(\m X \setminus S) > 0$ implies 
\begin{align}\label{eq:irred}
    \int_{\m X \setminus S} \int_S R(x, y) \mu(dx) \mu(dy) > 0. 
\end{align}
\end{itemize}
Then, 
\begin{itemize}
\item [(a)] The spectral radius $r(T)$ is positive. 

\item [(b)] The spectral radius $r(T)$ is an eigenvalue of $T$, and has a unique normalized eigenfunction $v \in L^2(\m X, \mu)$ with $\|v\| = 1$ satisfying $v > 0 \text{ a.e. } \mu$. 

\item [(c)] If $R(x, y) > 0 \text{ a.e. } \mu \otimes \mu$, then every other eigenvalue $\lambda$ of $T$ satisfies $|\lambda| < r(T)$. 
\end{itemize}
\end{theorem}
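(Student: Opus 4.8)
The plan is to recognize this as an instance of the Perron--Frobenius--Krein--Rutman theory for \emph{irreducible} positive operators on the Banach lattice $L^2(\m X,\mu)$, importing a few deep structural facts and supplying the measure-theoretic arguments by hand. First I would fix the framework: the cone $K=\{f : f\ge 0 \text{ a.e.}\}$ makes $L^2(\m X,\mu)$ a Banach lattice, $T$ is a positive operator ($TK\subseteq K$, since $R\ge 0$), and condition (ii) is precisely \emph{irreducibility}, i.e. the absence of a nontrivial closed $T$-invariant ideal (closed ideals of $L^2$ having the form $\{f : f=0 \text{ a.e. on } \m X\setminus S\}$). Because some power $T^m$ is compact, the spectral mapping $\sigma(T^m)=\sigma(T)^m$ together with Riesz--Schauder theory for $T^m$ shows that every nonzero element of $\sigma(T)$ is an isolated eigenvalue of finite algebraic multiplicity.

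For part (a) I would invoke two facts about positive operators: that $r:=r(T)$ always lies in $\sigma(T)$, and that an \emph{irreducible} compact positive operator has $r(T)>0$ (de Pagter's theorem), which I would cite rather than reprove. Given $r>0$ and $T^m$ compact, $r$ is then an isolated eigenvalue, yielding the first half of (b). For the positive eigenfunction I would either invoke positivity of the residue (spectral) projection at $r$, or build $v$ as a cone limit: fix $f_0>0$, take $\lambda_n\downarrow r$ with $\lambda_n>r$, form the positive vectors $(\lambda_n-T)^{-1}f_0$ (positive because the Neumann series has nonnegative terms), normalize, and extract a limit $v\ge 0$, $\|v\|=1$, with $Tv=rv$. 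The strict positivity $v>0$ a.e. is the first hands-on step: with $S_0=\{v=0\}$, if $0<\mu(S_0)<\mu(\m X)$ then for a.e. $y\in S_0$ the eigen-equation gives $0=r\,v(y)=\int_{\m X\setminus S_0}R(x,y)v(x)\,\mu(dx)$, so $R=0$ a.e. on $(\m X\setminus S_0)\times S_0$; hence $\int_{S_0}\int_{\m X\setminus S_0}R(x,y)\,\mu(dx)\,\mu(dy)=0$, which directly contradicts (ii) applied to the set $S=\m X\setminus S_0$. Uniqueness of the normalized positive eigenfunction follows by comparison: for a second a.e.-positive eigenfunction $u$, the difference $c^{*}v-u$ with $c^{*}=\operatorname{ess\,sup}(u/v)$ is a nonnegative eigenfunction that fails to be a.e.-positive, so the same dichotomy forces it to vanish, giving $u=c^{*}v$; the delicate point that $c^{*}v-u$ genuinely fails a.e.-positivity I would underwrite by simplicity of the pole $r$ (order one, rank-one residue), which is exactly where the compact-power hypothesis is essential.

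For part (c) I would now use the stronger assumption $R>0$ a.e., and the left eigenfunction $w$ with $T^{*}w=rw$, $w>0$ a.e., which exists by applying (b) to $T^{*}$ (also irreducible, with $r(T^{*})=r$). Let $Tu=\lambda u$ with $|\lambda|=r$. The triangle inequality for integrals gives $r|u|=|\lambda u|=|Tu|\le T|u|$, so $g:=T|u|-r|u|\ge 0$; pairing with $w$ yields $\langle g,w\rangle=\langle |u|,T^{*}w\rangle-r\langle |u|,w\rangle=0$, and since $w>0$ a.e. this forces $g=0$. Thus $|u|$ is a positive eigenfunction, whence $|u|=c\,v>0$ a.e. by (b). Equality $|Tu|=T|u|$ then holds pointwise, i.e. $\big|\int R(x,y)u(x)\,\mu(dx)\big|=\int R(x,y)|u(x)|\,\mu(dx)$ for a.e. $y$; because $R(x,y)>0$ a.e., the equality case of the triangle inequality forces $u$ to carry a single constant phase, so $u=e^{i\theta}|u|=e^{i\theta}c\,v$ and therefore $\lambda=r$. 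Since $|\lambda|\le r$ for every eigenvalue, this shows every eigenvalue other than $r$ has modulus strictly below $r$.

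The main obstacle is the abstract Banach-lattice core: de Pagter's positivity of the spectral radius in (a), and the algebraic simplicity of $r$ that legitimizes the uniqueness comparison in (b). These are the genuinely deep Perron--Frobenius inputs and are precisely why the statement is quoted from \cite{schaefer1974banach}. By contrast, the strict-positivity deduction from (ii) and the peripheral-spectrum phase argument for (c) are elementary and self-contained; I expect the equality-case analysis in (c) to be the cleanest piece to execute, while reconciling the comparison argument for uniqueness with the pole structure is the most technical.
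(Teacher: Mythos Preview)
The paper does not prove this theorem at all: it is quoted verbatim as Theorem~6.6 (Chapter~V) of \cite{schaefer1974banach} and used as a black box, so there is no ``paper's own proof'' to compare against. Your proposal is therefore not a reconstruction of anything in the paper but an independent sketch of a known deep result, and as such it is largely sound. The strict-positivity deduction from irreducibility in (b) and the phase/equality-case argument for (c) are both correct and are indeed the standard textbook arguments.

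Two remarks on the sketch itself. First, in (a) you invoke de~Pagter's theorem for an ``irreducible compact positive operator,'' but the hypothesis only gives $T^m$ compact, not $T$; you need either to argue that $T^m$ inherits irreducibility (which is not automatic for positive operators---think of cyclic permutation matrices) or to invoke a power-compact extension of de~Pagter. This is fixable but not addressed. Second, as a historical aside, de~Pagter's theorem (1986) postdates Schaefer (1974), so the cited proof necessarily proceeds differently---Schaefer works directly with the integral-operator structure and the ordered Banach space machinery developed earlier in his Chapter~V, rather than importing de~Pagter. Your route is a legitimate modern alternative, but it is not what the citation points to.
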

The condition \eqref{eq:irred} is called {\it irreducibility}, and ensures the spectral radius is positive\footnote{Note that there are examples of compact positive operators, such as the Volterra operator, for which $r(T) =0$; see Example 3.3 of Boelkins.}. If $R(x, y) > 0$ a.e., then the irreducibility condition is automatically satisfied, and hence Theorem \ref{thm:KR_without_int_main} follows as a special case of Theorem \ref{thm:KR_without_int}. 

\begin{remark}\label{rem:schaefer_vs}
Many versions of the Krein--Rutman theorem exist in the literature. Some of them (see, e.g. \cite[Theorem 1.4]{boelkins1998spectral}, \cite[Theorem 19.2]{deimling2013nonlinear}) only guarantee that the spectral radius is an eigenvalue with a positive eigenfunction. To ensure $r(T)$ is simple and any other eigenvalue $\lambda$ satisfies $|\lambda| < r(T)$, there are versions of these results (see, e.g \cite[Theorem 1.5]{boelkins1998spectral}, \cite[Theorem 19.3]{deimling2013nonlinear}) that use additional conditions, one of which is that the {\it interior of the cone is non-empty}. When the Banach space $X$ is $C[0,1]$ (the space of continuous functions on $[0,1]$ equipped with the supremum norm) and the cone $K = \{f \in C[0, 1] \,:\, f \ge 0\}$, it has a non-empty interior. 
However, the non-negative cone in $L^2(\m X, \mu)$ has empty interior (see Example 2.4 of \cite{boelkins1998spectral} for a proof), and hence these stronger results do not apply to our situation. Theorem 6.6 (Chapter V) of \cite{schaefer1974banach}  quoted above fits our need since it covers $L^p$ spaces. 
\end{remark}

\section{Proof of Theorem \ref{theorem:one_stationarity}}\label{sec:pf_one_stationarity}
As noted in the discussion after Theorem \ref{theorem:one_stationarity}, we crucially use the following identities: for any positive integer $n > 1$, one has  
\begin{align}
\begin{aligned}\label{eq:eigenmagic}
\int p_{[1:n]}(x_1, \ldots, x_n) \mu(dx_1) & = p_{[1:n-1]}(x_2, \ldots, x_n), \\
\int p_{[1:n]}(x_1, \ldots, x_n) \mu(dx_n) & = p_{[1:n-1]}(x_1, \ldots, x_{n-1}). 
\end{aligned}
\end{align}
These follow from the facts that $\int R(x, y) v(x) \mu(dx) = r v(y)$ and $\int R(x, y) w(y) \mu(dy) = r w(x)$. 

We begin by specifying finite-dimensional distributions $\pi_{s_1, \ldots, s_k}$ of $(X_{s_1}, \ldots, X_{s_k})$ under CEStGM. Since the index set is $\mb Z$, it suffices to construct $\pi_{s_1, \ldots, s_k}$ for ordered tuples $s_1 < \ldots < s_k$; see \cite[Example 36.4]{billingsley2017probability}. Kolmogorov's consistency theorem then assumes the following simplified form (see eq. (36.19) in \cite[Example 36.4]{billingsley2017probability}); for any $i \in [k]$, one needs to verify that  
\begin{align}\label{eq:kolmogorov_simplified}
& \pi_{s_1, \ldots, s_{i-1}, s_{i+1}, \ldots, s_k}(B_1 \times \cdots \times B_{i-1} \times B_{i+1} \times \cdots \times B_k)  \notag \\
& = \pi_{s_1, \ldots, s_k}(B_1 \times \cdots \times B_{i-1} \times \m X \times B_{i+1} \times \cdots \times B_k) 
\end{align}
for all 
measurable sets $B_i \subseteq \m X$. 

Recall for any $m \in \mb Z$, the joint distribution of $(X_m, \ldots, X_{m+n-1})$ at $n$ contiguous locations is given by $\pi_{[1:n]}$, which has density $p_{[1:n]}$ with respect to the $n$-fold product of $\mu$.  Using these, we define $\pi_{s_1, \ldots, s_k}$ as follows. For $s_1 < \ldots < s_k$ with $s_i \in \mb Z$ for all $i$, let $\ell = s_k - s_1 + 1$, and define 
\begin{align}
\pi_{s_1, \ldots, s_k}(B_1 \times \cdots \times B_k) = \pi_{[1:\ell]}(\widetilde{B}), \quad \widetilde{B} = \widetilde{B}_1 \times \cdots \times \widetilde{B}_\ell
\end{align}
where $\widetilde{B}_j = B_j$ if $j \in \{1, \ldots, k\}$, and $\widetilde{B}_j  = \m X$ otherwise. Clearly, $\pi_{s_1, \ldots, s_k}$ has density $p_{s_1, \ldots, s_k}$ with respect to the $\ell$-fold product of $\mu$, where 
\begin{align}\label{eq:nu_den}
p_{s_1, \ldots, s_k}(x_{s_1}, \ldots, x_{s_k}) = \int p_{[1:\ell]}(x_{s_1}, x_{s_1+1}, \ldots, x_{s_k-1}, x_{s_k}) \prod_{j \in \m J}  \mu(d x_j),   
\end{align}
where $\m J = [s_1: s_k] \setminus \{s_1, \ldots, s_k\}$, and $[a:b] = \{a, a+1, \ldots, b\}$ for $a, b \in \mb Z$ with $a < b$. For example, 
\begin{align*}
p_{3, 5, 8}(x_3, x_5, x_8) = \int p_{[1:6]}(x_3, x_4, x_5, x_6, x_7, x_8) \, \mu(dx_4) \mu(dx_6) \mu(dx_7).   
\end{align*}
Thus, \eqref{eq:kolmogorov_simplified} amounts to showing for any $i \in [k]$ that 
\begin{align}\label{eq:ks_den}
p_{s_1, \ldots, s_{i-1}, s_{i+1}, \ldots, s_k}(x_{s_1}, \ldots, x_{s_{i-1}}, x_{s_{i+1}}, \ldots x_{s_k}) = \int p_{s_1, \ldots, s_k}(x_{s_1}, \ldots, x_{s_k}) \, \mu(dx_{s_i}). 
\end{align}
If $i \in \{2, \ldots, k-1\}$, the above conclusion is immediate, since $p_{s_1, \ldots, s_{i-1}, s_{i+1}, \ldots, s_k}$ and $p_{s_1, \ldots, s_k}$ both originate upon marginalization from $p_{[1:\ell]}$ with $\ell = s_k - s_1 + 1$. 
However, the two boundary cases ($i = 1, k$) are more subtle.
We prove the case $i = 1$, the case $i = k$ follows similarly. 
By definition of the joint measure in 
\eqref{eq:nu_den}, we have 
that
\begin{align}\label{eq:ks_den_leftminus}
p_{s_2, \ldots, s_k}(x_{s_2}, \ldots, x_{s_k}) = \int p_{[1:\ell']}(x_{s_2}, x_{s_2+1}, \ldots, x_{s_k-1}, x_{s_k}) \, \prod_{j \in \m J'} \mu(d x_j),
\end{align}
where $\ell' = s_k -s_2 + 1$ and $\m J' = [s_2: s_k] \setminus \{s_2, \ldots, s_k\}$. In order 
to verify the consistency condition \eqref{eq:ks_den},
we need to show that it is equivalent to the joint measure 
derived from $[X_{s_1},\ldots,X_{s_{n}}]$ i.e. that the above is equal to 
\begin{align*}
 \int p_{s_1, \ldots, s_k}(x_{s_1}, \ldots, x_{s_k}) \, \mu(dx_{s_1}). 
\end{align*}
We establish that 
\begin{align*}
p_{s_2, \ldots, s_k}(x_{s_2}, \ldots, x_{s_k}) = \int p_{s_1, \ldots, s_k}(x_{s_1}, \ldots, x_{s_k}) \, \mu(dx_{s_1}). 
\end{align*}
Start from the right hand side of the above display and substitute its definition from \eqref{eq:ks_den}. Interchanging the order of integrals and repeatedly applying the first identity of \eqref{eq:eigenmagic}, we have 
\begin{align*}
& \int p_{s_1, \ldots, s_k}(x_{s_1}, \ldots, x_{s_k}) \, \mu(dx_{s_1})
\\
& = \int \int p_{[1:\ell]}(x_{s_1}, x_{s_1+1}, \ldots, x_{s_k-1}, x_{s_k}) \, \mu(dx_{s_1}) \, \prod_{j \in \m J} \mu(dx_j) \\
& = \int p_{[1:\ell-1]}(x_{s_1+1}, x_{s_1+2}, \ldots, x_{s_k-1}, x_{s_k}) \, \prod_{j \in \m J} \mu(dx_j) \\
& = \cdots \\
& = \int \int p_{[1:\ell'+1]}(x_{s_2-1}, x_{s_2}, \ldots, x_{s_k-1}, x_{s_k}) \mu(dx_{s_2-1}) \, \prod_{j \in \m J'} \mu(dx_j) \\
& = \int p_{[1:\ell']}(x_{s_2}, x_{s_2+1}, \ldots, x_{s_k-1}, x_{s_k}) \, \prod_{j \in \m J'} \mu(dx_j) = p_{s_2, \ldots, s_k}(x_{s_2}, \ldots, x_{s_k}). 
\end{align*}
This establishes the desired result. The case $i = k$ proceeds similarly, where we repeatedly use the second identity of \eqref{eq:eigenmagic}. 



\section{Proof of results in Section \ref{sec:graph}}\label{sec:proofgraph}

We first analysis the interaction kernel $R(\cdot,\cdot)$. 

\paragraph{A Hammersley-Clifford type factorisation of the interaction kernel $R$}
We define the following graph $\mathcal{G} = (V,E)$ where $V=[p]$ and
$(a,b)\in E$ iff either $\Phi^{(a,b)}_{0}\neq 0$, 
$\Phi^{(a,b)}_{1}\neq 0$ or $\Phi^{(a,b)}_{-1}\neq 0$  (i.e., if $b \in \m N_a$ or equivalently, $a \in \m N_b$, with the neighborhood set $\m N_a$ defined in \eqref{eq:cestgm_nhbr}). From $\mathcal{G}$ we obtain the set of all induced cliques $\mathcal{C}$. In other words if $D\in \mathcal{C}$ then $D\subseteq V$
and the corresponding subgraph in $\mathcal{G}$ is complete (all nodes
in $D$ are connected to each other). By using the definition of $G(x)$ and $H(x,y)$ in
(\ref{eq:GH_def}) we obtain a factorisation of the interaction kernel $R(\cdot,\cdot)$.
\begin{lemma}\label{lemma:cliques}
Suppose the assumptions in Theorem  \ref{theorem:one_stationarity}
hold.   Let $R(\cdot,\cdot)$ be defined as in (\ref{eq:int_ker}). Then 
\begin{eqnarray}\label{eq:cli-ha}
R(x,y) = \prod_{D\in \mathcal{C}}R_{D}(x^{(D)},y^{(D)})
\end{eqnarray}  
where $x^{(D)} = (x^{(a)};a\in D)$. Further,
\begin{eqnarray}
\label{eq:pnpd}  
p_{[-M:M]}(x_{-M},\ldots,x_{M})
  &=& \frac{1}{r^{2M}} \,   v(x_{-M})
      \left[\prod_{t=-M+1}^{M} \prod_{D\in \mathcal{C}}R_{D}(x_{t-1}^{(D)},x_{t}^{(D)}) \right] w(x_{M}). 
\end{eqnarray}
\end{lemma}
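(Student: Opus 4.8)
The plan is to prove \eqref{eq:cli-ha} by expanding $\log R(x,y)$ explicitly from the definitions of $G$ and $H$ in \eqref{eq:GH_def} and regrouping its terms by the subsets of nodes they couple, and then to obtain \eqref{eq:pnpd} as an immediate substitution of \eqref{eq:cli-ha} into the joint density formula \eqref{eq:joint_cestgm} supplied by Theorem \ref{theorem:one_stationarity}. The entire argument is algebraic; there are no measure-theoretic subtleties at this stage.

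First I would write out the exponent of $R(x,y) = G(x)^{1/2}H(x,y)G(y)^{1/2}$ and collect the terms into two groups. The \emph{singleton} contribution for a node $a$ gathers $\tfrac12[\theta_a s_a(x^{(a)}) + c_a(x^{(a)})]$, its $y$-counterpart, and the within-node adjacent-time term $\Phi_1^{(a,a)} x^{(a)} y^{(a)}$; each such term depends only on the coordinate $a$. The \emph{pairwise} contribution for an unordered pair $\{a,b\}$ with $a \neq b$ gathers the $\Phi_0$ terms $\tfrac12 \Phi_0^{(a,b)}(x^{(a)}x^{(b)} + y^{(a)}y^{(b)})$ (using $\Phi_0 = \Phi_0^\top$) together with the cross-time couplings $\Phi_1^{(a,b)} x^{(a)} y^{(b)} + \Phi_1^{(b,a)} x^{(b)} y^{(a)}$. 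Because the exponent is bilinear in the per-node (sufficient) statistics, there are no genuinely higher-order interactions, so this singleton-plus-pairwise grouping is exhaustive.

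The crucial observation is that the pairwise contribution for $\{a,b\}$ vanishes identically unless $(a,b) \in E$. Indeed, recalling the baseline constraint $\Phi_{-1} = \Phi_1^\top$ (so $\Phi_1^{(b,a)} = \Phi_{-1}^{(a,b)}$), the definition of the neighborhood set in \eqref{eq:cestgm_nhbr} shows that $(a,b)$ is \emph{absent} precisely when $\Phi_0^{(a,b)} = \Phi_1^{(a,b)} = \Phi_1^{(b,a)} = 0$, which kills every term coupling nodes $a$ and $b$. Hence the exponent of $\log R$ is a sum of functions indexed by singletons $\{a\}$ and by $2$-cliques $\{a,b\} \in E$. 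I would then define $R_{\{a\}}$ as the exponential of the collected singleton terms for node $a$, define $R_{\{a,b\}}$ as the exponential of the collected pairwise terms for each edge $\{a,b\}$, and set $R_D \equiv 1$ for every clique $D$ with $|D| \ge 3$. Since singletons and edges are themselves cliques, each factor $R_D(x^{(D)}, y^{(D)})$ depends only on the coordinates in $D$, and taking the product over all $D \in \mathcal{C}$ reconstructs $R(x,y)$, establishing \eqref{eq:cli-ha}.

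Finally, for \eqref{eq:pnpd} I would apply Theorem \ref{theorem:one_stationarity} with $n = 2M+1$ and shift $m = -M$, giving $p_{[-M:M]}(x_{-M},\ldots,x_M) = r^{-2M}\, v(x_{-M}) \big[\prod_{t=-M+1}^{M} R(x_{t-1}, x_t)\big] w(x_M)$, and then substitute \eqref{eq:cli-ha} into each factor $R(x_{t-1}, x_t)$. The step requiring the most care is the bookkeeping in constructing the $R_D$: one must check that every term of the expanded exponent is assigned to exactly one clique, and that it is precisely the symmetry constraints ($\Phi_0$ symmetric and $\Phi_{-1} = \Phi_1^\top$) that force the vanishing of all couplings between non-adjacent nodes, thereby matching the edge set $E$ that defines $\mathcal{C}$.
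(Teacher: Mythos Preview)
Your argument is correct, and the route is close in spirit to the paper's but uses a genuinely simpler construction of the factors $R_D$. You observe that $\log R(x,y)$ contains only singleton and pairwise terms, so you assign each singleton term to the $1$-clique $\{a\}$, each pairwise term (which survives only when $(a,b)\in E$) to the $2$-clique $\{a,b\}$, and set $R_D\equiv 1$ for every larger clique. The paper instead spreads each term evenly across \emph{all} cliques containing the relevant node or edge, dividing by the multiplicities $p_a$ and $p_{(a,b)}$, so that every $R_D$ is itself a miniature interaction kernel $G_D^{1/2}H_D G_D^{1/2}$. Both constructions are valid factorizations over $\mathcal C$, since the paper's definition of $\mathcal C$ explicitly includes all complete subgraphs (hence singletons and edges). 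Your version is more elementary and makes the ``no higher-order interactions'' point transparent; the paper's version has the aesthetic advantage that each $R_D$ retains the $G^{1/2}HG^{1/2}$ structure of the full kernel, though this extra structure is not actually exploited downstream, where only the property that $R_D$ depends solely on coordinates in $D$ is used.
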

\begin{proof}
  We construct $R_{D}(x^{(D)},y^{(D)})$, the result follows
  immediately from this construction.
  
  Let $p_{a}$ and $p_{(a,b)}$ denote the number of cliques associated with node $a$
  and edge $(a,b)$ respectively.   For $x, y \in \m X$, define  
\begin{align*}
G_{D}(x^{(D)}) =  \exp\left(\theta_{D}^{\top}{\bf
      s}_{D}(x^{(D)})+ {\bf 1}_{|D|}^\top {\bf c}_{D}(x) + \frac{1}{2} \, x_{D}^{\top}\Phi_{D,0}x_{D}\right), \quad
H_{D}(x^{(D)},y^{(D)}) = \exp\left(
      x^{\top}_{D}\Phi_{D,1} y_{D}\right)
\end{align*}
where
\begin{align*}
  \theta_{D} &= \left(\frac{1}{p_{a}}\theta_{a};a\in D\right), \quad
               s_{A}(x^{(D)}) = \left(\frac{1}{p_a}s_{a}(x);a\in D\right),
\end{align*}
and  $\Phi_{D,j}$ is an $|D|\times|D|$-dimensional matrix where
\begin{align*}
  \Phi_{D,j} & = \left( \frac{1}{p_{a}}\Phi_{j}^{(a,a)} \textrm{ for
               }a\in D; \,\frac{1}{p_{(a,b)}}\Phi_{j}^{(a,b)} \textrm{ for
               }a\neq b\in D \right).
\end{align*}
We let $R_{A}(x^{(D)},y^{(D)}) = G(x^{(D)},y^{(D)})^{1/2}H(x^{(D)},y^{(D)}) G(x^{(D)},y^{(D)})^{1/2}$
and the factorisation in (\ref{eq:cli-ha}) immediately follows.

Further by using Theorem \ref{theorem:one_stationarity} and the above factorisation, the joint
density of $x_{-M},\ldots,x_{M}$ is (\ref{eq:pnpd}).
\end{proof}  

Let $C_{a}$ denote the set of all cliques which contain the node $a$. Let $x^{(D_{a})} = (x^{(a)},x^{(D_{a}\backslash{a})})$. Then
we define the conditional kernel associated with node $a$ as 
\begin{eqnarray}
\label{eq:Ra}  
  R^{(a)}(x^{(a,N_{a})},y^{(a,N_{a})}) =
  \frac{\prod_{D\in \mathcal{C}_{a}}R_{D}((x^{(a)},x^{D\backslash{a}}), (y^{(a)},y^{D\backslash{a}}))}{\prod_{D\in \mathcal{C}_{a}}R_{D}((0,x^{D\backslash{a}}), (0,y^{D\backslash{a}}))}.
\end{eqnarray}
We divide by $R_{D}((0,x^{D\backslash{a}}),
  (0,y^{D\backslash{a}})$ to remove all interactions not associated
  with $x^{(a)}$.

We will use the above result to obtain the
conditional density of 
$(X_{-n}^{(a)},\ldots,X_{n}^{(a)})$ based on different 
conditional sets. We will then invoke  
Theorem \ref{theorem:cond-independenceSigma}, 
in Section \ref{sec:sigmalimit}, to 
prove the local and global Markov properties.

\subsection{The Local Markov property}

Some notation: Let $n \in \mb N$. 
\begin{itemize}
\item For $a \in [p]$, denote $x_{[-n,n]}^{(a)} =
  (x_{-n}^{(a)},x_{-n+1}^{(a)},\ldots,x_{n-1}^{(a)},x_{n}^{(a)})$.
\item  For $K \subseteq [p]$, denote
$x_{[-n,n]}^{K} =
  (x_{-n}^{(b)},x_{-n+1}^{(b)},\ldots,x_{n-1}^{(b)},x_{n}^{(b)};b\in
  K)$.
 \item Let $x_{(\infty,n)}^{(a)} = (x_{t}^{(a)};|t|\geq n)$.
\item Recall $\mathcal{N}_{a}$ denotes the neighbourhood set of $a$, and
  $\mathcal{N}_{a}^{\prime}=[p]\backslash\{a\cup N_{a}\}$.  
\end{itemize}  

\begin{lemma}\label{lemma:finitelocal}[Local Markov property]
Suppose the assumptions in Theorem  \ref{theorem:one_stationarity}
hold. Then the conditional density of
$X_{-n}^{(a)},\ldots,X_{n}^{(a)}$ given ${\bf X}^{\mathcal{N}_a}_{[-n-L-k_1,n+L+k_1]},
{\bf X}^{\mathcal{N}_{a}^{\prime}}_{[-n-L-k_2,n+L+k_2]},{\bf
  X}_{(\infty,n+L)}^{(a)}$ is equal to the conditional density of
$X_{-n}^{(a)},\ldots,X_{n}^{(a)}$ given ${\bf X}^{\mathcal{N}_a}_{[-n-L-k_1,n+L+k_1]},{\bf
  X}_{(\infty,n+L)}^{(a)}$ i.e.
\begin{eqnarray}
 \label{eq:tvconditional0}   
  &&p(x_{-n}^{(a)},\ldots,x_{n}^{(a)}|{\bf x}^{\mathcal{N}_a}_{[-n-L-k_1,n+L+k_1]},
{\bf x}^{\mathcal{N}_{a}^{\prime}}_{[-n-L-k_2,n+L+k_2]},{\bf x}_{(\infty,n+L)}^{(a)})\nonumber\\
  &=& p(x_{-n}^{(a)},\ldots,x_{n}^{(a)}|{\bf
    x}^{\mathcal{N}_a}_{[-n-L:n+L]},x_{-n-L}^{(a)},x_{n+L}^{(a)})
      \quad\textrm{for all } n,L,k_1,k_2>0.
\end{eqnarray}
\end{lemma}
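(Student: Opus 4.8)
The plan is to compute the conditional density on the left-hand side of \eqref{eq:tvconditional0} directly from the Hammersley--Clifford factorisation in Lemma \ref{lemma:cliques}, and to show that after cancellation it depends on the conditioning variables only through $\bX^{\m N_a}_{[-n-L,n+L]}$ and $(x_{-n-L}^{(a)}, x_{n+L}^{(a)})$. First I would fix a large window $[-N,N]$ containing all the finite conditioning blocks and write the joint density $p_{[-N:N]}$ via \eqref{eq:pnpd}, so that it is a product of the boundary terms $v(x_{-N})$, $w(x_N)$ and the clique factors $R_D(x_{t-1}^{(D)}, x_t^{(D)})$ over $D \in \m C$ and $-N+1 \le t \le N$. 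The conditional density of $(x_{-n}^{(a)}, \ldots, x_n^{(a)})$ is then the ratio of $\int p_{[-N:N]}$ over the free coordinates to the same integral with the target coordinates $(x_{-n}^{(a)}, \ldots, x_n^{(a)})$ additionally integrated out.

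The crucial structural step is a partition of the clique factors. Let $A$ denote the node-$a$ coordinates at the interior times $[-n-L+1, n+L-1]$, which contains both the target coordinates and the free node-$a$ coordinates. I would collect into a product $\Pi_A$ exactly those factors $R_D(x_{t-1}^{(D)}, x_t^{(D)})$ with $a \in D$ whose time pair touches $A$, i.e.\ $-n-L+1 \le t \le n+L$. Since every clique $D$ containing $a$ satisfies $D \setminus \{a\} \subseteq \m N_a$ (by the definition \eqref{eq:cestgm_nhbr} of $\m N_a$ and the construction of $\m C$), the factor $\Pi_A$ involves node $a$ and its neighbours $\m N_a$ only, and only at times in $[-n-L, n+L]$; in particular it never sees any non-neighbour coordinate, nor node $a$ or any neighbour at times $|t| > n+L$. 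The complementary product $\Pi_{A^c}$ of all remaining factors (together with $v(x_{-N})$, $w(x_N)$) is, by construction, free of the interior node-$a$ coordinates.

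The payoff is that $\Pi_A$ and $\Pi_{A^c}$ depend on disjoint sets of free variables: $\Pi_A$ touches the free coordinates only through node $a$ at $[-n-L+1,-n-1]\cup[n+1,n+L-1]$, whereas $\Pi_{A^c}$ touches the free coordinates only through neighbours and non-neighbours outside their conditioning windows and node $a$ at $|t| > n+L$. Hence the integral over the free variables factorises into a node-$a$ integral of $\Pi_A$ and a remaining integral of $\Pi_{A^c}$; the latter is a constant $C$ that does not involve the target coordinates and cancels between numerator and denominator. What survives is $\big[\int \Pi_A \, d(\text{free node-}a)\big]$, divided by the same quantity with the target also integrated, and this ratio depends on the conditioning set only through $\bX^{\m N_a}_{[-n-L,n+L]}$ and $(x_{-n-L}^{(a)}, x_{n+L}^{(a)})$. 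Running the identical computation for the right-hand conditioning set of \eqref{eq:tvconditional0} produces exactly the same ratio, establishing the claimed equality; this matches the $\m I_1$ versus $\m I_2$ integration pattern described in the sketch preceding Lemma \ref{lemma:separator}.

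I expect the main obstacle to be the measure-theoretic bookkeeping rather than the algebra. The left conditioning set involves the infinite block $\bX_{(\infty,n+L)}^{(a)}$ and the free sets contain infinitely many neighbour and non-neighbour coordinates, so I would justify the factorisation of the free integral and the cancellation of $C$ by working first at finite $N$, invoking Tonelli/Fubini together with the strict positivity of $R$, and checking that the constant $C$ is finite. The latter is where the Hilbert--Schmidt hypothesis \eqref{eq:HS_defn} enters, guaranteeing that the relevant integrals converge and that the finite-window conditional densities are consistent (via Theorem \ref{theorem:one_stationarity}), so that the limit as $N \to \infty$ is well defined and independent of $N$.
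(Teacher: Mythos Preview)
Your approach is essentially the paper's: partition the clique factors by whether they touch node $a$ at interior times, observe that cliques containing $a$ lie in $\{a\}\cup\mathcal{N}_a$, and cancel everything else between numerator and denominator. The one place where the paper is crisper is the infinite tail $\bX_{(\infty,n+L)}^{(a)}$: rather than an $N\to\infty$ limit, the paper invokes the Markov property of $\{X_t\}$ (Corollary~\ref{cor:Markovian}) directly---since the finite conditioning set already contains the full vectors $X_{-n-L}$ and $X_{n+L}$, appending the rest of the node-$a$ tail changes nothing.
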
  
\begin{proof}
We start by studying the conditional densities for finite subsets of the time
series $\{X_{t}^{(a)}:t\in \mathbb{Z}\}$.
By using (\ref{eq:cli-ha}), (\ref{eq:Ra}) and  (\ref{eq:pnpd}), the conditional distribution of
$X_{-n}^{(a)},\ldots,X_{n}^{(a)}$ given
$X^{V\backslash\{a\}}_{[-(n+1):(n+1)]},X_{-(n+1)}^{(a)},X_{n+1}^{(a)}$ is
\begin{align}
\label{eq:pnpdcond}  
  p(x_{-n}^{(a)},\ldots,x_{n}^{(a)}|{\bf
     x}^{V\backslash\{a\}}_{[-(n+1):(n+1)]},x_{-(n+1)}^{(a)},x_{n+1}^{(a)}) 
  =
  \frac{\prod_{t=-n}^{n+1}R^{(a)}(x^{(a,\mathcal{N}_{a})}_{t-1},
  x_{t}^{(a,\mathcal{N}_{a})})}{\int
  \prod_{t=-n}^{n+1}R^{(a)}(x^{(a,\mathcal{N}_{a})}_{t-1},x_{t}^{(a,\mathcal{N}_{a})})
  \prod_{i=-n}^{n}\mu(dx^{(a)}_{i})}.
\end{align}
Hence, by conditioning on all the other time series (except for $a$)
in the range $t\in [-(n+1),(n+1)]$ \emph{and} conditioning on $x_{-(n+1)}^{(a)}$
and $x^{(a)}_{n+1}$, the (conditional) distribution of 
$X_{-n}^{(a)},\ldots,X_{n}^{(a)}$ only depends 
series in the neighborhood set $\mathcal{N}_{a}$ and the anchor
points $(x_{-(n+1)}^{(a)},x^{(a)}_{(n+1)})$. That is  
\begin{align}
\label{eq:pmx}  
p(x_{-n}^{(a)},\ldots,x_{n}^{(a)}|{\bf
  x}^{V\backslash\{a\}}_{[-(n+1):n+1]},x_{-(n+1)}^{(a)},x_{n+1}^{(a)}) 
  = p(x_{-n}^{(a)},\ldots,x_{n}^{(a)}|{\bf
  x}^{\mathcal{N}_{a}}_{[-(n+1):(n+1)]},
  x_{-(n+1)}^{(a)},x_{(n+1)}^{(a)}).
\end{align}
Now we focus on the conditional distribution of
$X_{-n}^{(a)},\ldots,X_{n}^{(a)}$ conditioned on 
random variables that are further in the past and future.
Using a similar argument as above we have 
\begin{eqnarray}
 &&p(x_{-n}^{(a)},\ldots,x_{n}^{(a)}|{\bf
    x}^{V\backslash\{a\}}_{[-n-L:n+L]},x_{-n-L}^{(a)},x_{n+L}^{(a)}) \nonumber\\
  &=& \int p(x_{-n-L+1}^{(a)},\ldots,x_{n+L-1}^{(a)}|{\bf x}^{V\backslash\{a\}}_{[-n-L:n+L]},x_{-n-L}^{(a)},x_{n+L}^{(a)})
      \prod_{i\in \mathcal{I}}\mu(dx^{(a)}_{t}) \nonumber\\
  &=& p(x_{-n}^{(a)},\ldots,x_{n}^{(a)}|{\bf
    x}^{\mathcal{N}_a}_{[-n-L:n+L]},x_{-n-L}^{(a)},x_{n+L}^{(a)})\label{eq:conditionalmore}
\end{eqnarray}
where $\mathcal{I} = \{-n-L+1,\ldots,-n+1\}\cup \{n+1,\ldots,n+L-1\}$ and the
last line is due to (\ref{eq:pnpdcond}).
Next we show that for all $k>0$
\begin{eqnarray*}
 p(x_{-n}^{(a)},\ldots,x_{n}^{(a)}|x^{V\backslash\{a\}}_{[-n-L-k,n+L]},x_{-n-L}^{(a)},x_{n+L}^{(a)})
  &=& p(x_{-n}^{(a)},\ldots,x_{n}^{(a)}|{\bf
    x}^{\mathcal{N}_a}_{[-n-L:n+L]},x_{-n-L}^{(a)},x_{n+L}^{(a)}).
\end{eqnarray*}
We start with the joint density in (\ref{eq:pnpd})
partition the product of the interaction kernel into two parts
\begin{eqnarray*}
\prod_{t=-n-L-k+1}^{n+L} R(x_{t-1},x_{t}) = \prod_{t=-n-L-k+1}^{-n-L}R(x_{t-1},x_{t})
  \prod_{s=-n-L+1}^{n+L}R(x_{s-1},x_{s})
\end{eqnarray*}
using this we have
\begin{eqnarray*}
&&p(x_{-n}^{(a)},\ldots,x_{n}^{(a)}|{\bf x}^{V\backslash\{a\}}_{[-n-L-k,n+L]},x_{-n-L}^{(a)},x_{n+L}^{(a)})\\
  &=&  \frac{ \int v(x_{-n-L-k})\prod_{t=-n-L-k+1}^{-n-L}R(x_{t-1},x_{t})
      \prod_{i \in \mathcal{I}_0}\mu(dx_i^{(a)})}{
  \int  v(x_{-n-L-k})\prod_{t=-n-L-k+1}^{-n-L}R(x_{t-1},x_{t})
\prod_{i \in  \mathcal{I}_0} \mu(dx_i^{(a)})}\\
    &&   \times \frac{ \int \prod_{s=-n-L+1}^{n+L}R(x_{s-1},x_{s}) w(x_{n+L})
      \prod_{i \in \mathcal{I}_1}\mu(dx_i^{(a)})}{
  \int  \prod_{s=-n-L+1}^{n+L}R(x_{s-1},x_{s}) w(x_{n+L})\prod_{i \in
   \mathcal{I}_2} \mu(dx_i^{(a)})}
\end{eqnarray*}
where $\mathcal{I}_0 = \{-n-L-k,\ldots,-n-L-1\}$, $\mathcal{I}_1 =
\{-n-L+1,\ldots,-n-1\}\cup\{n+1,\ldots,n+L\}$ and  $\mathcal{I}_2 =
\{-n-L+1,\ldots,-n-1,-n,\ldots,n+1,\ldots,n+L\}$.
Since the variable $x_{-n-L}^{(a)}$ is not integrated out, there is a
separation in the two terms and the 
numerator and denominator in the first term above cancels.
Thus the above reduces to only the second term
(with  $w_{n+L}(\cdot)$ in the numerator and denominator also being
cancelled) i.e.
\begin{eqnarray}
 && p(x_{-n}^{(a)},\ldots,x_{n}^{(a)}|{\bf x}^{V\backslash\{a\}}_{[-n-L-k,n+L]},x_{-n-L}^{(a)},x_{n+L}^{(a)})\nonumber\\
 &=& \frac{ \int \prod_{t=-n-L+1}^{n+L}R(x_{t-1},x_{t}) 
      \prod_{i \in \mathcal{I}_1}\mu(dx_i^{(a)})}{
  \int    \prod_{t=-n-L+1}^{n+L}R(x_{t-1},x_{t}) \prod_{i \in
     \mathcal{I}_2} \mu(dx_i^{(a)})} = \frac{ \int 
      \prod_{t=-n-L+1}^{n+L}R(x_{t-1}^{(a,\mathcal{N}_a)},x_{t}^{(a,\mathcal{N}_a)}) 
      \prod_{i \in \mathcal{I}_1}\mu(dx_i^{(a)})}{
  \int  
  \prod_{t=-n-L+1}^{n+L}R(x_{t-1}^{(a,\mathcal{N}_a)},x_{t}^{(a,\mathcal{N}_a)}) \prod_{i \in
     \mathcal{I}_2} \mu(dx_i^{(a)})} \nonumber\\
  &=& p(x_{-n}^{(a)},\ldots,x_{n}^{(a)}|{\bf
    x}^{\mathcal{N}_a}_{[-n-L:n+L]},x_{-n-L}^{(a)},x_{n+L}^{(a)}).
    \label{eq:densityseparation}
\end{eqnarray}
By the same argument we can extend the right limit and for all $k>0$
we have
\begin{eqnarray*}
 &&
    p(x_{-n}^{(a)},\ldots,x_{n}^{(a)}|{\bf x}^{V\backslash\{a\}}_{[-n-L-k,n+L+k]},x_{-n-L}^{(a)},x_{n+L}^{(a)})\\
  &=& p(x_{-n}^{(a)},\ldots,x_{n}^{(a)}|{\bf
    x}^{\mathcal{N}_a}_{[-n-L:n+L]},x_{-n-L}^{(a)},x_{n+L}^{(a)}).
\end{eqnarray*}
Note, by partitioning ${\bf x}^{V\backslash\{a\}}_{[-n-L-k,n+L+k]}$
we can write the above as 
\begin{eqnarray*}
 &&     p(x_{-n}^{(a)},\ldots,x_{n}^{(a)}|{\bf x}^{\mathcal{N}_a}_{[-n-L-k,n+L+k]},
{\bf x}^{\mathcal{N}_{a}^{\prime}}_{[-n-L-k,n+L+k]},x_{-n-L}^{(a)},x_{n+L}^{(a)})\\
  &=& p(x_{-n}^{(a)},\ldots,x_{n}^{(a)}|{\bf
    x}^{\mathcal{N}_a}_{[-n-L:n+L]},x_{-n-L}^{(a)},x_{n+L}^{(a)}).
\end{eqnarray*}
Since the process is Markovian, replacing
$(x_{-n-L}^{(a)},x_{n+L}^{(a)})$ with the entire tail ${\bf
  x}_{(\infty,n+L)}^{(a)}=(x_{t}^{(a)};|t|\geq n+L)$
does not change the above distribution i.e.
\begin{eqnarray*}
  &&p(x_{-n}^{(a)},\ldots,x_{n}^{(a)}|{\bf x}^{\mathcal{N}_a}_{[-n-L-k,n+L+k]},
{\bf x}^{\mathcal{N}_{a}^{\prime}}_{[-n-L-k,n+L+k]},{\bf x}_{(\infty,n+L)}^{(a)})\\
  &=& p(x_{-n}^{(a)},\ldots,x_{n}^{(a)}|{\bf
    x}^{\mathcal{N}_a}_{[-n-L:n+L]},x_{-n-L}^{(a)},x_{n+L}^{(a)}).
\end{eqnarray*}
Further, we can replace ${\bf x}^{\mathcal{N}_a}_{[-n-L-k,n+L+k]}$ and
${\bf x}^{\mathcal{N}_{a}^{\prime}}_{[-n-L-k,n+L+k]}$ with
${\bf x}^{\mathcal{N}_a}_{[-n-L-k_1,n+L+k_1]}$ and
${\bf x}^{\mathcal{N}_{a}^{\prime}}_{[-n-L-k_2,n+L+k_2]}$ ($k_1,k_2>0$)
respectively and the conditional distribution does not change
\begin{eqnarray*}
 \label{eq:tvconditional0}   
  &&p(x_{-n}^{(a)},\ldots,x_{n}^{(a)}|{\bf x}^{\mathcal{N}_a}_{[-n-L-k_1,n+L+k_1]},
{\bf x}^{\mathcal{N}_{a}^{\prime}}_{[-n-L-k_2,n+L+k_2]},{\bf x}_{(\infty,n+L)}^{(a)})\nonumber\\
  &=& p(x_{-n}^{(a)},\ldots,x_{n}^{(a)}|{\bf
    x}^{\mathcal{N}_a}_{[-n-L:n+L]},x_{-n-L}^{(a)},x_{n+L}^{(a)})
      \quad\textrm{for all } n,L,k_1,k_2>0.
\end{eqnarray*}
Thus proving the result.
\end{proof}

For the remainder of the proof we focus on the
sigma-algebras associated with (\ref{eq:tvconditional0}). This requires
the following notation. 
\begin{itemize}
\item 
  $\mathcal{F}^{(a)}_{n} = \sigma(X_{-n}^{(a)},X_{-n+1}^{(a)},\ldots,X_{n-1}^{(a)},X_{n}^{(a)})$.
\item 
   $\mathcal{F}^{K}_{n} =
   \sigma(X_{-n}^{(b)},X_{-n+1}^{(b)},\ldots,X_{n-1}^{(b)},X_{n}^{(b)};b\in
   K)$.
 \item 
   $\mathcal{F}^{(a)}_{(\infty,n)} = \sigma(X_{t}^{(a)};|t|\geq n)$.
 \item We let $\mathcal{F}^{} = \sigma(X_{t}^{};t\in
   \mathbb{Z})$, 
   $\mathcal{F}^{(a)} = \sigma(X_{t}^{(a)};t\in \mathbb{Z})$
  and $\mathcal{F}^{K} = \sigma(X_{t}^{(b)};t\in \mathbb{Z},b\in K)$.
 \item  Let $\mathcal{G}_1\vee \mathcal{G}_2$ denote the smallest
sigma-algebra that contains both $\mathcal{G}_1$ and $\mathcal{G}_2$. 
\end{itemize}  

\vspace{2mm}

\noindent {\bf Proof of Theorem \ref{theorem:localMarkov}}
From (\ref{eq:conddef2}), Lemma \ref{lemma:finitelocal} implies 
\begin{align*}
\mathcal{F}_{n}^{(a)}\independent \mathcal{F}_{n+L+k_{2}}^{\mathcal{N}_{a}^{\prime}}|
  (\mathcal{F}_{n+L+k_{1}}^{\mathcal{N}_{a}}\vee
  \mathcal{F}_{(\infty,n+L)}^{(a)}) \textrm{ for all }n,k_1,k_2,L>0.
\end{align*}  
From Theorem \ref{theorem:mixing} we have
that the process $\{X_{t}\}_{t\in \mathbb{Z}}$ is $\beta$-mixing.  Thus 
by using the above and applying Theorem
\ref{theorem:cond-independenceSigma}
with $C =a$, $D = \mathcal{N}_{a}^{\prime}$ and $E=\mathcal{N}_{a}$ we have
the result.

\subsection{The Global Markov property}

From Definition 2.30 in \cite{b:lau-20}, two subsets $A$ and $B$ of a vertex set
$V$ of graph $\mathcal{G}= (V,E)$ are
said to be $g$-separated by $S$ if all paths from $A$ to $B$ are
blocked by $S$. If this is the case, then we
write $A\perp_{\mathcal{G}} B|S$.

Let $\alpha$ denote the
connectivity components in $\mathcal{G}_{V\backslash S}$ that contain $A$ and
$\beta = V\backslash\{\alpha\cup S\}$. 
Note by connectivity components, we mean 
the maximal number of nodes in 
$V\backslash S$ that contains $A$ and ever pair of nodes in that set has a path connecting them.
Clearly
$A\subseteq \alpha$, $B\subseteq \beta$ and $\alpha$
and $\beta$ are disjoint sets such that
$V=\alpha\cup\beta\cup S$.  Since $S$ separates $\alpha$ and $\beta$,
any clique of $\mathcal{G}$ is
either in $\alpha\cup S$ or $\beta\cup S$. This will be useful
in the lemma below. 

\begin{lemma}\label{lemma:separator}
Suppose the assumptions in Theorem  \ref{theorem:one_stationarity}
hold.  If $A\perp_{\mathcal{G}} B|S$, we define $\alpha$
as the connectivity components in $\mathcal{G}_{V\backslash S}$ that contain $A$ and
$\beta = V\backslash\{\alpha\cup S\}$.
Then for all $k_1,k_2,L,n>0$, 
the conditional density of
$X_{-n}^{\alpha},\ldots,X_{n}^{\alpha}$ given ${\bf X}^{S}_{[-n-L-k_1,n+L+k_1]},
{\bf X}^{\beta}_{[-n-L-k_2,n+L+k_2]},{\bf
  X}_{(\infty,n+L)}^{\alpha}$ is equal to the conditional density of
$X_{-n}^{\alpha},\ldots,X_{n}^{\alpha}$ given ${\bf X}^{S}_{[-n-L,n+L]},{\bf
  X}_{(\infty,n+L)}^{\alpha}$ i.e.
\begin{eqnarray}
 \label{eq:tvconditional2}   
  &&p(x_{-n}^{\alpha},\ldots,x_{n}^{\alpha}|{\bf x}^{S}_{[-n-L-k_1,n+L+k_1]},
{\bf x}^{\beta}_{[-n-L-k_2,n+L+k_2]},{\bf x}_{(\infty,n+L)}^{\alpha})\nonumber\\
  &=& p(x_{-n}^{\alpha},\ldots,x_{n}^{\alpha}|{\bf
    x}^{S}_{[-n-L:n+L]},x_{-n-L}^{\alpha},x_{n+L}^{\alpha})
      \quad\textrm{for all } n,L,k_1,k_2>0.
\end{eqnarray}
\end{lemma}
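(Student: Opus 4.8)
The plan is to replicate, essentially verbatim, the argument that established the local Markov property in Lemma~\ref{lemma:finitelocal}, but with the single node $a$ replaced by the connectivity block $\alpha$, the neighborhood $\m N_a$ replaced by the separator $S$, and $\m N_a'$ replaced by $\beta$. The engine is again the Hammersley--Clifford factorization of the interaction kernel from Lemma~\ref{lemma:cliques}, now supplemented by one structural observation that is specific to a separated triple $(\alpha, S, \beta)$.

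First I would record the graph-theoretic fact that drives everything. Since $A \perp_{\m G} B \mid S$, and $\alpha$ is the union of the connectivity components of $\m G_{V \backslash S}$ meeting $A$ while $\beta = V \backslash (\alpha \cup S)$, there is no edge of $\m G$ joining a node of $\alpha$ to a node of $\beta$. Because every clique $D \in \m C$ induces a complete subgraph, no clique can contain both an $\alpha$-node and a $\beta$-node; hence each $D$ satisfies either $D \subseteq \alpha \cup S$ or $D \subseteq \beta \cup S$. Writing $\m C_\alpha$ for the cliques contained in $\alpha \cup S$, the factorization $R(x,y) = \prod_{D \in \m C} R_D(x^{(D)}, y^{(D)})$ from \eqref{eq:cli-ha} then splits into a product over $\m C_\alpha$ and a product over the remaining cliques, and crucially the latter factors involve no $\alpha$-coordinates.

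Next I would substitute the clique factorization into the joint density \eqref{eq:pnpd} over a window $[-M:M]$ with $M = n+L+\max(k_1,k_2)$ and form the conditional density of $X^\alpha_{[-n,n]}$. The factors $R_D$ with $D \not\subseteq \alpha \cup S$ depend only on $\beta \cup S$ coordinates, which are being conditioned on rather than integrated; they therefore cancel between numerator and denominator, exactly as the non-neighborhood factors did in the passage leading to \eqref{eq:pmx}. What survives is a ratio built solely from $\{R_D : D \in \m C_\alpha\}$, which depends on the conditioning variables only through their $S$-coordinates and the $\alpha$-anchors, thereby removing all dependence on $\beta$. The boundary-extension steps---showing that enlarging the conditioning window in either time direction by $k_1$ or $k_2$ leaves the conditional density unchanged---then go through as in \eqref{eq:densityseparation}: partitioning the kernel product at the anchor times $-n-L$ and $n+L$ yields a clean separation in which the extra factors, together with the eigenfunctions $v$ and $w$, cancel from numerator and denominator. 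A final appeal to the Markov property (Corollary~\ref{cor:Markovian}) lets me replace the pair of anchors $(x^\alpha_{-n-L}, x^\alpha_{n+L})$ by the entire tail $\bX^\alpha_{(\infty, n+L)}$, delivering \eqref{eq:tvconditional2}.

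The main obstacle is the clean isolation of the $\alpha$-dependence: I must verify that the clique-separation property genuinely forces every $\alpha$-involving factor of $R$ into $\m C_\alpha \subseteq \alpha \cup S$, and then carry the bookkeeping of which coordinates are integrated versus conditioned through the successive cancellations without error. This is where the analog of \eqref{eq:densityseparation} demands the most care, since---unlike the single-node case---the integrated block $\alpha$ and the separator $S$ are now sets, and the anchoring at the two window ends must be tracked for the whole block $\alpha$ simultaneously.
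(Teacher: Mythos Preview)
Your proposal is correct and follows essentially the same approach as the paper's proof: the paper likewise invokes the clique-splitting observation (every $D\in\mathcal C$ lies in $\alpha\cup S$ or $\beta\cup S$), derives the factorization \eqref{eq:pnpd2}, cancels the non-$\mathcal C_\alpha$ factors to obtain the analog of \eqref{eq:pnpdcond}, then extends the conditioning window exactly as in \eqref{eq:densityseparation} before replacing the anchors by the full tail via Markovianity. The identification of the ``main obstacle'' as tracking the block-level anchoring is apt, and the paper handles it in precisely the manner you outline.
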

\begin{proof}
Following the proof of Proposition 2.40 in \cite{b:lau-20} and using
(\ref{eq:cli-ha}) we obtain the following
factorisation of $R(x_{t-1},x_{t})$ 
\begin{eqnarray*}
  \prod_{D\in \mathcal{C}}R_{D}(x_{t-1}^{(D)},x_{t}^{(D)})  =
  \prod_{D\in \mathcal{C}_{\alpha}}R_{D}(x_{t-1}^{(D)},x_{t}^{(D)})
  \prod_{E\in \mathcal{C}\backslash\mathcal{C}_{\alpha}}R_{E}(x_{t-1}^{(E)},x_{t}^{(E)})
\end{eqnarray*}
where $\mathcal{C}_{\alpha}$ denotes all cliques contained in
$\alpha\cup S$. Thus the joint density of
$X_{-M},X_{-M+1},\ldots,X_{M}$ has the factorisation
\begin{eqnarray}
\label{eq:pnpd2}  
&&p_{[-M:M]}(x_{-M},\ldots,x_{M}) \nonumber\\
  &=& \frac{1}{r^{2M}} \,   v(x_{-M})
      \left[\prod_{t=-M+1}^{M}
\prod_{D\in \mathcal{C}_{\alpha}}R_{D}(x_{t-1}^{(D)},x_{t}^{(D)})
  \prod_{E\in
      \mathcal{C}\backslash\mathcal{C}_{\alpha}}R_{E}(x_{t-
      1}^{(E)},x_{t}^{(E)})\right]w(x_{M}).
\end{eqnarray}
Using (\ref{eq:pnpd2}), the conditional distribution of
$X_{-n+1}^{\alpha},\ldots,X_{n-1}^{\alpha}$ given
${\bf X}^{S}_{[-n:n]},{\bf X}_{[-n,n]}^{\beta},X_{-n}^{\alpha},X_{n}^{\alpha}$ is
\begin{align}
\label{eq:pnpdcond}  
 & p(x_{-n}^{\alpha},\ldots,x_{n}^{\alpha}|{\bf
     x}^{S}_{[-n-1:n+1]},{\bf x}_{[-n-1,n+1]}^{\beta},x_{-n-1}^{\alpha},x_{n+1}^{\alpha}) \nonumber\\
  &=
  \frac{\prod_{t=-n}^{n+1}\prod_{D\in \mathcal{C}_{\alpha}}R_{D}(x_{t-1}^{(D)},x_{t}^{(D)})}{\int
  \prod_{t=-n}^{n+1}\prod_{D\in \mathcal{C}_{\alpha}}R_{D}(x_{t-1}^{(D)},x_{t}^{(D)})
  \prod_{i=-n}^{n}\mu(dx^{\alpha}_{i})}.
\end{align}
We extend the conditioning set and 
follow the same proof as in 
equation (\ref{eq:densityseparation}), in
Lemma \ref{lemma:finitelocal} to give 
\begin{eqnarray*}
 && p(x_{-n}^{\alpha},\ldots,x_{n}^{\alpha}|{\bf
     x}^{S}_{[-n-L-k:n+L+k]},{\bf x}_{[-n-L-k,n+L+k]}^{\beta},x_{-n-L}^{\alpha},x_{n+L}^{\alpha})\\
 &=& \frac{ \int 
      \prod_{t=-n-L}^{n+L}\prod_{D\in \mathcal{C}_{\alpha}}R_{D}(x_{t-1}^{(D)},x_{t}^{(D)})
      \prod_{i \in \mathcal{I}_1}\mu(dx_i^{\alpha})}{
  \int  
  \prod_{t=-n-L+1}^{n+L}\prod_{D\in \mathcal{C}_{\alpha}}R_{D}(x_{t-1}^{(D)},x_{t}^{(D)}) \prod_{i \in
     \mathcal{I}_2} \mu(dx_i^{\alpha})} \\
  &=& p(x_{-n}^{\alpha},\ldots,x_{n}^{\alpha}|{\bf
    x}^{S}_{[-n-L:n+L]},x_{-n-L}^{\alpha},x_{n+L}^{\alpha}),
\end{eqnarray*}
where $\mathcal{I}_1 =
\{-n-L+1,\ldots,-n-1\}\cup\{n+1,\ldots,n+L\}$ and  $\mathcal{I}_2 =
\{-n-L+1,\ldots,-n-1,-n,\ldots,n+1,\ldots,n+L\}$. As in the local
Markov case since the process is Markovian, replacing
$(X_{-n-L}^{\alpha},X_{n+L}^{\alpha})$ with the entire tail
${\bf X}_{(\infty,n+L)}^{\alpha} = (X_{t}^{a};|t|\geq n+L,a\in
\alpha)$ does not change the distribution. Similarly replacing
${\bf X}^{S}_{[-n-L-k:n+L+k]}$ and ${\bf X}_{[-n-L-k,n+L+k]}^{\beta}$
with ${\bf X}^{S}_{[-n-L-k_1:n+L+k_1]}$ and ${\bf X}_{[-n-L-k_2,n+L+k_2]}^{\beta}$
respectively does not change the distribution i.e.
\begin{eqnarray*}
 && p(x_{-n}^{\alpha},\ldots,x_{n}^{\alpha}|{\bf
     x}^{S}_{[-n-L-k_1:n+L+k_1]},{\bf x}_{[-n-L-k_2,n+L+k_2]}^{\beta},{\bf x}_{(\infty,n+L)}^{\alpha})\\
  &=& p(x_{-n}^{\alpha},\ldots,x_{n}^{\alpha}|{\bf
    x}^{S}_{[-n-L:n+L]},x_{-n-L}^{\alpha},x_{n+L}^{\alpha}).
\end{eqnarray*}
As the above holds for all $k_1,k_2,n,L>0$ we obtain the result. 
\end{proof}

\vspace{2mm}

\noindent {\bf Proof of Theorem \ref{theorem:globalMarkov}}
From (\ref{eq:conddef2}), Lemma \ref{eq:tvconditional2} implies 
\begin{align*}
\mathcal{F}_{n}^{\alpha}\independent \mathcal{F}_{n+L+k_{2}}^{\beta}|
  (\mathcal{F}_{n+L+k_{1}}^{S}\vee
  \mathcal{F}_{(\infty,n+L)}^{\alpha}) \textrm{ for all }n,k_1,k_2,L>0.
\end{align*}
From Theorem \ref{theorem:mixing} we have
that the process $\{X_{t}\}_{t\in \mathbb{Z}}$ is $\beta$-mixing.
Thus by using the above and applying Theorem
\ref{theorem:cond-independenceSigma}
with $C =\alpha$, $D = \beta$ and $E=S$ we have
\begin{align*}
\mathcal{F}_{}^{\alpha}\independent \mathcal{F}^{\beta}|
  \mathcal{F}^{S}.
\end{align*}
Since $A\subseteq \alpha$ and $B\subseteq \beta$ then the above
immediately implies that
\begin{align*}
\mathcal{F}_{}^{A}\independent \mathcal{F}^{B}|
  \mathcal{F}^{S},
\end{align*}
which gives the result. 

\subsection{Conditional independence between
  stochastic processes}\label{sec:sigmalimit}

\paragraph{Background results}
Let $(\Omega,\mathcal{F},P)$ denote a probability space. 
We first recall some results which will be useful in the remainder of
the proof. Suppose that $\mathcal{G}$ is a subsigma-algebra of $\mathcal{F}$.
We define the set of functions
\begin{eqnarray*}
  [\mathcal{G}]^{+}  = \{ \textrm{all positive, bounded }
  \mathcal{G}\textrm{-measurable  functions}\}.
\end{eqnarray*} 
For example, if $A\in \mathcal{G}$, then $\mathbbm{1}_{A}\in [\mathcal{G}]^{+}$.

\emph{Conditional expectation}
Suppose that $\mathcal{G}\subseteq\mathcal{F}$. 
We recall that $Y$ is the conditional expectation of $X$ given the
sigma-algebra $\mathcal{G}$ (written $Y=\Ex[X|\mathcal{G}]$) if
$Y$ is $\mathcal{G}$-measurable and for all $m\in [\mathcal{G}]^{+}$
we have
\begin{eqnarray*}
  \int_{}mXdP = \int mYdP, 
\end{eqnarray*}  
i.e. $\Ex[mX] = \Ex[\Ex[mX|\mathcal{G}]]$. We make frequent use of this definition.

\emph{Doob's Martingale convergence theorem and the reverse
  Martingale convergence theorem} 
\begin{itemize}
\item[(i)]Suppose $\{\mathcal{G}_{h}\}_{h}$ are sub-sigma algebras of
$\mathcal{A}$ where $\mathcal{G}_{h_1}\subseteq \mathcal{G}_{h_2}$ for
$h_1\leq h_2$ and $\mathcal{G}_{\infty} =
\vee_{h=1}^{\infty}\mathcal{G}_{h}$. Suppose that
$\{U_{h}\}_{h=1}^{\infty}$ is a martingale sequence with respect to
the filtration $\{\mathcal{G}_{h}\}$
where $\sup_{h}\Ex|U_{h}|<\infty$, then (by the Martingale
convergence theorem) the pointwise limit
$U=\lim_{h\rightarrow \infty} U_{h}$ exists almost surely with $U\in
\mathcal{G}_{\infty}$ and $\Ex|U|<\infty$.
Example: Suppose $m\in [\mathcal{G}]^{+}$,
let $m_{h} = \Ex[m|\mathcal{G}_{h}]$, then $\{m_{h}\}_{h=1}^{\infty}$ forms a
martingale sequence. By the martingale convergence theorem
$m_{h}\rightarrow m$ almost surely.
\item[(ii)] For the reverse martingale
convergence theorem we again assume that $\mathcal{G}_{h_{1}}\subseteq
\mathcal{G}_{h_{2}}$ where $h_{1}\leq h_{2}$, but this time focus on
the case $h_{1}$ and $h_{2}$ are negative and consider the limit
$h\rightarrow -\infty$. Let $\mathcal{G}_{-\infty} =
\cap_{h=-\infty}^{1}\mathcal{G}_{h}$ and $\{V_{-h}\}_{h=1}^{\infty}$
be a reverse martingale sequence with respect to the reverse
filtration $\{\mathcal{G}_{-h}\}_{h=1}^{\infty}$, then $V =
\lim_{h\rightarrow \infty} V_{-h}$ exists almost surely with
$\Ex[V]<\infty$. 
\end{itemize}

\emph{Trivial sigma-algebras}
A trivial sigma-algebra is defined as a sigma algebra whose sets have
probability of either one or zero with respect to the probability measure
$\mu$. For further reference we denote the trivial sigma-algebra as
$\mathcal{T}$. A complete trivial sigma-algebra  is defined as
$\overline{\mathcal{T}} = \{A\in \mathcal{F}: \mu(A)^{2}=\mu(A)\}$. 
We use the standard notation
$\overline{\mathcal{G}} = \mathcal{G}\vee\overline{\mathcal{T}}$ (this is often
referred to as the completion of the sub sigma-algebra
$\mathcal{G}$). Sections 0.3.2 and 2.2.3 in
\cite{b:flo-mou-rol-90} summarizes some of the important properties of
$\overline{\mathcal{G}}$; the results relevant to the proof are given below.

Equation (2.2.3) in \cite{b:flo-mou-rol-90} states that
for all $m\in [\mathcal{F}]^{+}$ 
\begin{eqnarray}
\label{eq:completionexpectation}
  \Ex\left(m|\mathcal{F}\right) = \Ex\left(m|\overline{\mathcal{F}}\right).  
\end{eqnarray}

\emph{Conditional independence} We use the definition of conditional
independence given in \cite{b:lau-20} (equations (2.3) and
(2.5)) and  Section 2.2.2, \cite{b:flo-mou-rol-90}. The random variables
$X_{1}$ and $X_{2}$ given $X_{3}$ is conditional independent if for
any $m_{1}\in [\sigma(X_{1})]^{+}$ and $m_{2}\in [\sigma(X_{2})]^{+}$
we have 
\begin{eqnarray}
  \label{eq:conddef1}
 \Ex\left[m_{1}m_{2}|\sigma(X_{3})\right]  =
  \Ex\left[m_{1}|\sigma(X_{3})\right]
  \Ex\left[m_{2}|\sigma(X_{3})\right] \textrm{ almost surely}.
\end{eqnarray}
If the above holds we often write $X_{1}\independent X_{2}|X_{3}$ or
$\sigma(X_{1})\independent \sigma (X_{2})|\sigma(X_{3})$. 
An equivalent definition to the above (see Theorem 2.2.1 in
\cite{b:flo-mou-rol-90}) is that for all $m_{1}\in
[\sigma(X_{1})]^{+}$
\begin{eqnarray}
  \label{eq:conddef2}
 \Ex\left[m_{1}|\sigma(X_{2})\vee \sigma(X_{3})\right]  =
  \Ex\left[m_{1}|\sigma(X_{3})\right] \textrm{ almost surely}.
\end{eqnarray}
If the conditional density of $X_{1}|X_{2},X_{3}$ is equivalent to the
conditional density of $X_{1}|X_{2}$, then by (\ref{eq:conddef2}) we
have $\sigma(X_{1})\independent \sigma (X_{2})|\sigma(X_{3})$.

\begin{theorem}\label{theorem:cond-independenceSigma}
Suppose that $\{X_{t}\}_{t\in \mathbb{Z}}$ is a $\beta$-mixing
stationary time series, with $X_{t} = (X_{t}^{(1)},\ldots,X_{t}^{(p)})^{\top}$.   
Suppose that $C,D$ and $E$ are disjoint sets of $[p]$ where
\begin{align*}
\mathcal{F}_{n}^{C}\independent \mathcal{F}_{n+L+k_{2}}^{D}|
  (\mathcal{F}_{n+L+k_{1}}^{E}\vee
  \mathcal{F}_{(\infty,n+L)}^{C}) \textrm{ for all }n,k_1,k_2,L>0.
\end{align*}  
Then
\begin{align*}
\mathcal{F}^{C}\independent \mathcal{F}^{D}|
  \mathcal{F}^{E}
\end{align*}  
\end{theorem}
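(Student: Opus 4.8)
The plan is to deduce the statement from the hypothesis by sending the four indices $n,L,k_1,k_2$ to infinity in a prescribed order, combining Doob's forward and the reverse martingale convergence theorems with the triviality of remote $\sigma$-fields supplied by $\beta$-mixing. Throughout I would work with the two equivalent formulations of conditional independence, the product form \eqref{eq:conddef1} and the regression form \eqref{eq:conddef2}. Since $\mathcal{F}_n^C \uparrow \mathcal{F}^C$ and $\Ex[m \mid \mathcal{F}_n^C] \to m$ for every $m \in [\mathcal{F}^C]^+$, a routine forward-martingale/density argument shows it is enough to prove $\mathcal{F}_n^C \independent \mathcal{F}^D \mid \mathcal{F}^E$ for each fixed $n$. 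Fix $n$ and write $\mathcal{T}_L := \mathcal{F}_{(\infty, n+L)}^C$. \emph{Stage A (send $k_1,k_2 \to \infty$).} Taking $k_1=k_2=k$ in the hypothesis, the conditioning fields $\mathcal{F}_{n+L+k}^D \vee \mathcal{F}_{n+L+k}^E \vee \mathcal{T}_L$ and $\mathcal{F}_{n+L+k}^E \vee \mathcal{T}_L$ increase with $k$ to $\mathcal{F}^D \vee \mathcal{F}^E \vee \mathcal{T}_L$ and $\mathcal{F}^E \vee \mathcal{T}_L$. Applying \eqref{eq:conddef2} to $m \in [\mathcal{F}_n^C]^+$ and passing to the limit by the forward martingale convergence theorem on both sides yields $\Ex[m \mid \mathcal{F}^D \vee \mathcal{F}^E \vee \mathcal{T}_L] = \Ex[m \mid \mathcal{F}^E \vee \mathcal{T}_L]$, i.e.\ $\mathcal{F}_n^C \independent \mathcal{F}^D \mid (\mathcal{F}^E \vee \mathcal{T}_L)$ for every $L$.

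\emph{Stage B (send $L \to \infty$).} The fields $\mathcal{S}_L := \mathcal{F}^E \vee \mathcal{T}_L$ decrease as $L$ grows to $\mathcal{S}_\infty := \bigcap_L (\mathcal{F}^E \vee \mathcal{T}_L)$. Writing the conclusion of Stage A in product form \eqref{eq:conddef1}, for $m_1 \in [\mathcal{F}_n^C]^+$ and $m_2 \in [\mathcal{F}^D]^+$ we have $\Ex[m_1 m_2 \mid \mathcal{S}_L] = \Ex[m_1 \mid \mathcal{S}_L]\,\Ex[m_2 \mid \mathcal{S}_L]$. Each of the three bounded conditional expectations is a reverse martingale in $L$, so by the reverse martingale convergence theorem all three converge a.s.\ to the corresponding conditional expectation given $\mathcal{S}_\infty$; since the factors are uniformly bounded the product passes to the limit, giving $\mathcal{F}_n^C \independent \mathcal{F}^D \mid \mathcal{S}_\infty$. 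Stages A and B are robust and use only stationarity and martingale convergence.

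\emph{Stage C (remove the tail; the crux).} It remains to lower the conditioning field from $\mathcal{S}_\infty$ to $\mathcal{F}^E$. Since $\mathcal{F}^E \subseteq \mathcal{S}_\infty$, it suffices to prove $\Ex[m \mid \mathcal{S}_\infty] = \Ex[m \mid \mathcal{F}^E]$ for $m \in [\mathcal{F}_n^C]^+$ (and likewise with $\mathcal{F}^D$ adjoined), equivalently that $\mathcal{S}_\infty = \overline{\mathcal{F}^E}$ modulo $P$-null sets, after which \eqref{eq:completionexpectation} closes the argument. This is precisely where $\beta$-mixing (Theorem \ref{theorem:mixing}) enters: being a marginal of a $\beta$-mixing process, $\{X_t^C\}$ is itself $\beta$-mixing, so its remote tail $\bigcap_L \mathcal{T}_L$ is $P$-trivial. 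I expect this to be the genuinely delicate step, because triviality of $\bigcap_L \mathcal{T}_L$ does \emph{not} by itself give $\bigcap_L(\mathcal{F}^E \vee \mathcal{T}_L) = \overline{\mathcal{F}^E \vee \bigcap_L \mathcal{T}_L}$: joins and intersections of $\sigma$-fields need not commute. Resolving it requires the quantitative coupling/total-variation content of absolute regularity, not mere tail-triviality. The natural route is to bound $\|\Ex[m \mid \mathcal{F}^E \vee \mathcal{T}_L] - \Ex[m \mid \mathcal{F}^E]\|_{L^1}$, which equals $\sup\{\Ex[\mathrm{Cov}(m, \psi \mid \mathcal{F}^E)] : \psi \in [\mathcal{F}^E \vee \mathcal{T}_L]^+,\ \|\psi\|_\infty \le 1\}$, and to show it vanishes as the gap $L$ separating the window of $m$ from the support of $\mathcal{T}_L$ grows.

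In the CEStGM application this obstacle is sidestepped by the Markov property (Corollary \ref{cor:Markovian}): conditioning on the tail $\mathcal{T}_L$ is equivalent to conditioning on the two anchor variables $(X_{-n-L}^C, X_{n+L}^C)$, as already exploited in Lemmas \ref{lemma:finitelocal} and \ref{lemma:separator}, and $\beta$-mixing drives their influence to zero. Granting $\Ex[m \mid \mathcal{S}_\infty] = \Ex[m \mid \mathcal{F}^E]$, Stage B gives $\mathcal{F}_n^C \independent \mathcal{F}^D \mid \mathcal{F}^E$ for each $n$, and the reduction at the outset (letting $n \to \infty$) completes the proof.
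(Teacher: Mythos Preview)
Your approach mirrors the paper's almost exactly: the paper also sends $k_2$, $k_1$, $L$, $n$ to infinity in sequence, using forward martingale convergence for the first two limits, reverse martingale convergence for the third, and $\beta$-mixing to dispose of the tail. Your Stage~A merely combines the paper's Steps~1--2 by setting $k_1=k_2=k$, a harmless streamlining; your Stage~B is the paper's Step~3; your final passage $n\to\infty$ is the paper's Step~4.

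The substantive point is Stage~C, and here you are in fact more careful than the paper. The paper simply writes
\[
\bigcap_{L}\bigl(\mathcal{F}^{E}\vee\mathcal{F}^{C}_{(\infty,n+L)}\bigr)=\mathcal{F}^{E}\vee\mathcal{F}^{C}_{-\infty}
\]
as if immediate, then invokes triviality of the double tail $\mathcal{F}^{C}_{-\infty}$ and \eqref{eq:completionexpectation} to reduce to $\mathcal{F}^{E}$. You correctly flag that commuting the countable intersection with the join is \emph{not} automatic: the inclusion $\supseteq$ is trivial, but $\subseteq$ can fail even when $\bigcap_L\mathcal{T}_L$ is $P$-trivial. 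So the gap you identify in your own proposal is also present, unacknowledged, in the paper's proof. Neither argument, as written, justifies $\mathcal{S}_\infty=\overline{\mathcal{F}^{E}}$ from the stated hypothesis of $\beta$-mixing alone; your suggested remedies (a direct $L^1$ bound on $\Ex[m\mid\mathcal{F}^E\vee\mathcal{T}_L]-\Ex[m\mid\mathcal{F}^E]$ via the absolute-regularity coupling, or, in the CEStGM application, using Markovianity to collapse the tail to two anchor points) are both plausible but not carried through. In short: same approach, same gap, but you see it and the paper does not.
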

\begin{proof} We prove the results in four steps, where we sequentially
  let $k_2,L,k_1$ and $n\rightarrow \infty$.
   
 \noindent {\bf Step 1} Since
\begin{eqnarray}
\label{eq:Fnstep1}  
  \mathcal{F}_{n}^{(a)}\independent \mathcal{F}_{n+L+k_{2}}^{D}|
  (\mathcal{F}_{n+L+k_{1}}^{E}\vee \mathcal{F}_{(\infty,n+L)}^{C})
\end{eqnarray}
holds for all $k_2>0$ we will show that
\begin{eqnarray}
  \label{eq:Fnstep1proof} 
  \mathcal{F}_{n}^{C}\independent \mathcal{F}^{D}|
  (\mathcal{F}_{n+L+k_{1}}^{E}\vee \mathcal{F}_{(\infty,n+L)}^{C}).
\end{eqnarray}

Let $\mathcal{G}_{k_1} = \mathcal{F}_{n+L+k_{1}}^{E}\vee
  \mathcal{F}_{(\infty,n+L)}^{C}$. Then 
(\ref{eq:Fnstep1}) implies that
for all $k_2>0$, $m_{n+L+k_2}^{D}\in [\mathcal{F}^{D}_{n+L+k_2}]^{+}$,
$m_{n}^{C}\in  [\mathcal{F}^{C}_{n}]^{+}$ we have
\begin{eqnarray*}
\Ex[m_{n}^{C}m_{n+L+k_2}^{D}|\mathcal{G}_{k_1}] =
\Ex[m_{n}^{C}|\mathcal{G}_{k_1}]\,\Ex[m_{n+L+k_2}^{D}|\mathcal{G}_{k_1}]  
\end{eqnarray*}  
almost surely.
To prove (\ref{eq:Fnstep1proof})  for every $m^{D}\in
[\mathcal{F}^{D}]^{+}$ we define a sequence of 
functions $m_{j}^{D} = \Ex[m^{D}|\mathcal{F}^{D}_{j}]\in
[\mathcal{F}^{D}_{j}]^{+}$. By the martingale
convergence theorem we have $m_{j}^{D}\rightarrow
m ^{D}$ almost surely (and in $L_{p}$ for all $p\geq
1$). Since $\{m_{j}^{D}\}_{j}$ are bounded functions, by dominated
convergence we can take the limit inside the expectation to give 
\begin{align*}
\lim_{k_{2}\rightarrow
  \infty}\Ex[m_{n}^{C}m_{n+L+k_2}^{D}|\mathcal{G}_{k_1}])
  &=\lim_{k_{2}\rightarrow \infty}
  \Ex[m_{n}^{C}|\mathcal{G}_{k_1}]\,
  \lim_{k_{2}\rightarrow
  \infty}\Ex[m_{n+L+k_2}^{D}|\mathcal{G}_{k_1}]) \\
  \Rightarrow 
       \Ex[m_{n}^{C}m^{D}|\mathcal{G}_{k_1}]) &=
  \Ex[m_{n}^{C}|\mathcal{G}_{k_1}]\,
\Ex[m^{D}|\mathcal{G}_{k_1}])
\end{align*}
almost surely (since the number of null sets over $k_2$ are countable).
As the above holds for all functions $m ^{D}\in
[\mathcal{F}^{D}]^{+}$ by (\ref{eq:conddef1}) we have
\begin{eqnarray*}
  \mathcal{F}_{n}^{C}\independent \mathcal{F}^{D}|
  (\mathcal{F}_{n+L+k_{1}}^{E}\vee \mathcal{F}_{(\infty,n+L)}^{C}).
\end{eqnarray*}
Thus proving (\ref{eq:Fnstep1proof}). 

\noindent {\bf Step 2} Since   
\begin{eqnarray}
\label{eq:Fnstep2}  
  \mathcal{F}_{n}^{C}\independent \mathcal{F}_{}^{D}|
  (\mathcal{F}_{n+L+k_{1}}^{E}\vee \mathcal{F}_{(\infty,n+L)}^{C})
\end{eqnarray}
holds for all $k_1>0$ we will show that
\begin{eqnarray*}
  \mathcal{F}_{n}^{C}\independent \mathcal{F}^{D}|
  (\mathcal{F}_{}^{E}\vee \mathcal{F}_{(\infty,n+L)}^{C}).
\end{eqnarray*}

(\ref{eq:Fnstep2}) implies that
for any $m^{C}_{n}\in [\mathcal{F}_{n}^{C}]^{+}$ and $m^{D}\in
[\mathcal{F}_{}^{D}]^{+}$ we have
\begin{eqnarray}
\label{eq:mCD}  
\Ex[m_{n}^{C}m^{D}|\mathcal{F}_{n+L+k_{1}}^{E}\vee
  \mathcal{F}_{(\infty,n+L)}^{C}] &=&
  \Ex[m_{n}^{C}|\mathcal{F}_{n+L+k_{1}}^{E}\vee
  \mathcal{F}_{(\infty,n+L)}^{C}]\,\Ex[m^{D}|\mathcal{F}_{n+L+k_{1}}^{E}\vee
  \mathcal{F}_{(\infty,n+L)}^{C}] \nonumber\\
  && 
\end{eqnarray}
almost surely. $\{\mathcal{G}_{k_1} = \mathcal{F}_{k_1}^{E}\vee
\mathcal{F}_{(\infty,n+L)}^{C}\}_{k_1}$ forms a filtration of sigma-algebras
where $\mathcal{G}_{k_1}\subseteq \mathcal{G}_{k_2}$ for all $k_1<k_2$.
Let
$$\mathcal{G} = \vee_{k_{1}=1}^{\infty}\mathcal{G}_{k_1} =
\vee_{k_1=1}^{\infty}(\mathcal{F}_{k_1}^{E}\vee
\mathcal{F}_{(\infty,n+L)}^{C})=
\mathcal{F}_{}^{E}\vee\mathcal{F}_{(\infty,n+L)}^{C}.$$

For every $m_{n}^{C}\in [\mathcal{F}_{n}^{C}]^{+}$ and
$m^{D} = \mathcal{F}^{D}$
we define 
$U_{k_1} = \Ex[m_{}^{C}m^{D}|\mathcal{G}_{k_{1}}]$,
$V_{k_1}=\Ex[m_{}^{C}|\mathcal{G}_{k_{1}}]$
and $W_{k_1}=\Ex[m_{}^{D}|\mathcal{G}_{k_{1}}]$. By the martingale convergence
  theorem we have $\lim_{k_1\rightarrow \infty}U_{k_1}= U$,
  $\lim_{k_1\rightarrow \infty}V_{k_1}= V$ and
  $\lim_{k_1\rightarrow \infty}W_{k_1}=
  W$ almost surely where $U,V,W\in \mathcal{G}$. Thus by letting $k_1\rightarrow \infty$ in
  (\ref{eq:mCD}) we have 
\begin{eqnarray*}
\Ex[m_{n}^{C}m^{D}|\mathcal{F}^{E}\vee
  \mathcal{F}_{(\infty,n+L)}^{C}] =
  \Ex[m^{C}_{n}|\mathcal{F}^{E}\vee
  \mathcal{F}_{(\infty,n+L)}^{C}]\,\Ex[m^{D}|\mathcal{F}^{E}\vee
  \mathcal{F}_{(\infty,n+L)}^{C}]
\end{eqnarray*}
almost surely. As the above holds for all $m_{n}^{C}\in [\mathcal{F}_{n}^{C}]^{+}$ and
$m^{D} = [\mathcal{F}^{D}]^{+}$
\begin{eqnarray*}
  \mathcal{F}_{n}^{C}\independent \mathcal{F}^{D}|
  (\mathcal{F}_{}^{E}\vee \mathcal{F}_{(\infty,n+L)}^{C}).
\end{eqnarray*}

\noindent {\bf Step 3} 
Next we show that since 
\begin{eqnarray}
\label{eq:step3}  
  \mathcal{F}_{n}^{C}\independent \mathcal{F}^{D}|
  (\mathcal{F}_{}^{E}\vee \mathcal{F}_{(\infty,n+L)}^{C})
\end{eqnarray}
holds for all $L$ we have
\begin{eqnarray*}
  \mathcal{F}_{n}^{C}\independent \mathcal{F}^{D}|
  \mathcal{F}_{}^{E}.
\end{eqnarray*}

 Let
$\mathcal{G}_{-L} = \mathcal{F}_{}^{E}\vee
\mathcal{F}_{(\infty,n+L)}^{C}$, where we recall
$\mathcal{F}^{C}_{(\infty,n+L)}= \sigma(X_{t}^{C};|t|\geq
n+L)$. 
Then for $0<L_{1}<L_{2}$  we have $\mathcal{G}_{-L_{2}}\subset
\mathcal{G}_{-L_{1}}$. Hence
$$\mathcal{G}_{-\infty} = \cap_{L=1}^{\infty}\mathcal{G}_{L} =
\cap_{L=1}^{\infty}(\mathcal{F}_{}^{E}\vee
\mathcal{F}_{(\infty,n+L)}^{C}) = \mathcal{F}_{}^{E}\vee
\mathcal{F}_{-\infty}^{C}.$$
where $\mathcal{F}_{-\infty}^{C}=\cap_{L=1}^{\infty}\mathcal{F}_{(\infty,n+L)}$.

To prove this result we apply the reverse martingale theorem.
For every $m_{n}^{C}\in [\mathcal{F}_{n}^{C}]^{+}$ and
$m^{D} = [\mathcal{F}^{D}]^{+}$
we define 
$U_{-L} = \Ex[m_{}^{C}m^{E^c}|\mathcal{G}_{-L}]$,
$V_{-L}=\Ex[m_{}^{C}|\mathcal{G}_{-L}]$
and $W_{-L}=\Ex[m_{}^{D}|\mathcal{G}_{-L}]$.
By the reverse martingale convergence
  theorem we have $\lim_{k_1\rightarrow\infty}U_{-k_1}= U_{-\infty}$,
  $\lim_{k_1\rightarrow \infty}V_{-k_1}= V_{-\infty}$ and
  $\lim_{k_1\rightarrow \infty}W_{-k_1}=
  W_{-\infty}$ almost surely where $U_{-\infty},V_{-\infty},W_{-\infty}\in \mathcal{G}_{-\infty}$.
  This gives
\begin{eqnarray*}
\Ex[m_{n}^{C}m^{D}|\mathcal{F}^{E}\vee
  \mathcal{F}_{-\infty}^{C}] =
  \Ex[m^{C}_{n}|\mathcal{F}^{E}\vee
  \mathcal{F}_{-\infty}^{C}]\,\Ex[m^{D}|\mathcal{F}^{E}\vee
  \mathcal{F}_{-\infty}^{C}].
\end{eqnarray*}   
This implies 
\begin{eqnarray}
  \label{eq:step3-part2}
  \mathcal{F}_{n}^{C}\independent \mathcal{F}^{D}|
  (\mathcal{F}_{}^{E}\vee \mathcal{F}_{-\infty}^{C}).
\end{eqnarray}
Under the assumptions of the theorem,
the process $\{X_{t}\}_{t\in \mathbb{Z}}$ is $\beta$-mixing. 
By  \cite{p:bra-06}, Section 2.5 point (e), since the time series
$\{X_{t}^{C}\}_{t}$ is $\beta$-mixing, then the double-tail sigma-algebra 
$\mathcal{F}_{-\infty}^{C}$ is trivial
i.e. $\mathcal{F}_{-\infty}^{C}=\mathcal{T}$. We now apply this result. 

Since $\mathcal{F}_{-\infty}^{C}=\mathcal{T}$, we have 
$\mathcal{F}_{}^{E}\vee \mathcal{F}_{-\infty}^{C}=
\mathcal{F}_{}^{E}\vee \mathcal{T}$. Under conditional independence, for all
 $m^{C}\in [\mathcal{F}_{n}^{C}]^{+}$
and $m^{D}\in
[\mathcal{F}^{D}]^{+}$ we have
\begin{align}
\label{eq:mCmD}  
  \Ex[m^{C}m^{D}|\mathcal{F}^{E}\vee\mathcal{T}]
  &= \Ex[m^{C}|\mathcal{F}^{E}\vee\mathcal{T}]
    \Ex[m^{D}|\mathcal{F}_{}^{E}\vee\mathcal{T}].
 \end{align}   
Now
$ \mathcal{F}_{}^{E}\subseteq\mathcal{F}_{}^{E}\vee
\mathcal{T} \subseteq \mathcal{F}_{}^{E}\vee \overline{\mathcal{T}}=
\overline{\mathcal{F}}^{E}$, and by
 (\ref{eq:completionexpectation}) for any $m\in [\mathcal{F}]^{+}$ we
 have
 $\Ex[m|\mathcal{F}_{}^{E}]=\Ex[m|\overline{\mathcal{F}}_{}^{E}]$. Thus
$\Ex[m|\mathcal{F}_{}^{E}\vee
\mathcal{T}]=\Ex[m|\mathcal{F}_{}^{E}]$. Hence 
from (\ref{eq:mCmD}) and the above 
for any $m^{C}\in [\mathcal{F}_{n}^{C}]^{+}$
and $m^{D}\in
[\mathcal{F}^{D}]^{+}$ we have
\begin{align*}
  \Ex[m^{C}m^{D}|\mathcal{F}_{}^{E}]  &=
  \Ex[m^{C}|\mathcal{F}_{}^{E}]
  \Ex[m^{D}|\mathcal{F}_{}^{E}] 
\end{align*}  
almost surely. This gives
\begin{eqnarray}
  \label{eq:step3-part2}
  \mathcal{F}_{n}^{C}\independent \mathcal{F}^{D}|
  \mathcal{F}_{}^{E}.
\end{eqnarray}

\noindent {\bf Step 4} The final step is to show that 
since 
  $\mathcal{F}_{n}^{C}\independent \mathcal{F}^{D}|
  \mathcal{F}_{}^{E}$
holds for all $n$ then
 $\mathcal{F}_{}^{C}\independent \mathcal{F}^{D}|
  \mathcal{F}_{}^{E}$.
The proof of this result mirrors the proof of Step 1, hence we omit
the details. 
\end{proof}

\section{Proof of results in Section \ref{sec:prob_cestgm}}\label{sec:power}

\subsection{Proof of Theorem \ref{theorem:mixing}}\label{sec:pf_mixing}
We first establish \eqref{eq:mixAB}. Fix partitions $\{A_i\}_{i=1}^I$ and $\{B_j\}_{j=1}^J$ of $\m X$ such that $A_i \in \sigma(X_0)$ for each $i \in [I]$, and $B_j \in \sigma(X_{n+1})$ for each $j \in [J]$. Then, 
\begin{align*}
& \sum_{i=1}^I \sum_{j=1}^J |P(A_i \cap B_j) - P(A_i)P(B_j)| \\
 = & \sum_{i=1}^I \sum_{j=1}^J \left|\int_{A_i} \int_{B_j} [p_{0, n+1}(x_0, x_{n+1}) - p_1(x_0) \, p_1(x_{n+1})] \, \mu(dx_0)\mu(dx_{n+1})\right|  \\
\le & \sum_{i=1}^I \sum_{j=1}^J \int_{A_i} \int_{B_j} |p_{0, n+1}(x_0, x_{n+1}) - p_1(x_0) \, p_1(x_{n+1})| \, \mu(dx_0)\mu(dx_{n+1}) \\ 
= & \int_{\m X} \int_{\m X} |p_{0, n+1}(x_0, x_{n+1}) - p_1(x_0) \, p_1(x_{n+1})| \, \mu(dx_0)\mu(dx_{n+1}),
\end{align*}
where going from the second to the third line, we use Jensen's inequality, and in the next step use that $\{A_i \times B_j\}_{(i \in [I], j \in [J])}$ form a partition of $\m X \times \m X$. Since the right hand side is independent of the partition sets, a supremum over such partition pairs delivers \eqref{eq:mixAB}. 

To complete the rest of the proof, we continue from \eqref{eq:mixing_rem}. From Lemma \ref{lemma:pow_it_main}, we have that $\Delta_n^*(R_{x_{n+1}}) = (T^* Q_{w, v})^n$. Substituting in \eqref{eq:mixing_rem}, we get 
\begin{align*}
p_{0,n+1}(x_0,x_{n+1}) -  p_1(x_0) p_1(x_{n+1}) = \frac{1}{r^{n+1}} v(x_{0}) w(x_{n+1}) \, (T^{*} Q_{w,v})^{n}(R_{x_{n+1}})[x_{0}]. 
\end{align*}
Substituting the equality from the above display in \eqref{eq:mixAB}, obtain
\begin{align*}
2 \beta\left(\sigma(X_0), \sigma(X_{n+1}) \right) \le \frac{1}{r^{n+1}} \int_{\m X}\int_{\m X} \left| v(x_{0}) w(x_{n+1}) \, (T^{*} Q_{w,v})^{n}(R_{x_{n+1}})[x_{0}] \right| \, \mu(dx_0)\mu(dx_{n+1}). 
\end{align*}
Using Cauchy--Schwarz inequality, bound 
\begin{align*}
 \int_{\m X}\int_{\m X} \left| v(x_{0}) w(x_{n+1}) \, (T^{*} Q_{w,v})^{n}(R_{x_{n+1}})[x_{0}] \right| \, \mu(dx_0)\mu(dx_{n+1})   \le L_1 \, L_2,
\end{align*}
where 
\begin{align*}
L_1^2 :\,= \int_{\m X}\int_{\m X} v^2(x_0) w^2(x_{n+1}) \mu(dx_0)\mu(dx_{n+1}) \le \|v\|^2 \|w\|^2 = \langle v, v^*\rangle^{-2}, 
\end{align*}
where we used $\|v\| = 1$ and $w = v^*/\langle v, v^*\rangle$ with $\|w\| = 1/\langle v, v^*\rangle$, 
and 
\begin{align*}
L_2^2 &:\,= \int_{\m X} \int_{\m X} \left|(T^{*} Q_{w,v})^{n}(R_{x_{n+1}})[x_{0}]\right|^2 \mu(dx_0)\mu(dx_{n+1})  \\
& =  
\int_{\m X} \left[\int_{\m X} \left\{(T^{*} Q_{w,v})^{n}(R_{x_{n+1}})[x_{0}]\right\}^2 \mu(dx_0) \right] \mu(dx_{n+1})  \\
& = \int_{\m X} \|(T^{*} Q_{w,v})^{n}(R_{x_{n+1}})\|^2 \mu(dx_{n+1}) \\
& \le \|(T^{*} Q_{w,v})^{n}\|_{\rm op}^2 \, \int_{\m X} \|R_{x_{n+1}}\|^2 \mu(dx_{n+1}) = \|(T^{*} Q_{w,v})^{n}\|_{\rm op}^2 \, \|R(\cdot, \cdot)\|^2. 
\end{align*}
Here, going from the first to the second line, we used that 
$(T^{*} Q_{w,v})^{n}(R_{x_{n+1}})[x_{0}]$ is real to drop the modulus, and 
then wrote the joint integral as an iterated integral. In the last step, we used $\int_{\m X} \|R_{x_{n+1}}\|^2 \mu(dx_{n+1}) = \int_{\m X} \int_{\m X} R(x, x_{n+1})^2 \mu(dx) \mu(dx_{n+1}) = \|R(\cdot, \cdot)\|^2$ from \eqref{eq:HS_defn}. By assumption, $\|R(\cdot, \cdot)\|$ is finite. Combining the inequalities, we obtain 
\begin{align*}
2 \beta\left(\sigma(X_0), \sigma(X_{n+1}) \right)
& \le r^{-(n+1)} \, \langle v, v^*\rangle^{-1} \, \|(T^{*} Q_{w,v})^{n}\|_{\rm op} \, \|R(\cdot, \cdot)\|. 
\end{align*}
We now use Lemma \ref{lemma:pow_it_main} to bound $\|(T^{*} Q_{w,v})^{n}\|_{\rm op}$ by $C_\varepsilon (|\lambda_2^*| + \varepsilon)^n$, where $\varepsilon > 0$ is chosen such that $|\lambda_2^*| + \varepsilon < r$. This gives the overall bound 
\begin{align*}
\beta\left(\sigma(X_0), \sigma(X_{n+1}) \right)
& \le 0.5 \, r^{-1} \, \langle v, v^*\rangle^{-1} \, \|R(\cdot, \cdot)\| \, C_\varepsilon \, \left(\frac{|\lambda_2^*| + \varepsilon}{r}\right)^n,
\end{align*}
establishing geometric $\beta$-mixing. It is known that a geometrically $\beta$-mixing Markov process is ergodic (see Section 2.5 in \citep{p:bra-06}; a result that is due to 
\citep{p:vin-57}).

\subsection{Proof of Theorem \ref{theorem:simulation}}\label{subsec:pf_simul}

We first rewrite the density $g_{n, 2m}$ as 
\begin{eqnarray*}
  &&
     g_{n,2m}(x_{-m+1}, \ldots, \ldots,x_{n+m}) \\
  &=&
      C_{n+2m}^{-1}f(x_{-m+1}) \left[\prod_{i =-m+2}^{n+m}  R(x_{i-1},x_{i}) \right] f(x_{n+m})\\
    &=&  C_{n+2m}^{-1} \, q({\bf x}_{[0:n+1]}) \, \left(f(x_{-m+1}) \, \prod_{i=-m+2}^{0}R(x_{i-1},x_{i})\right)
  \left(f(x_{n+m}) \, \prod_{i=1}^{m-1}R(x_{n+i},x_{n+i+1})\right)
\end{eqnarray*}
where $q({\bf x}_{[0:n+1]})=R(x_{0},x_{1}) \times \cdots \times R(x_{n},x_{n+1})$ and $C_{n+2m}$ is the normalizing constant. From the second line of the above display and \eqref{eq:iter_op}, it follows that 
\begin{eqnarray*}
 C_{n+2m}  &=& \int \left\{T^{n+2m-1}(f)[x_{n+m}]\right\} \, f(x_{n+m})\mu(dx_{n+m}).
\end{eqnarray*}  
Also, integrating over the variables in $\m I$ in the last line of the above display, and again invoking \eqref{eq:iter_op}, it follows that 
\begin{eqnarray*}
h_{[0:n+1]}^{(m-1)}(x_{0},\ldots,x_{n+1})  &=& C_{n+2m}^{-1} \, q({\bf x}_{[0:n+1]}) \, \left\{ T^{m-1}(f)[x_0] \right\} \, \left\{ (T^*)^{(m-1)}(f)[x_{n+1}] \right\}.
\end{eqnarray*}
Using Lemma \ref{lemma:pow_it_main}, we can write
   \begin{eqnarray*}
     (T^*)^{(m-1)}(f)[x_{n+1}] &=& 
     \langle f,v\rangle r^{m-1}w(x_{n+1}) +(T^{*}Q_{w,v})^{m-1}(f)[x_{n+1}] \\
      T^{m-1}(f)[x_0]  &=& \langle f,w\rangle r^{m-1}v(x_{0}) +(TQ_{v,w})^{m-1}(f)[x_{0}] \\
     C_{n+2m} &=& \langle f,w\rangle \langle f,v\rangle r^{n+2m-1} +
           \int \left\{(TQ_{v,w})^{n+2m-1}(f)[x_{n+m}]\right\} \, f(x_{n+m})\mu(dx_{n+m}).
   \end{eqnarray*} 
Substituting these into the expression for $h_{[0:n+1]}^{(m-1)}$ and 
dividing the numerator and denominator by $r^{n+2m-1}$ gives
    \begin{eqnarray*}
    && h_{[0:n+1]}^{(m-1)}(x_{0},\ldots,x_{n+1})\\
     &=& \frac{1}{r^{n+1}}q({\bf x}_{[0:n+1]})\frac{
         \left\{  \langle f,w\rangle v(x_{0}) + \delta_1(x_0)\right\}
           \left\{ \langle f,v\rangle w(x_{n+1})
         + \delta_2(x_{n+1})\right\}}{\langle f,w\rangle \langle f,v\rangle + \delta_3}, 
   \end{eqnarray*}
where $\delta_1(x_0) = r^{-(m-1)} \, (TQ_{v,w})^{m-1}(f)[x_{0}]$, $\delta_2(x_{n+1}) = r^{-(m-1)} \, (T^{*}Q_{w,v})^{m-1}(f)[x_{n+1}]$, and 
\begin{align*}
\delta_3 = r^{-(n+2m-1)} \, \int \left\{(TQ_{v,w})^{n+2m-1}(f)[x_{n+m}] \right\} \, f(x_{n+m})\mu(dx_{n+m}).
\end{align*}
With this, we arrive at \eqref{eq:hnm}. We now make use of Lemma \ref{lemma:pow_it_main} to show that 
\begin{align}\label{eq:delta_bds}
\|\delta_1\| = O(\rho^{m-1}), \ \|\delta_2\| = O(\rho^{m-1}), \ |\delta_3| = O(\rho^{n+2m-1}). 
\end{align}
To that end, in Lemma \ref{lemma:pow_it_main}, choose $\varepsilon > 0$ small enough so that $\max\{|\lambda_2| + \varepsilon, |\lambda_2^*| + \varepsilon\} < r$, and define $\rho = r^{-1} \, \max\{|\lambda_2| + \varepsilon, |\lambda_2^*| + \varepsilon\} \in (0, 1)$. We then have from Lemma \ref{lemma:pow_it_main} that
\begin{align*}
\|\delta_1\| = r^{-(m-1)} \, \|(TQ_{v,w})^{m-1}(f)\| \le r^{-(m-1)} \, \|(T^{*}Q_{w,v})^{m-1}\|_{\rm op}  \, \|f\| \le C_\varepsilon \, \left(\frac{|\lambda_2|+\varepsilon}{r}\right)^{m-1} \, \|f\| = O(\rho^{m-1}). 
\end{align*}
Similarly, one obtains $\|\delta_2\| = O(\rho^{m-1})$. Using Cauchy--Schwarz inequality, 
\begin{eqnarray*}
|\delta_3| \le 
\frac{1}{r^{n+2m-1}} \|((TQ_{v,w})^{n+2m-1}(f))\| \, \|f\|
 \le C_{\varepsilon} \left(\frac{|\lambda_2|+\varepsilon}{r}\right)^{n+2m-1} \, \|f\|^{2} = O(\rho^{n+2m-1}).
\end{eqnarray*}
This establishes \eqref{eq:delta_bds}. 
\\[2ex]
Let $K = (\langle f,v\rangle  \langle
     f,w\rangle + \delta_3)$. Using $p_{[0:n+1]}(x_{0},\ldots,x_{n+1}) =
r^{-(n+1)} \, v(x_{0}) q({\bf x}_{[0:n+1]}) w(x_{n+1})$, 
\begin{eqnarray*}
&& h_{[0:n+1]}^{(m-1)}(x_{0},\ldots,x_{n+1}) -
   p_{[0:n+1]}(x_{0},\ldots,x_{n+1}) \\
 &=&  p_{[0:n+1]}(x_{0},\ldots,x_{n+1})\left(\frac{ \langle
     f,v\rangle  \langle f,w\rangle}{K}-1\right)+
     \mathcal{R}_{1}({\bf x}_{[0:n+1]})+\mathcal{R}_{2}({\bf
     x}_{[0:n+1]})+
     \mathcal{R}_{3}({\bf x}_{[0:n+1]})
\end{eqnarray*}
where, 
\begin{eqnarray*}
\mathcal{R}_{1}({\bf x}_{[0:n+1]}) &=& \frac{\langle f,w\rangle}{K} \, \frac{1}{r^{n+1}} \, q({\bf x}_{[0:n+1]}) \, 
      v(x_{0}) \, \delta_2(x_{n+1}), \\
\mathcal{R}_{2}({\bf x}_{[0:n+1]})&=&
\frac{\langle f,v\rangle}{K} \, \frac{1}{r^{n+1}} \, q({\bf x}_{[0:n+1]}) \,  w(x_{n+1}) \, \delta_1(x_0),
     \\
 \mathcal{R}_{3}({\bf x}_{[0:n+1]}) &=& \frac{1}{K} \, \frac{1}{r^{n+1}} \, q({\bf x}_{[0:n+1]}) \, 
     \delta_1(x_0) \, 
     \delta_2(x_{n+1}).
\end{eqnarray*} 
Since 
\begin{eqnarray*}
\int  p_{[0:n+1]}(x_{0},\ldots,x_{n+1})\left|\frac{ \langle
     f,v\rangle  \langle f,w\rangle}{K}-1\right|\prod_{i=0}^{n+1} \mu(dx_{i}) = \frac{|\delta_3|}{K}, 
 \end{eqnarray*}  
we have 
\begin{align}\label{eq:tv_bd_intermediate}
\| h_{[0:n+1]}^{(m-1)} - p_{[0:n+1]}\|_{\rm TV} 
\le \frac{|\delta_3|}{K}  + \int \left( \sum_{j=1}^3 \left |\mathcal{R}_{j}({\bf x}_{[0:n+1]}) \right| \right)\prod_{i=0}^{n+1} \mu(dx_{i}). 
\end{align}
We first show how to control the integral of $|\m R_1|$, that of $|\m R_2|$ follows similarly. Writing $q({\bf x}_{[0:n+1]}) = q({\bf x}_{[0:n]}) \times R(x_n, x_{n+1})$, we have, by Jensen and Cauchy--Schwarz, 
\begin{eqnarray*}
  \int |\mathcal{R}_{1}({\bf x}_{[0:n+1]})|\mu(dx_{n+1})
  &\leq & \frac{|\langle f,w\rangle|}{K} \, \frac{1}{r^{n+1}} \, q({\bf
     x}_{[0:n]}) \, v(x_{0}) \, \int R(x_{n},x_{n+1}) \, |\delta_2(x_{n+1})|  \mu(dx_{n+1})  \\   
  &\leq& \frac{|\langle f,w\rangle|}{K} \, \frac{1}{r^{n+1}} \, q({\bf
     x}_{[0:n]}) \, v(x_{0}) \, \|\widetilde{R}_{x_{n}}\| \, \|\delta_2\|, 
\end{eqnarray*}  
where we define $\widetilde{R}_x(\cdot) = R(x, \cdot)$. 
This implies 
\begin{eqnarray*}
  \int |\mathcal{R}_{1}({\bf x}_{[0:n+1]})| \, \mu(dx_{n+1}) \prod_{i=0}^{n}\mu(dx_{i})
  &\leq& \frac{|\langle f,w\rangle| \, \|\delta_2\|}{K} \, \frac{1}{r^{n+1}} \, \int \|\widetilde{R}_{x_{n}}\| \, q({\bf
  x}_{[0:n]})v(x_{0}) \prod_{i=0}^{n}\mu(dx_{i})\\
  &=& \frac{|\langle f,w\rangle| \, \|\delta_2\|}{K} \, \frac{1}{r^{n+1}} \, \int \|\widetilde{R}_{x_{n}}\| \, T^{n}v[x_{n}]\mu(dx_{n})\\
  &= & \frac{|\langle f,w\rangle| \, \|\delta_2\|}{K} \, \frac{1}{r} \, \int \|\widetilde{R}_{x_{n}}\| \, v(x_{n}) \, \mu(dx_{n}) \\
  &\le & \frac{|\langle f,w\rangle| \, \|R(\cdot, \cdot)\| \, \|v\|}{r K} \, \|\delta_2\| = O(\rho^{m-1}). 
\end{eqnarray*}
In the above, going from the first to the second line, we invoked \eqref{eq:iter_op}; going from the second to the third line, we used $T^n v = r^n v$; and going from the third to the fourth line, we invoked Cauchy--Schwarz inequality and the fact that $\int \|\widetilde{R}_{x_{n}}\|^2 \mu(d x_n) = \|R(\cdot, \cdot)\|^2$. Here, we used that $\|R(\cdot, \cdot)\| < \infty$ by the HS condition. At the final step, we used \eqref{eq:delta_bds}.
%

By a similar argument one arrives at 
\begin{eqnarray*}
  \int |\mathcal{R}_{2}({\bf x}_{[0:n+1]})| \, \prod_{i=0}^{n+1} \mu(dx_i) 
  \leq  \frac{|\langle f,v\rangle| \, \|R(\cdot, \cdot)\| \, \|w\|}{r K} \, \|\delta_1\| = O(\rho^{m-1}).
\end{eqnarray*}  
Finally, we deal with the integral of $|\m R_3|$, which is a lower order term. Writing $q({\bf x}_{[0:n+1]}) = R(x_0, x_1) \times q({\bf x}_{[1:n]}) \times R(x_n, x_{n+1})$, and using similar arguments as above, one has 
\begin{eqnarray*}
  \int |\mathcal{R}_{3}({\bf x}_{[0:n+1]})| \, \mu(dx_{0}) \mu(dx_{n+1})  \le  \frac{1}{K} \frac{1}{r^{n+1}} \, q({\bf
         x}_{[1:n]}) \|R_{x_1}\| \, \|\delta_1\| \, \|\widetilde{R}_{x_n}\| \, \|\delta_2\|.
\end{eqnarray*}
Let $\ell(x) = \|R_x\|$ and $\widetilde{\ell}(x) = \|\widetilde{R}_x\|$. This leads to 
\begin{align*}
\int |\mathcal{R}_{3}({\bf x}_{[0:n+1]})| \, \mu(dx_{0}) \mu(dx_{n+1}) \prod_{i=1}^n \mu(dx_i) 
& \le \frac{\|\delta_1\| \|\delta_2\|}{K} \, \frac{1}{r^{n+1}} \, \int T^{n-1}(\ell)[x_n] \, \widetilde{\ell}(x_n) \mu(dx_n) \\
& \le \frac{\|\widetilde{\ell}\|}{r^2 K} \, \frac{\|T^{n-1}(\ell)\|}{r^{n-1}} \, \|\delta_1\| \|\delta_2\|. 
\end{align*}
Using Lemma \ref{lemma:pow_it_main}, $\frac{\|T^{n-1}(\ell)\|}{r^{n-1}}$ is an $O(1)$ term, and the overall order of the above integral for $|\m R_3|$ is $O(\rho^{2(m-1)})$ using \eqref{eq:delta_bds}. Combining all bounds inside \eqref{eq:tv_bd_intermediate} and using \eqref{eq:delta_bds}, the leading contribution comes from the integrals for $|\m R_1|$ and $|\m R_2|$, and the overall order of the TV is $O(\rho^{m-1})$. 


\subsection{Proof of Lemma \ref{lemma:pow_it_main}}\label{subsec:pf_pow_it_main}
Let $\mbox{id}$ denote the identity operator which maps $f$ to itself, so that we can write $Q_{v,w} = \mbox{id} - P_{v,w}$.

We first show that $P_{v,w}$ (and hence $Q_{v,w}$) is a projection operator. To that end, for any $f \in L^2(\m X, \mu)$, 
\begin{align*}
& P_{v,w}^2(f) = P_{v,w}(P_{v,w}(f)) = P_{v,w}(\langle f, w\rangle v) \\
& = \left \langle \langle f, w\rangle v, w\right \rangle v =  \langle f, w \rangle \, \langle v, w\rangle \, v = P_{v,w}(f),
\end{align*}
since $\langle v, w\rangle =1$. Since $f$ is arbitrary, we have $P_{v,w}^2 = P_{v,w}$. Also, $Q_{v,w}^2 = (\mbox{id} - P_{v,w})^2 = \mbox{id} - 2 P_{v,w} + P_{v,w}^2 = Q_{v,w}$. 

Next, we show that the projection operators $P_{v,w}$ and $Q_{v,w}$ commute with $T$. To that end, for any $f \in L^2(\m X, \mu)$,
\begin{align*}
T P_{v,w}(f) = T(\langle f, w\rangle v) = \langle f, w\rangle T(v) = r \langle f, w \rangle v,
\end{align*}
and 
\begin{align*}
P_{v,w} T(f) = \langle T(f), w \rangle v = \langle f, T^*(w)\rangle v = \langle f, r w \rangle v = r \langle f, w \rangle v. 
\end{align*}
This shows $T P_{v,w} = P_{v,w} T$. One can similarly show $T Q_{v,w} = Q_{v,w} T$. 

We first establish \eqref{eq:pow_non_sa}. Using linearity of $T^k$, write $T^k(f) = T^k(P_{v,w}(f)) + T^k(Q_{v,w}(f))$. The first term equals $T^k(\langle f, w\rangle v) = \langle f, w\rangle \, T^k(v) = \langle f, w\rangle \, r^k v$. It thus remains to show that $T^k Q_{v,w} = (T Q_{v,w})^k$. We prove this by induction. The assertion is clearly true for $k = 1$. Now, write 
\begin{align*}
T^k Q_{v,w} = T^{k-1} T Q_{v,w} = T^{k-1} T Q_{v,w} Q_{v,w} = T^{k-1} Q_{v,w} T Q_{v,w} = (TQ_{v,w})^{k-1} TQ_{v,w} = (TQ_{v,w})^k
\end{align*}
to prove it for any $k > 1$. In the above display, we use $Q_{v,w} = Q_{v,w}^2$ in the second step, $T Q_{v,w} = Q_{v,w} T$ in the third step, and our inductive hypothesis $T^{k-1}Q_{v,w} = (TQ_{v,w})^{k-1}$ in the fourth step. 

We next establish \eqref{eq:op_norm_bd}. 
Consider the operator $T Q_{v,w}$. We show below that its spectral radius $r(T Q_{v,w}) \le |\lambda_2|$, where recall $|\lambda_2| :\,= \sup \{|\lambda| \,:\, \lambda \in \sigma(T), \lambda \ne r\}$. Then, from \eqref{eq:sp_inf}, we have that for any $\varepsilon > 0$, there exists a norm $|\cdot|_\varepsilon$ on $L^2(\m X, \mu)$ equivalent to $\|\cdot\|$ (that is, there exist constants $a_\varepsilon, b_\varepsilon > 0$ such that $a_\varepsilon |f|_\varepsilon \le \|f\|_2 \le b_\varepsilon |f|_\varepsilon$ for all $f \in L^2(\m X, \mu)$) 
with the induced norm $|T Q_{v,w}|_{{\rm op},\varepsilon} < (|\lambda_2| + \varepsilon)$. Fix $f \ne 0$. One then has, for any $k \ge 1$, 
\begin{align*}
\|(T Q_{v,w})^k f\| \le b_\varepsilon |(T Q_{v,w})^k f|_{\varepsilon} \le b_\varepsilon \, |(T Q_{v,w})^k|_{{\rm op},\varepsilon} \, |f|_\varepsilon \le \frac{b_\varepsilon}{a_\varepsilon} \left(|T Q_{v,w} |_\varepsilon\right)^k \, \|f\| \le C_\varepsilon \, (|\lambda_2| + \varepsilon)^k \, \|f\|, 
\end{align*}
where $C_\varepsilon = \frac{b_\varepsilon}{a_\varepsilon}$. This implies $\|(T Q_{v,w})^k\|_{\rm op} \le C_\varepsilon \, (|\lambda_2| + \varepsilon)^k$, giving \eqref{eq:op_norm_bd}. In the above display, going from the third to the fourth step, we use sub-multiplicativity of the induced norm, which follows from definition. 

It remains to establish $r(T Q_{v,w}) \le |\lambda_2|$. 
First, we show that $r$ cannot be an eigenvalue of $T Q_{v,w}$. Suppose in the contrary that there exists $f \ne 0$ such that $T Q_{v,w} f = r f$. Using the commutative property, this gives $Q_{v,w} T f = r f$. Apply $Q_{v,w}$ both sides and use $Q_{v,w}^2 = Q_{v,w}$ to get $Q_{v,w} T f = r Q_{v,w} f$, and hence $T (Q_{v,w} f) = r Q_{v,w} f$. Thus, either $Q_{v,w} f = 0$ or it is an eigenvector corresponding to $r$. If $Q_{v,w} f = 0$, substituting in the equation $T Q_{v,w} f = r f$, we obtain $r f = 0$, which is a contradiction, since $r > 0$ and $f \ne 0$ by assumption. Thus, assume $Q_{v,w} f \ne 0$ and it is an eigenfunction of $T$ corresponding to $r$. Since we know that the spectral radius $r$ is a simple eigenvalue, this means there exists a constant $c \ne 0$ such that $Q_{v,w} f = c v$. Applying $Q_{v,w}$ on both sides, we get $Q_{v,w} f =  c Q v = 0$, since $Q v = v - \langle v, w\rangle v = 0$. This again leads to a contradiction since $c \ne 0$ and $v > 0$ a.e. 

Next we show that all eigenvalues of $TQ_{v,w}$ are eigenvalues of $T$, which in conjunction with the above paragraph then proves that 
$r(TQ_{v,w})\leq |\lambda_2|$. 
To that end, if $\lambda \ne r$ is an eigenvalue of $T Q_{v,w}$, then there exists $f \ne 0$ such that $T Q_{v,w} f = \lambda f$. This implies $Q_{v,w} T f = \lambda f$. Applying $Q_{v,w}$ on both sides, $Q_{v,w} T f = \lambda Q_{v,w} f$, and hence $T Q_{v,w} f = \lambda Q_{v,w} f$. Now, $Q_{v,w} f \ne 0$, since otherwise one would get $f = 0$ from the equation $T Q_{v,w} f = \lambda f$. This implies 
$\lambda$ is an eigenvalue of $T$ with eigenfunction $Q_{v,w} f$. 

Finally, we establish \eqref{eq:non_sa_geom_cgence}. If $\langle f, w \rangle \ne 0$, we have from \eqref{eq:pow_non_sa} that 
\begin{align*}
\left \|\frac{T^k(f)}{r^k \langle f, w \rangle} - v \right \| = 
|\langle f, w \rangle|^{-1}  \, r^{-k} \, \|(T Q_{v,w})^k(f)\| 
\le |\langle f, w \rangle|^{-1}  \, r^{-k} \, \|f\| \, \|(T Q_{v,w})^k\|_{\rm op}.  
\end{align*} 
The desired inequality then follows from the operator norm bound in \eqref{eq:op_norm_bd}.

\section{Proof of Theorem \ref{theorem:stationarityd}}\label{sec:cased}

To prove the result we focus mainly on the case $d=2$
(to simplify notation), this gives the main idea of the proof. We give a brief description of the pertinent parts for the case $d=3$ and $d>3$.

\subsection{$2$-neighbourhood case}

Analysis of the $2$-neighbourhood case hinges on the shift invariant nature of the interaction kernel. 
It is worth pointing that out that on the surface the 
existence of a unique 
stationarity based on the conditional specification appears "obvious". The proof below shows that there are some subtle points that need to be considered. 

We recall that 
in the $2$-neighbourhood case the interaction kernel has the form
\begin{eqnarray*}
R(x_1,x_2,x_3,x_4) = c_{x_1}^{1/2}c_{x_2}^{1/2}
  A(x_{1},x_{2})^{1/2}[B(x_{1},x_{3})A(x_{2},x_{3})B(x_{2},x_{4})]
  A(x_{3},x_{4})^{1/2}c_{x_3}^{1/2}c_{x_4}^{1/2},  
\end{eqnarray*}
where $A$ are the one-lag interactions and $B$ the two lag interactions.
To analysis the corresponding distribution we require the following
background calculations. This requires the following definition: for any $s \in \mb Z$ and $n \in \mb N$, define
\begin{eqnarray*}
q_{[s+1:s+2n]}(x_1, \ldots, x_{2n})= q_{[1:2n]}(x_{1},\ldots,x_{2n}) = \prod_{t=1}^{n-1}R(x_{2t-1}, x_{2t},x_{2t+1},x_{2t+2}).
\end{eqnarray*}

\paragraph{Preliminary calculations}
Using the above we have 
\begin{eqnarray*}
  &&R(x_0,x_1,x_2,x_3) R(x_2,x_3,x_4,x_5)\\
  &=& c_{x_0}^{1/2}c_{x_1}^{1/2}
  A(x_{0},x_{1})^{1/2}[B(x_{0},x_{2})A(x_{1},x_{2})B(x_{1},x_{3})]A(x_{2},x_{3})^{1/2} c_{x_2}^{1/2}c_{x_3}^{1/2}\\
                    && \times    
    c_{x_2}^{1/2}c_{x_3}^{1/2}                A(x_{2},x_{3})^{1/2}[B(x_{2},x_{4})A(x_{3},x_{4})B(x_{3},x_{5})]A(x_{4},x_{5})^{1/2} c_{x_4}^{1/2}c_{x_5}^{1/2}\\
  &=&
      \underbrace{c_{x_0}^{1/2}A(x_{0},x_{1})^{1/2}B(x_{0},x_{2}) A(x_{1},x_{2})^{1/2}c_{x_2}^{1/2}}_{=D(x_{0},x_{1},x_{2})c_{x_0}^{1/2}c_{x_2}^{1/2}}
   R(x_{1},x_{2},x_{3},x_{4}) \underbrace{c_{x_3}^{1/2}A(x_{3},x_{4})^{1/2}B(x_{3},x_{5}) A(x_{4},x_{5})^{1/2}c_{x_5}^{1/2}}_{=D(x_{5},x_{4},x_{3})c_{x_3}^{1/2}c_{x_5}^{1/2}}.                                        \end{eqnarray*}
   
Let
$E(x_{0},x_{1},x_{2}) = D(x_{0},x_{1},x_{2})c_{x_1}^{1/2}c_{x_2}^{1/2}$ and $\widetilde{E}(x_{3},x_{4},x_{5}) = E(x_{5},x_{4},x_{3})$.    
Then the above implies 
\begin{eqnarray*}
  R(x_0,x_1,x_2,x_3) R(x_2,x_3,x_4,x_5)  &=& E(x_{0},x_{1},x_{2})
                                             R(x_{1},x_{2},x_{3},x_{4}) \widetilde{E}(x_{3},x_{4},x_{5}). 
\end{eqnarray*}
Recursing on the above we have  
\begin{eqnarray*}
  &&  R(x_0,x_1,x_2,x_3) R(x_2,x_3,x_4,x_5) R(x_4,x_5,x_6,x_7)\\
     &=&
  E(x_{0},x_1,x_2) R(x_1,x_2,x_3,x_4) R(x_3,x_4,x_5,x_6)
  \widetilde{E}(x_5,x_6,x_7)
\end{eqnarray*}
and the general expression
\begin{eqnarray}
\label{eq:qqq}  
  q_{[0:2n+1]}(x_{0},\ldots,x_{2n+1}) = E(x_{0},x_1,x_2)  q_{[1:2n]}(x_{1},\ldots,x_{2n})\widetilde{E}(x_{2n-1},x_{2n},x_{2n+1})
\end{eqnarray}  

\paragraph{Proof of Stationarity (case $d=2$)}
Employing Theorem \ref{theorem:one_stationarity} (under compactness of $T$, viewed as an operator on $L^2(\m X_2, \mu_2)$), there exists a unique vector stationary process $\{\xi_t\}_{t \in \mb Z}$ with $\xi_t = (Y_t, Z_t) \in \m X_2$, where the joint density of $(\xi_{s},\ldots,\xi_{n+s-1})$ is given by
\begin{eqnarray*}
 p^{\xi}_{[1:n]}(y_{1},z_{1},\ldots,y_{n},z_{n}) = r^{-(n-1)} \, v(y_{1},z_{1}) \left[
 \prod_{t=1}^{n-1}R(y_{t},z_{t},y_{t+1},z_{t+1})\right]
 w(y_{n},z_{n}).
\end{eqnarray*}
Construct a new stochastic process $\{X_t\}_{t \in \mb Z}$ on $\m X^{\mb Z}$ as $X_{2t-1} = Y_t$ and $X_{2t} = Z_t$. As a consequence of the vector stationarity of $\xi_t$, it follows that for any {\it even} $s$ and $n \in \mb N$, the joint density $p_{[s+1:s+2n]}$ of $(X_{s+1}, \ldots, X_{s+2n})$ is given by 
\begin{eqnarray}\label{eq:p_sp1_sp2n}
 p_{[s+1:s+2n]}(x_{s+1},\ldots,x_{s+2n}) \propto
  v(x_{s+1},x_{s+2}) \, q_{[s+1:s+2n]}(x_{s+1},\ldots,x_{s+2n}) 
  \, w(x_{s+2n-1},x_{s+2n}). 
\end{eqnarray}
Clearly, $p_{[s+1:s+2n]} = p_{[1:2n]}$ for any even $s$ and $n \in \mb N$. 
We show below that the above holds for any $s \in \mb Z$, not just even $s$, to conclude that the joint density of $(X_{s+1}, \ldots, X_{s+2n})$ is $p_{[1:2n]}$. We sketch a detailed argument for the joint density $p_{[2:2m-1]}$ of $(X_2, \ldots, X_{2m-1})$ (i.e. with $s = 1$ and $n = (m-1)$ for $m > 2$), and the general argument (for arbitrary $s$) follows similarly exploiting the shift invariance of $q$. Our strategy is to arrive at two different expressions for $p_{[2:2m-1]}$, one by writing 
\begin{align}\label{eq:path_1}
p_{[2:2m-1]}(x_2, \ldots, x_{2m-1}) = \int p_{[-1:2m+2]}(x_{-1}, \ldots, x_{2m+2}) \, \prod_{i \in \m I} \mu(dx_i),
\end{align}
with $\m I = \{-1, 0, 1, 2m, 2m+1, 2m+2\}$, and the second by writing 
\begin{align}\label{eq:path_2}
p_{[2:2m-1]}(x_2, \ldots, x_{2m-1}) = \int p_{[1:2m]}(x_1, \ldots, x_{2m}) \mu(dx_1) \mu(dx_{2m}). 
\end{align}
Since $\{X_t\}$ is a valid stochastic process by construction, the expressions in the right hand sides of the previous two displays must be equal, and the desired result somewhat surprisingly follows from this identity.

We proceed to compute the respective quantities, starting with the right hand side of \eqref{eq:path_1}. 
Using \eqref{eq:qqq} inside \eqref{eq:p_sp1_sp2n}, we have 
\begin{eqnarray*}
 && p_{[-1:2m+2]}(x_{-1},\ldots,x_{2m+2})
  \propto v(x_{-1},x_{0})q_{[-1:2m+2]}(x_{-1},\ldots,x_{2m+2})
      w(x_{2n+1},x_{2m+2}) \\
  &=& v(x_{-1},x_{0})E(x_{-1},x_0,x_1) q_{[1:2m]}(x_{1},\ldots,x_{2m})\widetilde{E}(x_{2m-1},x_{2m},x_{2m+1}) w(x_{2m},x_{2m+1}).
\end{eqnarray*}
Integrating over $x_{-1}$ and $x_{2m+2}$, we obtain 
\begin{eqnarray}\label{eq:p_0_2mp1}
&&  p_{[0:2m+1]}(x_{0},\ldots,x_{2m+1})
  \propto 
  v_{1}(x_{0},x_{1})q_{[1:2m]}(x_{1},\ldots,x_{2m})
      w_{1}(x_{2m},x_{2m+1}), 
\end{eqnarray}
where
\begin{eqnarray}
  \label{eq:vw}
  v_{1}(x_{0},x_{1}) &=& \int E(x_{-1},x_0,x_1)v(x_{-1},x_0)\mu(d x_{-1})\nonumber\\
  \textrm{ and }
   w_{1}(x_{2m},x_{2m+1}) &=& \int \widetilde{E}(x_{2m},x_{2m+1},x_{2m+2})w(x_{2m+1},x_{2m+2})\mu(d x_{2m+2}).
\end{eqnarray}
Now, we integrate over $x_0, x_1$ and $x_{2m}, x_{2m+1}$ in \eqref{eq:p_0_2mp1}. Writing $q_{[1:2m]}(x_1, \ldots, x_{2m}) = R(x_1, x_2) q_{[2:2m-1]}(x_2, \ldots, x_{2m-1}) R(x_{2m-1}, x_{2m})$ and invoking \eqref{eq:iter_op}, one obtains 
\begin{eqnarray}
\label{eq:vw1}
  p_{[2:2m-1]}(x_{2},\ldots,x_{2m-1})
  \propto
  (Tv_{1})[x_{2},x_{3}]
  q_{[2:2m-1]}(x_{2},\ldots,x_{2m-1})
      (T^{*}w_{1})[x_{2m-2},x_{2m-1}].
\end{eqnarray}
Next, we compute the right hand side of \eqref{eq:path_2}. 
Starting from $p_{[1:2m]}$ and repeating the calculation leading to \eqref{eq:p_0_2mp1}, we get 
\begin{eqnarray}
\label{eq:vw2}
p_{[2:2m-1]}(x_{2},\ldots,x_{2m-1}) &\propto&      
  v_{1}(x_{2},x_{3})
  q_{[2:2m-1]}(x_{2},\ldots,x_{2m-1})
      w_{1}(x_{2m-2},x_{2m-1}).
\end{eqnarray}
Comparing (\ref{eq:vw1}) and (\ref{eq:vw2}) gives  
\begin{eqnarray*}
  Tv_{1} \propto v_{1} \textrm{ and }T^{*}w_{1} \propto w_{1}.
\end{eqnarray*}
This implies that $v_1$ and $w_1$ must be eigenfunctions of $T$ and $T^*$
respectively. However,  $v_1$ and $w_1$ are strictly positive, and we
know from Theorem \ref{thm:KR_without_int_main} 
and Remark \ref{rem:only_positive}
that the positive eigenfunctions are unique, thus $w_{1}=w$ and $v_1=v$ (upto scaling). 
This gives 
\begin{eqnarray*}
p_{[2:2m-1]}(x_{2},\ldots,x_{2m-1}) &\propto&      
  v(x_{2},x_{3})
  q_{[2:2m-1]}(x_{2},\ldots,x_{2m-1})
      w(x_{2m-2},x_{2m-1}),
\end{eqnarray*}
proving our claim. Consequently, \emph{for all} $s \in \mb Z$ and $n$ the joint density of $X_{s},X_{s+1},\ldots,X_{s+2n-1}$ is 
\begin{eqnarray*}
 p_{[1:2n]}(x_{1},\ldots,x_{2n}) = r^{-(n-1)}
  v(x_{1},x_{2})q_{[0:2n-1]}(x_{1},\ldots,x_{2n-1}) w(x_{2n-1},x_{2n}).
\end{eqnarray*}
By using the same arguments in 
Theorem \ref{theorem:one_stationarity} this immediately implies that $\{X_{t};t\in \mathbb{Z}\}$ is a strictly stationary process. 

\medskip 

It is interesting to note that equation (\ref{eq:vw}) implies that
\begin{eqnarray}
  \label{eq:vw_modif}
 \int E(x_0,x_1,x_2)v(x_0,x_1)\mu(d x_0)\propto v(x_1,x_2)\nonumber\\
  \int \widetilde{E}(x_{2n-1},x_{2n},x_{2n+1})w(x_{2n},x_{2n+1})\mu(d
  x_{2n+1}) \propto w(x_{2n-1},x_{2n}).
\end{eqnarray} 
Further using the above calculations we obtain the joint distribution of $2n+1$ consecutive
random variables $X_{s},X_{s+1},\ldots,X_{s+2n}$
\begin{eqnarray}
  p^{}_{[0:2n+1]}(x_{0},\ldots,x_{2n})
  &\propto& v(x_{0},x_{1})E(x_{0},x_{1},x_{2})q_{[1:2n]}(x_{1},\ldots,x_{2n})
      w(x_{2n-1},x_{2n}).  
\end{eqnarray}

\subsection{$3$-neighbourhood case}

We now generalize the above arguments 
to the $3$-neighbourhood case. Broadly, the arguments are similar to the case $d=2$. However, there are some 
differences between the case $d=2$ and $d>2$ which are 
essentially captured in the case $d=3$. The main difference are that there are 
two possible "internal" interaction kernels in 
the product
\begin{eqnarray*}
R(x_0,x_{1},x_{2},x_3,x_4,x_5) R(x_3,x_{4},x_{5},x_6,x_7,x_8),
\end{eqnarray*}
whereas in the case $d=2$ there is only one. Further, 
in the case $d=2$, there were certain symmetries that we exploited that are lost for the case
$d=3$. We make this precise below. 

For the case $d=3$, the interaction kernel  has the form
\begin{align*}
  R(x_1,x_2,x_3,x_4,x_5,x_6) =&
   A(x_{1},x_{2})^{1/2}A(x_2,x_3)^{1/2}B(x_1,x_3)^{1/2}c_{x_1}^{1/2}c_{x_2}^{1/2}c_{x_3}^{1/2} \\
 &  [C(x_1,x_4)B(x_{2},x_4)C(x_{2},x_5)^{1/2}A(x_3,x_4)C(x_{2},x_5)^{1/2}B(x_{3},x_5)C(x_3,x_6)]\\
 &   A(x_{4},x_{5})^{1/2}A(x_5,x_6)^{1/2}B(x_4,x_6)^{1/2}
 c_{x_4}^{1/2}c_{x_5}^{1/2}c_{x_6}^{1/2}.
\end{align*}
Analogous to $D$ in the $2$-neighbourhood case we define 
\begin{eqnarray*}
  D_{1}(x_{1},x_{2},x_{3},x_{4}) &=&
                                     A(x_{1},x_{2})^{1/2}A(x_{3},x_{4})^{1/2}B(x_{1},x_{3})^{1/2}
                                     B(x_2,x_4)^{1/2}C(x_1,x_4)
                                     \\
 D_{2}(x_{2},x_{3},x_{4},x_{5}) &=& A(x_2,x_{3})^{1/2}B(x_{2},x_{4})^{1/2}C(x_2,x_5)B(x_{3},x_5)^{1/2} \\
 D_{3}(x_3,x_4,x_5,x_6) &=& 
C(x_3,x_6)A(x_3,x_4)^{1/2}B(x_3,x_5)^{1/2}.
\end{eqnarray*}
We observe that
$D_{1}(x_{1},x_{2},x_3,x_4)$ removes all interactions in $R(x_{1},\ldots,x_{6})$ which contain $x_{1}$
 \emph{and} all (half) interactions from either $2$ and $3$ to  $4$.
$D_{2}(x_{2},x_3,x_4,x_5)$ removes interactions
that contain $x_{2}$ and all (half) interactions from $3$ to $5$. $D_{3}(x_{3},x_4,x_5,x_6)$ removes all interactions from $x_3$ to $x_{4}$ to $x_6$.
An
illustration is given in Figure \ref{fig:3}.
\begin{figure}[h!]
\begin{center}
  \includegraphics[scale =0.5]{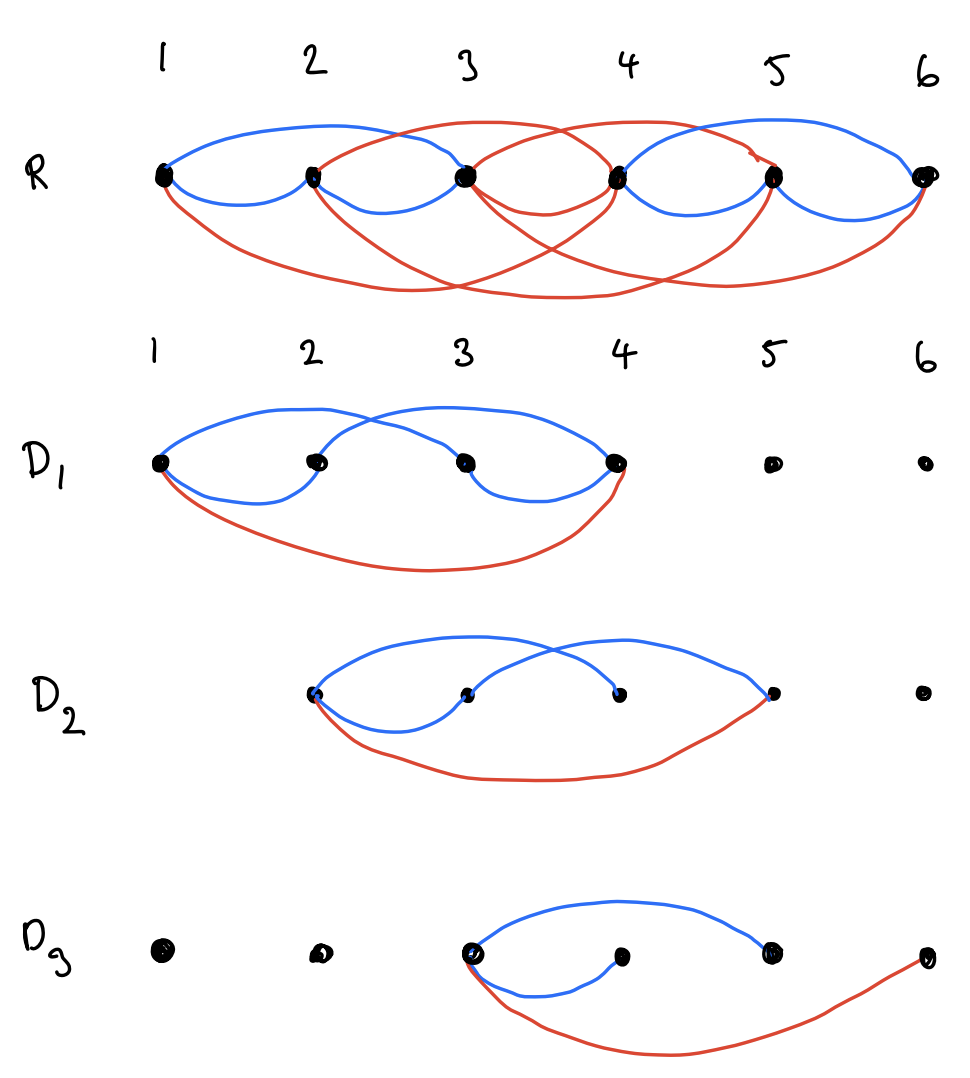}
\end{center}
\caption{Top: $R$, Middle: $D_1$ and $D_{2}$ Bottom: $D_3$. \label{fig:3}}
\end{figure}
Using $D_{1},D_{2}$ and $D_{3}$ we define 
\begin{eqnarray*}
E_{1}(x_{1},x_{2},x_{3},x_{4}) &=& 
D_{1}(x_1,x_2,x_3,x_4)c_{x_{1}}^{1/2}c_{x_4}^{1/2} \\
E_{2}(x_{1},x_{2},x_{3},x_{4},x_{5}) &=& 
D_{1}(x_1,x_2,x_3,x_4)D_{2}(x_2,x_3,x_4,x_5)
c_{x_{1}}^{1/2}c_{x_{2}}^{1/2}c_{x_4}^{1/2}c_{x_5}^{1/2} \\
\widetilde{E}_{2}(x_{1},x_{2},x_{3},x_{4},x_{5}) &=&
E_{2}(x_{5},x_{4},x_{3},x_{2},x_{1}) \\
\widetilde{E}_{1}(x_{1},x_{2},x_{3},x_{4}) &=& 
E_{1}(x_{4},x_{3},x_{2},x_{1}).
\end{eqnarray*}
Then, by using the above notation
\begin{eqnarray*}
  &=&R(x_0,x_{1},x_{2},x_3,x_4,x_5) R(x_3,x_{4},x_{5},x_6,x_7,x_8)   \\
  &=& E_{2}(x_{0},x_{1},x_{2},x_{3},x_{4}) 
     R(x_{2},x_{3},x_4,x_5,x_6,x_7)  \widetilde{E}_{1}(x_{5},x_{6},x_{7},x_{8}) \\
  &=&E_{1}(x_{0},x_{1},x_{2},x_{3}) 
R(x_{1},x_{2},x_3,x_4,x_5,x_6)
     \widetilde{E}_{2}(x_{4},x_{5},x_{6},x_{7},x_8).
\end{eqnarray*}  
More generally, we have
\begin{eqnarray*}
q_{[0,3n+2]}(x_{0},\ldots,x_{3n+2}) 
   &=&
  E_{1}(x_{0},x_{1},x_{2},x_{3})  q_{[1,3n]}(x_{1},\ldots,x_{3n}) 
  \widetilde{E}_{2}(x_{3n-2},x_{3n-1},x_{3n},x_{3n+1},x_{3n+2})\\
q_{[-1,3n+1]}(x_{-1},\ldots,x_{3n+1})
 &=&      E_{2}(x_{-1},x_{0},x_{1},x_{2},x_3)
  q_{[1,3n]}(x_{1},\ldots,x_{3n}) 
  \widetilde{E}_{1}(x_{3n-2},x_{3n-1},x_{3n},x_{3n+1}),
\end{eqnarray*}
where 
$q_{[0,3n+2]}(x_{0},\ldots,x_{3n+2}) = \prod_{t=1}^{n}R(x_{3t-3}, x_{3t-2}, x_{3t-1}, x_{3t},x_{3t+1},x_{3t+2})$.
We use the above identities in the stationarity proof. 

\paragraph{Proof of stationarity (case $d=3$)}
As in the $2$-neighbourhood case, there 
exists a unique stationary process
$\{\xi_{t} = (Y_{t},Z_{t},U_{t})\}_{t}$ with the 
the stated
conditional specification. For each $n$ the joint distribution of
$\{\xi_{t}\}_{t=1}^{n}$ is
\begin{eqnarray*}
  p_{[1:n]}^{\xi}(y_{1},z_{1},u_{1},\ldots,y_n,z_n,u_{n}) = v(y_{1},z_{1},u_{1})q_{[1:3n]}(y_{1},\ldots,u_{n})w(y_{n},z_{n},u_{n}).
\end{eqnarray*}
We relabel the entries of the process
$X_{3t} = Y_{t}$, $X_{3t+1}=Z_{t}$ and 
$X_{3t+2} = U_{t}$. Thus by rewriting the above, the joint distribution of  
$\{X_{t}\}_{t=1}^{3n}$ is
\begin{eqnarray*}
  p_{[1:3n]}(x_{1},\ldots,x_{3n}) = r^{-n+1}v(x_{1},x_{2},x_{3})q_{[1:3n]}(x_{1},\ldots,x_{3n})w(x_{3n-2},x_{3n-1},x_{3n}).
\end{eqnarray*}

By using the same ideas as in the $2$-neighbourhood case the
joint distribution of $X_{2},\ldots,X_{3n+1}$ is  
\begin{eqnarray*}
  p_{[2:3n+1]}(x_{2},\ldots,x_{3n+1}) =
  v_{1}(x_{2},x_{3},x_{4})q_{[2:3n+1]}(x_{2},\ldots,x_{3n+1})
  w_1(x_{3n-1},x_{3n},x_{3n+1}),
\end{eqnarray*}
where
\begin{eqnarray*}
&&  v_{1}(x_{2},x_{3},x_{4}) \propto \int
  E_{1}(x_{1},x_{2},x_{3},x_{4})v(x_{1},x_{2},x_{3})\mu(dx_1) \\
&&  w_{1}(x_{3n-1},x_{3n},x_{3n+1}) \propto \int
\widetilde{E}_{2}(x_{3n-1},x_{3n},x_{3n+1},x_{3n+2},x_{3n+3})\times\\
 &&  w(x_{3n+1},x_{3n+2},x_{3n+3})\mu(dx_{3n+3}) \mu(dx_{3n+2}).
\end{eqnarray*} 
But on the other hand, by integrating out
$x_{-1},x_{0},x_{1},x_{3n+2},x_{3n+3},x_{3n+4}$ we can show that 
\begin{eqnarray*}
  p_{[2:3n+1]}(x_{2},\ldots,x_{3n+1}) =
  [Tv_{1}](x_{2},x_{3},x_{4})q_{[2:3n+1]}(x_{2},\ldots,x_{3n+1})
  [T^{*}w_1](x_{3n-1},x_{3n},x_{3n+1}).
\end{eqnarray*}
Consequently, 
\begin{eqnarray*}
  Tv_{1} \propto v_{1} \textrm{ and }T^{*}w_{1} \propto w_{1}.
\end{eqnarray*}
Now by using the same argument as in the case $d=2$, this implies that 
$v_1=v$ and $w_1=w$, hence 
the joint density of 
$X_{2},\ldots,X_{3n+1}$ is 
\begin{eqnarray*}
  p_{[2:3n+1]}(x_{2},\ldots,x_{3n+1}) = r^{-n+1}v(x_{2},x_{3},x_{4})q_{[1:3n]}(x_{2},\ldots,x_{3n+1})w(x_{3n-1},x_{3n},x_{3n+1}).
\end{eqnarray*}
By a similar argument
the joint distribution of $X_{3},\ldots,X_{3n+2}$ is 
\begin{eqnarray*}
  p_{[3:3n+2]}(x_{3},\ldots,x_{3n+2}) \propto
  v_{2}(x_{3},x_{4},x_{5})q_{[2:3n+2]}(x_{3},\ldots,x_{3n+2})
  w_2(x_{3n},x_{3n+1},x_{3n+2}),
\end{eqnarray*}
where
\begin{eqnarray*}
  &&  v_{2}(x_{3},x_{4},x_{5}) \propto \int
     E_{2}(x_{1},x_{2},x_{3},x_{4},x_5)
    v(x_{1},x_{2},x_{3})
     \mu(dx_1)\mu(dx_{2}) \\
&&  w_{2}(x_{3n},x_{3n+1},x_{3n+2}) \propto \int
   \widetilde{E}_{1}(x_{3n},x_{3n+1},x_{3n+2},x_{3n+3})w(x_{3n+1},x_{3n+2},x_{3n+3}) \mu(dx_{3n+3}).
  \end{eqnarray*} 
But also 
\begin{eqnarray*}
  p_{[3:3n+2]}(x_{3},\ldots,x_{3n+2}) \propto
  [Tv_{2}](x_{3},x_{4},x_{5})q_{[2:3n+2]}(x_{3},\ldots,x_{3n+2})
  [T^{*}w_2](x_{3n},x_{3n+1},x_{3n+2}),
\end{eqnarray*}
Hence $Tv_2 \propto v_2=v$ and $T^*w_2\propto w_2=w$. 
Hence the joint density of 
$X_{3},\ldots,X_{3n+2}$ is 
\begin{eqnarray*}
  p_{[3:3n+2]}(x_{3},\ldots,x_{3n+2}) = r^{-n+1}v(x_{3},x_{4},x_{5})q_{[1:3n]}(x_{3},\ldots,x_{3n+2})w(x_{3n},x_{3n+1},x_{3n+3}).
\end{eqnarray*}
Thus for all $s\in \mathbb{Z}$ we have that the joint density of $X_{s},\ldots,X_{3n+s-1}$ is 
\begin{eqnarray*}
  p_{[1:3n]}(x_{1},\ldots,x_{3n}) \propto v(x_{1},x_{2},x_{3})q_{[1:3n]}(x_{1},\ldots,x_{3n})w(x_{3n-2},x_{3n-1},x_{3n}).
\end{eqnarray*}  
From this, and the arguments in 
Theorem \ref{theorem:one_stationarity} 
stationarity of the time series immediately follows. 

It is interesting to note that the joint distribution of 
$3n+1$ consecutive time series is
\begin{eqnarray*}
  p_{[0:3n]}(x_{0},\ldots,x_{3n}) \propto v(x_{0},x_{1},x_{2})E_{1}(x_{0},x_{1},x_{2},x_3)
  q_{[1:3n]}(x_{1},\ldots,x_{3n})w(x_{3n-2},x_{3n-1},x_{3n})
\end{eqnarray*}
and the joint distribution of 
$3n+2$ consecutive time series is
\begin{eqnarray*}
  p_{[-1:3n]}(x_{-1},\ldots,x_{3n}) &\propto& v(x_{-1},x_{0},x_{1})E_{2}(x_{-1},x_{0},x_{1},x_2,x_3)
q_{[1:3n]}(x_{1},\ldots,x_{3n})w(x_{3n-2},x_{3n-1},x_{3n}).
\end{eqnarray*}

\subsection{The general $d$-neighbourhood case}

We now generalize the above arguments 
to the $d$-neighbourhood case. Based on the interaction kernel 
$R(x_{[1:d]},x_{[d+1:2d]})$ 
we define $D_{1},D_{2},\ldots,D_{d}$ as follows.  $D_{j}$ contains all interactions in $R$
which contain $x_{j}$ \emph{ and } all 'half' interactions between
$x_{j+1},\ldots,x_{d}$ to $x_{d+j}$. With this we define
\begin{eqnarray*}
  E_{j}(x_{[1:d+j]})  &=& \prod_{i=1}^{j}D_{i}({\bf x}_{[i:i+d]})c_{x_i}^{1/2}c_{x_{i+d}}^{1/2}
\end{eqnarray*}  
and $\widetilde{E}_{j}(x_{1},\ldots,x_{d+j}) = 
E_{j}(x_{d+j},\ldots,x_{1})$.
Using the above, and similar arguments to the 
case $d=3$ we have 
\begin{eqnarray*}
  q_{[0:d(n+1)-1]}({\bf x}_{[0:d(n+1)-1]}) =
E_{j}({\bf x}_{[0:d+j-1]}) q_{[j-1:dn+j-1]}({\bf x}_{[j-1:dn+j-1]}) \widetilde{E}_{d-j}({\bf x}_{[dn-(d-j):d(n+1)-1]}).
\end{eqnarray*}
Using the same argument as in the case $d=2$ and $d=3$, 
there exists a stochastic process 
$\{\xi_{t} = (X_{1,t},\ldots,X_{d,t})\}_{t}$ with the stated conditional specification and joint density
\begin{eqnarray*}
p_{[1:n]}^{\xi}(\xi_{1},\ldots,\xi_{n}) \propto  v(\xi_1) q_{[1:n]}(\xi_{1},\ldots,\xi_{n}) w(\xi_{n}).
\end{eqnarray*} 
We relabel the entries $X_{dt+1}=X_{1,t}$, 
$X_{dt+2}=X_{2,t},\ldots,X_{dt+d}=X_{d,t}$. Thus the joint density of $X_{0},X_{1},\ldots,X_{dn-1}$ is 
\begin{eqnarray*}
p_{[0:dn-1]}(x_{[0:nd-1]}) =  r^{-n+1}v({\bf x}_{[0:d-1]}) q_{[0:dn-1]}({\bf x}_{[0:dn-1]}) w({\bf x}_{[(n-1)d:nd-1]}).
\end{eqnarray*}
And using the same argument as in the case 
$d=3$, the joint density of 
$X_{j},X_{j+1},\ldots,X_{dn+j-1}$ (for $j=1,\ldots,d-1$)
is 
\begin{eqnarray*}
p_{[j:dn+j-1]}({\bf x}_{[j:dn+j-1]}) \propto  
v_{j}({\bf x}_{[j:d+j-1]}) q_{[j:dn+j-1]}({\bf x}_{[j:dn+j-1]}) 
w_{j}({\bf x}_{[(n-1)d+j:dn+j-1]})
\end{eqnarray*}  
with $v_{0} = v$ and $w_{0} = w$ and for $j\in \{1,\ldots,d-1\}$
\begin{eqnarray*}
  v_{j}({\bf x}_{[j:d+j-1]})&=&
\int E_{j}({\bf x}_{[0:d+j-1]})v(x_{[0:d-1]})
\mu(d x_{0})\ldots \mu(d x_{j-1})\\
  w_{j}({\bf x}_{[(n-1)d+j:nd+j-1]}) &=& \int
                               \widetilde{E}_{d-j}({\bf x}_{[dn-(d-j):d(n+1)-1]})\mu(d
                               x_{dn+j})\ldots\mu(d x_{d(n+1)-1}).
\end{eqnarray*}  
But we also have that $T v_{j}\propto v_{j}$ and $T^{*}w_{j}\propto
w_{j}$, this immediately implies that $v_{j}=v$ and $w_{j}=w$ and consequently $\{X_{t}\}_{t}$ is a stationary time series. 

A bi-product of the proof is that the joint distribution of time consecutive random variables has a simple expression. 
In particular, 
the joint distribution of $dn$ consecutive random variables is 
\begin{eqnarray*}
  v({\bf x}_{[1:d]}) q_{[1:dn]}({\bf x}_{[1:dn]}) w({\bf x}_{[(n-1)d+1:nd]}),
\end{eqnarray*}
whereas for $j\in \{1,\ldots d-1\}$ the joint distribution of $dn+j$ consecutive random variables is 
\begin{eqnarray*}
  v({\bf x}_{[0:d-1]})E_{j}({\bf x}_{[0:d+j-1]}) 
  q_{[j:dn+j-1]}({\bf x}_{[j:dn+j-1]}) w({\bf x}_{[(n-1)d+j+1:dn+j]}).
\end{eqnarray*}

\section{Additional derivations}\label{sec:addnl_derivs}
\subsection{Univariate Gaussian with reflective boundary: lack of internal consistency and stationarity}\label{subsec:univ_g_reflect}
Let $|\phi| < 1$. Suppose the joint density $p^{(n)}$ of $(X_1, \ldots, X_n)$ on $\mb R^n$ is constructed based on the following conditional specification (with a reflective boundary condition): 
\begin{align*}
X_t \mid X_{-t} & \sim N\left( \frac{\phi}{1+\phi^2} (X_{t-1} + X_{t+1}), \frac{1}{1+\phi^2} \right), \quad 1 < t < n, \\
X_1 \mid X_{-1} & \sim N\left( \frac{\phi}{1+\phi^2} X_2, \frac{1}{1+\phi^2} \right), \\
X_n \mid X_{-n} & \sim N\left( \frac{\phi}{1+\phi^2} X_{n-1}, \frac{1}{1+\phi^2} \right),
\end{align*}
where $X_{-t} = (X_s \,: \, s \ne t \in [n])$. 
The conditionals (barring the boundaries) correspond to a Gaussian AR(1) process; see Example \ref{ex:AR1}. Using Theorem \ref{theorem:distributionedge}, it follows that 
\begin{align*}
p^{(n)}(x_1, \ldots, x_n) = [c^{(n)}]^{-1} \exp \left[-\frac{1}{2} \left\{
    \sum_{t=1}^{n}(1+\phi^{2})x_{t}^{2}-2\phi\sum_{t=2}^{n}x_{t}x_{t-1}\right\}\right]
\end{align*}
It is immediate that $p^{(n)}$ corresponds to a $n$-variate Gaussian distribution with mean 0 and precision (or inverse covariance) matrix $Q^{(n)}$ with $Q^{(n)}_{ii} = (1 + \phi^2)$; $Q^{(n)}_{ij} = - \phi$ for $|i -j| = 1$ and $0$ for $|i - j| > 1$. 

Observe that we can write 
\begin{align*}
p^{(n+1)}(x_1, \ldots, x_{n+1}) = \frac{c^{(n)}}{c^{(n+1)}} \, p^{(n)}(x_1, \ldots, x_n) \, \exp \left[-\frac{1}{2} \big\{(1 + \phi^2) x_{n+1} - 2 \phi x_n x_{n+1} \big\}\right]. 
\end{align*}
Using $\int_{-\infty}^{\infty} e^{-\frac{1}{2} (q x^2 - 2 bx)} dx = \sqrt{2 \pi} \, q^{-1/2} \, e^{b^2/(2q)}$ for $q > 0$, we obtain
\begin{align*}
\int p^{(n+1)}(x_1, \ldots, x_{n+1}) dx_{n+1} & = \frac{c^{(n)}}{c^{(n+1)}} \, p^{(n)}(x_1, \ldots, x_n) \, \sqrt{2 \pi} \, (1 +\phi^2)^{-1/2} \, e^{\frac{\phi^2 x_n^2}{2(1+\phi^2)}} \\
& \ne p^{(n)}(x_1, \ldots, x_n). 
\end{align*}
Moreover, since $Q^{(n)}$ is a tridiagonal Toeplitz matrix, using a formula for inverse of such matrices (c.f. \cite[Theorem 2.8]{meurant1992review}), it follows that the diagonal entries $(\Sigma^{(n)})_{tt}$ of the covariance matrix $\Sigma^{(n)} :\,= (Q^{(n)})^{-1}$ vary across $t$. This implies the $n$ univariate marginal distributions arising from $p^{(n)}$ are different, or in other words, lack stationarity.

\subsection{Unique positive eigenfunction in Remark \ref{rem:only_positive}}\label{subsec:only_positive}
We show that $v$ is (upto scaling) the only a.e. positive eigenfunction of $T$. Suppose that $f$ is an a.e. positive eigenfunction of $T$ with $f/\|f\| \ne v$. Let $\lambda$ be the corresponding eigenvalue so that $T f = \lambda f$. Since the eigenspace corresponding to $r$ is spanned by $v$, it is clear that $|\lambda| < r$. Moreover, if $f > 0$ a.e., then $Tf > 0$ a.e., and hence it must be the case that $\lambda$ is (real and) positive. Since $f > 0$ a.e., we have $\langle f, w\rangle > 0$, and hence $f$ satisfies the condition for \eqref{eq:non_sa_geom_cgence} to hold in Lemma \ref{lemma:pow_it_main}. Then, $T^k(f)/(r^k \langle f, w\rangle)$ converges to $v$ as $k \to \infty$. On the other hand, $T^k(f) = \lambda^k f$ for any $k \ge 1$, implying $T^k(f)/(r^k \langle f, w\rangle) \to 0$. This leads to a contradiction. 

\subsection{Eigenfunction calculation for Example \ref{ex:AR1}}\label{subsec:eigencalc}
We detail the steps for arriving at the dominant eigenfunction for the interaction kernel $R$ defined in \eqref{eq:kernel_AR1} with $|\phi| \ne 1$. Consider the equation
\begin{align*}
\int_{-\infty}^{\infty} R(x, y) v(x) dx = r \, v(y), \quad y \in \mb R.
\end{align*}
We demonstrate a candidate for $v$ below. By the Krein--Rutman theorem (Theorem \ref{thm:KR_without_int}), this is the unique positive eigenfunction upto scaling, since we know the integral operator corresponding to $R$ is Hilbert--Schmidt, and hence compact, for $|\phi| \ne 1$. 

As a candidate, set 
$$
v(x) \,\propto\, e^{-\frac{1}{2} (qx^2 - 2bx)}
$$
proportional to the density function of a $N(b/q,1/q)$ distribution. Substituting in the previous equation, and after some algebra, we obtain a system of equations $(1+\phi^2) - \phi^2/q = q$ and $b \phi/q = -b$. This implies $b = 0$ and solving the quadratic for $q$, $q^2 - (1+\phi^2)q + \phi^2 = 0$, gives $q = 1$ or $q = \phi^2$, corresponding to the cases $|\phi| < 1$ and $|\phi| > 1$, respectively. 

\subsection{The case $|\phi| = 1$ in Example \ref{ex:AR1}}\label{subsec:not_compact}
We show that the integral operator in Example \ref{ex:AR1} is not compact when $|\phi| = 1$. 
\begin{lemma}
The integral operator $T:L_{2}(\mathbb{R})\rightarrow L_{2}(\mathbb{R})$ given by 
\begin{eqnarray*}
T(f)[x]=\int_{\mathbb{R}} R(x,y)f(y)dy
\end{eqnarray*}
where 
\begin{eqnarray*}
R(x,y) = \exp\left(-\frac{1}{2}x^{2}+xy-\frac{1}{2}y^{2} \right) = 
\exp\left(-\frac{1}{2}(x-y)^{2} \right) 
\end{eqnarray*}
is not compact on $L_{2}(\mathbb{R})$. 
\end{lemma}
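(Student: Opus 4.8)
The plan is to exploit the fact that $R(x,y) = e^{-(x-y)^2/2}$ depends only on $x-y$, so that $T$ is the convolution operator $Tf = k * f$ with $k(u) = e^{-u^2/2}$. The decisive structural feature is that $T$ commutes with translations: writing $\tau_a f(\cdot) = f(\cdot - a)$ for $a \in \mathbb{R}$, one checks directly from the convolution form that $T\tau_a = \tau_a T$. I would use this translation-invariance to manufacture a bounded sequence whose image under $T$ has no convergent subsequence, contradicting compactness (a compact operator must map the unit ball to a relatively compact set).

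First I would fix a nonzero $g \in L^2(\mathbb{R})$ --- say a nonnegative, compactly supported bump with $\|g\| = 1$ --- and set $f_n = \tau_n g$ for $n \in \mathbb{N}$. Since translation is an isometry, $\|f_n\| = 1$, so $\{f_n\}$ lies in the unit ball. Let $h = Tg$; because $k > 0$ everywhere and $g \ge 0$ is not identically zero, $h(x) = \int k(x-y)g(y)\,dy > 0$ for every $x$, so $h \ne 0$ and $\|h\| > 0$. By the commutation relation, $Tf_n = T\tau_n g = \tau_n(Tg) = \tau_n h$, i.e.\ the images are simply translates of the single fixed function $h$.

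The heart of the argument is to show that the translates $\{\tau_n h\}_n$ admit no convergent subsequence in $L^2$. For this I would compute, for $n \ne m$,
\[
\|\tau_n h - \tau_m h\|^2 = 2\|h\|^2 - 2\,\mathrm{Re}\,\langle \tau_n h, \tau_m h\rangle,
\]
and observe that $\langle \tau_n h, \tau_m h\rangle = \int h(x)\,\overline{h(x-(m-n))}\,dx$ is the autocorrelation of $h$ at lag $m-n$. This autocorrelation tends to $0$ as $|m-n| \to \infty$: it is the inverse Fourier transform of $|\widehat{h}|^2 \in L^1$ (Riemann--Lebesgue), or more elementarily one approximates $h$ in $L^2$ by a compactly supported function whose translates eventually have disjoint support. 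Hence $\|\tau_n h - \tau_m h\|^2 \to 2\|h\|^2 > 0$ as $|m-n| \to \infty$, so $\{\tau_n h\}$ is uniformly separated and no subsequence is Cauchy. The main obstacle --- such as it is --- is precisely this asymptotic-orthogonality step; everything else is bookkeeping with the convolution structure.

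Concluding, $T$ maps the bounded sequence $\{f_n\}$ to $\{\tau_n h\}$, which has no convergent subsequence, so the image of the unit ball is not relatively compact and $T$ is not compact. An alternative, equally short route passes to the Fourier side, where $T$ becomes multiplication by $\widehat{k}(\xi) = \sqrt{2\pi}\,e^{-\xi^2/2}$; since the essential range $(0,\sqrt{2\pi}]$ of this symbol is uncountable, the operator --- being unitarily equivalent to a multiplication operator with uncountable spectrum --- cannot be compact, because the spectrum of a compact operator is countable with $0$ as its only possible accumulation point.
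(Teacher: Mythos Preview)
Your proof is correct and follows essentially the same strategy as the paper: both exploit that $R(x,y)$ depends only on $x-y$ and use integer translates of a fixed $L^2$ function (the paper uses the indicator of $[0,1]$, you use a generic bump) to produce a bounded sequence whose images under $T$ are translates of a single nonzero $L^2$ function and hence have no Cauchy subsequence. Your presentation is a bit more conceptual in naming the convolution/translation-commutation structure explicitly, and the Fourier-side alternative you sketch is a valid independent route that the paper does not mention.
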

\begin{proof}
We recall that an operator is compact in $L_{2}(\mathbb{R})$ if for every
bounded sequence $\{f_{j}\}_{j}$ the sequence 
$\{T(f_{j})\}_{j}$
contains a convergent subsequence. 

To show that $T$ is not compact
 we construct a sequence of functions in $L_{2}(\mathbb{R})$ whose mapping $\{T(f_{j})\}_{j}$
cannot contain a convergent subsequence.
We define the indicator function
\begin{eqnarray*}
I(x) = \left\{
\begin{array}{cc}
1 & x\in [0,1] \\
0 & x\notin [0,1]
\end{array}
\right.
\end{eqnarray*}
Using $I(x)$ we define the sequence of functions
$f_{j}(x) = I(x+j)$. Clearly, $\|f_{j}\|_2=1$ hence 
$\{f_{j}\}_j$ is a bounded sequence in $L_{2}(\mathbb{R})$. Let  
\begin{eqnarray*}
\Phi(x) = \int_{-\infty}^{x}\exp(-y^{2}/2)dy.
\end{eqnarray*}
Using this it is easily seen that
\begin{eqnarray*}
T(f_j)[x] = \Phi(x+j+1) -\Phi(x+j)
\end{eqnarray*}
For any $0<n<m<\infty$, by using a change of variables it can be shown that
\begin{eqnarray*}
\|T(f_m) - T(f_{n})\| = \|T(f_{n-m}) - T(f_{0})\|
\end{eqnarray*}
$\{\|T(f_{n}) - T(f_{0})\|\}_{n}$ is an increasing sequence with 
\begin{eqnarray*}
\|T(f_{n}) - T(f_{0})\|\rightarrow 2\|Tf_0\| \textrm{ as }n\rightarrow \infty.
\end{eqnarray*}
Thus there cannot exist a subsequence $\{T(f_{n_j})\}_j$ which is Cauchy convergent, which proves that $T$ is not a compact operator. 
The principle reason the above holds is that 
$\mathbb{R}$ is not compact, if it were then one would obtain an accumulation point.

\end{proof}

\section{Examples}\label{sec:examples}

\subsection{Trivariate conditional beta}

\paragraph{Example 1} We now give two examples of a trivariate
conditional beta. Both with the same process-level conditional graph
but with different individual graphs. 

The sample space of $x_{t}^{(a)},x_{t}^{(b)},x_{t}^{(c)}\in (0,1)$ and
the conditional specification is
\begin{align*}
   \log p(x_{t}^{(a)}|\mathcal{H}_{(a,t)}) 
   &\propto
  [\alpha^{(a)} - 1 + \phi_{1,2,1}^{(a,a)}s_{2}(x_{t-1}^{(a)})+
    \phi_{1,1,2}^{(a,a)}s_{2}(x_{t+1}^{(a)})+\phi_{1,1,2}^{(a,b)}s_{2}(x_{t+1}^{(b)})]s_{1}(x_{t}^{(a)})
   \nonumber\\
  &  + [\beta^{(a)} - 1 + \phi_{1,1,2}^{(a,a)}s_{1}(x_{t-1}^{(a)})+
    \phi_{1,2,1}^{(a,a)}s_{1}(x_{t+1}^{(a)})+\phi_{1,2,1}^{(a,b)}s_{1}(x_{t+1}^{(b)})]s_{2}(x_{t}^{(a)})
              \end{align*}
 \begin{align*}         
      \log p(x_{t}^{(b)}|\mathcal{H}_{(b,t)}) 
   &\propto
  [\alpha^{(b)} - 1 + \phi_{1,2,1}^{(b,b)}s_{2}(x_{t-1}^{(b)})+
     \phi_{1,1,2}^{(b,b)}s_{2}(x_{t+1}^{(b)})+\phi_{1,2,1}^{(a,b)}s_{2}(x_{t-1}^{(a)})+
     \phi_{1,2,1}^{(c,b)}s_{2}(x_{t-1}^{(c)})]s_{1}(x_{t}^{(b)})
    \\
  &  + [\beta^{(b)} - 1 + \phi_{1,1,2}^{(b,b)}s_{1}(x_{t-1}^{(b)})+
    \phi_{1,2,1}^{(b,b)}s_{1}(x_{t+1}^{(b)})+\phi_{1,1,2}^{(a,b)}s_{1}(x_{t-1}^{(a)})+
    \phi_{1,1,2}^{(c,b)}s_{1}(x_{t-1}^{(c)})]s_{2}(x_{t}^{(b)}).
  \end{align*}
\begin{align*}
   \log p(x_{t}^{(c)}|\mathcal{H}_{(c,t)}) 
   &\propto
  [\alpha^{(c)} - 1 + \phi_{1,2,1}^{(c,c)}s_{2}(x_{t-1}^{(c)})+
    \phi_{1,1,2}^{(c,c)}s_{2}(x_{t+1}^{(c)})+\phi_{1,1,2}^{(c,b)}s_{2}(x_{t+1}^{(b)})]s_{1}(x_{t}^{(c)})
   \nonumber\\
  &  + [\beta^{(c)} - 1 + \phi_{1,1,2}^{(c,c)}s_{1}(x_{t-1}^{(c)})+
    \phi_{1,2,1}^{(c,c)}s_{1}(x_{t+1}^{(c)})+\phi_{1,2,1}^{(c,b)}s_{1}(x_{t+1}^{(b)})]s_{2}(x_{t}^{(c)})
              \end{align*}

In this example, ${\boldsymbol \theta} =
(\alpha^{(a)}-1,\beta^{(a)}-1, \alpha^{(b)}-1,\beta^{(b)}-1, \alpha^{(c)}-1,\beta^{(c)}-1)$,
${\bf s}(x) =
(s_{1}(x^{(a)}),s_{2}(x^{(a)}),s_{1}(x^{(b)}),s_{2}(x^{(b)}), s_{1}(x^{(c)}),s_{2}(x^{(c)}))^{\top}$,
the block matrices are  $\Psi_{0}=0$,
\begin{eqnarray*}
 \Psi_{1}^{} =
  \left(
  \begin{array}{ccc}
    \Psi_{1}^{(a,a)}  & \Psi_{1}^{(a,b)}  & 0 \\
    0 & \Psi_{1}^{(b,b)}  & 0 \\
    0 & \Psi_{1}^{(c,b)} & \Psi_{1}^{(c,c)}
  \end{array}  
  \right) \textrm{ and }
  \Psi_{-1}^{} =
  \left(
  \begin{array}{ccc}
    (\Psi_{1}^{(a,a)})^{\top}  & 0  & 0\\
    ( \Psi_{1}^{(a,b)})^{\top} & (\Psi_{1}^{(b,b)})^{\top} & (\Psi_{1}^{(c,b)})^{\top} \\
       0 & 0 &(\Psi_{1}^{(c,c)})^{\top}   \\
  \end{array}  
  \right) 
\end{eqnarray*}
where
\begin{eqnarray*}
  \Psi_{1}^{(a,a)} &=&
  \left(
  \begin{array}{cc}
    0  & \phi_{1,1,2}^{(a,a)} \\
   \phi_{1,2,1}^{(a,a)}   & 0 \\ 
  \end{array}  
  \right),
   \Psi_{1}^{(b,b)} =\left(
  \begin{array}{cc}
    0  & \phi_{1,1,2}^{(b,b)} \\
   \phi_{1,2,1}^{(b,b)}   & 0 \\ 
  \end{array}  
  \right),
   \Psi_{1}^{(c,c)} =\left(
  \begin{array}{cc}
    0  & \phi_{1,1,2}^{(c,c)} \\
   \phi_{1,2,1}^{(c,c)}   & 0 \\ 
  \end{array}  
  \right) \\
   \Psi_{1}^{(a,b)} &=&\left(
  \begin{array}{cc}
    0  & \phi_{1,1,2}^{(a,b)} \\
   \phi_{1,2,1}^{(a,b)}   & 0 \\ 
  \end{array}  
  \right),
  \Psi_{1}^{(c,b)} =\left(
  \begin{array}{cc}
    0  & \phi_{1,1,2}^{(c,b)} \\
   \phi_{1,2,1}^{(c,b)}   & 0 \\ 
  \end{array}  
  \right)
\end{eqnarray*}
If all the entries in $\Psi_{1}$ are negative and
$\alpha^{(a)}, \alpha^{(b)}, \alpha^{(c)}, \beta^{(a)}, \beta^{(b)},
\beta^{(c)}>0$, then the HS-condition is satisfied.

The process-wide conditional independence graph and the individual
level conditional independence graph is given below. 
\begin{center}
  \includegraphics[scale =0.4]{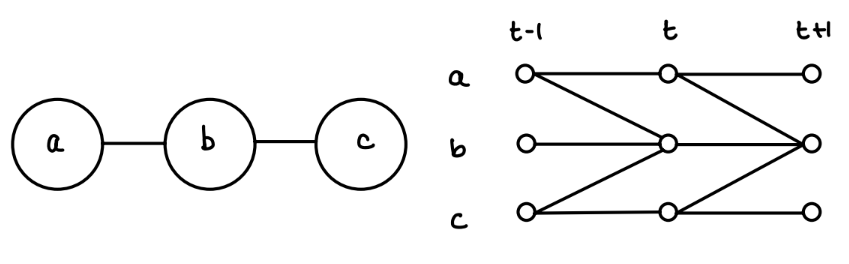}
\end{center}

A realisation using the method described in Section
\ref{sec:prob_cestgm} is given below.
\begin{center}
  \includegraphics[scale =0.3]{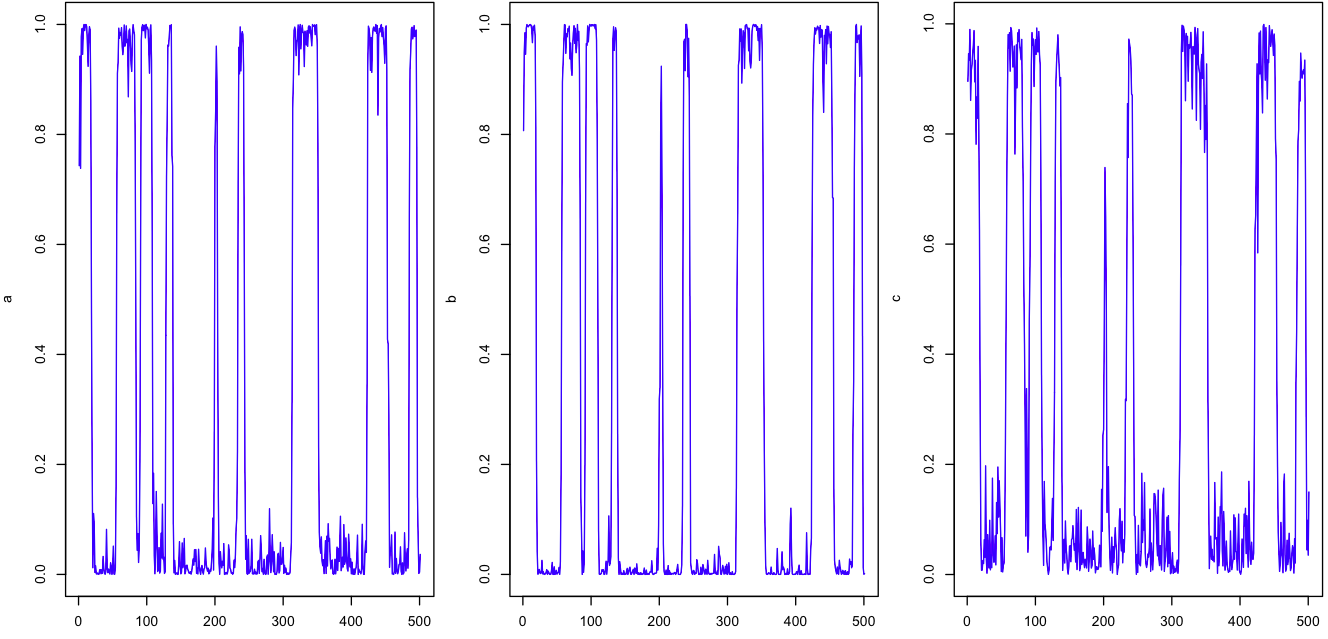}
 \end{center}

\paragraph{Example 2} Another trivariate conditional beta:

The sample space of $x_{t}^{(a)},x_{t}^{(b)},x_{t}^{(c)}\in (0,1)$ and
the conditional specification is
\begin{align*}
   \log p(x_{t}^{(a)}|\mathcal{H}_{(a,t)}) 
   &\propto
  [\alpha^{(a)} - 1 + \phi_{1,2,1}^{(a,a)}s_{2}(x_{t-1}^{(a)})+
    \phi_{1,1,2}^{(a,a)}s_{2}(x_{t+1}^{(a)})+\phi_{0,1,2}^{(a,b)}s_{2}(x_{t}^{(b)})]s_{1}(x_{t}^{(a)})
   \nonumber\\
  &  + [\beta^{(a)} - 1 + \phi_{1,1,2}^{(a,a)}s_{1}(x_{t-1}^{(a)})+
    \phi_{1,2,1}^{(a,a)}s_{1}(x_{t+1}^{(a)})+\phi_{0,2,1}^{(a,b)}s_{1}(x_{t}^{(b)})]s_{2}(x_{t}^{(a)})
              \end{align*}
  \begin{align*}         
      \log p(x_{t}^{(b)}|\mathcal{H}_{(b,t)}) 
   &\propto
  [\alpha^{(b)} - 1 + \phi_{1,2,1}^{(b,b)}s_{2}(x_{t-1}^{(b)})+
     \phi_{1,1,2}^{(b,b)}s_{2}(x_{t+1}^{(b)})+\phi_{0,2,1}^{(a,b)}s_{2}(x_{t}^{(a)})+
     \phi_{0,2,1}^{(c,b)}s_{2}(x_{t}^{(c)})]s_{1}(x_{t}^{(b)})
    \\
  &  + [\beta^{(b)} - 1 + \phi_{1,1,2}^{(b,b)}s_{1}(x_{t-1}^{(b)})+
    \phi_{1,2,1}^{(b,b)}s_{1}(x_{t+1}^{(b)})+\phi_{0,1,2}^{(a,b)}s_{1}(x_{t}^{(a)})+
    \phi_{0,1,2}^{(c,b)}s_{1}(x_{t}^{(c)})]s_{2}(x_{t}^{(b)}).
  \end{align*}
\begin{align*}
   \log p(x_{t}^{(c)}|\mathcal{H}_{(c,t)}) 
   &\propto
  [\alpha^{(c)} - 1 + \phi_{1,2,1}^{(c,c)}s_{2}(x_{t-1}^{(c)})+
    \phi_{1,1,2}^{(c,c)}s_{2}(x_{t+1}^{(c)})+\phi_{0,1,2}^{(c,b)}s_{2}(x_{t}^{(b)})]s_{1}(x_{t}^{(c)})
   \nonumber\\
  &  + [\beta^{(c)} - 1 + \phi_{1,1,2}^{(c,c)}s_{1}(x_{t-1}^{(c)})+
    \phi_{1,2,1}^{(c,c)}s_{1}(x_{t+1}^{(c)})+\phi_{0,2,1}^{(c,b)}s_{1}(x_{t}^{(b)})]s_{2}(x_{t}^{(c)})
              \end{align*}

In this example, ${\boldsymbol \theta} =
(\alpha^{(a)}-1,\beta^{(a)}-1, \alpha^{(b)}-1,\beta^{(b)}-1, \alpha^{(c)}-1,\beta^{(c)}-1)$,
${\bf s}(x) =
(s_{1}(x^{(a)}),s_{2}(x^{(a)}),s_{1}(x^{(b)}),s_{2}(x^{(b)}), s_{1}(x^{(c)}),s_{2}(x^{(c)}))^{\top}$,
the block matrices are 
\begin{eqnarray*}
\Psi_{0} =  \left(
  \begin{array}{ccc}
    0  & \Psi_{0}^{(a,b)}  & 0 \\
   (\Psi_{0}^{(a,b)})^{\top}  & 0  &  (\Psi_{0}^{(c,b)})^{\top}\\
    0 & \Psi_{0}^{(c,b)} & 0
  \end{array}  
  \right),
 \Psi_{1}^{} =
  \left(
  \begin{array}{ccc}
    \Psi_{1}^{(a,a)}  & 0  & 0 \\
    0 & \Psi_{1}^{(b,b)}  & 0 \\
    0 & 0 & \Psi_{1}^{(c,c)}
  \end{array}  
  \right) \textrm{ and }
  \Psi_{-1}^{} = \Psi_{1}^{\top}
\end{eqnarray*}
where
\begin{eqnarray*}
  \Psi_{1}^{(a,a)} &=&
  \left(
  \begin{array}{cc}
    0  & \phi_{1,1,2}^{(a,a)} \\
   \phi_{1,2,1}^{(a,a)}   & 0 \\ 
  \end{array}  
  \right),
   \Psi_{1}^{(b,b)} =\left(
  \begin{array}{cc}
    0  & \phi_{1,1,2}^{(b,b)} \\
   \phi_{1,2,1}^{(b,b)}   & 0 \\ 
  \end{array}  
  \right),
   \Psi_{1}^{(c,c)} =\left(
  \begin{array}{cc}
    0  & \phi_{1,1,2}^{(c,c)} \\
   \phi_{1,2,1}^{(c,c)}   & 0 \\ 
  \end{array}  
  \right) \\
   \Psi_{0}^{(a,b)} &=&\left(
  \begin{array}{cc}
    0  & \phi_{0,1,2}^{(a,b)} \\
   \phi_{0,2,1}^{(a,b)}   & 0 \\ 
  \end{array}  
  \right),
  \Psi_{0}^{(c,b)} =\left(
  \begin{array}{cc}
    0  & \phi_{0,1,2}^{(c,b)} \\
   \phi_{0,2,1}^{(c,b)}   & 0 \\ 
  \end{array}  
  \right)
\end{eqnarray*}
If all the entries in $\Psi_0$ and $\Psi_{1}$ are negative and
$\alpha^{(a)}, \alpha^{(b)}, \alpha^{(c)}, \beta^{(a)}, \beta^{(b)},
\beta^{(c)}>0$, then the HS-condition is satisfied.

The process-wide conditional independence graph and the individual
level conditional independence graph is given below.

\begin{center}
  \includegraphics[scale =0.4]{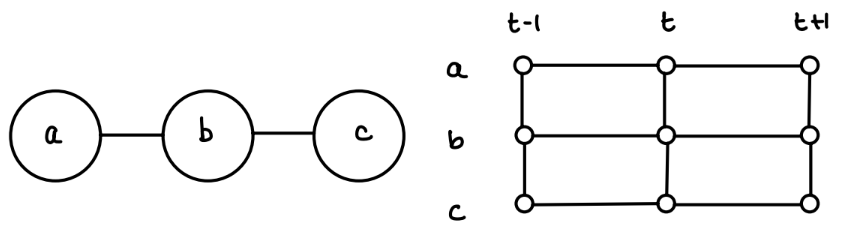}
 \end{center} 

 \subsection{Bivariate conditional beta and conditional binomial}

 The sample space of $x_{t}^{(a)},x_{t}^{(b)}\in (0,1)$,
 $x_{t}^{(c)}\in \{0,1,\ldots,n\}$. We model
 the conditional distributions of $x_{t}^{(a)}$ and $x_{t}^{(b)}$ as a beta distribution 
\begin{align*}
   \log p(x_{t}^{(a)}|\mathcal{H}_{(a,t)}) 
   &\propto
  [\alpha^{(a)} - 1 + \phi_{1,2,1}^{(a,a)}s_{2}(x_{t-1}^{(a)})+
    \phi_{1,1,2}^{(a,a)}s_{2}(x_{t+1}^{(a)})+\phi_{1,1,2}^{(a,b)}s_{2}(x_{t+1}^{(b)})]s_{1}(x_{t}^{(a)})
   \nonumber\\
  &  + [\beta^{(a)} - 1 + \phi_{1,1,2}^{(a,a)}s_{1}(x_{t-1}^{(a)})+
    \phi_{1,2,1}^{(a,a)}s_{1}(x_{t+1}^{(a)})+\phi_{1,2,1}^{(a,b)}s_{1}(x_{t+1}^{(b)})]s_{2}(x_{t}^{(a)})
              \end{align*}
  \begin{align*}         
      \log p(x_{t}^{(b)}|\mathcal{H}_{(b,t)}) 
   &\propto
  [\alpha^{(b)} - 1 + \phi_{1,2,1}^{(b,b)}s_{2}(x_{t-1}^{(b)})+
     \phi_{1,1,2}^{(b,b)}s_{2}(x_{t+1}^{(b)})+\phi_{1,2,1}^{(a,b)}s_{2}(x_{t-1}^{(a)})+
     \phi_{1,2,1}^{(c,b)}x_{t-1}^{(c)}]s_{1}(x_{t}^{(b)})
    \\
  &  + [\beta^{(b)} - 1 + \phi_{1,1,2}^{(b,b)}s_{1}(x_{t-1}^{(b)})+
    \phi_{1,2,1}^{(b,b)}s_{1}(x_{t+1}^{(b)})+\phi_{1,1,2}^{(a,b)}s_{1}(x_{t-1}^{(a)})+
    \phi_{1,1,2}^{(c,b)}x_{t-1}^{(c)}]s_{2}(x_{t}^{(b)}).
  \end{align*}
  and the conditional distribution of $x_{t}^{(c)}$ as a Binomial
\begin{align*}
   \log p(x_{t}^{(c)}|\mathcal{H}_{(c,t)}) 
   &\propto [\theta+\phi_{1}x_{t-1}^{(c)}+\phi_{1}x_{t+1}^{(c)} +
     \phi_{1,2,1}^{(c,b)}s_{1}(x_{t+1}^{(b)}) + \phi_{1,1,2}^{(c,b)}s_{2}(x_{t+1}^{(b)})]x_{t}^{(c)}
+\log {n\choose x_{t}^{(c)}}.
              \end{align*}

In this example, ${\boldsymbol \theta} =
(\alpha^{(a)}-1,\beta^{(a)}-1, \alpha^{(b)}-1,\beta^{(b)}-1, \theta)$,
\\*
${\bf s}(x) =
(s_{1}(x^{(a)}),s_{2}(x^{(a)}),s_{1}(x^{(b)}),s_{2}(x^{(b)}), x^{(c)})^{\top}$,
the block matrices are  $\Psi_{0}=0$,
\begin{eqnarray*}
 \Psi_{1}^{} =
  \left(
  \begin{array}{ccc}
    \Psi_{1}^{(a,a)}  & \Psi_{1}^{(a,b)}  & 0 \\
    0 & \Psi_{1}^{(b,b)}  & 0 \\
    0 & \Psi_{1}^{(c,b)} & \Psi_{1}^{(c,c)}
  \end{array}  
  \right) \textrm{ and }
  \Psi_{-1}^{} =
  \left(
  \begin{array}{ccc}
    (\Psi_{1}^{(a,a)})^{\top}  & 0  & 0\\
    ( \Psi_{1}^{(a,b)})^{\top} & (\Psi_{1}^{(b,b)})^{\top} & (\Psi_{1}^{(c,b)})^{\top} \\
       0 & 0 &(\Psi_{1}^{(c,c)})^{\top}   \\
  \end{array}  
  \right) 
\end{eqnarray*}
where
\begin{eqnarray*}
  \Psi_{1}^{(a,a)} &=&
  \left(
  \begin{array}{cc}
    0  & \phi_{1,1,2}^{(a,a)} \\
   \phi_{1,2,1}^{(a,a)}   & 0 \\ 
  \end{array}  
  \right),
   \Psi_{1}^{(b,b)} =\left(
  \begin{array}{cc}
    0  & \phi_{1,1,2}^{(b,b)} \\
   \phi_{1,2,1}^{(b,b)}   & 0 \\ 
  \end{array}  
  \right),
   \Psi_{1}^{(c,c)} = \phi_{1} \\
   \Psi_{1}^{(a,b)} &=&\left(
  \begin{array}{cc}
    0  & \phi_{1,1,2}^{(a,b)} \\
   \phi_{1,2,1}^{(a,b)}   & 0 \\ 
  \end{array}  
  \right),
  \Psi_{1}^{(c,b)} =\left(
  \begin{array}{cc}
     \phi_{1,2,1}^{(c,b)}  & \phi_{1,1,2}^{(c,b)} \\
  \end{array}  
  \right)
\end{eqnarray*}
If all the entries in $\Psi_{1}^{(a,a)}$, $\Psi_{1}^{(b,b)}$,
$\Psi_{1}^{(a,b)}$ are negative, all the entries in
$\Psi_{1}^{(c,b)}$ are positive, $\Phi_{1}^{(c)}$ and $\theta$ can take any real
values and 
$\alpha^{(a)}, \alpha^{(b)}, \beta^{(a)}, \beta^{(b)}>0$, then the HS-condition is satisfied.

The process-wide and individual level graphs are the same as that in the
trivariate conditonal beta in example 1, above. 
A realisation using the method described in Section
\ref{sec:prob_cestgm} is given below.

\begin{center}
  \includegraphics[scale =0.3]{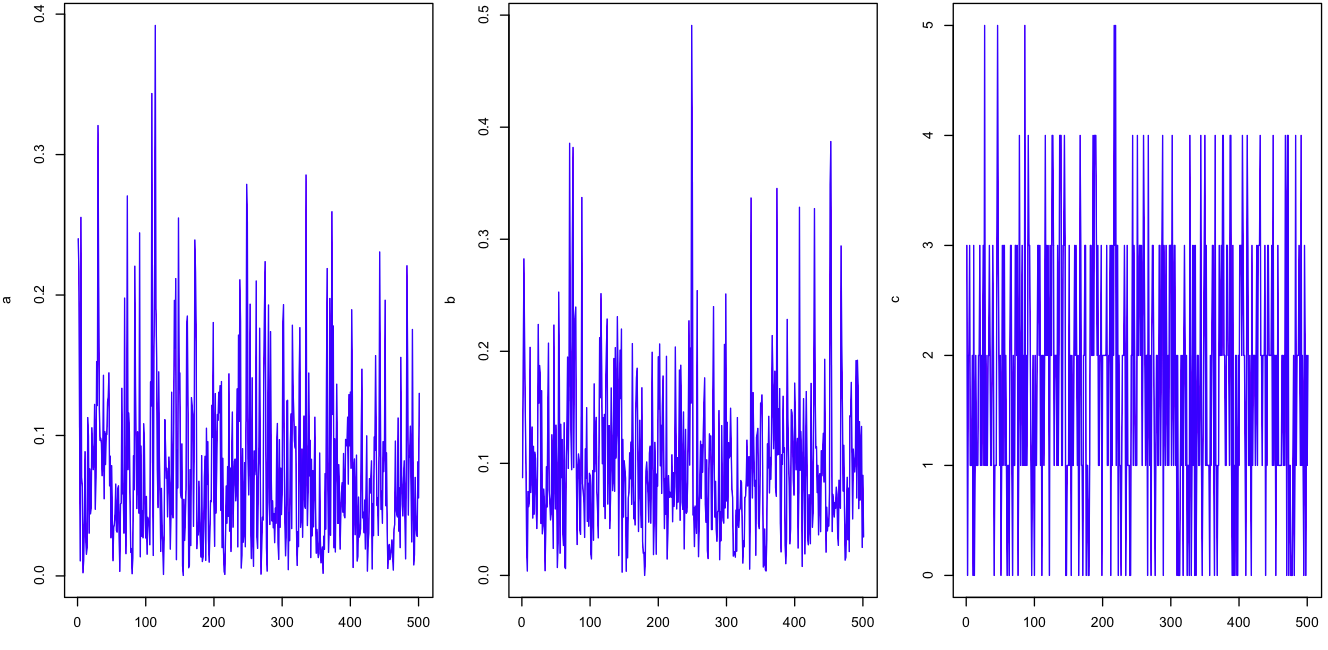}
 \end{center}

\end{document}